\numberwithin{equation}{section}
\newcommand{\blue}{\textcolor{blue}}
\newcommand{\pfstep}[1]{\vspace{.5em} {\it \noindent #1.}}
\newtheorem{theorem}{Theorem}[section]
\newtheorem{proposition}[theorem]{Proposition}
\newtheorem{lemma}[theorem]{Lemma}
\newtheorem{remark}[theorem]{Remark}
\newtheorem{definition}[theorem]{Definition}
\newtheorem{corollary}[theorem]{Corollary}
\def\ls {\lesssim}
\def\th {\theta}
\def\rd {\partial}
\def\ep {\epsilon}
\def\f {\frac}
\def\ab {\bar{a}}
\def\cb {\bar{c}}
\def\i {\infty}
\def\de {\delta}
\newcommand{\ud}{\mathrm{d}}
\def\alp {\alpha}
\def\bt {\beta}
\def\vb {\langle v\rangle}
\def\wb {\langle x-tv \rangle}
\def\xb {\langle x \rangle}
\def\Mm {M_{\mathrm{max}}}
\def\Mi {M_{\mathrm{int}}}
\newcommand{\ba}{\begin{equation}}
\newcommand{\ea}{\end{equation}}
\newcommand{\bea}{\begin{eqnarray}}
\newcommand{\eea}{\end{eqnarray}}
\def\beaa{\begin{eqnarray*}}
\def\eeaa{\end{eqnarray*}}
\title{Stability of vacuum for the \\
Landau equation with moderately soft potentials}
\author{Jonathan Luk\thanks{jluk@stanford.edu}}
\affil{\small  Department of Mathematics, Stanford University, 450~Serra~Mall~Building~380,~Stanford~CA~94305-2125,~United~States~of~America \ }
\begin{document}

\maketitle

\begin{abstract}
Consider the spatially inhomogeneous Landau equation with moderately soft potentials (i.e.~with $\gamma \in (-2,0)$) on the whole space $\mathbb R^3$. We prove that if the initial data $f_{\mathrm{in}}$ are close to the vacuum solution $f_{\mathrm{vac}} \equiv 0$ in an appropriate norm, then the solution $f$ remains regular globally in time. This is the first stability of vacuum result for a binary collisional model featuring a long-range interaction.

Moreover, we prove that the solutions in the near-vacuum regime approach solutions to the linear transport equation as $t\to +\infty$. Furthermore, in general, solutions do not approach a traveling global Maxwellian as $t \to +\infty$.

Our proof relies on robust decay estimates captured using weighted energy estimates and the maximum principle for weighted quantities. Importantly, we also make use of a \emph{null structure} in the nonlinearity of the Landau equation which suppresses the most slowly-decaying interactions.
\end{abstract}

\section{Introduction}

Consider the Landau equation for the particle density $f(t,x,v)\geq 0$ in the whole space $\mathbb R^3$. Here, $t\in \mathbb R_{\geq 0}$, $x\in \mathbb R^3$ and $v\in \mathbb R^3$. The Landau equation reads
\begin{equation}\label{Landau}
\rd_t f +v_i\rd_{x_i}f=Q(f,f),
\end{equation}
where $Q(f,f)$ is the \emph{collision kernel} given by\footnote{Each of these terms depends also on $(t,x)$. For brevity, we have suppressed these dependence in \eqref{kernel}.}
\begin{equation}\label{kernel}
Q(f,f)(v):= \rd_{v_i}\int_{\mathbb R^3} a_{ij}(v-v_*)\big(f(v_*) (\rd_{v_j}f)(v)-f(v) (\rd_{v_j}f)(v_*)\big)\,\ud v_*,
\end{equation} 
and $a_{ij}$ is the non-negative symmetric matrix defined by
\begin{equation}\label{a.def}
a_{ij}(z):=\big(\delta_{ij}-\f{z_i z_j}{|z|^2}\big)|z|^{\gamma+2}.
\end{equation}
In all the expressions above (and in the remainder of the paper), we have used the convention that repeated lower case Latin indices are summed over $i,j=1,2,3$.

In this paper, we will be concerned with the case $\gamma \in (-2,0)$ in \eqref{a.def}. The case $\gamma \in (-2,0)$ is usually known as the case of \emph{moderately soft potentials}. Note that the $\gamma =-3$ case is the original case Landau wrote down, and the case that we consider can be thought of as a limiting case of the Boltzmann equation (without angular cutoff).

It will be convenient to also define
\begin{equation}\label{c.def}
c:=\rd^2_{z_i z_j} a_{ij}(z)=-2(\gamma+3)|z|^{\gamma}.
\end{equation}
and
\begin{equation}\label{bar.def}
\ab_{ij}:=a_{ij}\ast f,\quad \cb:=c\ast f,
\end{equation}
where $\ast$ denotes convolutions in $v$. The Landau equation \eqref{Landau} %reads
%\begin{equation}\label{Landau.2}
%\rd_t f+v_i \rd_{x_i} f=\rd_{v_i} (\ab_{ij} \rd_{v_j} f-\bb_i f)
%\end{equation}
is then equivalent to
\begin{equation}\label{Landau.3}
\rd_t f+v_i\rd_{x_i}f=\ab_{ij} \rd^2_{v_i v_j} f-\cb f.
\end{equation}

We solve the Cauchy problem for the Landau equation \eqref{Landau}, i.e.~we study the solution arising from prescribed regular initial data:
\begin{equation}\label{data.def}
f(0,x,v)=f_{\mathrm{in}}(x,v)\geq 0.
\end{equation}

Our main result is that if $f_{\mathrm{in}}$ is sufficiently small and is sufficiently localized in both $x$ and $v$ (i.e.~if $f_{\mathrm{in}}$ is in the ``near-vacuum'' regime), then it gives rise to a unique global-in-time solution, which is moreover globally smooth. More precisely\footnote{For the precise definition of the multi-index notations, see Section~\ref{sec:notation}.},
\begin{theorem}\label{thm:main}
Let $\gamma \in (-2,0)$, $d_0>0$ and $\Mm = 
\begin{cases}
2+2\lceil \f{2}{2+\gamma} + 4\rceil & \mbox{ if $\gamma \in (-2,-1]$} \\
2+2\lceil \f{1}{|\gamma|}+ 4\rceil & \mbox{ if $\gamma \in (-1,0)$}
\end{cases}
$.

There exists an $\underline{\ep}_0 = \underline{\ep}_0(\gamma,d_0)>0$ such that if
\begin{equation*}
\begin{split}
\sum_{|\alp|+|\bt| \leq \Mm} &\: \|(1+|x|^2)^{\f{\Mm+5}{2}}\rd_x^{\alp}\rd_v^{\bt} (e^{2d_0(1+|v|^2)} f_{\mathrm{in}}) \|_{L^2_xL^2_v} \\
&\: + \sum_{|\alp|+|\bt|\leq \Mm-5} \|(1+|x|^2)^{\f{\Mm+5}{2}}(1+|v|^2)^{\f 12}\rd_x^{\alp}\rd_v^{\bt} (e^{2d_0(1+|v|^2)} f_{\mathrm{in}}) \|_{L^\i_xL^\i_v} \leq \ep 
\end{split}
\end{equation*}
for some $\ep \in [0,\underline{\ep}_0]$,
then there exists a unique global solution $f:[0,+\infty)\times \mathbb R^3\times \mathbb R^3$ to the Landau equation \eqref{Landau} in the energy space $C^0([0,T];H^{4,0}_{\mathrm{ul}})\cap L^2([0,T];H^{4,1}_{\mathrm{ul}})$ for any $T\in (0,+\infty)$ (see~Definition~\ref{def:ul}) which achieves the prescribed initial data $f_{\mathrm{in}}$.

Moreover, the solution is $C^\infty$ in $(0,+\infty)\times \mathbb R^3\times \mathbb R^3$ and, as long as $f_{\mathrm{in}}$ is not identically $0$, it holds that $f(t,x,v)>0$ for all $(t,x,v) \in (0,+\infty)\times \mathbb R^3\times \mathbb R^3$. %Moreover, for any $\ell\in \mathbb N\cup\{0\}$, there exists a constant $C = C(\ell)>0$ such that
%$$\int_{\mathbb R^3} (1+|v|^2)^{\ell} f(t,x,v)\,\ud v \leq C(1+t)^{-3}.$$
%In particular, the macroscopic mass density, momentum density and energy density
%$$\rho(t,x):=\int_{\mathbb R^3} f(t,x,v)\,\ud v,\quad m_j(t,x):=\int_{\mathbb R^3} v_j f(t,x,v)\,\ud v,\quad e(t,x):= \int_{\mathbb R^3} |v|^2 f(t,x,v)\,\ud v$$
%all decay with a quantitative rate $O((1+t)^{-3})$.
\end{theorem}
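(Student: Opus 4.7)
The plan is to run a continuity/bootstrap argument, exploiting the fact that near vacuum the equation \eqref{Landau.3} behaves like a perturbation of the free transport $\partial_t + v\cdot\nabla_x$, whose solutions disperse in three space dimensions. Local well-posedness in the weighted energy space is relatively standard once $\bar a_{ij}$ is known to be uniformly elliptic (which follows from smallness of the data plus the $e^{2d_0\vb^2}$ weight), so the work lies in the global a priori estimate. I would work simultaneously with two tiers of norms: (i) a high-order weighted $L^2$ energy using the transported weight $\wb = \langle x-tv\rangle$, the Japanese bracket $\vb$, and the Gaussian $e^{d_0\vb^2}$, since $\wb$ commutes with $\partial_t + v\cdot\nabla_x$; and (ii) lower-order weighted $L^\infty$ pointwise bounds, controlled by a \emph{maximum principle} applied to quantities of the form $\wb^N \vb^M e^{d_0\vb^2} \partial_x^\alpha \partial_v^\beta f$. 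The bootstrap will postulate smallness of both tiers, uniformly in $t$ and with no $t$-growth, and aim to improve them by a factor of $1/2$.

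The dispersive input comes from evaluating the coefficients $\bar a_{ij}$ and $\bar c$. For $\bar c(t,x,v) = \int |v-v_*|^\gamma f(t,x,v_*)\,\ud v_*$, I change variables $v_* \mapsto x_* = x - tv_*$, with Jacobian $t^{-3}$, and combine the pointwise weighted $L^\infty$ bound on $f$ with splitting $|v-v_*|\lessgtr 1$: the singular region is integrable because $\gamma > -3$, while the non-singular region is tamed by the Gaussian weight. This yields a decay of the form $\|\bar c\|_{L^\infty} \lesssim \ep\, t^{-3-\gamma}$ for $t \geq 1$ (with analogous estimates for $\bar a_{ij}$ and their derivatives), which is integrable in time precisely because $\gamma > -2$ — this is exactly where the \emph{moderately soft} assumption enters. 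These decay estimates are then plugged back into both the energy and the maximum-principle estimates to close the bootstrap.

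The crucial ingredient beyond this is the \emph{null structure} advertised in the abstract. When commuting $\wb^N = \langle x-tv\rangle^N$ through the equation, one produces terms where derivatives land on the weight; the worst such commutator involves the direction $v - v_*$ of the relative velocity, i.e.\ precisely the direction in which $a_{ij}(v-v_*) = (\delta_{ij} - \tfrac{(v-v_*)_i(v-v_*)_j}{|v-v_*|^2})|v-v_*|^{\gamma+2}$ is degenerate. Consequently these most dangerous contributions to $\bar a_{ij} \partial^2_{v_iv_j}(\wb^N f)$ simply vanish, and one is left with commutators that can be absorbed by the good terms. Without this cancellation the $t$-decay extracted from dispersion would be insufficient and the bootstrap would fail; with it, one gains the extra power of $\vb^{-1}$ needed to integrate in $t$.

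The main obstacle will be synchronizing these three ingredients — the hierarchy between $L^2$ and $L^\infty$ tiers (which fixes $\Mm$ by Sobolev embedding plus the extra derivatives consumed to bound $\bar a_{ij}$ pointwise), the dispersive decay rates of the coefficients (which degrade as $\gamma \to -2$, explaining the case split in the statement of $\Mm$), and the null structure (which must be invoked at every order of commutation). Once the uniform-in-time a priori estimate is established, global existence follows by the standard continuation criterion; interior $C^\infty$ regularity for $t>0$ is a consequence of the parabolic/hypoelliptic smoothing of the Landau operator, and strict positivity of $f(t,x,v)$ for $t>0$ follows from the strong maximum principle, since the coefficients $\bar a_{ij}$ are uniformly elliptic on compact sets along the solution.
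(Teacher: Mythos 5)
Your high-level outline — a bootstrap pairing weighted $L^2$ energy estimates with $L^\infty$ maximum-principle bounds, dispersive decay of $\bar a_{ij}$ and $\bar c$ via the $\langle x-tv\rangle$ weight, a null structure to beat the borderline decay, and closure via continuity — is the same skeleton the paper uses, so you have the right strategy. But there are three concrete points where the proposal, as stated, would not work.

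First, you assert that $\bar a_{ij}$ is \emph{uniformly elliptic} near vacuum (using this both for local well-posedness and for the strong maximum principle). This is false: $\bar a_{ij}=a_{ij}\ast f$ is a convolution with $f$, so near the vacuum solution $\bar a_{ij}$ is \emph{small}, and it degenerates further in time since $f$ disperses. The paper stresses exactly this obstruction (no usable coercivity from the elliptic part) and is careful to never invoke a lower bound on $\bar a_{ij}$; the maximum-principle argument exploits only the \emph{sign} of $\bar a_{ij}\partial^2_{v_iv_j}h$ at an interior max/min, not ellipticity. Local well-posedness with data in the near-vacuum regime is itself a nontrivial recent result (the paper cites Henderson--Snelson--Tarfulea) precisely because there is no uniform ellipticity to lean on.

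Second, the mechanism you describe for the null structure is not the one in the equation. You argue that the dangerous commutators produced by hitting $\wb^N=\langle x-tv\rangle^N$ with $\partial_v$ point in the direction $v-v_*$, which $a_{ij}(v-v_*)$ annihilates. But $\partial_v\wb$ points along $x-tv$, not along $v-v_*$; moreover $\bar a_{ij}$ is a $v_*$-average, so no algebraic cancellation of the form $\bar a_{ij}(x-tv)_i(x-tv)_j=0$ is available — those terms genuinely appear and must be estimated. The actual null structure is geometric/dispersive: for the integrand $|v-v_*|^{2+\gamma}f(v_*)\partial^2_v f(v)$ to be large, both $v$ and $v_*$ must be close to $x/t$, forcing $|v-v_*|$ to be small and activating the factor $|v-v_*|^{2+\gamma}$. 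This is encoded by writing $|v-v_*|^{2+\gamma}\lesssim t^{-\min\{1,2+\gamma\}}(|x-tv|+|x-tv_*|)^{\min\{1,2+\gamma\}}|v-v_*|^{\max\{0,1+\gamma\}}$ and absorbing the extra $\wb$ power into a hierarchy of weights indexed by the number of $Y=t\partial_x+\partial_v$ derivatives.

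Third, the bootstrap cannot postulate uniform-in-time smallness with ``no $t$-growth.'' Each $\partial_v$ commutes with $\partial_t+v\cdot\nabla_x$ only up to a $\partial_x$ error, and the natural bound for the free flow grows like $(1+t)^{|\beta|}$; the paper's norms explicitly carry this loss. Related to this, a static Gaussian $e^{d_0\vb^2}$ is insufficient: the paper uses a slowly decaying exponent $d(t)=d_0(1+(1+t)^{-\delta})$, whose time derivative manufactures the positive bulk term $\|\f{\vb\wb^{\Mm+5-|\sigma|}}{(1+t)^{(1+\delta)/2}}\partial_x^\alpha\partial_v^\beta Y^\sigma g\|_{L^2_{t,x,v}}^2$ that absorbs the bulk of the error terms and provides the $\vb$-weight gain needed to close both the energy and the maximum-principle steps. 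Without this time-dependent weight and the accompanying descent scheme across derivative levels, the scheme you describe would not close.
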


\begin{remark}[The restriction on $\gamma \in (-2,0)$]\label{rmk:gamma}
Our argument requires $\gamma \in (-2,0)$ and indeed one sees that the number of derivatives needed in Theorem~\ref{thm:main} $\to +\infty$ as $\gamma$ approaches the endpoints. For $\gamma \in [-3,-2]$, the dispersion seems too weak for our argument; see Section~\ref{sec:method.heuristics}. On the other hand, for $\gamma \in (0,1]$, we lack at this point even a local-in-time theory which incorporates near-vacuum data. Finally, the $\gamma =0$ case seems already tractable, although it requires a slightly different argument. This will be treated in a future work.
\end{remark}

Theorem~\ref{thm:main} above can be thought of as the global nonlinear stability of the vacuum solution $f_{\mathrm{vac}} \equiv 0$. Such a result is known for the Boltzmann equation \emph{with an angular cutoff assumption} in the pioneering work of Illner--Shinbrot \cite{IlSh84}; see also the discussion in Section~\ref{sec:cutoff.Boltzmann}. However, the stability of the vacuum solution is not known for a collisional kinetic model featuring a long range interaction, such as in the case of the Landau equation or the Boltzmann equation without angular cutoff. One important difference between the cutoff Boltzmann equation and the Landau or non-cutoff Boltzmann equation is that \emph{ellipticity} is present in the latter models. This ellipticity manifests itself for instance in the smoothing of solutions; see \cite{ChDeHe09, AlDeViWe00, AMUXY10, HeSnTa17}. In the context of the stability of vacuum for the Landau equation, the ellipticity presents the following difficulty in understanding the long time dynamics of solutions: on the one hand, the collision kernel contains top-order elliptic terms which cannot be treated completely perturbatively as in the case of cutoff Boltzmann; on the other hand, in a neighborhood of the vacuum solution, the coefficient of the elliptic term does not seem to be coercive enough to provide useful control of the solution.

In the proof of Theorem~\ref{thm:main}, we show that despite the presence of elliptic terms, the main mechanism governing the long time behavior of the solutions is the dispersion associated with the transport operator. In particular, except for the terms with the top order derivatives that we necessarily treat with elliptic/parabolic methods, all the other terms arising from the collision kernel, including commutator terms coming from differentiating the elliptic part, are treated perturbatively. There are two main ingredients necessary to achieve this: (1) we prove robust decay estimates showing that solutions to the Landau equation in the near-vacuum regime obey similar decay estimates as the linear transport equation; (2) we show that there is a \emph{null structure} in the nonlinearity which suppresses that most slowly decaying terms in the collision kernel. 

Moreover, after closing all the estimates in the proof of Theorem~\ref{thm:main}, we show a posteriori that as long as we consider the solution in an appropriate weaker topology, the $Q(f,f)$ term in \eqref{kernel} --- including its top order contribution --- can be considered as a perturbation term. In other words, as far as the long term dynamics of the solution in this weaker topology is concerned, it is completely dominated by the transport part, and the elliptic term presents no correction of the long time dynamics. To formulate this result, let us first define, associated to a function $f(t,x,v)$, the function $f^\sharp(x,v)$:
\begin{equation}\label{fsharp.def}
f^\sharp(t,x,v):= f(t,x+tv,v).
\end{equation}
Note that $f$ is a solution to the linear transport equation, i.e.~$\rd_t f+v_i\rd_{x_i}f = 0$, if and only if $f^\sharp(t,x,v)$ is in fact \emph{independent} of $t$. Thus the statement that $f(t,x,v)$ approaches a solution to the linear transport equation can be captured by the following theorem:
\begin{theorem}\label{thm:asymptotics}
There exists $C>0$ depending only on $\gamma$ and $d_0$ such that the following holds. Assume the conditions of Theorem~\ref{thm:main} hold and suppose $f$ is a solution given by Theorem~\ref{thm:main}. Then there exists a unique function $f_\i^\sharp:\mathbb R^3\times\mathbb R^3\to \mathbb R$ such that for every $\ell \in \mathbb N\cup\{0\}$, 
$$\sup_{t\geq 0} (1+t)^{\min\{1,2+\gamma\}}\|(1+|v|^2)^{\f \ell 2} (1+|x|^2)^{\f {\Mm+4}2} (f^\sharp(t,x,v) - f^\sharp_\infty(x,v))\|_{L^\i_x L^\i_v} \leq C\ep^{\f 32}.$$
\end{theorem}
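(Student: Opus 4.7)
The plan is to integrate the evolution equation for $f^\sharp$ in time and show that the resulting integral is absolutely convergent in the weighted $L^\i$ norm appearing in the conclusion. A direct computation from \eqref{Landau} yields
\ba\label{sharp.eqn}
\rd_t f^\sharp(t,x,v) = Q(f,f)(t,x+tv,v),
\ea
so that for $0\le t_1\le t_2$, $f^\sharp(t_2,x,v) - f^\sharp(t_1,x,v) = \int_{t_1}^{t_2} Q(f,f)(s,x+sv,v)\,\ud s$. Once I establish
\ba\label{key.est}
\int_t^{+\infty} \bigl\|\vb^\ell \xb^{(\Mm+4)/2}\, Q(f,f)(s,x+sv,v)\bigr\|_{L^\i_x L^\i_v}\,\ud s \le C\ep^{3/2}(1+t)^{-\min\{1,2+\gamma\}},
\ea
the family $\{f^\sharp(t,\cdot,\cdot)\}_{t\ge 0}$ will be Cauchy in the weighted $L^\i$ space as $t\to+\i$, will converge to some $f^\sharp_\i$, and the claimed rate will follow upon sending $t_2\to+\i$. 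Uniqueness of $f^\sharp_\i$ is immediate, and the factor $\ep^{3/2}$ will emerge from a bound of the form $C\ep^2$ together with the smallness $\ep\le\underline{\ep}_0<1$.

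To prove \eqref{key.est}, I would expand $Q(f,f) = \ab_{ij}\rd^2_{v_iv_j} f - \cb f$ as in \eqref{Landau.3} and treat each term separately. Two basic mechanisms produce decay in $s$. First, \emph{dispersion for convolutions}: after the substitution $w=x-sv_*$, one has $\int f(s,x,v_*)\,\ud v_* = s^{-3}\int f^\sharp(s,w,(x-w)/s)\,\ud w \ls s^{-3}$, and analogous computations apply to $\ab_{ij}$ and $\cb$ once the $|v-v_*|^{\gamma+2}$ and $|v-v_*|^\gamma$ weights have been separated from $v_*$. Second, the \emph{pointwise spatial decay} $\xb^{(\Mm+4)/2} f^\sharp(s,x,v)\ls\ep$ coming from the bounds underlying Theorem~\ref{thm:main} transfers to weighted decay for $f(s,x+sv,v)$. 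Pointwise control on $\rd^2_{v_iv_j} f$ is obtained by Sobolev embedding from the $\Mm$-derivative $L^2$ bound and the $(\Mm-5)$-derivative $L^\i$ bound afforded by Theorem~\ref{thm:main} (which is why $\Mm-5\ge 2$ is imposed). Putting these together gives a pointwise estimate of schematic form $|Q(f,f)(s,x+sv,v)|\ls \ep^2(1+s)^{-3-\min\{1,2+\gamma\}}$ with the appropriate weights distributed, which is manifestly $s$-integrable with tail $(1+t)^{-\min\{1,2+\gamma\}}$.

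The principal obstacle is recovering the sharp exponent $\min\{1,2+\gamma\}$ when $\gamma$ is near $-2$. For $\gamma\in(-1,0)$, the factor $|v-v_*|^{\gamma+2}$ in $\ab_{ij}$ is bounded pointwise by $\vb^{\gamma+2}\langle v_*\rangle^{\gamma+2}$, and the base dispersion $s^{-3}$, together with an additional $s^{-1}$ obtained by integrating a $\vb^{\gamma+2}$ factor against a dispersive profile, suffices for the full rate $(1+t)^{-1}$. For $\gamma\in(-2,-1]$, the weight $|v-v_*|^{\gamma+2}$ is a low power and the naive estimate is insufficient; here the \emph{null structure} of $a_{ij}$ emphasized in the introduction becomes essential. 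The idea is to split the $v_*$ integral into the regions $|v-v_*|\ls s^{-1}$, where the projection $\de_{ij}-\f{z_iz_j}{|z|^2}$ produces a cancellation when paired with the $v$-derivatives of $f$ along the $(v-v_*)$ direction (so that the apparently dangerous diagonal is tamed), and $|v-v_*|\gtrsim s^{-1}$, where the bare factor $|v-v_*|^{\gamma+2}$ itself contributes the missing $s^{-(2+\gamma)}$ decay. This matches the exponent claimed in the theorem. Finally, the exponential weight $e^{2d_0(1+|v|^2)}$ built into the hypothesis of Theorem~\ref{thm:main} dominates every polynomial $\vb^\ell$, so \eqref{key.est} indeed holds uniformly in $\ell\in\mathbb N\cup\{0\}$, completing the argument.
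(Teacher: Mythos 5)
Your overall framework is correct and matches the paper: one writes $\rd_t f^\sharp(t,x,v) = (\bar a_{ij}\rd^2_{v_iv_j}f - \bar c f)(t,x+tv,v)$, integrates in time, and shows the integrand is absolutely integrable in the appropriate weighted $L^\infty_xL^\infty_v$ norm; the $\bar c f$ term is easy by the dispersive bound on $\bar c$, and the $\ep^{3/2}$ emerges from a product of two $\ep^{3/4}$-factors from the closed bootstrap. However, your description of the null-structure mechanism for the $\bar a_{ij}\rd^2_{v_iv_j}f$ term is both unnecessary and incorrect. The claimed ``cancellation when paired with the $v$-derivatives of $f$ along the $(v-v_*)$ direction'' coming from the projection $\delta_{ij}-z_iz_j/|z|^2$ is not used anywhere in the proof and is not a valid mechanism here: $\rd^2_{v_iv_j}f$ has no preferred alignment with $v-v_*$, which depends on the integration variable $v_*$. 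Moreover you have the roles of the two regions reversed. In the regime $|v-v_*|\lesssim s^{-1}$ it is the \emph{smallness} of $|v-v_*|^{\gamma+2}$ (recall $\gamma+2\in(0,2)$) that directly yields $s^{-(\gamma+2)}$; in the regime $|v-v_*|\gtrsim s^{-1}$, the factor $|v-v_*|^{\gamma+2}$ is \emph{large}, not small, and the decay must come from the $\wb$/$\langle x-sv_*\rangle$ localization of the solution, since at least one of $|x-sv|$, $|x-sv_*|$ is $\gtrsim 1$.

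What the paper actually does is more algebraic and avoids any region splitting. Since $|v-v_*|=t^{-1}|(x-tv)-(x-tv_*)|$, one has
$$|v-v_*|^{\gamma+2}\ls t^{-\min\{1,2+\gamma\}}\big(|x-tv|+|x-tv_*|\big)^{\min\{1,2+\gamma\}}|v-v_*|^{\max\{0,1+\gamma\}},$$
which is the content of Proposition~\ref{prop:ab.Li.null.cond}: it trades exactly $\min\{1,2+\gamma\}$ powers of the $\wb$ weight for $t^{-\min\{1,2+\gamma\}}$ decay. This is precisely why the theorem is stated with the weight $\xb^{\Mm+4}$ rather than the full $\xb^{\Mm+5}$ available from the bootstrap --- the spare power of $\wb$ is the currency that buys the decay rate. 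With this correction the rest of your argument goes through, and the second derivative of $f$ is controlled not by a fresh Sobolev embedding at this stage but directly by the pointwise $L^\infty$ bound $\wb^{\Mm+5}|\rd_v^\bt g|\ls\ep\vb^{-1}(1+t)^{|\bt|}$ for $|\bt|=2$ (Corollary~\ref{cor:MP.con}), which is already part of the closed bootstrap.
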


One particular consequence of Theorem~\ref{thm:asymptotics} is that the long-time asymptotics for the macroscopic quantities are to leading order determined by $f^\sharp_\infty$. We formulate this in the following corollary\footnote{Note that while we control the macroscopic momentum $m_i(t,x)$, we have no control of the macroscopic velocity $u_i(t,x) = \f{m_i}{\rho}(t,x)$ since we do not have any lower bounds for $\rho$.}:
\begin{corollary}\label{cor:macro}
There exists $C>0$ depending only on $\gamma$ and $d_0$ such that the following holds.  Assume the conditions of Theorem~\ref{thm:main} hold. Let $f_\infty^\sharp$ be as in Theorem~\ref{thm:asymptotics}. Define
$$\rho_\infty(t,x):= \int_{\mathbb R^3} f_\infty^\sharp(x-tv,v)\,\ud v,\,\, (m_i)_\infty(t,x):= \int_{\mathbb R^3} v_i f_\infty^\sharp(x-tv,v)\,\ud v,\,\, e_\infty(t,x):= \int_{\mathbb R^3} \f{|v|^2}{2} f_\infty^\sharp(x-tv,v)\,\ud v.$$
Then the mass density, the momentum density and the energy density, respectively defined by
$$\rho(t,x):= \int_{\mathbb R^3} f(t,x,v)\,\ud v,\quad m_i(t,x):= \int_{\mathbb R^3} v_i f(t,x,v)\,\ud v,\quad e(t,x):= \f 12\int_{\mathbb R^3} |v|^2 f(t,x,v)\,\ud v,$$
satisfy
\begin{equation}\label{cor:macro.1}
\|\rho \|_{L^\i_x}(t) + \|m_i \|_{L^\i_x}(t) + \|e \|_{L^\i_x}(t) \leq C\ep (1+t)^{-3}
\end{equation}
and
\begin{equation}\label{cor:macro.2}
(1+t)^{3}(\|\rho-\rho_\i\|_{L^\i_x}(t) + \|m_i-(m_i)_\i\|_{L^\i_x}(t) + \|e - e_\i\|_{L^\i_x}(t)) \leq C\ep^{\f 32}(1+t)^{-\min\{1,2+\gamma\}}
\end{equation}
for every $t\geq 0$.
\end{corollary}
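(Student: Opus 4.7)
The plan is to reduce both parts of the corollary to a single dispersive integral estimate after rewriting everything in terms of $f^\sharp$. Using $f(t,x,v) = f^\sharp(t, x-tv, v)$ gives
\[
\rho(t,x) = \int_{\mathbb R^3} f^\sharp(t, x{-}tv, v)\,\ud v, \quad m_i(t,x) = \int_{\mathbb R^3} v_i f^\sharp(t, x{-}tv, v)\,\ud v, \quad e(t,x) = \f{1}{2}\int_{\mathbb R^3} |v|^2 f^\sharp(t, x{-}tv, v)\,\ud v,
\]
with $\rho_\infty, (m_i)_\infty, e_\infty$ obtained by replacing $f^\sharp(t,\cdot,\cdot)$ by $f^\sharp_\infty$. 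The key technical lemma I will establish is: for $k \in \{0,1,2\}$ and $\ell > k + 3$, if a function $g$ on $\mathbb R^3 \times \mathbb R^3$ satisfies $|g(y,v)| \leq A (1+|v|^2)^{-\f{\ell}{2}}(1+|y|^2)^{-\f{\Mm+4}{2}}$, then
\[
\sup_{x\in\mathbb R^3} \int_{\mathbb R^3} |v|^k |g(x-tv,v)|\,\ud v \ls A (1+t)^{-3}.
\]

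The lemma is an elementary change-of-variables estimate. For $t \leq 1$ it is immediate from integrability of $|v|^k (1+|v|^2)^{-\f{\ell}{2}}$ in $v$ together with $(1+|x-tv|^2)^{-\f{\Mm+4}{2}} \leq 1$. For $t \geq 1$ the substitution $u = tv$ recasts the integral as
\[
t^{-k-3} \int_{\mathbb R^3} |u|^k \bigl(1+\tfrac{|u|^2}{t^2}\bigr)^{-\f{\ell}{2}} (1+|x-u|^2)^{-\f{\Mm+4}{2}}\,\ud u.
\]
Split into $|u|\leq t$, where $|u|^k(1+|u|^2/t^2)^{-\f{\ell}{2}} \leq t^k$, and $|u|>t$, where $(1+|u|^2/t^2)^{-\f{\ell}{2}} \leq (t/|u|)^{\ell}$ and $|u|^{k-\ell} \leq t^{k-\ell}$, so the same product is again $\leq t^k$. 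In both regions the integrand is pointwise at most $t^{-3}(1+|x-u|^2)^{-\f{\Mm+4}{2}}$, whose $u$-integral over $\mathbb R^3$ is uniformly bounded since $\Mm+4 > 3$.

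To obtain \eqref{cor:macro.1}, I apply the lemma with $g = f^\sharp(t,\cdot,\cdot)$, $\ell = 6$, and $A = C\ep$; the required uniform-in-$t$ weighted $L^\infty$ control $\|(1+|v|^2)^{3} (1+|x|^2)^{\f{\Mm+4}{2}} f^\sharp(t,\cdot,\cdot)\|_{L^\i_xL^\i_v} \ls \ep$ follows by combining Theorem~\ref{thm:asymptotics} (which propagates the bound between any finite $t$ and $+\infty$, and in particular passes it to $f^\sharp_\infty$) with the initial-data hypothesis on $\|(1+|x|^2)^{\f{\Mm+5}{2}} e^{2d_0(1+|v|^2)} f_{\mathrm{in}}\|_{L^\i_xL^\i_v}$ at $t=0$, via the triangle inequality. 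To obtain \eqref{cor:macro.2}, I apply the same lemma instead to $g = f^\sharp(t,\cdot,\cdot) - f^\sharp_\infty$ with $\ell = 6$ and $A = C\ep^{3/2}(1+t)^{-\min\{1,2+\gamma\}}$, which is precisely the pointwise bound furnished by Theorem~\ref{thm:asymptotics}. There is no real obstacle; the entire corollary is essentially the change-of-variables encoding of the $(1+t)^{-3}$ dispersion of the free transport flow, fed by the pointwise asymptotics of Theorem~\ref{thm:asymptotics} as a black box. The only mild care is to choose $\ell$ in Theorem~\ref{thm:asymptotics} strictly greater than $k+3$ for all $k \in \{0,1,2\}$.
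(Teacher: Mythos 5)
Your proof is correct and follows essentially the same approach as the paper: rewrite the macroscopic quantities as weighted velocity integrals, establish a $(1+t)^{-3}$ dispersive estimate for such integrals given simultaneous $\vb$- and spatial-weight control, and feed in Theorem~\ref{thm:asymptotics} (together with the triangle inequality at $t=0$ to control $f^\sharp$ itself). The only difference is in the proof of the dispersive lemma: the paper's Lemma~\ref{lem:Li.2} splits the $v$-integral into $|v - x/t|\leq t^{-1}$ and $|v - x/t|> t^{-1}$, whereas you change variables $u=tv$ and split on $|u|\lessgtr t$; the two are elementary reformulations of the same mechanism.
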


Another natural question in the context of long-time asymptotics is whether the limit given in Theorem~\ref{thm:asymptotics} is associated to a global traveling Maxwellian (see Definition~\ref{def:GM} below). Recall that the $H$-functional $H[f] = \int_{\mathbb R^3}\int_{\mathbb R^3} f\log f\,\ud v\,\ud x$ is non-increasing along the flow by the Landau equation \eqref{Landau}. Moreover, the solutions to \eqref{Landau} for which the $H$-functional is constant (and have finite mass, entropy and second moments) are exactly the traveling global Maxwellians \cite{Levermore}. We show that despite these facts, general solutions to \eqref{Landau} do \underline{not} necessarily approach traveling global Maxwellians. 

In the case of the Boltzmann equation with an angular cutoff, the existence of solutions not approaching traveling global Maxwellians as $t\to +\infty$ was first demonstrated by Toscani \cite{To87} by showing that polynomial lower bound in the spatial variable can be propagated. In fact, for the Boltzmann equation with an angular cutoff, much more than non-convergence to traveling global Maxwellian is known: a scattering theory can be developed in a neighborhood of any sufficient small traveling global Maxwellian \cite{BaGaGoLe16}. In the case of the Landau equation, in view of the smoothing effect of the equation, it seems unlikely that a scattering theory of the type in \cite{BaGaGoLe16} still holds (cf.~\cite{Go16} for related discussions on the non-cutoff Boltzmann equation). Nonetheless, given the estimates in Theorem~\ref{thm:main}, we can construct solutions which do not approach traveling global Maxwellians using a perturbative argument.

Before we proceed to the formulation of this result (see~Theorem~\ref{thm:Maxwellian} below), we fix our notation and take the following definition of traveling global Maxwellians from \cite{Levermore}. 
\begin{definition}[Traveling global Maxwellians]\label{def:GM}
We say that a function $\mathcal M:[0,+\infty)\times \mathbb R^3\times \mathbb R^3\to \mathbb R_{>0}$ is a traveling global Maxwellian if
$$\mathcal M(t,x,v) = \f{m\sqrt{\det Q}}{(2\pi)^3} \exp \left(-\f 12 \begin{pmatrix} v\\ x-tv \end{pmatrix}^{T} \begin{pmatrix} \sigma I & \beta I+B \\ \beta I-B & \alp I \end{pmatrix} \begin{pmatrix} v\\ x-tv \end{pmatrix} \right),$$
for some $m\geq 0$, $\alp,\,\sigma>0$, $\bt\in \mathbb R$, $B\in \mathbb R^{3\times 3}$ skew symmetric matrix such that $Q=(\alp\sigma-\bt^2)I + B^2$ is positive definite.

Given a traveling global Maxwellian $\mathcal M$, define $\mathcal M^\sharp:[0,+\infty)\times \mathbb R^3\times \mathbb R^3$ by $\mathcal M^\sharp(t,x,v) := \mathcal M(t,x+tv,v)$. Note that by definition $\mathcal M^\sharp$ is independent of $t$. We will henceforth write $\mathcal M^\sharp(x,v) = \mathcal M^\sharp(t,x,v)$

We denote by $\mathfrak M$ the set of all traveling global Maxwellians.
\end{definition}

The following is our result that solutions in general do not asymptote to traveling global Maxwellians:
\begin{theorem}\label{thm:Maxwellian}
There exists $f_{\mathrm{in}}$ satisfying the assumptions of Theorem~\ref{thm:main} such that the limiting function $f_\infty$ given by Theorem~\ref{thm:asymptotics} (defined by $f_\infty(t,x,v):=f^\sharp_\infty(x-tv,v)$) does not correspond to the zero solution or a traveling global Maxwellian.
\end{theorem}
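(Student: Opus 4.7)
The plan is to exploit the log-concavity in $v$ of every traveling global Maxwellian: I will prescribe initial data with a double-bump profile in $v$ that vanishes in between the bumps, and transfer this non-convexity to $f_\i^\sharp$ by specializing Theorem~\ref{thm:asymptotics} to $t = 0$.

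First I would fix two distinct points $v_1, v_2 \in \mathbb R^3$ with $|v_1 - v_2| = 3\de$ for some $\de > 0$, together with nonnegative bump functions $\phi \in C_c^\i(\mathbb R^3)$ satisfying $\phi(0) = 1$ and $\psi \in C_c^\i(\mathbb R^3)$ satisfying $\psi(0) = 1$, $\operatorname{supp}\psi \subset \{|w| < \de\}$, and set
$$f_{\mathrm{in}}(x,v) := \ep\, e^{-3d_0(1+|v|^2)} \phi(x) \big[\psi(v-v_1) + \psi(v-v_2)\big] \geq 0.$$
Since $e^{2d_0(1+|v|^2)} f_{\mathrm{in}} = \ep e^{-d_0(1+|v|^2)} \phi(x)[\psi(v-v_1)+\psi(v-v_2)]$ is smooth, compactly supported in $x$, and has Gaussian decay in $v$, all the weighted Sobolev and $L^\i$ norms in the hypothesis of Theorem~\ref{thm:main} are bounded by $K_0\ep$ for a constant $K_0$ depending only on $d_0, \phi, \psi, v_1, v_2, \Mm$; so the hypothesis holds provided $K_0 \ep \leq \underline{\ep}_0$.

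Using $f^\sharp(0,\cdot,\cdot) = f_{\mathrm{in}}$, Theorem~\ref{thm:asymptotics} at $t = 0$ with $\ell = 0$ yields the pointwise comparison
$$|f_{\mathrm{in}}(x,v) - f_\i^\sharp(x,v)| \leq C\ep^{\f 32}(1+|x|^2)^{-\f{\Mm+4}{2}}.$$
This already rules out $f_\i \equiv 0$, since $f_{\mathrm{in}}(0, v_1) = \ep e^{-3d_0(1+|v_1|^2)}$ exceeds $C\ep^{\f 32}$ for $\ep$ small. To rule out $f_\i = \mathcal M$ for $\mathcal M \in \mathfrak M$, I would note from Definition~\ref{def:GM} that for each fixed $x$ the map $v \mapsto \log \mathcal M^\sharp(x,v)$ is a quadratic with Hessian $-\sigma I$, hence $\mathcal M^\sharp$ is strictly log-concave in $v$. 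The pointwise bound forces $\mathcal M^\sharp(0, v_i) \geq \f 12 \ep e^{-3d_0(1+|v_i|^2)}$ for $i = 1, 2$ and $\ep$ small; applying log-concavity at the midpoint $v_* := \f 12(v_1 + v_2)$ then yields
$$\mathcal M^\sharp(0, v_*) \geq \sqrt{\mathcal M^\sharp(0,v_1)\, \mathcal M^\sharp(0,v_2)} \geq c_* \ep,$$
with $c_* > 0$ depending only on $d_0, v_1, v_2$. On the other hand $|v_* - v_i| = \f{3\de}{2} > \de$ gives $f_{\mathrm{in}}(0, v_*) = 0$, and the pointwise comparison forces $\mathcal M^\sharp(0, v_*) \leq C\ep^{\f 32}$. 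Choosing $\ep$ sufficiently small produces the required contradiction.

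The principal obstacle is conceptual rather than technical: one must identify a geometric property of the family $\mathfrak M$ that is provably violated by some admissible $f_{\mathrm{in}}$, given that near-vacuum smallness restricts the data to an essentially Schwartz-like class. Log-concavity of the $v$-sections of $\mathcal M^\sharp$ is the right such feature, and a double bump in $v$ separated by a gap exploits it. Once the data is fixed, the verification that it satisfies the hypothesis of Theorem~\ref{thm:main} and the reduction of Theorem~\ref{thm:asymptotics} at $t = 0$ to a pointwise comparison are routine.
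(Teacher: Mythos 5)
Your argument is correct, and it takes a genuinely different route from the paper. The paper's proof is a soft compactness argument: it takes an \emph{arbitrary} seed function $\underline{f}$ which is not of the form $\mathcal M^\sharp$, argues by finite-dimensionality of $\mathfrak M$ that $\underline{f}$ is at a fixed positive $L^2_xL^2_v$-distance $\underline{c}$ from the whole family, then sets $f_{\mathrm{in}} = \eta\underline{f}$ and uses the scaling invariance $\mathcal M\in\mathfrak M\iff\lambda\mathcal M\in\mathfrak M$ so that this gap scales linearly in $\eta$, beating the $C_\infty\ep^{3/2}$ error from Lemma~\ref{lem:main.conv}. Your proof instead identifies a concrete geometric obstruction --- strict log-concavity of $v\mapsto\mathcal M^\sharp(x,v)$, which follows from $\sigma>0$ in Definition~\ref{def:GM} --- and constructs explicit double-bump data that must violate it; you then close the contradiction with the pointwise $L^\i$ comparison from Theorem~\ref{thm:asymptotics} at $t=0$ rather than the $L^2$ comparison the paper extracts from Lemma~\ref{lem:main.conv}. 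Both are correct, and the logical skeleton (a quantitative gap from $\mathfrak M$ at scale $\ep$ beating the $\ep^{3/2}$ proximity of $f^\sharp_\infty$ to the data) is the same; what differs is how the gap is certified. Your approach is more elementary and explicit --- it does not need the parameter-compactness lemma and produces an identifiable data set --- while the paper's approach is more general, showing (as recorded in its Remark~\ref{rmk:generally.not.Max}) that \emph{any} seed not already a Maxwellian works after a small enough rescaling. One minor remark: your argument also rules out the case $m=0$ in Definition~\ref{def:GM} (the degenerate zero Maxwellian) separately via the $f^\sharp_\infty(0,v_1)>0$ estimate, which is necessary since log-concavity only applies when $m>0$; you did handle this, but it is worth stating explicitly.
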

The proof of Theorem~\ref{thm:Maxwellian} will in fact show that for a very large class of initial data, the limits do not correspond to the zero solution or a traveling global Maxwellian; see Remark~\ref{rmk:generally.not.Max}.

The remainder of the introduction is structured as follows. In \textbf{Section~\ref{sec:method}}, we briefly discuss the method of the proof. Then in \textbf{Section~\ref{sec:related}} we discuss some related works. Finally, in \textbf{Section~\ref{sec:outline}}, we end the introduction with an outline of the remainder of the paper.

\subsection{Method of proof}\label{sec:method}

\subsubsection{Local existence}\label{sec:method.local}
In order to construct global-in-time solutions in the near-vacuum regime, one first needs to ensure that local-in-time solutions exist. This was carried out in a recent work of Henderson--Snelson--Tarfulea \cite{HeSnTa17} (see also \cite{AMUXY10, AMUXY11, AMUXY13} for related ideas for the Boltzmann equation without angular cutoff). 

We highlight two ingredients in \cite{HeSnTa17}:
\begin{enumerate}
\item Use of a function space adapted to a \emph{time-dependent} Gaussian in $|v|$: As $t$ increases, one only aims for an upper bound by a weaker Gaussian in the $v$-variable. This allows one to control the $v$-weights in coefficient $\bar{a}_{ij}$ in \eqref{Landau.3}. In order to handle the time-dependent Gaussian weight, one also needs to exploit the anisotropy (in $v$) of the coefficient $\bar{a}_{ij}$.
\item Use of $L^2$-based estimates: This in particular allows for an integration by parts argument to control the commutator terms without a loss of derivatives.
\end{enumerate}

As in \cite{HeSnTa17}, we will use time-dependent Gaussian weights in $|v|$. Our choice of Gaussian weights will decrease as $t$ increases, but it needs to decay in a sufficiently slow manner so that it is non-degenerate as $t\to +\infty$. More precisely, we define\footnote{We will from now on use the Japanese bracket notations; see Section~\ref{sec:notation}.}
\begin{equation}\label{g.def.intro}
g := e^{d(t) \vb^2} f,\quad d(t) := d_0 (1+ (1+t)^{-\de})
\end{equation}
for appropriate $d_0>0$, $\de>0$ and estimate $g$ instead of $f$. (Similar time-dependent weights have been used \cite{rjDtYhjZ2011, rjDtYhjZ2012, rjDtYhjZ2013} to gain control over $v$-weights.)

In order not to lose derivatives, we will in particular prove $L^2$-based energy estimates for $g$ and its derivatives. However, in our setting, we will also need additional ingredients to handle the large time behavior of $g$ (and its derivatives).

\subsubsection{Decay and heuristic argument}\label{sec:method.heuristics}

Before we proceed, we give a heuristic argument why one can expect that in the near-vacuum regime, the solutions to the Landau equation, as $t\to +\infty$ approach solutions to the linear transport equation
\begin{equation}\label{eq:transport}
\rd_t f + v_i \rd_{x_i} f = 0.
\end{equation}

Let us recall again the Landau equation (see \eqref{Landau.3})
\begin{equation}\label{eq:Landau.in.methods}
\rd_t f+v_i\rd_{x_i}f=\ab_{ij} \rd^2_{v_i v_j} f-\cb f.
\end{equation}
We now argue that \emph{if $f$ obeys decay estimates similar to those satisfied by solutions to the linear transport equation} \eqref{eq:transport}, then the RHS of \eqref{eq:Landau.in.methods} decays with a rate at least $(1+t)^{-1-}$, which in particular is \emph{integrable in time}. This at least shows that it is consistent to expect $f$ behaves like solutions to the linear transport equation.

\eqref{eq:transport} can be solved explicitly and a solution takes the form
\begin{equation}\label{transport.explicit}
f_{\mathrm{free}}(t,x,v) = f_{\mathrm{data}}(x-tv,v).
\end{equation}
Thus if the initial $f_{\mathrm{data}}$ is sufficiently localized in $x$ and $v$, then for all $\ell \in \mathbb N\cup \{0\}$,
\begin{equation}\label{easy.transport}
\int_{\mathbb R^3} \vb^\ell f_{\mathrm{free}}(t,x,v)\,\ud v\ls (1+t)^{-3},\quad |f_{\mathrm{free}}|(t,x,v) \ls 1.
\end{equation}
By \eqref{transport.explicit}, it also follows that taking $\rd_x$ derivatives does not worsen the decay estimate, but taking $\rd_v$ derivatives worsen the estimate by a power of $t$, i.e.
\begin{equation}\label{easy.transport.2}
\int_{\mathbb R^3} \vb^\ell |\rd_x^\alp \rd_v^\bt f_{\mathrm{free}}|(t,x,v)\,\ud v\ls (1+t)^{-3+|\bt|},\quad |\rd_x^\alp \rd_v^\bt f_{\mathrm{free}}|(t,x,v) \ls (1+t)^{|\bt|}.
\end{equation}
Assuming that $f$ obeys estimates as for $f_{\mathrm{free}}$, we now consider each of the terms on the RHS of \eqref{eq:Landau.in.methods}.

\textbf{The $\bar{c} f$ term.} An easy interpolation together with \eqref{easy.transport} imply that
$$\bar{c}(t,x,v) \ls \int_{\mathbb R^3} |v-v_*|^{\gamma} f(t,x,v_*)\,\ud v_* \ls \|f \|_{L^1_v}^{1+\f{\gamma}{3}}\|f\|_{L^\i_v}^{-\f{\gamma}{3}}\ls (1+t)^{-3-\gamma}.$$
At the same time, $f$ is uniformly bounded. Since $\gamma>-2$, this implies that $\bar{c}f$ is \emph{integrable in time}.

\textbf{The $\bar{a}_{ij} \rd^2_{v_i v_j} f$ term.} We focus on the decay in $t$ and neglect for the moment the additional $|v|$ weight which could in principle be handled using the time-dependent Gaussian weight as in Section~\ref{sec:method.local}. The $\bar{a}_{ij}$ term has decay
$$\bar{a}_{ij}(t,x,v) \ls  \int_{\mathbb R^3} (|v|+|v_*|)^{2+\gamma} f(t,x,v_*)\,\ud v_* \ls \vb^{2+\gamma} (1+t)^{-3}.$$
On the other hand, by \eqref{easy.transport.2}, $\rd^2_{v_i v_j} f$ is not bound even for solutions to the free transport, but instead \emph{grows} like $(1+t)^2$. Hence together it seems that $\bar{a}_{ij} \rd^2_{v_i v_j} f$ decays only as $(1+t)^{-1}$, which is barely non-integrable in time! 

\textbf{The null structure.} The key observation, however, is that while the decay estimates in \eqref{easy.transport}, \eqref{easy.transport.2} are in general sharp, they are sharp only when $\f xt \sim v$. For instance, given sufficiently regular and localized data, when $|v-\f{x}{t}|\gtrsim t^{-\alp}$ for some $\alp\in [0,1)$, $f(t,x,v)$ in fact decays in time (as opposed to merely being bounded). A similar improvement also occurs for velocity averages, as long as the velocity average is taken over a set with an appropriate lower bound on $|v-\f xt|$. 

Returning to our problem, at a spacetime point $(t,x)$, for the term $\int |v-v_*|^{2+\gamma}f(v_*)\,\ud v_*(\rd^2_{v_i v_j}f)(v)$, we must have one of the following three scenarios: (1) $v$ is not too close to $\f xt$, (2) $v_*$ is not too close to $\f xt$, or (3) $v$ and $v_*$ are close to each other. In cases (1) or (2), one has additional decay because of the gain away from $v\sim \f xt$ we described above; while in case (3) there is an improvement because of the small $|v-v_*|^{2+\gamma}$ factor! It therefore implies that
\begin{equation}\label{eq:improved.decay}
|\bar{a}_{ij}\rd^2_{v_iv_j} f|\ls \vb^{2+\gamma}(1+t)^{-1-}.
\end{equation}
The improved decay \eqref{eq:improved.decay} can be viewed as a consequence of a \emph{null structure} in the nonlinearity.

These rough heuristics already give hope that one can \emph{bootstrap} the decay estimates consistent with that of the transport equation.

Given the above discussion, the key ingredients for the proof are as follows:
\begin{enumerate}
\item Develop a robust method for proving decay estimates for solutions to the transport equation.
\item The robust decay estimates need to capture the improved decay in \eqref{eq:improved.decay}, which is important for exploiting the \emph{null structure} in the equation.
\item Moreover, the decay estimates have to be combined with $L^2$-based energy estimates (which is needed already for \emph{local} regularity theory; see~Section~\ref{sec:method.local}).
\end{enumerate}
We will discuss points 1, 2, and 3 respectively in Sections~\ref{sec:method.decay}, \ref{sec:hierarchy.of.norms} and \ref{sec:method.energy}. There is yet another issue arising from combining 1, 2, and 3, and will be discussed in Section~\ref{sec:descent.scheme}.

\subsubsection{A robust decay estimate for the transport equation and the maximum principle}\label{sec:method.decay}

\textbf{Main robust decay estimate.} Our robust decay estimate will be based on controlling a weighted $L^\i_xL^\i_v$ norm of $g$ and its derivatives (recall \eqref{g.def.intro}). The main idea is very simple for the linear transport equation. Given a sufficiently regular solution $f_{\mathrm{free}}$ to \eqref{eq:transport}, $\vb^\ell \wb^m f_{\mathrm{free}}$ also solves \eqref{eq:transport}. As a result $\| \vb^\ell \wb^m f_{\mathrm{free}} \|_{L^\i_xL^\i_v}(t)$ is uniformly bounded by its initial value. For $m>3$, this implies
\begin{equation}\label{stupid.dispersion}
|\int_{\mathbb R^3} \vb^\ell f_{\mathrm{free}} \,\ud v | \ls \| \vb^\ell \wb^m f_{\mathrm{free}} \|_{L^\i_xL^\i_v}(0) \int_{\mathbb R^3} \f{\ud v}{\wb^{m}}  \ls_m \f{\| \vb^\ell \wb^m f_{\mathrm{free}} \|_{L^\i_xL^\i_v}(0)}{(1+t)^3},
\end{equation}
and we have a decay estimate for weighted velocity averages of $f_{\mathrm{free}}$.

This type of estimate turns out to be sufficiently robust to be used in a nonlinear setting. We will prove the following weighted $L^\i_xL^\i_v$ bound for $g$ for some $m\geq 4$.\footnote{Here, and for the rest of this subsubsection, we have yet to make precise the powers $m$ that we will use. This will turn out to be a delicate issue; see Section~\ref{sec:hierarchy.of.norms} for further discussions.}%\footnote{The estimate here in fact holds with $\ep^{\f 34}$ replaced by $\ep$ (corresponding to the size of the initial data). However, in the bootstrap argument it will be convenient to aim at a weaker $\ep^{\f 34}$ estimate to provide more room. In the introduction we will therefore only discuss estimates with constants $\ep^{\f 34}$ for consistency with the rest of the paper.}
\begin{equation}\label{intro.Li.low.order}
\sup_{(x,v)\in \mathbb R^3\times \mathbb R^3} |\vb \wb^m g|(t,x,v) \ls \ep.
\end{equation}
Since $\bar{a}_{ij}$ and $\bar{c}$ are convolutions of $f$ with different kernels (see~\eqref{bar.def}), \eqref{intro.Li.low.order} implies quantitative decay estimates for the coefficients $\bar{a}_{ij}$ and $\bar{c}$ in a manner similar to \eqref{stupid.dispersion}.

\textbf{Commutators and higher order estimates.} To close our estimate we in fact need also to control also higher derivatives of $g$. For this purpose we use $\rd_x$, $\rd_v$ and $Y:=t\rd_x+\rd_v$ as commutators. $\rd_x$ and $Y$ both commute with the transport operator $\rd_t+v_i\rd_{x_i}$, but $\rd_v$ does not commute with the transport operator\footnote{For the estimate \eqref{intro.Li.goal}, in fact one can equivalently just prove $$|\wb^m \rd_x^\alp Y^\sigma g|(t,x,v) \ls \ep \vb^{-1}$$
(i.e.~without commuting with $\rd_v$) and \eqref{intro.Li.goal} follows from the triangle inequality. The actual reason that we also use $\rd_v$ as a commutator is more subtle and is related to the fact that we will use a hierarchy of weighted norms; see \eqref{how.to.weight.g} in Section~\ref{sec:hierarchy.of.norms}.}. This results in a loss of a power of $t$ for every commutation with $\rd_v$. In other words, we will aim at the following $L^\i_xL^\i_v$ estimate (see~\eqref{easy.transport.2} and \eqref{intro.Li.low.order}):
\begin{equation}\label{intro.Li.goal}
|\wb^m \rd_x^\alp \rd_v^\bt Y^\sigma g|(t,x,v) \ls \ep \vb^{-1} (1+t)^{|\bt|}.
\end{equation}
When $m\geq 4$, \eqref{intro.Li.goal} implies the following estimates for the coefficients (using an argument similar to \eqref{stupid.dispersion}, after appropriately accounting for the singularity in $v$ in the definition of $\bar{c}$):
\begin{equation}\label{intro.Li.coeff}
\sup_{i,j} |\rd_x^\alp \rd_v^\bt Y^\sigma \bar{a}_{ij}| \ls \ep \vb^{2+\gamma}(1+t)^{-3},\quad |\rd_x^\alp \rd_v^\bt Y^\sigma \bar{c}| \ls \ep (1+t)^{-3-\gamma}.
\end{equation}

\textbf{Estimating the error terms.} The estimates \eqref{intro.Li.goal} and \eqref{intro.Li.coeff} will be proven simultaneously in a bootstrap argument. In order to establish \eqref{intro.Li.goal}, we differentiate the equation for $g$ and control the terms on the RHS.

One of the error terms (which shows the typical difficulty) is $(\rd_x^{\alp'}\rd_v^{\bt'}Y^{\sigma'} \bar{a}_{ij}) (\rd^2_{v_iv_j} \rd_x^{\alp''} \rd_v^{\bt''} Y^{\sigma''} g)$ (where $\alp'+\alp''=\alp$, etc.). If we were to plug in \eqref{intro.Li.goal} and \eqref{intro.Li.coeff}, this error term is controlled by\footnote{Note that in the actual bootstrap setting we need some room and will only obtain a smallness constant of $\ep^{\f 32}$ instead of $\ep^2$. We will suppress this minor detail in the rest of the introduction.}
\begin{equation}\label{bad.intro.Li.term}
|\wb^m (\rd_x^{\alp'}\rd_v^{\bt'}Y^{\sigma'}\bar{a}_{ij}) (\rd^2_{v_iv_j} \rd_x^{\alp''} \rd_v^{\bt''} Y^{\sigma''} g)|(t,x,v)\ls \ep^2 \vb^{1+\gamma} (1+t)^{-1+|\bt|}.
\end{equation}
We make the following observations by comparing the $\vb$ weights and $t$ rates in \eqref{intro.Li.goal} and \eqref{bad.intro.Li.term}:
\begin{enumerate}
\item We need to prove an estimate \eqref{intro.Li.goal} which has \emph{better} $\vb$ weight compared to \eqref{bad.intro.Li.term}.
\item The decay rate on the RHS of \eqref{bad.intro.Li.term} is exactly borderline to obtain the decay rate in \eqref{intro.Li.goal}.
\end{enumerate}

For point (1) above, note that a gain in $\vb$ weight is possible is due to the $e^{d(t)\vb^2}$ weight in the definition of $g$ (see~\eqref{g.def.intro}). This gain has to be achieved, however, at the expense of a $(1+t)^\de$ decay rate (see~definition of $d(t)$).

For point (2) above regarding $t$-decay, already the borderline rate means that we cannot hope to straightforwardly recover \eqref{intro.Li.goal} when $|\bt|=0$. This is even more problematic since to handle the $\vb$ weights for point (1) above requires additional room for the $t$-decay rate. We must therefore improve the decay rate in \eqref{bad.intro.Li.term} by taking advantage of the null structure (recall the heuristic argument in Section~\ref{sec:method.heuristics}). This will be discussed in Section~\ref{sec:hierarchy.of.norms}.

\textbf{The maximum principle.} However, even with the ideas to be discussed in Section~\ref{sec:hierarchy.of.norms}, we will not be able to obtain sufficient $t$-decay to treat the main (non-commutator) term 
\begin{equation}\label{main.term.in.MP}
\wb^{m} \bar{a}_{ij} \rd^2_{v_iv_j} \rd_x^\alp \rd_v^\bt Y^\sigma g
\end{equation}
Instead, we will handle \eqref{main.term.in.MP} using a maximum principle argument: since $\bar{a}_{ij}$ is semi-positive definite, we show that the presence of the term \eqref{main.term.in.MP} can only give a favorable contribution. In other words, only the terms with $|\alp'|+|\bt'|+|\sigma'|\geq 1$ in \eqref{bad.intro.Li.term} will be treated as errors.

\textbf{Additional technical difficulties.} Unfortunately, even after taking into account all the above considerations, not all the $L^\i$ estimates we prove will be as strong as \eqref{intro.Li.goal}. This is related to the fact we need to couple our $L^\i$ estimates with $L^2$ estimates. The important point, however, is that \emph{at the lower order of derivatives}, i.e.~for $|\alp|+|\bt|+|\sigma|$ smaller than a particular threshold, we indeed obtain the estimate \eqref{intro.Li.goal}. We will return to this issue in Section~\ref{sec:descent.scheme}.

\subsubsection{Null structure and the hierarchy of weighted norms}\label{sec:hierarchy.of.norms}

Our robust proof of decay must also capture the null structure discussed in Section~\ref{sec:method.heuristics}! By naive inspection, one can already see that the $\wb$ weight (see~\eqref{intro.Li.goal} in Section~\ref{sec:method.decay}) ensures the solution to be localized at $v\sim \f xt$, which as discussed in Section~\ref{sec:method.heuristics} is exactly the mechanism which enforces the null structure. 

In order to exploit this gain, however, one needs to be able to put in \emph{extra} weights on the error terms, i.e.~in order to control $\wb^\ell \rd_x^\alp \rd_v^\bt Y^\sigma g$, we will need to have estimates for $\wb^{\ell+}\rd_x^{\alp'} \rd_v^{\bt'} Y^{\sigma'} g$, where $\ell+$ denotes a positive number strictly larger than $\ell$. In order to close the estimates, we need to exploit more subtle features of the problem and  introduce a \emph{hierarchy} of weighted norms. Namely, the weight of $\wb$ that we will use will depend on the number of $Y:=t\rd_x+\rd_v$ derivatives on $g$. In fact, we will control\footnote{Recall here $\Mm$ is the maximum number of derivatives in the assumptions of Theorem~\ref{thm:main}.}
\begin{equation}\label{how.to.weight.g}
\wb^{\Mm+5-|\sigma|}\rd_x^\alp\rd_v^\bt Y^\sigma g
\end{equation}
so that the more $Y$ derivatives we have, the weaker $\wb$ weight we put.

Here are the main observations that allow such a hierarchy of weighted estimates to be closed:
\begin{enumerate}
\item The main (i.e.~non-commutator) term can be considered as a ``good term'' (see the discussions on the maximum principle in Section~\ref{sec:method.decay})). Thus we only need to control terms where at least one derivative hits on $\bar{a}_{ij}$, i.e.
$$(\rd_x^{\alp'}\rd_v^{\bt'}Y^{\sigma'} \bar{a}_{ij})(\rd^2_{v_iv_j} \rd_x^{\alp''}\rd_v^{\bt''}Y^{\sigma''} g),$$
where $|\alp'|\geq 1$, $|\bt'|\geq 1$ or $|\sigma'|\geq 1$.
\item Next, we show that if there is at least one $\rd_v$ derivative on $\bar{a}_{ij}$, i.e.~if $|\bt'|\geq 1$, then the decay is $(1+t)^{-2-\nu}$ with some $\nu>0$ (depending on $\gamma$). This can be thought of as a better-than-expected estimate since without using the structure of $\bar{a}_{ij}$, one may naively expect that every $\rd_v$ derivative ``costs'' one power of $t$ so that one only has $|\rd_v \bar{a}_{ij}|\ls (1+t)^{-2}$.
\item In the case where there is at least one $\rd_x$ derivative on $\bar{a}_{ij}$, i.e.~if $|\alp'|\geq 1$, we write $\rd_x = t^{-1} (t\rd_x+\rd_v) - t^{-1} \rd_v = t^{-1}Y - t^{-1}\rd_v$. Note that 
\begin{itemize}
\item since $Y$ is one of our commutators, $t^{-1} Y$ effectively gains us a power of $t$;
\item $t^{-1} \rd_v$ also gains in terms of $t$ due to the gain associated to $\rd_v$ in point 2 above.
\end{itemize}
\item It thus remains to control the terms where there is at least one $Y=t\rd_x+\rd_v$ derivative hitting on $\bar{a}_{ij}$, i.e.~if $|\sigma'|\geq 1$. In this case, it must be that there is one fewer $Y$ hitting on $g$ as compared to the term that we are estimating! Our hierarchy of norms (see \eqref{how.to.weight.g}) is designed so that one can put an extra $\wb$ weight in this term and therefore one can use the \emph{null structure} to obtain an additional decay rate.
\end{enumerate}

\subsubsection{$L^2$ energy estimates}\label{sec:method.energy}

For regularity issues, we cannot work with $L^\i$ estimates alone, but will also need to work with $L^2$ based estimates (which is already the case for \emph{local-in-time} estimates; see Section~\ref{sec:method.local}.) Similar to the $L^\i$ estimates (see Section~\ref{sec:method.decay}), we use $\rd_x$, $\rd_v$, $Y:= t\rd_x+\rd_v$ as commutators. We then prove $L^2$ estimates for $\rd_x^\alp \rd_v^\bt Y^\sigma g$, again weighted with $\wb^{\Mm+5-|\sigma|}$ to exploit the null structure (see Section~\ref{sec:hierarchy.of.norms}).

\textbf{Main $L^2$ estimates.} Using the equation for $g$ one derives a weighted $L^2$ estimate which for $|\alp|+|\bt|+|\sigma|\leq \Mm$ controls the following three terms on any time interval $[0,T]$, up to some error terms:
\begin{equation}\label{intro.boundary.term}
\|\wb^{\Mm+5-|\sigma|} \rd_x^\alp \rd_v^\bt Y^\sigma g \|_{L^\i([0,T]; L^2_xL^2_v)}^2,
\end{equation}
\begin{equation}\label{intro.good.term.1}
\|\wb^{2\Mm+10-2|\sigma|} \bar{a}_{ij} (\rd_{v_i} \rd_x^\alp \rd_v^\bt Y^\sigma g)(\rd_{v_j} \rd_x^\alp \rd_v^\bt Y^\sigma g)\|_{L^1([0,T];L^1_xL^1_v)},
\end{equation}
and
\begin{equation}\label{intro.good.term.2}
\|\wb^{\Mm+5-|\sigma|} (1+t)^{-\f 12 -\f \de 2} \vb \rd_x^\alp \rd_v^\bt Y^\sigma g\|_{L^2([0,T];L^2_xL^2_v)}^2.
\end{equation}
The term \eqref{intro.boundary.term} is a fixed-time estimate while the terms \eqref{intro.good.term.1} and \eqref{intro.good.term.2} are non-negative terms integrated over $[0,T]\times \mathbb R^3\times \mathbb R^3$. 

The term \eqref{intro.good.term.1} arises from the main (non-commutator) term $\bar{a}_{ij}\rd^2_{v_iv_j} \rd_x^\alp \rd_v^\bt Y^\sigma g$. Note that since we do not establish any lower bound for $\bar{a}_{ij}$, \eqref{intro.good.term.1} could be too degenerate to be used to control error terms, but its good sign at least means that we need not view the main term as an error term.

\eqref{intro.good.term.2} has the favorable feature that it has a stronger $\vb$ weight compared to \eqref{intro.boundary.term}. This term is generated by the time-dependent Gaussian in the definition of \eqref{g.def.intro}. We will in fact use \eqref{intro.good.term.2} to bound most of the error terms.\footnote{The only error terms that we will not estimate with \eqref{intro.good.term.2} but will instead use \eqref{intro.boundary.term} are the terms arising from the commutator $[\rd_t+v_i \rd_{x_i}, \rd_x^\alp \rd_v^\bt Y^\sigma]$; see Section~\ref{sec:EE} for details.}

We note as in Section~\ref{sec:method.decay} that $\rd_v$ does not commute with $\rd_t+v_i\rd_{x_i}$ and therefore the decay rate worsens with every commutation of $\rd_v$. Denoting $E^2(T):= \sum_{|\alp|+|\bt|+|\sigma|\leq \Mm} (1+T)^{-2|\bt|} (\eqref{intro.boundary.term} + \eqref{intro.good.term.2})$, our goal will be to prove that
\begin{equation}\label{intro.E.goal}
E^2(T) \ls \ep^2.
\end{equation}
Note that this is consistent with the best $L^2_xL^2_v$ estimate that one can get for solutions to the linear transport equation.

\textbf{Controlling the error terms.} We consider an example of an error term when deriving the energy estimates (which shows the typical difficulties): 
\begin{equation}\label{example.error}
\sum_{\substack{\alp'+\alp''=\alp,\,\bt'+\bt''=\bt,\,\sigma'+\sigma''=\sigma \\|\alp'|+|\bt'|+|\sigma'|\geq 1}} \|\wb^{2\Mm+10-2|\sigma|}(\rd_x^\alp \rd_v^\bt Y^\sigma g) (\rd_x^{\alp'} \rd_v^{\bt'} Y^{\sigma'} \bar{a}_{ij}) (\rd^2_{v_i v_j} \rd_x^{\alp''} \rd_v^{\bt''} Y^{\sigma''} g) \|_{L^1([0,T];L^1_xL^1_v)}.
\end{equation}

Notice that as described above, the main (non-commutator term) can be viewed as a good term. Therefore we indeed only need to consider the cases $|\alp'|+|\bt'|+|\sigma'|\geq 1$.

The estimates are different depending on whether $|\alp'|+|\bt'|+|\sigma'|$ is small or large. When $|\alp'|+|\bt'|+|\sigma'|$ is small, we can use \eqref{intro.Li.coeff} to control $\rd_x^{\alp'} \rd_v^{\bt'} Y^{\sigma'} \bar{a}_{ij}$ and bound both $\rd_x^\alp \rd_v^\bt Y^\sigma g$ and $\rd^2_{v_i v_j} \rd_x^{\alp''} \rd_v^{\bt''} Y^{\sigma''} g$ in $L^2([0,T];L^2_xL^2_v)$ using the norm as in \eqref{intro.good.term.2}. One then sees that the decay rate is slightly insufficient (in fact it misses by a power of $(1+t)^{\de}$). As in the proof of the $L^\i_xL^\i_v$ estimates in Section~\ref{sec:hierarchy.of.norms}, to overcome the borderline decay, we need to make use of the null structure. Indeed, we note that $|\alp'|+|\bt'|+|\sigma'|\geq 1$ so that we can argue as in Section~\ref{sec:hierarchy.of.norms} to obtain a better decay rate. (Note that the ideas in Section~\ref{sec:hierarchy.of.norms} give a quantitatively better rate than the borderline case. Thus by choosing $\de$ sufficiently small, \eqref{example.error} can indeed be controlled by the norms in \eqref{intro.good.term.2}.)

Consider now the term \eqref{example.error} when $|\alp'|+|\bt'|+|\sigma'|$ is large. As a particular example, we have the following term when $|\alp|+|\bt|+|\sigma|=\Mm$:
\begin{equation}\label{the.annoying.intro.term}
\|\wb^{2\Mm+10-2|\sigma|}(\rd_x^\alp \rd_v^\bt Y^\sigma g) (\rd_x^{\alp} \rd_v^{\bt} Y^{\sigma} \bar{a}_{ij}) (\rd^2_{v_i v_j}  g) \|_{L^1([0,T];L^1_xL^1_v)}.
\end{equation}
Here, we are faced with another challenge regarding the decay rate. At the top order, we need to control $ \rd_x^{\alp} \rd_v^{\bt} Y^{\sigma} \bar{a}_{ij}$ in $L^2_x$ (as opposed to $L^\i_x$). As a result, we only obtain 
$$\|\vb^{-(2+\gamma)}\rd_x^{\alp}\rd_v^\bt Y^\sigma \bar{a}_{ij} \|_{L^2_xL^\i_v} \ls \ep (1+t)^{-\f 32+|\bt|}.$$
(This should be compared with the $L^\i_xL^\i_v$ estimate in \eqref{intro.Li.coeff} when $|\alp|+|\bt|+|\sigma|$ is lower order.) At the same time, we need to bound $\rd^2_{v_iv_j} g$ in some $L^\i_x$ norm. At first sight, one may hope, based on linear estimates for solutions to \eqref{eq:transport}, that 
$$\|\wb^{\Mm+5-|\sigma|} \vb \rd^2_{v_iv_j} g\|_{L^\i_xL^2_v} \ls \ep (1+t)^{\f 12}.$$
However, when $|\sigma|$ is small, we are very tight with the $\wb$ weights and in general we only obtain the following weaker estimate\footnote{Note that in the $(1+t)$ weight on the LHS, we have $\de$ instead of $\f \de 2$ (as one may expect). This is a technical point (see Section~\ref{sec:EE.prelim}) which plays no substantial role.} based on Sobolev embedding and \eqref{intro.good.term.2}:
$$\|(1+t)^{-\f 12- \de -|\bt|} \wb^{\Mm+5-|\sigma|} \vb \rd^2_{v_iv_j} g\|_{L^2([0,T];L^\i_xL^2_v)} \ls \ep (1+T)^2.$$
Combining these estimates and using \eqref{intro.E.goal}, it seems that H\"older's inequality only gives
\begin{equation*}
\begin{split}
&\: \|\wb^{2\Mm+10-2|\sigma|} (\rd_x^\alp \rd_v^\bt Y^\sigma g) (\rd_x^{\alp} \rd_v^{\bt} Y^{\sigma} \bar{a}_{ij}) (\rd^2_{v_i v_j}  g) \|_{L^1([0,T];L^1_xL^1_v)} \\
\ls &\: \|\wb^{\Mm+5-|\sigma|} \f{\vb}{(1+t)^{\f 12+\f \de 2}} \rd_x^\alp \rd_v^\bt Y^\sigma g\|_{L^2([0,T];L^2_xL^2_v)} \|\vb^{-(2+\gamma)} (1+t)^{\f 32-|\bt|} \rd_x^{\alp} \rd_v^{\bt} Y^{\sigma} \bar{a}_{ij} \|_{L^\i([0,T];L^2_xL^2_v)}\\
&\:\times \|(1+t)^{-\f 12-\de-|\bt|} \wb^{\Mm+5-|\sigma|} \vb \rd^2_{v_iv_j} g \|_{L^\i([0,T];L^\i_xL^\i_v)} \|(1+t)^{-\f 12+\f {3\de} 2+2|\bt|}\|_{L^\i([0,T])} \\
\ls &\: \ep^2 (1+T)^{\f 32+\f {3\de}2+2|\bt|},
\end{split}
\end{equation*}
which is much worse than the bound $(1+T)^{2|\bt|}$ that we aim at in \eqref{intro.E.goal}. 

To handle \eqref{the.annoying.intro.term}, note that while at the top order we need to put $\rd_x^{\alp}\rd_v^{\bt} Y^{\sigma} \bar{a}_{ij}$ in $L^2_xL^\i_v$, we must have $|\alp|+|\bt|+|\sigma| \geq 2$. In this case, we can further extend ideas as described in Section~\ref{sec:hierarchy.of.norms} to obtain better decay rates. (Note that unlike for the $L^\i$ estimates, ideas in Section~\ref{sec:hierarchy.of.norms} are no longer just used to beat the borderline terms, but are instead needed to achieve a more substantial improvement.) To implement this, we will in addition need to contend with certain singular $|v-v_*|^\gamma$ factors, which affect the decay rate. In order for the above ideas to work, we will then need to estimate some $\rd_x^{\alp}\rd_v^\bt Y^\sigma \bar{a}_{ij}$ terms in a few different mixed $L^2_xL^p_v$ spaces (for appropriate $p\in [2,\infty)$ depending on $\gamma$). See Sections~\ref{sec:coeff.L2} and \ref{sec:EE} for details.

\subsubsection{A descent scheme}\label{sec:descent.scheme}

As we have stressed above, even our decay estimates are based on the $L^\i$ bounds, in order not to lose derivatives, we need to combine the $L^\i$ estimates with $L^2$ energy estimates. In other words, the $L^\i$ estimates we described above in Section~\ref{sec:method.decay} do not close by themselves. Indeed, in carrying out the maximum principle argument, we encounter commutator terms that have one derivative more than the term that we are estimating. As a result, at the higher level of derivatives, we need to use the $L^2$ estimate together with Sobolev embedding to control these commutator terms. \textbf{This however creates a loss} in both $\vb$ and $t$ in the sense that the $L^\i$ decay rate thus obtained is \emph{weaker} than the corresponding decay rate for solutions to the linear transport equation.

In order to overcome this, we introduce a \emph{descent scheme}. More precisely, we allow the higher level $L^\i$ norms to have weaker decay in both $\vb$ and $t$ compared to \eqref{intro.Li.goal}, but as we descend in the number of derivatives, we obtain a slight improvement at every level, until we get to a sufficiently low level of derivatives for which we obtain the desired \eqref{intro.Li.goal}. To give a concrete example, consider the special case $\gamma = -1$. We will prove 
$$\sum_{|\alp|+|\bt|+|\sigma| = 10} \wb^{\Mm+5-|\sigma|}|\rd_x^\alp\rd_v^\bt Y^{\sigma}g|(t,x,v) \ls \ep (1+t)^{|\bt|+\f 32},$$ 
$$\sum_{|\alp|+|\bt|+|\sigma| = 9} \wb^{\Mm+5-|\sigma|}|\rd_x^\alp\rd_v^\bt Y^{\sigma}g|(t,x,v) \ls \ep \vb^{-1} (1+t)^{|\bt|+\f 34},$$
$$\sum_{|\alp|+|\bt|+|\sigma| \leq 8} \wb^{\Mm+5-|\sigma|}|\rd_x^\alp\rd_v^\bt Y^{\sigma}g|(t,x,v) \ls \ep \vb^{-1} (1+t)^{|\bt|}.$$
Here are two observations regarding the descent scheme:
\begin{enumerate}
\item Such a scheme can close since when controlling a nonlinear term, a term with higher order derivatives must multiply a term with lower order derivatives. Therefore, the loss that we allow in a descent scheme does not accumulate. (It is therefore also crucial that we indeed prove sharp estimates at the lower order!)
\item Moreover, when bounding the nonlinear terms, after using the ideas in Sections~\ref{sec:method.decay}--\ref{sec:method.energy}, every term that we encounter is quantitatively better than the borderline case. It is for this reason that every time we descend one order of derivative, we obtain a quantitative gain.
\end{enumerate}

We note that the full hierarchy for the descent scheme is more complicated for general $\gamma$. In fact, as $\gamma\to 0^-$ or $\gamma\to -2^+$, the number of steps for which we descend $\to +\infty$. (It is because of this fact that we need a large number of derivatives in Theorem~\ref{thm:main} as $\gamma\to 0^-$ or $\gamma\to -2^+$.)  We refer the reader to Section~\ref{sec:hierarchy} for the precise numerology.

\subsubsection{Long-time asymptotics}

The above concludes the discussions of the main difficulties of proving the global existence of near-vacuum solutions. Theorem~\ref{thm:asymptotics}, Corollary~\ref{cor:macro} and Theorem~\ref{thm:Maxwellian} more or less follow from the estimates that have been established. 

The only thing to note is that so far we have ``dropped'' the main elliptic term $\bar{a}_{ij}\rd^2_{v_iv_j}\rd_x^\alp \rd_v^\bt Y^\sigma g$ in either the maximum principle or energy estimate argument, showing that it can only give a better upper bound than that for the linear transport equation. To make statements about the precise asymptotic behavior of the solutions, however, we need to be able to control the main elliptic term.

The key point is to note that since all the estimates have now been closed, by carrying out an estimate on $f$ with a slightly weaker $\wb$ weight, we can use the null structure to show that even the main term $\bar{a}_{ij}\rd^2_{v_iv_j} f$ has faster than integrable time decay. We refer the reader to Section~\ref{sec:long.time} for details.

\subsection{Related works}\label{sec:related}

\subsubsection{Stability of vacuum for collisional kinetic models}\label{sec:cutoff.Boltzmann}

The earliest work on the stability of vacuum for a collisional kinetic model is that for the Boltzmann equation with an angular cutoff by Illner--Shinbrot \cite{IlSh84}. There are many extensions and refinements of \cite{IlSh84}; see for instance \cite{ToBe84, BaDeGo84, Ha85, BeTo85, To86, Po88, Guo01, Ar11, HeJi17}. We refer the readers also to the related \cite{To88, Go97, AlGa09} in which perturbations of traveling global Maxwellians were studied --- in this setting the long-time dynamics is also characterized by dispersion (compare Theorem~\ref{thm:asymptotics}).

To our knowledge the present work is the first stability of vacuum result for a collisional kinetic model \emph{with a long range interaction}. Note in particular that the analogous stability of vacuum problem for the non-cutoff Boltzmann equation remains open.

\subsubsection{Dispersion and stability for collisionless models} 
Stability of vacuum results in collisional models can be viewed in the larger context of stability results for nonlinear models in kinetic theory that are driven by dispersion. That dispersion of the transport operator is useful in establishing global result for close-to-vacuum data has been well-known early on for collisionless models; see \cite{BaDe85, GlSt87, GlSc88} for some early results, which are mostly based on the method of characteristics. See also \cite{BaDeGo84} for a discussion of the relation between these results and the stability of vacuum for the Boltzmann equation with angular cutoff. For more recent discussions, see \cite{Bi17, FaJoSm17.1, Sm16, Wa18.1, Wa18.3, Wa18.2, Wo18}, as well as remarkable proof of the stability of the Minkowski spacetime for the Einstein--Vlasov system \cite{Ta17, FaJoSm17, LiTa17}.

\subsubsection{Regularity theory for Landau equation}

It is an outstanding open problem whether regular initial data to the Landau equation give rise to globally regular solutions. The literature is too vast for an exhaustive discussion, but we highlight some relevant results here.

\textbf{Weak solutions.} Renormalized solutions to the Landau equation have been constructed in \cite{Vi96}. See also \cite{Li94, AlVi04}.

\textbf{Spatially homogeneous solutions.} In the Maxwellian molecule case ($\gamma = 0$) and the hard potentials case ($\gamma > 0$), the theory of spatially homogeneous solutions is very well-developed \cite{Vi98, DeVi00, DeVi00.2}. In the soft potentials case ($\gamma\in [-3,0)$), existence was studied in \cite{ArPe77, Vi98.2, De15}, and uniqueness was studied in \cite{FoGu09, Fo10}. See also \cite{Wu14, AlLiLi15, De15, Si17} for further a priori estimates in the soft potentials case. 

\textbf{Global nonlinear stability of Maxwellians.} The global nonlinear stability of Maxwellians on a periodic box was established in Guo's seminal \cite{Guo02}. This is part of Guo's program to use a nonlinear energy method to construct perturbative solutions in nonlinear kinetic models. The methods of Guo have moreover inspired many subsequent perturbative results for various kinetic models \cite{Guo02.2, Guo03, Guo03.2, StGu04, StGu06, StGu08, Guo12, StZh13}, including the remarkable works of the global nonlinear stability of Maxwellians for the non-cutoff Boltzmann equation \cite{GrSt11, AMUXY12, AMUXY12.2, AMUXY12.3}. See also \cite{CaMi17, CaTrWu17, CaTrWuErratum17} for more recent results on near-Maxwellian solutions.

\textbf{Conditional regularity theory.} A thread of recent works concern regularity of solutions to the Landau equation assuming a priori pointwise control of the mass density, energy density and entropy density \cite{GoImMoVa16, CaSiSn18, HeSn17, HeSnTa17, Sn18}. Our present paper in particular relies on the work \cite{HeSnTa17}, which proves the local existence, uniqueness and instantaneous smoothing of solutions using the theory developed in the papers mentioned above.

\textbf{Model problems.} Various simplified models for Landau equation have been introduced and some regularity results have been obtained for these models; see for instance \cite{KrSt12, GrKrSt12, GuGu16, ImMo18}.

\subsection{Outline of the paper}\label{sec:outline}

The remainder of the paper is structured as follows. 

In \textbf{Section~\ref{sec:notation}}, we introduce some notations that will be used throughout the paper. In \textbf{Section~\ref{sec:local.existence}}, we cite a recent local-in-time existence and uniqueness result of \cite{HeSnTa17}, which will be the starting point of our construction of global-in-time solutions. 

Sections~\ref{sec:bootstrap}--\ref{sec:everything} will be devoted to the proof of Theorem~\ref{thm:main}. In \textbf{Section~\ref{sec:bootstrap}}, we discuss the bootstrap argument used for the proof and introduce the bootstrap assumptions. In \textbf{Section~\ref{sec:coeff}}, we control the coefficients $\bar{a}_{ij}$, $\bar{c}$. In \textbf{Section~\ref{sec:MP}}, we use the maximum principle and an appropriate iteration argument to prove the $L^\i_xL^\i_v$ estimates. In \textbf{Section~\ref{sec:EE}}, we use energy methods to prove the $L^2_xL^2_v$ estimates. We then conclude the proof in \textbf{Section~\ref{sec:everything}}.

Finally, in \textbf{Section~\ref{sec:long.time}}, we discuss the long-time asymptotics of near-vacuum solutions and prove Theorem~\ref{thm:asymptotics}, Corollary~\ref{cor:macro} and Theorem~\ref{thm:Maxwellian}.

\subsection*{Acknowledgments} I thank Robert Strain for introducing this problem to me. I gratefully acknowledge the support of a Sloan fellowship, a Terman fellowship, and NSF grant DMS-1709458.

\section{Notations}\label{sec:notation}

We introduce some notations to be used throughout the paper.

\textbf{Norms.} We will use mixed $L^p$ norms, $1\leq p<\infty$ defined in the standard manner:
$$\|h\|_{L^p_v}:= (\int_{\mathbb R^3} |h|^p(v)\, \ud v)^{\f 1p}.$$
For $p=\infty$, define
$$\|h\|_{L^\i_v}:= \mbox{ess\,sup}_{v\in \mathbb R^3} |h|(v).$$
For mixed norms, the norm on the right is taken first. For instance,
$$\|h\|_{L^p_x L^q_v} := (\int_{\mathbb R^3}(\int_{\mathbb R^3} |h|^q(x,v)\,\ud v)^{\f pq}\,\ud x)^{\f 1p}$$
and
$$\|h\|_{L^r([0,T];L^p_x L^q_v)} := (\int_0^T (\int_{\mathbb R^3}(\int_{\mathbb R^3} |h|^q(t,x,v)\,\ud v)^{\f pq}\,\ud x)^{\f rp} \ud t)^{\f 1r}$$
with obvious modification when $p=\infty$, $q=\infty$ or $r=\infty$.
We will silently use that $\|h\|_{L^p_x L^q_v} \ls \|h\|_{L^q_v L^p_x}$ when $p\geq q$.

Given two Banach spaces $X_1$ and $X_2$, define the following norms for the sum $X_1+X_2$ and the intersection $X_1\cap X_2$:
$$\|h\|_{X_1+X_2} := \inf_{h = h_1+h_2} (\|h_1\|_{X_1} + \|h_2\|_{X_2}),\quad  \|h\|_{X_1\cap X_2} := \|h\|_{X_1}+\|h\|_{X_2}.$$

\textbf{Japanese brackets.} Define
$$\langle \cdot \rangle := (1+|\cdot|^2)^{\f 12}.$$

\textbf{Multi-indices.} $\alp = (\alp_1, \alp_2, \alp_3)\in (\mathbb N\cup\{0\})^3$ will be called a multi-index. Given a multi-index $\alp$, define $\rd_x^\alp = \rd_{x_1}^{\alp_1}\rd_{x_2}^{\alp_2}\rd_{x_3}^{\alp_3}$; and similarly for $\rd_v^\bt$ when $\bt$ is a multi-index. Let $|\alp| = \alp_1+\alp_2+\alp_3$. Multi-indices are added according to the rule that if $\alp' = (\alp'_1, \alp'_2, \alp'_3)$ and $\alp'' = (\alp''_1, \alp''_2, \alp''_3)$, then $\alp'+\alp'':=(\alp'_1+\alp''_1, \alp'_2+\alp''_2, \alp'_3+\alp''_3)$. Given a multi-index $\alp = (\alp_1,\alp_2,\alp_3)$, the length of the multi-index is defined by $|\alp| = \alp_1+\alp_2+\alp_3$.

We will often sum over all multi-indices up to a certain length. In this context, we will use the convention that $\sum_{|\alp|\leq -1} (\cdots) = 0$.

\section{Local existence}\label{sec:local.existence}

In this section, we recall the local existence result in \cite{HeSnTa17} (and state a small variant of it). 

To state the result in \cite{HeSnTa17}, we first recall their definition of uniformly local weighted Sobolev spaces.
\begin{definition}[Uniformly local weighted Sobolev spaces]\label{def:ul}
Let $\phi:\mathbb R^3\to \mathbb R$ be a smooth and compactly supported cut-off function such that $0\leq \phi\leq 1$ everywhere, $\phi(x)=1$ for $|x|\leq 1$ and $\phi(x) = 0$ for $|x|\geq 2$.

Define the $H^{k,\ell}_{\mathrm{ul}}$ norm on $\mathcal S(\mathbb R^3\times \mathbb R^3)$ by
$$\| h \|_{H^{k,\ell}_{\mathrm{ul}}}:= \sum_{|\alp|+|\bt|\leq k}(\sup_{a\in \mathbb R^3} \int_{\mathbb R^3}\int_{\mathbb R^3} |\phi(x-a) \vb^\ell \rd_x^\alp\rd_v^\bt h|^2 \,\ud v\,\ud x)^{\f 12}$$
and take $H^{k,\ell}_{\mathrm{ul}}$ to be the completion of $\mathcal S(\mathbb R^3\times \mathbb R^3)$ under this norm.
\end{definition}

The following theorem is taken from \cite{HeSnTa17}. (Note that in the statement of Theorem~1.1 in \cite{HeSnTa17}, the estimate \eqref{est.local.existence} is stated only for $e^{\f 12\rho_0\vb^2} f$ instead of $e^{(\rho_0-\kappa t)\vb^2} f$, but it is clear from the proof that \eqref{est.local.existence} indeed holds.)
\begin{theorem}[Henderson--Snelson--Tarfulea \cite{HeSnTa17}]\label{thm:local.existence}
Fix $\rho_0,\,M_0\in \mathbb R$ with $\rho_0>0$, $M_0>0$ and $k\in \mathbb N$ with $k\geq 4$. Suppose $e^{\rho_0\vb^2}f_{\mathrm{in}}$ satisfies the estimate
\begin{equation}\label{local.existence.assumption}
\|e^{\rho_0 \vb^2}f_{\mathrm{in}}\|_{H^{k}_{\mathrm{ul}}} \leq M_0.
\end{equation}
Then for any $\kappa>0$, there exists $T=T_{\gamma,\rho_0,M_0,\kappa}>0$ depending only on $\gamma$, $\rho_0$, $M_0$ and $\kappa$, such that there exists a unique solution $f\geq 0$ to the Landau equation \eqref{Landau} with initial data $f(0,x,v) = f_{\mathrm{in}}(x,v)$ and satisfying
\begin{equation}\label{est.local.existence}
\|e^{(\rho_0-\kappa t)\vb^2} f \|_{C^0([0,T];H^{k,0}_{\mathrm{ul}})\cap L^2([0,T];H^{k,1}_{\mathrm{ul}})}<+\infty.
\end{equation}
\end{theorem}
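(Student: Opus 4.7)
\textbf{Proof proposal for Theorem~\ref{thm:local.existence}.}

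The plan is to work with the weighted unknown $g := e^{(\rho_0-\kappa t)\vb^2} f$ and prove the required estimates by an iteration scheme based on weighted $L^2$ energy estimates. First, I would compute the equation satisfied by $g$. Letting $w(t,v) := e^{(\rho_0-\kappa t)\vb^2}$, a direct calculation using \eqref{Landau.3} gives
\begin{equation*}
\rd_t g + v_i\rd_{x_i} g + \kappa \vb^2 g = \ab_{ij}\rd^2_{v_iv_j} g - \cb g + 4(\rho_0-\kappa t)\ab_{ij} v_j \rd_{v_i} g + \ab_{ij}\bigl(2(\rho_0-\kappa t)\delta_{ij} - 4(\rho_0-\kappa t)^2 v_i v_j\bigr) g,
\end{equation*}
with $\ab_{ij},\cb$ still defined in terms of $f = w^{-1} g$. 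The coercive term $+\kappa \vb^2 g$ on the left, produced by differentiating $w$ in time, is essential: it will absorb the dangerous $\ab_{ij} v_i v_j \sim \vb^{2+\gamma} \|f\|_{L^1}\cdot \vb^2$ growth, using the \emph{anisotropy} $a_{ij}(z) z_i z_j = 0$ of the Landau matrix, which yields the crucial identity $\ab_{ij} v_i v_j = \int a_{ij}(v-v_*) v_{*,i} v_{*,j} f(v_*)\,\ud v_*$, and hence a bound $|\ab_{ij} v_i v_j| \lesssim \vb^{2+\gamma}$ (without the extra $\vb^2$ one would naively expect).

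Next, I would set up a Picard-type iteration: define $g^{(0)} \equiv 0$ and, given $g^{(n)}$ (hence $f^{(n)} = w^{-1} g^{(n)}$ and $\ab^{(n)}_{ij}, \cb^{(n)}$), let $g^{(n+1)}$ solve the \emph{linear} equation obtained by replacing the $\ab_{ij},\cb$ factors on the right with $\ab^{(n)}_{ij},\cb^{(n)}$. Since $a_{ij}$ is semi-positive definite, so is $\ab^{(n)}_{ij}$, and the linearized equation is a degenerate parabolic equation in $v$ with smooth coefficients; existence of $g^{(n+1)}\in C^0 H^{k,0}_{\mathrm{ul}} \cap L^2 H^{k,1}_{\mathrm{ul}}$ for the linear problem is standard. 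The real work is the uniform-in-$n$ a priori estimate.

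I would prove this by energy estimates: multiply the equation for $\phi(x-a)\rd_x^\alp \rd_v^\bt g^{(n+1)}$ (with $|\alp|+|\bt|\leq k$, $\phi$ as in Definition~\ref{def:ul}) by itself, integrate in $x,v$, and sum. The main term $\int \ab^{(n)}_{ij}(\rd_{v_i}\cdot)(\rd_{v_j}\cdot)\,\ud v\,\ud x$ produced by integrating $\ab^{(n)}_{ij}\rd^2_{v_iv_j}$ by parts has a good sign; the coercive term produces $\kappa\|\vb\,\phi\rd_x^\alp\rd_v^\bt g^{(n+1)}\|_{L^2}^2$; and all other terms are lower order in derivatives of $g^{(n+1)}$ and can be estimated, using Sobolev embedding in $\mathbb R^3_x\times \mathbb R^3_v$ and \eqref{local.existence.assumption}, by $C(M_0,\rho_0,\kappa)\bigl(1 + \|g^{(n)}\|_{H^{k,0}_{\mathrm{ul}}}^2\bigr)\|g^{(n+1)}\|_{H^{k,0}_{\mathrm{ul}}}^2$ plus a small multiple of $\|\vb\,g^{(n+1)}\|_{H^{k,0}_{\mathrm{ul}}}^2$ which is absorbed by the $\kappa\vb^2$ term. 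A Gr\"onwall argument then gives a short time $T=T(\gamma,\rho_0,M_0,\kappa)>0$ on which $\|g^{(n+1)}\|_{C^0([0,T];H^{k,0}_{\mathrm{ul}})\cap L^2([0,T];H^{k,1}_{\mathrm{ul}})}\leq 2M_0$ for all $n$.

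Finally, for convergence I would estimate $g^{(n+1)}-g^{(n)}$ in a \emph{lower}-order norm (say $H^{k-2,0}_{\mathrm{ul}}$), where Sobolev embedding allows the difference of the coefficients $\ab^{(n)}_{ij}-\ab^{(n-1)}_{ij}$ to be placed in $L^\infty$; the same structural observations yield a contraction on a possibly smaller time interval, producing a limit $g$, whose smoothness in the top norm is recovered by weak-$\ast$ compactness together with the uniform bound. Uniqueness is proved by an identical difference estimate, and nonnegativity $f\geq 0$ by a maximum-principle argument on the linear parabolic equation solved by $f$ once existence is known. The main technical obstacle throughout is the simultaneous handling of (i) the $\vb^{\gamma+2}$ weight in $\ab_{ij}$, which forces one to exploit the anisotropy to avoid a catastrophic $\vb^2 g$ term, and (ii) the control of commutators $[\ab^{(n)}_{ij}\rd^2_{v_iv_j}, \rd_x^\alp\rd_v^\bt]$ without losing derivatives, which is exactly why the $L^2$-based framework (permitting integration by parts) is used in place of $L^\infty$ methods.
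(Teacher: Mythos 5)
The paper does not prove Theorem~\ref{thm:local.existence}: it is cited from \cite{HeSnTa17}, with only a remark that the time-dependent weight bound \eqref{est.local.existence} follows from the proof there. There is therefore no internal proof to compare against, but your sketch does reproduce the two ideas the paper explicitly attributes to \cite{HeSnTa17} in Section~\ref{sec:method.local}: the time-dependent Gaussian weight whose $t$-derivative produces the coercive $\kappa\vb^2 g$ term, combined with the anisotropy $a_{ij}(z)z_j=0$ to upgrade the naive bound $|\ab_{ij}v_iv_j|\ls\vb^{4+\gamma}$ to $\ls\vb^{2+\gamma}$ (this is precisely \eqref{a.0.3} in Proposition~\ref{prop:a.expressions}); and the $L^2$-based framework in which the commutator $[\ab_{ij}\rd^2_{v_iv_j},\rd_x^\alp\rd_v^\bt]$ is handled by integration by parts rather than pointwise. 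These are also exactly what the paper does for the nonlinear problem in Section~\ref{sec:EE}, so your proposal is structurally consistent with the framework.

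Several places in your sketch need more care before it is a proof. First, your equation for $g$ has wrong signs on the weight-generated terms: a direct computation yields $-4(\rho_0-\kappa t)\ab_{ij}v_j\rd_{v_i}g$ and $\ab_{ij}\bigl(-2(\rho_0-\kappa t)\delta_{ij}+4(\rho_0-\kappa t)^2v_iv_j\bigr)g$ on the right, opposite to what you wrote; this is harmless for the energy estimate (only magnitudes enter), but would matter in any maximum-principle argument. Second, the claim that existence for each linear iterate is ``standard'' overlooks that $\ab^{(n)}_{ij}$ is only semi-positive definite and degenerate near vacuum, so that the linearized equation is hypoelliptic-degenerate; one needs to regularize (e.g.~add a small $v$-Laplacian) and pass to the limit. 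Third, the iteration must be seen to preserve $f^{(n)}\geq 0$ so that $\ab^{(n)}_{ij}\geq 0$ and the diffusion term keeps its good sign in the energy estimate; this holds by a maximum principle for the linear problem, but it needs to be stated. Fourth, after testing against $\phi(x-a)^2\vb^{2\ell}\rd_x^\alp\rd_v^\bt g$, the first-order term $\ab_{ij}v_j\rd_{v_i}g$ produces a cross term that is not directly absorbed by $\kappa\vb^2 g$; it requires a further integration by parts in $\rd_{v_i}$, exactly as in \eqref{Term.5.serious} of the paper's own energy estimate. None of these are fatal, and your strategy is the correct one, but these are the points where ``standard'' conceals the real work in \cite{HeSnTa17}.
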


A more remarkable statement is that the solution constructed in Theorem~\ref{thm:local.existence} immediately acquires smoothness and positivity\footnote{as long as the initial $f_{\mathrm{in}}$ is not identically zero.} even the initial $f_{\mathrm{in}}$ may not be smooth and may contain vacuum regions.
\begin{theorem}[Henderson--Snelson--Tarfulea \cite{HeSnTa17}]\label{thm:smoothness.positivity}
The solution $f:[0,T]\times \mathbb R^3\times \mathbb R^3$ in Theorem~\ref{thm:local.existence} is $C^\infty$ when $t>0$. Moreover, if $f_{\mathrm{in}}$ is not identically zero, then $f(t,x,v)>0$ when $t>0$.
\end{theorem}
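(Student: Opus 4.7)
The plan is to treat the Landau equation as a linear kinetic Fokker--Planck-type equation $\partial_t f + v_i\partial_{x_i} f = \bar a_{ij}\partial^2_{v_i v_j}f - \bar c f$ with coefficients $\bar a_{ij}$, $\bar c$ determined by the already-constructed $f$, and then invoke the conditional regularity theory together with a spreading/barrier argument. I would split into positivity first and smoothing second, since uniform ellipticity of $\bar a_{ij}$ (hence the smoothing) requires some pointwise lower bound on $f$.

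First I would establish the coefficient bounds needed to access this theory. From Theorem~\ref{thm:local.existence} the solution lies in a Gaussian-weighted Sobolev space, so the macroscopic mass, energy and entropy densities $\int f\,\ud v$, $\int |v|^2 f\,\ud v$, $\int f\log f\,\ud v$ are uniformly bounded above on $[0,T]\times \mathbb R^3$. The standard splitting argument of Desvillettes--Villani / Cameron--Silvestre--Snelson then gives upper bounds $\bar a_{ij}\ls \langle v\rangle^{2+\gamma}$ and, on any region where a definite positive mass of $f$ is present, a matching lower bound $\bar a_{ij}\xi_i\xi_j\gtrsim \langle v\rangle^{\gamma}|\xi|^2$. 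At this point one first applies a weak De\,Giorgi / weak Harnack inequality for kinetic equations (Golse--Imbert--Mouhot--Vasseur) to the equation satisfied by $f$, producing a H\"older modulus of continuity on the interior.

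For positivity, since $f\not\equiv 0$ in the initial datum, one can find a point $(x_0,v_0)$ and a small neighborhood where $\int f_{\mathrm{in}}(\cdot, v)\,\ud v$ is bounded below. Propagating this along the transport characteristic $(x,v)\mapsto (x+tv,v)$ gives a set of positive mass in the solution for short times. Then I would run the spreading/Harnack argument of Henderson--Snelson--Tarfulea: the strong minimum principle for hypoelliptic linear equations of Kolmogorov--H\"ormander type (using $[v_i\partial_{x_i},\partial_{v_j}]=\partial_{x_j}$ to reach the missing $x$-directions) shows that the positive lower bound spreads to all of $\mathbb R^3_x\times \mathbb R^3_v$ for any $t>0$. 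This is the step where the commutator bracket of the transport and diffusion directions is essential.

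For smoothing, once a positive local lower bound on $f$ is in hand, $\bar a_{ij}$ becomes uniformly elliptic on compact subsets of $(0,T]\times \mathbb R^3\times \mathbb R^3$, so the equation becomes a linear hypoelliptic equation with H\"older coefficients. I would then apply hypoelliptic Schauder estimates to upgrade $C^\alpha \to C^{2,\alpha}$, and inductively differentiate the equation in $\partial_x$, $\partial_v$, $\partial_t$, controlling the commutator error terms by the previously obtained level of regularity. This produces $f\in C^\infty$ on $(0,T]\times \mathbb R^3\times \mathbb R^3$. The main obstacle I anticipate is exactly the coupling between positivity and uniform ellipticity: before one has a pointwise lower bound on $f$, the coefficient $\bar a_{ij}$ can degenerate near vacuum and classical Schauder theory is unavailable, so the preliminary H\"older regularity via the weak theory must be extracted without assuming any coercivity of the diffusion — that weak regularity is then precisely what makes the barrier argument for instantaneous positivity go through.
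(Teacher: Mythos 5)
The paper does not give a proof of Theorem~\ref{thm:smoothness.positivity}; this statement is quoted verbatim from \cite{HeSnTa17}, the same reference that supplies Theorem~\ref{thm:local.existence}. So the relevant comparison is to the Henderson--Snelson--Tarfulea argument, and your broad architecture --- upper bounds on $\bar{a}_{ij}$ from conditional control of mass, energy and entropy densities, instantaneous positivity by a spreading/barrier argument exploiting the Kolmogorov--H\"ormander bracket structure, then uniform ellipticity of $\bar{a}_{ij}$ on compacts and a hypoelliptic Schauder bootstrap to $C^\infty$ --- is indeed the correct one. Your opening paragraph even states the right order (positivity first, precisely because uniform ellipticity of $\bar{a}_{ij}$ needs a pointwise lower bound on $f$).

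However, your body paragraphs reverse that order and introduce a genuine gap. You propose to obtain interior H\"older regularity from the weak Harnack inequality of \cite{GoImMoVa16} \emph{before} establishing positivity, and your final paragraph claims this weak regularity ``must be extracted without assuming any coercivity of the diffusion.'' That is not available: the De Giorgi--Nash--Moser machinery in \cite{GoImMoVa16} is predicated on a two-sided ellipticity bound for $\bar{a}_{ij}$, and near vacuum --- with only upper bounds on $\int f\,\ud v$, $\int |v|^2 f\,\ud v$, $\int f\log f\,\ud v$ --- the diffusion can vanish, so the estimate produces nothing. The chicken-and-egg problem you identify is real, but the way it is actually broken is different: one does not need any H\"older estimate before positivity, because the solution of Theorem~\ref{thm:local.existence} already lies in $C^0([0,T];H^{4,0}_{\mathrm{ul}})$ with Gaussian $v$-weights, and $H^4$ in six variables embeds in $C^{1,\alpha}_{\mathrm{loc}}$. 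That Sobolev regularity, together with continuity in $t$, is enough to run the barrier construction: find a small spacetime cylinder where $f$ has a definite positive mass (hence local ellipticity of $\bar{a}_{ij}$ \emph{there}), build an explicit subsolution, apply the comparison principle, and iterate to spread the lower bound along the hypoelliptic cone. Only \emph{after} global positivity --- hence uniform ellipticity of $\bar{a}_{ij}$ on compacts --- does the regularity theory of \cite{GoImMoVa16,CaSiSn18} enter, delivering the H\"older estimate and then the Schauder bootstrap. Reorder your steps so that positivity precedes any invocation of the weak Harnack inequality, and replace the phantom ``Harnack-without-coercivity'' step by the Sobolev embedding already implicit in Theorem~\ref{thm:local.existence}; with that fix, your outline matches the argument in \cite{HeSnTa17}.
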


In what follows, we will need a slight variant of Theorem~\ref{thm:local.existence}. It can be proven in a very similar manner as Theorem~\ref{thm:local.existence} in \cite{HeSnTa17}, we therefore state it as a corollary and omit the proof. Note that the assumptions in Corollary~\ref{cor:local.ext} are weaker than those in Theorem~\ref{thm:local.existence} (i.e.~\eqref{cor:local.ext.assumption} implies \eqref{local.existence.assumption}). Therefore, by Theorem~\ref{thm:local.existence}, a unique local solution indeed exists under the assumptions of Corollary~\ref{cor:local.ext} (and moreover uniqueness holds even in the weaker space in Theorem~\ref{est.local.existence}). The point, however, is that Corollary~\ref{cor:local.ext} also gives a stronger estimate which will be useful.
\begin{corollary}\label{cor:local.ext}
Fix $\rho_0,\,M_0,\,\lambda\in \mathbb R$ with $\rho_0>0$, $M_0>0$ (and $\lambda$ arbitrary) and $k,\,N\in \mathbb N$ with $k\geq 4$. Suppose $e^{\rho_0\vb^2}f_{\mathrm{in}}$ satisfies the estimate
\begin{equation}\label{cor:local.ext.assumption}
\sum_{|\alp|+|\bt|\leq k}\|\langle x - \lambda v\rangle^N \rd_x^\alp\rd_v^\bt(e^{\rho_0\vb^2}f_{\mathrm{in}})\|_{L^2_x L^2_v} \leq M_0.
\end{equation}
Then for any $\kappa>0$, there exists $T=T_{\gamma,\rho_0,M_0,\lambda,N,\kappa}>0$ depending only on $\gamma$, $\rho_0$, $M_0$, $\lambda$, $N$ and $\kappa$, such that there exists a unique solution $f\geq 0$ to the Landau equation \eqref{Landau} with initial data $f(0,x,v) = f_{\mathrm{in}}(x,v)$ and satisfying
\begin{equation}\label{eq:local.ext}
\begin{split}
&\:\sum_{|\alp|+|\bt|\leq k}(\|\langle x - (\lambda+t) v\rangle^N \rd_x^\alp\rd_v^\bt(e^{(\rho_0-\kappa t)\vb^2}f)\|_{C^0([0,T];L^2_x L^2_v)} \\
&\:\qquad \qquad +\|\langle x - (\lambda+t) v\rangle^N \vb \rd_x^\alp\rd_v^\bt(e^{(\rho_0-\kappa t)\vb^2}f)\|_{L^2([0,T];L^2_x L^2_v)}) <+\infty.
\end{split}
\end{equation}
Moreover, given fixed $\gamma$, $\rho_0$, $M_0$, $N$ and $\kappa$, for any compact interval $K\subset \mathbb R$, $T=T_{\gamma,\rho_0,M_0,\lambda,N,\kappa}>0$ can be chosen uniformly for all $\lambda \in K$.
\end{corollary}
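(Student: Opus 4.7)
The plan is to obtain existence and uniqueness directly from Theorem~\ref{thm:local.existence} and then upgrade to the quantitative bound \eqref{eq:local.ext} by performing a weighted $L^2$-based energy estimate using $w(t,x,v) := \langle x - (\lambda+t)v\rangle$ as a multiplier. Since $w \geq 1$, the assumption \eqref{cor:local.ext.assumption} dominates $\|e^{\rho_0\vb^2} f_{\mathrm{in}}\|_{H^{k}_{\mathrm{ul}}}$ up to a constant depending on $N, M_0$, so Theorem~\ref{thm:local.existence} already produces a unique local solution $f$ on some $[0,T']$ in the class \eqref{est.local.existence}. It therefore suffices to derive \eqref{eq:local.ext} as an a priori estimate on $[0,T]$ for some $T\leq T'$ chosen later.

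The key structural observation is that $w$ is annihilated by the free transport operator,
\begin{equation*}
(\rd_t + v_i\rd_{x_i})w = \frac{-v\cdot(x-(\lambda+t)v) + v\cdot(x-(\lambda+t)v)}{w} = 0,
\end{equation*}
while $|\rd_x w|\leq 1$ and $|\rd_v w|\leq |\lambda|+t$. Setting $g := e^{(\rho_0 - \kappa t)\vb^2} f$, the Landau equation \eqref{Landau.3} rewrites, as in \cite{HeSnTa17}, in the form
\begin{equation*}
\rd_t g + v_i \rd_{x_i} g + \kappa \vb^2 g = \ab_{ij} \rd^2_{v_iv_j} g - \cb g + \mathcal L(g),
\end{equation*}
where $\mathcal L(g)$ collects the first- and zeroth-order terms in $v$ produced by passing $\rd_v$ through $e^{(\rho_0-\kappa t)\vb^2}$. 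I would then apply $\rd_x^\alp\rd_v^\bt$ for $|\alp|+|\bt|\leq k$, test against $w^{2N}\rd_x^\alp\rd_v^\bt g$, and integrate in $(t,x,v)$. Because $w^{2N}$ is annihilated by the transport operator, the transport term produces no boundary contribution, and one arrives at an identity of the form
\begin{equation*}
\begin{split}
& \tfrac{1}{2}\|w^N \rd_x^\alp \rd_v^\bt g\|_{L^2_xL^2_v}^2\Big|_0^T + \kappa \int_0^T\!\!\int\!\!\int w^{2N} \vb^2 |\rd_x^\alp \rd_v^\bt g|^2 \, \ud v \, \ud x \, \ud t \\
& \qquad + \int_0^T\!\!\int\!\!\int w^{2N} \ab_{ij} \rd_{v_i}(\rd_x^\alp \rd_v^\bt g) \rd_{v_j}(\rd_x^\alp \rd_v^\bt g) \, \ud v \, \ud x \, \ud t = \mathcal E,
\end{split}
\end{equation*}
where the $\kappa\vb^2$ coercivity comes from $\rd_t e^{(\rho_0-\kappa t)\vb^2} = -\kappa\vb^2 e^{(\rho_0-\kappa t)\vb^2}$ and the elliptic coercivity from integrating $\ab_{ij}\rd^2_{v_iv_j}g$ by parts in $v$.

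The error $\mathcal E$ contains the same families treated in \cite{HeSnTa17} --- commutators $[\rd_v^\bt, v_i\rd_{x_i}]$, commutators between $\rd_x^\alp\rd_v^\bt$ and the coefficients $\ab_{ij}, \cb$, and the contributions of $\mathcal L(g)$ --- which are handled using the Gaussian weight and the standard convolution estimates for $\ab_{ij}$ and $\cb$. The only new pieces are the commutators $[\rd_v^\bt, w^{2N}]$ produced when $\rd_v^\bt$ is distributed and $[\rd_{v_j}, w^{2N}]$ produced by integration by parts on the elliptic term. Since each $\rd_v$ falling on $w$ costs at most a factor $|\lambda|+T$, these pieces contribute at most $C(|\lambda|+T+1)^{2N}$ times lower-order weighted $L^2$ norms of $g$ and small fractions of the $\kappa\vb^2$ and elliptic coercivities, all of which can be absorbed via Cauchy--Schwarz provided $T$ is small enough (depending on $|\lambda|, \gamma, \rho_0, M_0, N, \kappa$). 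Because the $\lambda$-dependence enters only through the polynomial bound $(|\lambda|+T+1)^{2N}$, $T$ can be taken uniformly over $\lambda \in K$ for any compact $K\subset\mathbb R$. I expect the only real obstacle to be the bookkeeping of these $w^N$-commutators on the elliptic term: one must verify that the cross terms in which $\rd_{v_j}$ lands on $w^{2N}$ can, by Cauchy--Schwarz, be paired with the $\vb^2$ from the Gaussian coercivity and a small fraction of the elliptic energy, exactly as in the standard argument of \cite{HeSnTa17}, modulo the extra $(|\lambda|+T)$-factors.
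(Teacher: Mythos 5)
Your proposal takes essentially the same route as the paper's brief sketch: invoke the existence and uniqueness from Theorem~\ref{thm:local.existence} (since $\langle x-\lambda v\rangle\geq 1$ makes the assumption \eqref{cor:local.ext.assumption} stronger than \eqref{local.existence.assumption}), then run a weighted $L^2$ energy estimate, using exactly the two facts the paper highlights: $(\rd_t + v_i\rd_{x_i})\langle x-(\lambda+t)v\rangle = 0$ and the $\lambda$-dependent boundedness of $\rd_{v_j}\langle x-(\lambda+t)v\rangle$. You simply spell out what the paper compresses into "the extra terms can be easily controlled" --- the Leibniz commutators producing at most $(|\lambda|+T+1)^{2N}$ factors absorbable by Cauchy--Schwarz into the $\kappa\vb^2$ and elliptic coercivities --- and your explanation of uniformity over compact $\lambda$-intervals matches the paper's.
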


\begin{proof}[Comments on the proof]
The proof is essentially the same as in \cite{HeSnTa17}. Note that the norms in this corollary are different from those in Theorem~\ref{thm:local.existence} in two places: first, it involves a usual Sobolev space instead of a uniformly local one; second, there is an additional weight of $\langle x - (\lambda+t) v\rangle^N$.

The first difference in fact makes the proof easier, as one no longer needs to keep track of the cut-off functions $\phi$ (see Definition~\ref{def:ul}). The second difference only affects the proof minimally. This is because 
\begin{equation}\label{simple.weights.things}
(\rd_t + v_i \rd_{x_i}) \langle x - (\lambda+t) v\rangle = 0,\quad |\rd_{v_j} \langle x - (\lambda+t) v\rangle| \ls_{\lambda} 1.
\end{equation}
One can therefore prove weighted $L^2$ estimates with $\langle x - (\lambda+t) v\rangle^N$ weights and \eqref{simple.weights.things} guarantees that all extra terms arising from integrating by parts in the $L^2$ estimate can be easily controlled.

Finally, for $\lambda \in K$ and $K\subset \mathbb R$ a compact interval, $T$ can be chosen to depend only on $K$ but not the specific value of $\lambda$. This is simply because the constant in \eqref{simple.weights.things} can be chosen uniformly for all $\lambda \in K$.
\end{proof}

\section{Bootstrap assumptions and the bootstrap theorem}\label{sec:bootstrap}

We now begin the proof of Theorem~\ref{thm:main}. We will argue using a bootstrap argument. After introducing some preliminaries in \textbf{Section~\ref{sec:bootstrap.prelim}}, we will state our bootstrap assumptions and our main bootstrap theorem (\textbf{Theorem~\ref{thm:BA}}) in \textbf{Section~\ref{sec:BA}}.

\subsection{Preliminaries}\label{sec:bootstrap.prelim}

Let $\de$ be a small positive number (depending on $\gamma$) defined by
\begin{equation}\label{def:de}
\de := \min\{\f{2+\gamma}{4}, \f 1{10}\}.
\end{equation}

Instead of directly controlling $f$, define 
\begin{equation}\label{def:g}
g := e^{d(t) \vb^2} f,
\end{equation}
where 
\begin{equation}\label{def:d}
d(t) := d_0 (1+ (1+t)^{-\de}),
\end{equation} 
$d_0>0$ is the constant in the statement of Theorem~\ref{thm:main}, and $\de>0$ is the constant fixed above satisfying \eqref{def:de}. We will estimate $g$ instead of $f$. 

The function $g$ then satisfies the following equation:
\begin{equation}\label{eq:g}
\begin{split}
\rd_t g +v_i\rd_{x_i} g +\f{\de d_0}{(1+t)^{1+\de}} \vb^2 g- \ab_{ij}\rd^2_{v_i v_j} g= &\: -\cb_i g - 4 d(t) \bar{a}_{ij} v_i \rd_{v_j} g - 2d(t)(\de_{ij}-2d(t)v_iv_j)\bar{a}_{ij}g.
\end{split}
\end{equation}

Define the following shorthand
\begin{equation}\label{def:Y}
Y_i = t\rd_{x_i} + \rd_{v_i}.
\end{equation}

Introduce the following energies for $k=0,1,\dots, \Mm$ and for $T\geq 0$.
\begin{equation}\label{eq:energy.def}
\begin{split}
E_k(T):= &\: \sum_{|\alp|+|\bt|+|\sigma|= k}(1+T)^{-|\bt|}\|\wb^{\Mm+5-|\sigma|}(\rd_x^\alp \rd_v^\bt Y^\sigma g)\|_{L^\i([0,T];L^2_x L^2_v)}\\
&\: + \sum_{|\alp|+|\bt|+|\sigma|= k}(1+T)^{-|\bt|}\|(1+t)^{-\f 12-\f\de 2}\vb\wb^{\Mm+5-|\sigma|}(\rd_x^\alp \rd_v^\bt Y^{\sigma} g)\|_{L^2([0,T];L^2_x L^2_v)}.
\end{split}
\end{equation}
We note explicit the following features of the energies:
\begin{itemize}
\item $\wb$ weight depends on the number of $Y=t\rd_x+\rd_v$ derivatives: the more $Y$ derivatives we take, the weaker weight we have.
\item For every $\rd_v$ derivative we take, we give up a power of $(1+t)$.
\end{itemize}

\subsection{The bootstrap assumptions}\label{sec:BA}

Introduce the bootstrap assumption for the $E$ norms
\begin{equation}\label{BA}
E(T):= \sum_{k=0}^{\Mm} E_k(T) \leq \ep^{\f 34},
\end{equation}
where $\Mm = \begin{cases}
2+2\lceil \f{2}{2+\gamma} + 4\rceil & \mbox{ if $\gamma \in (-2,-1]$} \\
2+2\lceil \f{1}{|\gamma|}+ 4\rceil & \mbox{ if $\gamma \in (-1,0)$}
\end{cases}$ as in Theorem~\ref{thm:main}.

Introduce also the following bootstrap assumptions for the  $L^\i_xL^\i_v$ norms of derivatives of $g$:

When
$|\alp|+|\bt|+|\sigma| \leq \Mm-4-\max\{2,\lceil \f{2}{2+\gamma} \rceil \}$, 
\begin{equation}\label{BA.sim.1}
\|\wb^{\Mm+5-|\sigma|} \rd_x^{\alp} \rd_v^{\bt} Y^\sigma g \|_{L^\i_x L^\i_v}(T) \leq \ep^{\f 34}(1+T)^{|\bt|}.
\end{equation}

When $\Mm-3-\max\{2,\lceil \f{2}{2+\gamma} \rceil \}\leq |\alp|+|\bt|+|\sigma| =:k \leq \Mm-5$, then
\begin{equation}\label{BA.sim.2}
\|\wb^{\Mm+5-|\sigma|} \rd_x^{\alp} \rd_v^{\bt} Y^\sigma g \|_{L^\i_x L^\i_v}(T) \leq \ep^{\f 34}(1+T)^{\f 32 - (\Mm-4-k)\min\{\f 34, \f{3(2+\gamma)}{4}\} +|\bt|}.
\end{equation}

Our goal from now on until Section~\ref{sec:EE} will be to improve these bootstrap assumptions \eqref{BA}, \eqref{BA.sim.1} and \eqref{BA.sim.2} with $\ep^{\f34}$ replaced by $C\ep$ (which is indeed an improvement for $\ep$ sufficiently small) for some constant $C$ depending only on $d_0$ and $\gamma$. We formulate this as a theorem below. 

\begin{theorem}[Bootstrap theorem]\label{thm:BA}
Let $\gamma$, $d_0$ and $f_{\mathrm{in}}$ be as in Theorem~\ref{thm:main} and let $\de>0$ be as in \eqref{def:de}. There exist $\ep_0 = \ep_0(d_0,\gamma)>0$ and $C_0 = C_0(d_0,\gamma)>0$ with $C_0 \ep_0\leq \f 12 \ep_0^{\f 34}$ such that the following holds:

Suppose there exists $T_{Boot}>0$ and a solution $f:[0,T_{Boot})\times \mathbb R^3\times \mathbb R^3$ with $f(t,x,v)\geq 0$, $f$ smooth for $t>0$ and $f(0,x,v)=f_{\mathrm{in}}(x,v)$. Moreover, suppose that the estimates \eqref{BA}, \eqref{BA.sim.1} and\eqref{BA.sim.2} all hold 
for all $T \in [0,T_{Boot})$, then all of these estimate in fact hold for all $T \in [0,T_{Boot})$ with $\ep^{\f 34}$ replaced by $C\ep$.
\end{theorem}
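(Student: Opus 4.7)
The plan follows the three-step outline of Sections~\ref{sec:coeff}--\ref{sec:EE}. First, I would use the bootstrap assumptions \eqref{BA}--\eqref{BA.sim.2} to derive sharp pointwise and mixed $L^2_x L^p_v$ estimates on the coefficients $\bar a_{ij}$, $\bar c$, and on their commuted derivatives $\rd_x^\alp\rd_v^\bt Y^\sigma \bar a_{ij}$. The core idea is velocity averaging: split each convolution integral in \eqref{bar.def} into the regimes $|v-v_*|\ls 1$ and $|v-v_*|\gtrsim 1$; the small-$|v-v_*|$ piece is handled by H\"older interpolation between $L^1_v$ and $L^\i_v$ bounds on $f$ and uses integrability of $|v-v_*|^\gamma$ (which holds since $\gamma>-2$), while the large-$|v-v_*|$ piece picks up the $\vb^{2+\gamma}$ factor and is controlled by the Gaussian weight hidden in $g$ via \eqref{def:g}. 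At low orders this yields the pointwise bounds \eqref{intro.Li.coeff}; at top order it yields mixed $L^2_x L^p_v$ estimates with $p\in[2,\i)$ chosen depending on $\gamma$.

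Next, for the $L^\i_xL^\i_v$ estimates \eqref{BA.sim.1}--\eqref{BA.sim.2}, I would commute $\rd_x^\alp\rd_v^\bt Y^\sigma$ through \eqref{eq:g} and study the scalar $h := \wb^{\Mm+5-|\sigma|}\rd_x^\alp\rd_v^\bt Y^\sigma g$. Using $[\rd_t+v_i\rd_{x_i},Y_j]=0$, $(\rd_t+v_i\rd_{x_i})\wb=0$, and $[\rd_t+v_i\rd_{x_i},\rd_{v_j}]=\rd_{x_j}$, all dangerous commutators land either on $\rd_v$-derivatives (each costing one power of $(1+t)$) or on products of the schematic form $(\rd_x^{\alp'}\rd_v^{\bt'}Y^{\sigma'}\bar a_{ij})(\rd^2_{v_iv_j}\rd_x^{\alp''}\rd_v^{\bt''}Y^{\sigma''}g)$ with $|\alp'|+|\bt'|+|\sigma'|\geq 1$; the main term $\bar a_{ij}\rd^2_{v_iv_j}h$ is discarded at a spacetime extremum of $|h|$ by semi-positive-definiteness of $\bar a_{ij}$, and the $\de d_0 (1+t)^{-1-\de}\vb^2 h$ contribution provides coercivity. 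I would then control the commutator products via the case analysis of Section~\ref{sec:hierarchy.of.norms}: $\rd_v$ on $\bar a_{ij}$ gains through the anisotropy of $a_{ij}$; $\rd_x$ on $\bar a_{ij}$ is rewritten as $t^{-1}Y - t^{-1}\rd_v$; and $Y$ on $\bar a_{ij}$ frees a $\wb$-weight by the hierarchy \eqref{how.to.weight.g}, activating the null structure. This yields the sharp decay \eqref{BA.sim.1} at low orders and, via the descent scheme of Section~\ref{sec:descent.scheme}, the weaker \eqref{BA.sim.2} at intermediate orders; at the very top the commutators are instead bounded through the energy norms after Sobolev embedding, using $E(T)\leq \ep^{3/4}$.

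Finally, for the energy bootstrap \eqref{BA}, I would multiply the commuted equation for $\rd_x^\alp\rd_v^\bt Y^\sigma g$ by $\wb^{2(\Mm+5-|\sigma|)}\rd_x^\alp\rd_v^\bt Y^\sigma g$ and integrate by parts. Three non-negative quantities emerge as in \eqref{intro.boundary.term}--\eqref{intro.good.term.2}: a time-boundary $L^2_xL^2_v$ term, the degenerate bulk elliptic term, and the spacetime $\vb$-weighted term generated by $\rd_t d(t)\cdot\vb^2$ from \eqref{def:d}. Most error terms can be distributed between the two good spacetime norms via Cauchy--Schwarz, while the transport-operator commutators $[\rd_t+v_i\rd_{x_i},\rd_x^\alp\rd_v^\bt Y^\sigma]g$ are absorbed into the boundary term by Gr\"onwall. \textbf{The main obstacle} is the top-order term \eqref{the.annoying.intro.term}, schematically $\wb^{2(\Mm+5-|\sigma|)}(\rd_x^\alp\rd_v^\bt Y^\sigma g)(\rd_x^\alp\rd_v^\bt Y^\sigma \bar a_{ij})(\rd^2_{v_iv_j} g)$ with $|\alp|+|\bt|+|\sigma| = \Mm$: here $\bar a_{ij}$ must sit in $L^2_x L^p_v$ rather than $L^\i$, and only a quantitative upgrade of the null-structure argument, combined with the mixed-norm coefficient estimates from the first step for appropriate $p=p(\gamma)\in[2,\i)$, is able to close the decay budget. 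Once this is in hand, summing over $k=0,\ldots,\Mm$, choosing $\ep_0$ small enough that the resulting $O(\ep^{3/2})$ quadratic errors are dominated by $C\ep$, and noting that the initial contribution is $\ls \ep$ by the hypothesis of Theorem~\ref{thm:main}, improves all three bootstrap assumptions with $\ep^{3/4}$ replaced by $C\ep$ with $C_0\ep_0\leq \tfrac12\ep_0^{3/4}$, as required.
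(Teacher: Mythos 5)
Your outline is correct and tracks the paper's proof quite closely: the coefficient estimates of Section~\ref{sec:coeff} via interpolation and dispersion, the maximum-principle argument of Section~\ref{sec:MP} exploiting the hierarchy of $\wb$-weights, the descent scheme, and the null structure, and the weighted energy estimates of Section~\ref{sec:EE} with the mixed $L^2_xL^{p_*}_v$ and $L^2_xL^{p_{**}}_v$ norms at top order. Two small remarks: the commutator identity should read $[\rd_t+v_i\rd_{x_i},\rd_{v_j}]=-\rd_{x_j}$ (sign flipped from what you wrote, though this does not affect the scheme since it only enters schematically as in $V_p^{\alp,\bt,\sigma}$); and the phrase ``discarded at a spacetime extremum'' hides the genuine technical content of Proposition~\ref{prop:max.prin}, where one must first run a cutoff at spatial/velocity infinity (with unequal powers $R^3$ and $R$), combine it with a continuity argument in time, and construct the barrier functions $u_N^{(n),\pm}$ carefully so that the Gaussian-weight error terms (error terms$_1$, error terms$_{2,i}$ in \eqref{eq:u}) can actually be absorbed by the two coercive ``good terms.'' Neither observation alters the validity of your overall plan.
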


From now on until Section~\ref{sec:EE}, we will prove Theorem~\ref{thm:BA} (see~Section~\ref{sec:end.of.bootstrap}). \textbf{In these section, we therefore always work under the assumptions of Theorem~\ref{thm:BA}.} To simplify the notations, from now on, unless explicitly stated otherwise, for two non-negative quantities $A$ and $B$, \textbf{$A \ls B$ means that there exists $C>0$ depending $d_0$ and $\gamma$ (and in particular independent of $\epsilon$) such that $A\leq CB$.}

\section{Estimates for the coefficients}\label{sec:coeff}

We work under the assumptions of Theorem~\ref{thm:BA}.

In this section, we prove $L^\infty_x$ and $L^2_x$ type bounds for the coefficients $\bar{a}_{ij}$, $\bar{c}$ and their derivatives. The $L^\i_x$ estimates will be proven in \textbf{Section~\ref{sec:coeff.Li}} while the $L^2_x$ estimates will be proven in \textbf{Section~\ref{sec:coeff.L2}}.

\subsection{The $L^\infty_x$ estimates for $\bar{a}_{ij}$, $\bar{c}$ and their derivatives}\label{sec:coeff.Li}

\subsubsection{Preliminary embedding estimates}\label{sec:prelim.Li}

We begin with a simple interpolation estimate.

\begin{lemma}\label{lem:Li.1}
Let $\nu \in (0,3)$ and $h:\mathbb R^3_v\to \mathbb R$ be an $L^1\cap L^\i$ function. Then the following estimate holds:
$$\sup_{v\in \mathbb R^3} \int_{\mathbb R^3} |v-v_*|^{-\nu} |h|(v_*) \, \ud v_* \ls \|h\|_{L^1_v}^{1-\f \nu 3} \|h\|_{L^\i_v}^{\f \nu 3}.$$
\end{lemma}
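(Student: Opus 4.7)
The plan is a standard dyadic/thresholding interpolation, of the same flavor one uses to derive the weak-type bound for the Riesz potential. Since $\nu \in (0,3)$, the kernel $|v-v_*|^{-\nu}$ fails to be integrable both at the origin (locally) and at infinity. The idea is that $\|h\|_{L^\infty}$ is the right norm to handle the singularity at $v_*=v$, while $\|h\|_{L^1}$ is the right norm to handle the tail; one then balances the two by choosing the splitting scale optimally in terms of the ratio $\|h\|_{L^1}/\|h\|_{L^\infty}$.

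More concretely, fix $v\in\mathbb R^3$ and $R>0$ to be chosen, and split
\[
\int_{\mathbb R^3}|v-v_*|^{-\nu}|h(v_*)|\,\ud v_*
= \int_{|v-v_*|\le R}(\cdots)\,\ud v_* + \int_{|v-v_*|>R}(\cdots)\,\ud v_*.
\]
On the near piece I would pull out $\|h\|_{L^\infty_v}$ and use $\int_{|w|\le R}|w|^{-\nu}\,\ud w \lesssim R^{3-\nu}$, which converges precisely because $\nu<3$. On the far piece I would use the uniform bound $|v-v_*|^{-\nu}\le R^{-\nu}$ there and pull out $\|h\|_{L^1_v}$. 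This yields
\[
\int_{\mathbb R^3}|v-v_*|^{-\nu}|h(v_*)|\,\ud v_* \lesssim R^{3-\nu}\|h\|_{L^\infty_v} + R^{-\nu}\|h\|_{L^1_v}.
\]

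Finally, I would optimize in $R$ by choosing $R = \bigl(\|h\|_{L^1_v}/\|h\|_{L^\infty_v}\bigr)^{1/3}$ (assuming both norms are positive; the degenerate cases where one of the norms vanishes reduce to $h\equiv 0$ and are trivial). Substituting gives both terms comparable to $\|h\|_{L^1_v}^{1-\nu/3}\|h\|_{L^\infty_v}^{\nu/3}$, which is the claimed bound, uniformly in $v$. There is no real obstacle here: the only thing to be a touch careful about is making sure the constants in the two endpoint integrals are finite, which is exactly the content of the hypothesis $\nu \in (0,3)$.
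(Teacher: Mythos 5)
Your proof is correct and follows essentially the same route as the paper: both split the integral at a scale $\lambda = (\|h\|_{L^1_v}/\|h\|_{L^\infty_v})^{1/3}$, bounding the near piece with $\|h\|_{L^\infty_v}$ and the far piece with $\|h\|_{L^1_v}$, then optimizing. Nothing to add.
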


\begin{proof}
Assume that $h\not\equiv 0$ for otherwise the estimate is trivial.

Let $\lambda>0$ be a constant to be determined. We divide the integral into regions $|v-v_*|\leq \lambda$ and $|v-v_*|> \lambda$ and use H\"older's inequality in each of the regions to obtain
\begin{equation*}
\begin{split}
&\: \int_{\mathbb R^3} |v-v_*|^{-\nu} |h|(v_*) \, \ud v_*\\
\ls &\: \int_{\{|v-v_*|\leq \lambda\}} |v-v_*|^{-\nu} |h|(v_*) \, \ud v_* + \int_{\{|v-v_*|> \lambda\}} |v-v_*|^{-\nu} |h|(v_*) \, \ud v_* \\
\ls &\: \lambda^{-\nu+3} \|h\|_{L^\i_v} + \lambda^{-\nu} \|h\|_{L^1_v}.
\end{split}
\end{equation*}
Let $\lambda = \|h\|_{L^1_v}^{\f 13} \|h\|_{L^\i_v}^{-\f 13}$. Then 
$$\sup_{v\in \mathbb R^3} \int_{\mathbb R^3} |v-v_*|^{-\nu} |h|(v_*) \, \ud v_* \ls \|h\|_{L^1_v}^{1-\f \nu 3} \|h\|_{L^\i_v}^{\f \nu 3},$$
as claimed. \qedhere
\end{proof}

\begin{lemma}\label{lem:Li.2}
Let $h:[0,T_{Boot})\times \mathbb R^3 \times \mathbb R_v \to \mathbb R$ be a smooth function such that $\vb^4 h,\,\wb^4 h \in L^\i_xL^\i_v$ for all $t\in [0,T_{Boot})$. Then for all $t\in [0,T_{Boot})$,
$$\|h\|_{L^\i_x L^1_v}(t) \ls (1+t)^{-3}\left(\|\vb^4 h\|_{L^\i_x L^\i_v}(t) + \|\wb^4 h\|_{L^\i_x L^\i_v}(t) \right).$$
\end{lemma}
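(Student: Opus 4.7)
The plan is to split into two regimes according to the size of $t$, in each regime discarding one of the two weight hypotheses. The $\vb^{4}$ weight will suffice when $t$ is bounded, while the $\wb^{4}$ weight, which localizes $v$ near $x/t$, will produce the decaying factor $(1+t)^{-3}$ via a change of variables.

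More precisely, I would first treat $t\in[0,1]$. Here $(1+t)^{-3}$ is bounded below by $1/8$, so there is nothing to gain from dispersion, and it suffices to use the $\vb^4$ estimate: for any fixed $x$,
\[
\int_{\mathbb R^3}|h|(t,x,v)\,\ud v\;\leq\;\|\vb^4 h\|_{L^\infty_xL^\infty_v}(t)\int_{\mathbb R^3}\vb^{-4}\,\ud v\;\lesssim\;\|\vb^4 h\|_{L^\infty_xL^\infty_v}(t)\;\lesssim\;(1+t)^{-3}\|\vb^4 h\|_{L^\infty_xL^\infty_v}(t),
\]
using only that $\int \vb^{-4}\,\ud v<\infty$ in $\mathbb R^3$.

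For $t\geq 1$ I would instead use the $\wb^{4}=\langle x-tv\rangle^{4}$ bound. For each fixed $x$ and $t$, perform the change of variables $u=x-tv$, which gives $\ud v = t^{-3}\,\ud u$; then
\[
\int_{\mathbb R^3}|h|(t,x,v)\,\ud v\;\leq\;\|\wb^4 h\|_{L^\infty_xL^\infty_v}(t)\int_{\mathbb R^3}\langle x-tv\rangle^{-4}\,\ud v\;=\;t^{-3}\|\wb^4 h\|_{L^\infty_xL^\infty_v}(t)\int_{\mathbb R^3}\langle u\rangle^{-4}\,\ud u.
\]
The remaining integral is a finite constant, and for $t\geq 1$ we have $t^{-3}\lesssim (1+t)^{-3}$, yielding the claimed bound in this range.

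There is no real obstacle here; the only point to be careful about is that the $\wb^{-4}$ integral must be done in the correct variable. The small-$t$/large-$t$ split is forced by the fact that the change of variables $v\mapsto x-tv$ degenerates at $t=0$. Taking the supremum over $x\in\mathbb R^3$ and summing the two regimes completes the proof.
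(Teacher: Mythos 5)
Your proof is correct and follows the same small-$t$/large-$t$ decomposition as the paper, using the $\vb^4$ weight for $t\leq 1$ and the $\wb^4$ weight for $t\geq 1$. Your change of variables $u = x - tv$ for the large-$t$ integral is a slightly cleaner computation than the paper's (the paper splits the $v$-integral into $|v - x/t| \leq \lambda$ and $|v - x/t| > \lambda$ and then optimizes the threshold $\lambda = t^{-1}$), but it exploits exactly the same dispersive mechanism of $\wb$ localizing $v$ near $x/t$ on a scale $t^{-1}$.
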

\begin{proof}

\pfstep{Step~1: The case $t\leq 1$} In this case we simply use the H\"older's inequality to estimate as follows:
\begin{equation*}
\begin{split}
\|h\|_{L^1_v}(t,x) \ls &\: \left(\int_{\mathbb R^3}  \vb^{-4}\,\ud v \right)\left(\sup_{(x,v)\in \mathbb R^3} \vb^4|h|(t,x,v)\right) \ls \|\vb^4 h\|_{L^\i_x L^\i_v}(t).
\end{split}
\end{equation*}
Taking supremum over all $x \in \mathbb R^3$ yields the desired estimate.

\pfstep{Step~2: The case $t> 1$} Let $\lambda>0$ be a constant to be chosen. We divide the region of integration according to $|v-\f{x}{t}|\leq \lambda$ and $|v-\f{x}{t}|> \lambda$.
\begin{equation*}
\begin{split}
\|h\|_{L^1_v}(t,x) \ls &\: \int_{\{|v-\f{x}{t}|\leq \lambda\}}  |h|(t,x,v)\,\ud v + \int_{\{|v-\f{x}{t}|> \lambda\}} |h|(t,x,v) \, \ud v \\
\ls &\: \lambda^3 \|h\|_{L^\i_x L^\i_v}(t) + t^{-4} \||x-tv|^4 h\|_{L^\i_x L^\i_v}(t) \int_{\{|v-\f{x}{t}|> \lambda\}} |v-\f{x}{t}|^{-4} \, \ud v \\
\ls &\: \|\wb^4 h\|_{L^\i_x L^\i_v}(t) \left(\lambda^3 + \lambda^{-1} t^{-4}\right).
\end{split}
\end{equation*}
Let $\lambda = t^{-1}$ and taking the supremum over all $x\in \mathbb R^3$, we obtain the desired estimate.
\end{proof}

\begin{lemma}\label{lem:Li.3}
Let $\nu \in (0,3)$ and $h:[0,T_{Boot})\times \mathbb R^3 \times \mathbb R^3 \to \mathbb R$ be a smooth function such that $\vb^4 h,\,\wb^4 h \in L^\i_xL^\i_v$ for all $t\in [0,T_{Boot})$. Then for all $t\in [0,T_{Boot})$,
$$\int_{\mathbb R^3} |v-v_*|^{-\nu} |h|(t,x,v_*) \, \ud v_* \ls (1+t)^{-3+\nu}\left(\|\vb^4 h\|_{L^\i_x L^\i_v}(t) + \|\wb^4 h\|_{L^\i_x L^\i_v}(t) \right).$$
\end{lemma}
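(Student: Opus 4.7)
The plan is to combine the two preceding lemmas: the interpolation estimate in Lemma~\ref{lem:Li.1} converts the singular integral into a product of $L^1_v$ and $L^\infty_v$ norms of $|h|$, and then Lemma~\ref{lem:Li.2} supplies the $(1+t)^{-3}$ decay for the $L^1_v$ piece, while the $L^\infty_v$ piece is handled by the trivial bound $\|h\|_{L^\infty_v}(t,x) \le \|\langle v\rangle^4 h\|_{L^\infty_x L^\infty_v}(t)$.

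First I would apply Lemma~\ref{lem:Li.1} at fixed $(t,x)$ to the function $v_*\mapsto h(t,x,v_*)$, giving
\[
\int_{\mathbb R^3} |v-v_*|^{-\nu}|h|(t,x,v_*)\,\ud v_* \ls \|h\|_{L^1_v}(t,x)^{1-\nu/3}\,\|h\|_{L^\infty_v}(t,x)^{\nu/3}.
\]
Next, let
\[
M(t) := \|\langle v\rangle^4 h\|_{L^\infty_x L^\infty_v}(t) + \|\langle x-tv\rangle^4 h\|_{L^\infty_x L^\infty_v}(t).
\]
Lemma~\ref{lem:Li.2} provides $\|h\|_{L^\infty_x L^1_v}(t) \ls (1+t)^{-3}M(t)$, and since $\langle v\rangle \ge 1$ we have the obvious bound $\|h\|_{L^\infty_x L^\infty_v}(t) \le \|\langle v\rangle^4 h\|_{L^\infty_x L^\infty_v}(t) \le M(t)$.

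Substituting both bounds into the interpolation inequality yields, uniformly in $x$,
\[
\int_{\mathbb R^3}|v-v_*|^{-\nu}|h|(t,x,v_*)\,\ud v_* \ls \left((1+t)^{-3}M(t)\right)^{1-\nu/3} M(t)^{\nu/3} = (1+t)^{-3+\nu}\,M(t),
\]
which is exactly the claimed estimate. There is no real obstacle here; the only point to verify is that the exponent arithmetic gives $-3(1-\nu/3) = -3+\nu$, and that the hypothesis $\nu \in (0,3)$ is precisely what is needed to invoke Lemma~\ref{lem:Li.1}. The bound is uniform in $x$ because both Lemma~\ref{lem:Li.2} and the pointwise bound on $\|h\|_{L^\infty_v}$ are already stated with an $L^\infty_x$ outside.
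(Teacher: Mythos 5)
Your proof is correct and follows exactly the same route as the paper, which simply states that Lemma~\ref{lem:Li.3} "follows from combining Lemmas~\ref{lem:Li.1} and \ref{lem:Li.2}." Your exponent arithmetic $-3(1-\nu/3) = -3+\nu$ and the trivial bound $\|h\|_{L^\infty_v} \le \|\langle v\rangle^4 h\|_{L^\infty_v}$ are precisely the implicit steps.
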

\begin{proof}
This follows from combining Lemmas~\ref{lem:Li.1} and \ref{lem:Li.2}. \qedhere
\end{proof}

\subsubsection{Estimates for weighted $v$-integrals of $f$}\label{sec:Li.v.weighted}

We now use the preliminary estimates derived in Section~\ref{sec:prelim.Li} to bound general weighted $v$-integrals of $f$; see Lemmas~\ref{lem:Li.5} and \ref{lem:Li.6} below. 

\begin{lemma}\label{lem:Li.4}
For every $\ell \in \mathbb N$ and $m\in \mathbb N\cup \{0\}$, the following estimate holds with an implicit constant depending on $\ell$, $\gamma$ and $d_0$ for any $(t,x,v) \in [0,T_{Boot})\times \mathbb R^3\times \mathbb R^3$:
$$\vb^\ell \wb^m |\rd_x^\alp \rd_v^\bt Y^\sigma f|(t,x,v) \ls \sum_{|\bt'|\leq |\bt|,\,|\sigma'|\leq |\sigma|} \wb^m |\rd_x^\alp \rd_v^{\bt'} Y^{\sigma'} g|(t,x,v).$$
\end{lemma}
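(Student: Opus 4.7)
\medskip

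\noindent\textbf{Proof plan for Lemma~\ref{lem:Li.4}.}

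The plan is to invert the relation $g = e^{d(t)\vb^2} f$ and apply Leibniz to the right-hand side after writing $f = e^{-d(t)\vb^2} g$. The key observation is that the weight $e^{-d(t)\vb^2}$ depends only on $(t,v)$, so $\rd_x$ annihilates it entirely, and moreover $Y_i = t\rd_{x_i}+\rd_{v_i}$ reduces to $\rd_{v_i}$ when applied to any function of $(t,v)$. Hence, by induction on $|\sigma_1|$, $Y^{\sigma_1}(e^{-d(t)\vb^2}) = \rd_v^{\sigma_1}(e^{-d(t)\vb^2})$ (interpreting $\sigma_1$ as a multi-index for $\rd_v$), and so the Leibniz expansion of $\rd_x^\alp \rd_v^\bt Y^\sigma(e^{-d(t)\vb^2} g)$ takes the form
\[
\rd_x^\alp \rd_v^\bt Y^\sigma f = \sum_{\bt_1+\bt_2=\bt,\ \sigma_1+\sigma_2=\sigma} c_{\bt_1,\sigma_1} \bigl(\rd_v^{\bt_1+\sigma_1} e^{-d(t)\vb^2}\bigr)\, \rd_x^\alp \rd_v^{\bt_2} Y^{\sigma_2} g,
\]
since the $\alp_1=0$ term is the only surviving contribution in the $\rd_x$ distribution.

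Next I would observe that $\rd_v^{\bt_1+\sigma_1} e^{-d(t)\vb^2} = P_{\bt_1,\sigma_1}(v, d(t))\, e^{-d(t)\vb^2}$ for some polynomial in $v$ of degree $|\bt_1|+|\sigma_1|$ with bounded coefficients (uniformly in $t\geq 0$, since $d(t)\in[d_0, 2d_0]$). Therefore
\[
|\rd_x^\alp \rd_v^\bt Y^\sigma f|(t,x,v) \ls \sum_{|\bt_2|\leq|\bt|,\ |\sigma_2|\leq|\sigma|} \vb^{(|\bt|-|\bt_2|)+(|\sigma|-|\sigma_2|)} e^{-d(t)\vb^2} |\rd_x^\alp \rd_v^{\bt_2} Y^{\sigma_2} g|(t,x,v),
\]
with an implicit constant depending on $|\bt|,|\sigma|,d_0$.

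Multiplying through by $\vb^\ell \wb^m$, the pointwise bound $\vb^{N} e^{-d(t)\vb^2}\leq \vb^N e^{-d_0\vb^2} \leq C_{N,d_0}$ allows all the accumulated $v$-polynomial factors (including the prefactor $\vb^\ell$) to be absorbed into the Gaussian, leaving only $\wb^m$ and the claimed sum of derivatives of $g$ on the right. The estimate follows. The argument involves no obstacle beyond correctly tracking the Leibniz combinatorics and using $d(t)\geq d_0>0$ to dominate polynomial growth by the Gaussian decay.
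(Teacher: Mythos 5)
Your proof is correct and follows essentially the same route as the paper's (one-line) proof: write $f = e^{-d(t)\vb^2}g$, distribute derivatives by Leibniz using that $\rd_x$ kills the weight and $Y$ reduces to $\rd_v$ on functions of $(t,v)$, and absorb all resulting polynomial $\vb$-factors into the Gaussian via $\vb^{\underline\ell} e^{-d(t)\vb^2}\ls_{\underline\ell,d_0} 1$. The only thing worth noting is that your implicit constant also depends on $|\bt|,|\sigma|$, which is consistent with the paper's usage since these are bounded by $\Mm$ (a function of $\gamma$).
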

\begin{proof}
This follows immediately from differentiating \eqref{def:g} and using $\vb^{\underline{\ell}} e^{-d(t)\vb^2}\ls_{\underline{\ell}} 1$ for all $\underline{\ell} \in \mathbb N$. \qedhere
\end{proof}

\begin{lemma}\label{lem:Li.5}
If $|\alp|+|\bt|+|\sigma| \leq \Mm-4-\max\{2,\lceil \f{2}{2+\gamma} \rceil \}$, then
\begin{equation*}
\begin{split}
\|\vb^4 \wb  \rd_x^\alp \rd_v^\bt Y^\sigma f\|_{L^\i_x L^1_v}(t)  \ls \ep^{\f 34} (1+t)^{-3+|\bt|}.
\end{split}
\end{equation*}
If $\Mm-3-\max\{2,\lceil \f{2}{2+\gamma} \rceil \} \leq |\alp|+|\bt|+|\sigma|=:k \leq \Mm-5$, then
$$\|\vb^4 \wb \rd_x^\alp \rd_v^\bt Y^\sigma f\|_{L^\i_x L^1_v}(t) \ls \ep^{\f 34} (1+t)^{-\f 32+|\bt|-(\Mm-4-k)\min\{\f 34,\f{3(2+\gamma)}{4}\}}.$$
\end{lemma}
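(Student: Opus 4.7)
My plan is to deduce the result from the preliminary embedding lemmas (Lemmas~\ref{lem:Li.2} and \ref{lem:Li.4}) combined with the bootstrap assumptions \eqref{BA.sim.1}, \eqref{BA.sim.2}. The strategy is essentially: convert $f$-estimates to $g$-estimates (swallowing $\vb$ weights by the Gaussian), then trade $L^\i_v$ for $L^1_v$ at the cost of extra $\vb^4$ and $\wb^4$ weights (this is where the dispersive $(1+t)^{-3}$ factor comes in).

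First, I would apply Lemma~\ref{lem:Li.2} with $h := \vb^4 \wb \rd_x^\alp \rd_v^\bt Y^\sigma f$ to obtain
\[
\|\vb^4 \wb \rd_x^\alp \rd_v^\bt Y^\sigma f\|_{L^\i_x L^1_v}(t)\ls (1+t)^{-3}\Bigl(\|\vb^8 \wb \rd_x^\alp \rd_v^\bt Y^\sigma f\|_{L^\i_xL^\i_v}(t)+\|\vb^4 \wb^5 \rd_x^\alp \rd_v^\bt Y^\sigma f\|_{L^\i_xL^\i_v}(t)\Bigr).
\]
Then I would apply Lemma~\ref{lem:Li.4} to absorb both the $\vb^8$ and $\vb^4$ weights by the Gaussian in the definition \eqref{def:g} of $g$. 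Since $|\sigma|\leq k\leq \Mm$, both resulting $\wb$-weights ($\wb^1$ and $\wb^5$) are bounded by $\wb^{\Mm+5-|\sigma'|}$. Therefore each term above is bounded by $\sum_{|\bt'|\leq|\bt|,\,|\sigma'|\leq |\sigma|}\|\wb^{\Mm+5-|\sigma'|}\rd_x^\alp\rd_v^{\bt'}Y^{\sigma'}g\|_{L^\i_xL^\i_v}(t)$.

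The remaining step is to plug in \eqref{BA.sim.1} and \eqref{BA.sim.2}. In Case 1, where $|\alp|+|\bt|+|\sigma|\leq \Mm-4-\max\{2,\lceil 2/(2+\gamma)\rceil\}$, every multi-index in the sum automatically falls in the regime of \eqref{BA.sim.1}, giving $\ep^{3/4}(1+t)^{|\bt'|}\leq \ep^{3/4}(1+t)^{|\bt|}$, which combines with the $(1+t)^{-3}$ prefactor to give the claimed bound. In Case 2, with $k:=|\alp|+|\bt|+|\sigma|$, I would split the sum into two subcases based on whether $k' := |\alp|+|\bt'|+|\sigma'|$ is in the regime of \eqref{BA.sim.1} or \eqref{BA.sim.2}. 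When $k'$ is in the \eqref{BA.sim.2} regime, monotonicity in $k'$ (since $\Mm-4-k'\geq \Mm-4-k$) gives the claimed exponent directly.

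The only real bookkeeping point, and what I expect to be the one place a reader might need to pause, is the subcase of Case 2 where $k'$ lies in the lower regime \eqref{BA.sim.1}. There the bound is only $\ep^{3/4}(1+t)^{|\bt|}$, and I must check this is dominated by the target $\ep^{3/4}(1+t)^{3/2-(\Mm-4-k)\min\{3/4,3(2+\gamma)/4\}+|\bt|}$. This reduces to verifying $(\Mm-4-k)\min\{3/4,3(2+\gamma)/4\}\leq 3/2$. For $\gamma\in(-2,-1]$ this uses $\Mm-4-k\leq \lceil 2/(2+\gamma)\rceil-1<2/(2+\gamma)$, giving the product strictly less than $3/2$; for $\gamma\in(-1,0)$ it uses $\Mm-4-k\leq 1$ and $\min=3/4$, so the product is $\leq 3/4<3/2$. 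This verification, which is precisely the reason $\Mm$ was chosen as in Theorem~\ref{thm:main}, closes the estimate.
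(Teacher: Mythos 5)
Your proof is correct and follows the paper's argument exactly: apply Lemma~\ref{lem:Li.2} with $h=\vb^4\wb\rd_x^\alpha\rd_v^\beta Y^\sigma f$, absorb the $\vb$-weights into the Gaussian via Lemma~\ref{lem:Li.4}, then invoke \eqref{BA.sim.1} and \eqref{BA.sim.2}. The paper simply ends with ``the desired conclusion then follows from \eqref{BA.sim.1} and \eqref{BA.sim.2},'' and the bookkeeping you spell out (in particular the check that $(\Mm-4-k)\min\{\tfrac34,\tfrac{3(2+\gamma)}{4}\}\le\tfrac32$ in the subcase where $k'$ drops into the lower regime) is exactly the verification being elided there.
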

\begin{proof}
By Lemmas~\ref{lem:Li.2} and \ref{lem:Li.4}, we obtain 
\begin{equation*}
\begin{split}
&\: \|\vb^4 \wb \rd_x^\alp \rd_v^\bt Y^\sigma f\|_{L^\i_x L^1_v}(t) \\
\ls &\: (1+t)^{-3}(\|\vb^8 \wb \rd_x^\alp \rd_v^\bt Y^\sigma f\|_{L^\i_x L^\i_v}(t) + \|\vb^4 \wb^5 \rd_x^\alp \rd_v^\bt Y^\sigma f\|_{L^\i_x L^\i_v}(t)) \\
\ls &\: (1+t)^{-3} (\sum_{|\bt'|\leq |\bt|,\,|\sigma'|\leq |\sigma|} \|\wb^5 \rd_x^{\alp} \rd_v^{\bt'} Y^{\sigma'} g \|_{L^\i_x L^\i_v} ).
\end{split}
\end{equation*}
The desired conclusion then follows from \eqref{BA.sim.1} and \eqref{BA.sim.2}. \qedhere
\end{proof}

\begin{lemma}\label{lem:Li.6}
Let $\nu \in (0,3)$. If $|\alp|+|\bt|+|\sigma| \leq \Mm-4-\max\{2,\lceil \f{2}{2+\gamma} \rceil \}$, then
$$\|\int_{\mathbb R^3} |v-v_*|^{-\nu} |\rd_x^\alp \rd_v^\bt Y^\sigma f|(t,x,v_*) \,\ud v_*\|_{L^\i_x L^\i_v}(t) \ls \ep^{\f 34}(1+t)^{-3+\nu+|\bt|}.$$
If $\Mm-3-\max\{2,\lceil \f{2}{2+\gamma} \rceil \} \leq |\alp|+|\bt|+|\sigma|=:k \leq \Mm-5$, then
$$\|\int_{\mathbb R^3} |v-v_*|^{-\nu} |\rd_x^\alp \rd_v^\bt Y^\sigma f|(t,x,v_*) \,\ud v_*\|_{L^\i_x L^\i_v}(t) \ls \ep^{\f 34} (1+t)^{-\f 32+\nu+|\bt|-(\Mm-4-k)\min\{\f 34,\f{3(2+\gamma)}{4}\}}.$$
\end{lemma}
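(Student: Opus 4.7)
The plan is to reproduce the template of Lemma~\ref{lem:Li.5}, substituting Lemma~\ref{lem:Li.3} for Lemma~\ref{lem:Li.2} so that the singular convolution $\int |v-v_*|^{-\nu}|\cdot|\,\ud v_*$ is handled in one shot. First, applying Lemma~\ref{lem:Li.3} to $h=\rd_x^\alp\rd_v^\bt Y^\sigma f$ gives
$$\bigl\|\int_{\mathbb R^3}|v-v_*|^{-\nu}|\rd_x^\alp\rd_v^\bt Y^\sigma f|(t,x,v_*)\,\ud v_*\bigr\|_{L^\i_xL^\i_v}(t)\ls (1+t)^{-3+\nu}\bigl(\|\vb^4\rd_x^\alp\rd_v^\bt Y^\sigma f\|_{L^\i_xL^\i_v}(t)+\|\wb^4\rd_x^\alp\rd_v^\bt Y^\sigma f\|_{L^\i_xL^\i_v}(t)\bigr).$$
Next, Lemma~\ref{lem:Li.4} applied with $(\ell,m)=(4,0)$ and $(\ell,m)=(0,4)$ converts the $f$-norms on the right to $g$-norms, bounding each of the two terms by a finite sum $\sum_{|\bt'|\leq|\bt|,\,|\sigma'|\leq|\sigma|}\|\wb^4\rd_x^\alp\rd_v^{\bt'}Y^{\sigma'}g\|_{L^\i_xL^\i_v}(t)$.

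The remaining step is to invoke the bootstrap assumptions. Since $|\sigma'|\leq|\sigma|\leq k\leq\Mm-5$, we have $\Mm+5-|\sigma'|\geq 10\geq 4$, so with $\wb\geq 1$ we may absorb the $\wb^4$ into the bootstrap weight: $\|\wb^4\rd_x^\alp\rd_v^{\bt'}Y^{\sigma'}g\|_{L^\i_xL^\i_v}\leq\|\wb^{\Mm+5-|\sigma'|}\rd_x^\alp\rd_v^{\bt'}Y^{\sigma'}g\|_{L^\i_xL^\i_v}$. Now \eqref{BA.sim.1} or \eqref{BA.sim.2} applies depending on the size of $|\alp|+|\bt'|+|\sigma'|$. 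Since $|\alp|+|\bt'|+|\sigma'|\leq k$ and the right-hand side of \eqref{BA.sim.2} is monotone in $k$ (while \eqref{BA.sim.1} is always at least as strong), in either regime for $k$ every summand is controlled by the bound advertised for the top index $k$. Combining this with the prefactor $(1+t)^{-3+\nu}$ from Lemma~\ref{lem:Li.3} yields exactly the two stated estimates.

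I do not expect any genuine obstacle: the argument is essentially an application of the previously-established interpolation machinery to the $|v-v_*|^{-\nu}$ kernel. The only points deserving care are (i) keeping the $(1+t)^{|\bt'|}\leq (1+t)^{|\bt|}$ bookkeeping correct when $\bt'<\bt$ is produced by Lemma~\ref{lem:Li.4}, and (ii) verifying the regime transition at $k=\Mm-4-\max\{2,\lceil 2/(2+\gamma)\rceil\}$ — both of which are routine, and mirror what was already done in the proof of Lemma~\ref{lem:Li.5}.
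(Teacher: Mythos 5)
Your proof is correct and follows the same route as the paper: apply Lemma~\ref{lem:Li.3} to obtain the factor $(1+t)^{-3+\nu}$, convert $f$-norms to $g$-norms via Lemma~\ref{lem:Li.4}, and invoke the bootstrap assumptions \eqref{BA.sim.1}, \eqref{BA.sim.2}. Your extra remarks about the $\wb^4$ absorption and the monotonicity in $k$ are just a more explicit spelling-out of what the paper leaves as "the desired conclusion then follows from \eqref{BA.sim.1} and \eqref{BA.sim.2}."
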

\begin{proof}
By Lemmas~\ref{lem:Li.3} and \ref{lem:Li.4},
\begin{equation*}
\begin{split}
&\: \|\int_{\mathbb R^3} |v-v_*|^{-\nu} |\rd_x^\alp \rd_v^\bt Y^\sigma f|(t,x,v_*) \,\ud v_*\|_{L^\i_x L^\i_v}(t)\\
\ls &\: (1+t)^{-3+\nu} (\|\vb^4 \rd_x^\alp \rd_v^\bt Y^\sigma f\|_{L^\i_x L^\i_v}(t) + \|\wb^4 \rd_x^\alp \rd_v^\bt Y^\sigma f\|_{L^\i_x L^\i_v}(t)) \\
\ls &\: (1+t)^{-3+\nu}(\sum_{|\bt'|\leq |\bt|,\,|\sigma'|\leq |\sigma|} \|\wb^4 \rd_x^\alp \rd_v^{\bt'} Y^{\sigma'} g\|_{L^\i_x L^\i_v}(t)).
\end{split}
\end{equation*}
The desired conclusion then follows from \eqref{BA.sim.1} and \eqref{BA.sim.2}. \qedhere
\end{proof}

\subsubsection{$L^\i_x L^\i_v$ estimates for $\bar{a}_{ij}$ and its derivatives}

\begin{proposition}\label{prop:a.expressions}
In the following, suppose $|\alp|+|\bt|+|\sigma|\leq \Mm$.

The coefficient $\bar{a}_{ij}$ and its higher derivatives satisfy the following pointwise bounds:
\begin{equation}\label{a.0.1}
\max_{i,j}|\rd_x^\alp \rd_v^\bt Y^\sigma \bar{a}_{ij}|(t,x,v) \ls \int_{\mathbb R^3} |v-v_*|^{2+\gamma} |\rd_x^\alp \rd_v^\bt Y^\sigma f|(t,x,v_*)\,\ud v_*,
\end{equation}
\begin{equation}\label{a.0.2}
\max_j |\rd_x^\alp \rd_v^\bt Y^\sigma (\bar{a}_{ij}v_i)|(t,x,v) \ls \vb^{\max\{2+\gamma,1\}} \int_{\mathbb R^3} \langle v_*\rangle^{\max\{2+\gamma,1\}} |\rd_x^\alp \rd_v^\bt Y^\sigma f|(t,x,v_*)\,\ud v_*,
\end{equation}
\begin{equation}\label{a.0.3}
|\rd_x^\alp \rd_v^\bt Y^\sigma (\bar{a}_{ij}v_i v_j)|(t,x,v) \ls \int_{\mathbb R^3} (|v|^{2+\gamma}|v_*|^2 + |v_*|^{4+\gamma}) |\rd_x^\alp \rd_v^\bt Y^\sigma f|(t,x,v_*)\,\ud v_*.
\end{equation}
The first $v$-derivatives of $\bar{a}_{ij}$ and their higher derivatives satisfy the following pointwise bounds:
\begin{equation}\label{a.1.1}
\max_{i,j,\ell}|\rd_x^\alp \rd_v^\bt Y^\sigma \rd_{v_\ell} \bar{a}_{ij}|(t,x,v) \ls \int_{\mathbb R^3} |v-v_*|^{1+\gamma} |\rd_x^\alp \rd_v^\bt Y^\sigma f|(t,x,v_*)\,\ud v_*,
\end{equation}
\begin{equation}\label{a.1.2}
\max_{i,j,\ell} |\rd_x^\alp \rd_v^\bt Y^\sigma \rd_{v_\ell} (\bar{a}_{ij}v_i)|(t,x,v) \ls \int_{\mathbb R^3} (|v||v-v_*|^{1+\gamma}+|v-v_*|^{2+\gamma}) |\rd_x^\alp \rd_v^\bt Y^\sigma f|(t,x,v_*)\,\ud v_*,
\end{equation}
Finally, the second $v$-derivatives of $\bar{a}_{ij}$ and their higher derivatives satisfy the following pointwise bounds:
\begin{equation}\label{a.2.1}
\max_{i,j,\ell,m}|\rd_x^\alp \rd_v^\bt Y^\sigma \rd^2_{v_\ell v_m} \bar{a}_{ij}|(t,x,v) \ls \int_{\mathbb R^3} |v-v_*|^{\gamma} |\rd_x^\alp \rd_v^\bt Y^\sigma f|(t,x,v_*)\,\ud v_*
\end{equation}
\end{proposition}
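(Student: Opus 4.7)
The plan is to reduce every left-hand side to a single convolution of the form $\int a_{ij}(v-v_*)\,K(v_*)\,(\partial_x^{\alpha}\partial_{v_*}^{\beta}Y_*^{\sigma}f)(t,x,v_*)\,\ud v_*$ (plus easy correction terms), and then to estimate the kernel pointwise. Here $K$ is a polynomial in $v_*$ of degree at most $2$, and $Y_{*,k}:=t\partial_{x_k}+\partial_{v_{*k}}$. The reduction rests on two ingredients: $\partial_x$ (and the $t\partial_x$ piece of $Y$) commutes trivially past the integral; and $\partial_{v_k}[a_{ij}(v-v_*)]=-\partial_{v_{*k}}[a_{ij}(v-v_*)]$, so every $\partial_v$ hitting the kernel can be transferred onto $f$ by integration by parts in $v_*$. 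This yields
$$\partial_x^{\alpha}\partial_v^{\beta}Y^{\sigma}\bar a_{ij}(t,x,v)=\int a_{ij}(v-v_*)\,(\partial_x^{\alpha}\partial_{v_*}^{\beta}Y_*^{\sigma}f)(t,x,v_*)\,\ud v_*,$$
and analogous identities with $v_i$ or $v_iv_j$ inserted inside the integrand.

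With the commutation identity in hand, \eqref{a.0.1}, \eqref{a.1.1} and \eqref{a.2.1} follow from the pointwise bound $|\partial_z^k a_{ij}(z)|\lesssim|z|^{2+\gamma-k}$ for $k=0,1,2$, which is immediate from the homogeneity once one writes $a_{ij}(z)=|z|^{\gamma+2}(\delta_{ij}-\hat z_i\hat z_j)$. For \eqref{a.0.2} and \eqref{a.0.3} the key input is the null identity $a_{ij}(z)z_i=0$, visible from the projection form of $a_{ij}$. Decomposing $v_i=(v-v_*)_i+(v_*)_i$ (and similarly $v_iv_j$ as a sum of four cross terms, three of which vanish by $a_{ij}z_i=a_{ij}z_j=0$) replaces the external polynomial by $(v_*)_i$ or $(v_*)_i(v_*)_j$ inside the integral. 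Applying the commutation identity and bounding via $|a_{ij}(v-v_*)|\lesssim|v-v_*|^{2+\gamma}$ together with the elementary inequality $|v-v_*|^{2+\gamma}\lesssim|v|^{2+\gamma}+|v_*|^{2+\gamma}$ (valid whenever $2+\gamma>0$) then yields \eqref{a.0.3} directly and \eqref{a.0.2} after distributing the weights between $v$ and $v_*$. Finally, \eqref{a.1.2} follows from the product rule $\partial_{v_\ell}(\bar a_{ij}v_i)=v_i\,\partial_{v_\ell}\bar a_{ij}+\bar a_{\ell j}$ and an appeal to \eqref{a.1.1} and \eqref{a.0.1}.

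The one place requiring care is the bookkeeping of the correction terms produced when an integration-by-parts $\partial_{v_*}$ --- coming from either a $\partial_v^{\beta}$ or the $\partial_v$ piece of a $Y^{\sigma}$ --- contracts against one of the polynomial factors $(v_*)_i$ or $(v_*)_i(v_*)_j$. Each such contraction strictly decreases both the polynomial degree and the derivative order on $f$, and the resulting remainder is pointwise bounded by a term of the same type as \eqref{a.0.1} with one fewer derivative. Thus no structural difficulty arises; all correction terms are absorbed into the stated bounds, and the remaining effort is just to verify that the stated weights accommodate these lower-order pieces.
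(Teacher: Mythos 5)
Your overall strategy --- reduce to a single convolution and estimate the kernel pointwise using the projection form of $a_{ij}$ and the null identity $a_{ij}(z)z_i=0$ --- matches the paper's Step 0, and your treatment of \eqref{a.0.1}, \eqref{a.1.1}, \eqref{a.2.1}, \eqref{a.1.2}, and \eqref{a.0.3} is sound (for \eqref{a.0.3} your null decomposition $v_iv_j=\sum(\cdots)$ is an equivalent route to the paper's Pythagorean-identity computation, and both land on the same kernel bound $|z|^{2+\gamma}|v_*|^2$). The correction-term discussion in your last paragraph is also the right bookkeeping.

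There is, however, a genuine gap in \eqref{a.0.2}. Your decomposition $v_i=(v-v_*)_i+(v_*)_i$ gives $a_{ij}(v-v_*)v_i=a_{ij}(v-v_*)(v_*)_i$, and you then bound the kernel by $|v-v_*|^{2+\gamma}|v_*|$. After $|v-v_*|^{2+\gamma}\lesssim|v|^{2+\gamma}+|v_*|^{2+\gamma}$ this produces $|v|^{2+\gamma}|v_*|+|v_*|^{3+\gamma}$, and the second piece cannot be ``redistributed'' into $\vb^{\max\{2+\gamma,1\}}\langle v_*\rangle^{\max\{2+\gamma,1\}}$: take $\gamma=-3/2$, $|v|\to 0$, $|v_*|\to\infty$ and compare $|v_*|^{3/2}$ against $\vb\langle v_*\rangle\sim\langle v_*\rangle$. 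The null structure actually buys more than you record. Since $a_{ij}(z)(v_*)_i=a_{ij}(z)v_i$ and $a_{ij}(z)$ is $|z|^{2+\gamma}$ times a projection onto $z^{\perp}$, the kernel is bounded by $|z|^{2+\gamma}\min\{|v|,|v_*|\}$; combined with the elementary inequality $|z|\min\{|v|,|v_*|\}\leq 2|v||v_*|$ this yields $|a_{ij}(z)v_i|\lesssim|z|^{1+\gamma}|v||v_*|$, which is the bound the paper actually uses in \eqref{a.0.2.main.step}. With that sharper kernel bound the case split $1+\gamma\geq 0$ vs.\ $1+\gamma<0$ (and then $|z|\leq 1$ vs.\ $|z|>1$) closes \eqref{a.0.2} with the stated $\vb^{\max\{2+\gamma,1\}}\langle v_*\rangle^{\max\{2+\gamma,1\}}$ weight; the paper derives the same bound by the explicit expansion of $a_{ij}(v-v_*)v_i$ rather than by the $\min$ argument. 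Your write-up uses only one half of the symmetric identity $a_{ij}(z)v_i=a_{ij}(z)(v_*)_i$ and therefore loses the extra power of $|v|$ that makes the soft-potential case $\gamma<-1$ work.
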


\begin{proof}
\pfstep{Step~0: Preliminaries}
We will repeatedly use the following easily verified facts when\footnote{Here, $|\bt'|\leq 2$ is to ensure that $\rd_v^{\bt'} a_{ij}$ is in $L^1_{loc,v_*}$ so that the computation can be justified.} $|\bt'|\leq 2$:
$$[\rd_x^\alp \rd_v^\bt Y^\sigma \rd_v^{\bt'}] \bar{a}_{ij} = \int_{\mathbb R^3} (\rd_v^{\bt'} a_{ij}(v-v_*)) (\rd_x^\alp \rd_v^\bt Y^\sigma f)(t,x,v_*)\,\ud v_*,$$
$$[\rd_x^\alp \rd_v^\bt Y^\sigma \rd_v^{\bt'}] (\bar{a}_{ij}v_i) = \int_{\mathbb R^3} (\rd_v^{\bt'} (a_{ij}(v-v_*)v_i)) (\rd_x^\alp \rd_v^\bt Y^\sigma f)(t,x,v_*)\, \ud v_*,$$
$$[\rd_x^\alp \rd_v^\bt Y^\sigma \rd_v^{\bt'}] (\bar{a}_{ij}v_iv_v) =  \int_{\mathbb R^3} (\rd_v^{\bt'} (a_{ij}(v-v_*)v_iv_j)) (\rd_x^\alp \rd_v^\bt Y^\sigma f)(t,x,v_*)\, \ud v_*,$$

As a result, the proof of the proposition essentially boils down to checking the derivatives of the kernel. This is what we will check below. In other words, when we say ``Proof of \eqref{a.0.1}'', we mean that we will estimate the kernel so that when plugging in the above, we obtain \eqref{a.0.1}.
 
\pfstep{Step~1: Proof of \eqref{a.0.1}} 
To obtain \eqref{a.0.1}, we only need an estimate
$$|a_{ij}(v-v_*)| \leq |v-v_*|^{2+\gamma},$$
which is obvious by \eqref{a.def}.

\pfstep{Step~2: Proof of \eqref{a.0.2}} For \eqref{a.0.2}, we start with \eqref{a.def} and compute 
\begin{equation}\label{a.0.2.main.step}
\begin{split}
a_{ij}(v-v_*)v_i =&\: |v-v_*|^{2+\gamma} \left(v_j - \f{(v\cdot (v-v_*))(v-v_*)_j}{|v-v_*|^2} \right) \\
=&\: |v-v_*|^{\gamma} \left(v_j |v|^2- 2v_j (v\cdot v_*) + v_j|v_*|^2 - (|v|^2 - (v\cdot v_*))(v-v_*)_j \right) \\
=&\: |v-v_*|^{\gamma} \left(- v_j (v\cdot v_*) + v_j|v_*|^2 \blue{+} (|v|^2 - (v\cdot v_*))(v_*)_j \right) \\
=&\: |v-v_*|^{\gamma} \left(-v_j (v_* \cdot (v-v_*)) \blue{+} (v\cdot(v-v_*))(v_*)_j \right) .
\end{split}
\end{equation}

We now split into various cases. First, suppose $1+\gamma\geq 0$. Then \eqref{a.0.2.main.step} and the triangle inequality implies that
$$\sup_j |a_{ij}(v-v_*)v_i| \ls |v-v_*|^{1+\gamma} |v| |v_*| \ls |v|^{2+\gamma} |v_*| + |v| |v_*|^{2+\gamma}.$$

If $1+\gamma<0$, we further split into two cases. If $|v-v_*|\leq 1$, then a trivial estimate using \eqref{a.def} implies
$$\sup_j |a_{ij}(v-v_*)v_i| \ls |v-v_*|^{2+\gamma} |v| \ls |v|.$$
If $1+\gamma<0$ and $|v-v_*|>1$, then by \eqref{a.0.2.main.step}, we obtain
$$\sup_j |a_{ij}(v-v_*)v_i| \ls |v-v_*|^{1+\gamma} |v| |v_*| \ls |v||v_*|.$$

\pfstep{Step~3: Proof of \eqref{a.0.3}} For \eqref{a.0.3}, we compute
\begin{equation}\label{eq:avv}
\begin{split}
a_{ij}(v-v_*)v_i v_j =&\: |v-v_*|^{2+\gamma} \left(|v|^2 - \f{(v\cdot (v-v_*))^2}{|v-v_*|^2} \right) \\
=&\: |v-v_*|^{\gamma} \left(|v|^2 (|v|^2 + |v_*|^2 -2(v\cdot v_*)) - |v|^4 + 2|v|^2(v\cdot v_*)- (v\cdot v_*)^2 \right)\\
=&\: |v-v_*|^{\gamma} \left(|v|^2 |v_*|^2 - (v\cdot v_*)^2 \right).
\end{split}
\end{equation}
Using the Pythagorean theorem, we obtain the following estimate:
\begin{equation}\label{Pythagorean}
\begin{split}
|v|^2|v_*|^2 - (v\cdot v_*)^2 = &\: \f{1}{|v|^2} \left||v|^2 v_* -(v\cdot v_*) v\right|^2 = \f{1}{|v|^2} \left|(v\cdot (v-v_*)) v_* -(v\cdot v_*)(v-v_*)\right|^2 \leq 2|v-v_*|^2|v_*|^2.
\end{split}
\end{equation}
Putting \eqref{eq:avv} and \eqref{Pythagorean} together, and noting $2+\gamma>0$, we thus obtain
$$|a_{ij}(v-v_*)v_i v_j| \ls |v-v_*|^{2+\gamma} |v_*|^2 \ls |v|^{2+\gamma}|v_*|^2 + |v_*|^{4+\gamma}.$$

\pfstep{Step~4: Proof of \eqref{a.1.1}} By homogeneity of $\bar{a}_{ij}$, it is easy to see that
\begin{equation}\label{easy.dva}
|\rd_{v_k}\bar{a}_{ij}(v-v_*)|\ls |v-v_*|^{1+\gamma},
\end{equation}
 which implies \eqref{a.1.1}.

\pfstep{Step~5: Proof of \eqref{a.1.2}} Arguing again by the homogeneity of $\bar{a}_{ij}$, it follows that $|\rd_{v_k}[\bar{a}_{ij}(v-v_*) v_i]|\ls |v-v_*|^{1+\gamma}|v|+|v-v_*|^{2+\gamma}$, which then implies \eqref{a.1.2}.

\pfstep{Step~6: Proof of \eqref{a.2.1}} Finally, for the second derivatives of $a_{ij}$, we use homogeneity to obtain
$$|\rd_{v_\ell} \rd_{v_k} a_{ij}(v-v_*)| \ls |v-v_*|^\gamma,$$
which implies \eqref{a.2.1}. \qedhere

\end{proof}

Using Proposition~\ref{prop:a.expressions}, as well as estimates in Section~\ref{sec:Li.v.weighted}, we derive estimates for $\bar{a}_{ij}$ and its derivatives in the next few propositions. Our first proposition is the most general, but as we will see, we will need various refinements later to close our bootstrap argument.

\begin{proposition}\label{prop:ab.Li.1}
If $|\alp|+|\bt|+|\sigma| \leq \Mm-4-\max\{2,\lceil \f{2}{2+\gamma} \rceil \}$, then for $(t,x,v)\in [0,T_{Boot})\times \mathbb R^3\times \mathbb R^3$,
$$|\rd_x^\alp \rd_v^\bt Y^\sigma \bar{a}_{ij}|(t,x,v) \ls \ep^{\f 34} \vb^{2+\gamma}(1+t)^{-3+|\bt|}.$$
If $\Mm-3-\max\{2,\lceil \f{2}{2+\gamma} \rceil \} \leq |\alp|+|\bt|+|\sigma| =:k \leq \Mm-5$, then for $(t,x,v)\in [0,T_{Boot})\times \mathbb R^3\times \mathbb R^3$,
$$|\rd_x^\alp \rd_v^\bt Y^\sigma \bar{a}_{ij}|(t,x,v) \ls \ep^{\f 34} \vb^{2+\gamma} (1+t)^{-\f 32+|\bt|-(\Mm-4-k)\min\{\f 34,\f{3(2+\gamma)}{4}\}}.$$
\end{proposition}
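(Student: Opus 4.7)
The plan is to reduce the estimate to the already-established weighted $L^\infty_x L^1_v$ bounds on derivatives of $f$, using the pointwise representation of $\rd_x^\alp\rd_v^\bt Y^\sigma \bar{a}_{ij}$ as a convolution. Concretely, first I would apply the bound \eqref{a.0.1} of Proposition~\ref{prop:a.expressions} to write
$$|\rd_x^\alp \rd_v^\bt Y^\sigma \bar{a}_{ij}|(t,x,v) \ls \int_{\Rt} |v-v_*|^{2+\gamma}\,|\rd_x^\alp \rd_v^\bt Y^\sigma f|(t,x,v_*)\,\ud v_*.$$

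The next step is to extract the $\vb^{2+\gamma}$ factor from the kernel. Since $\gamma\in(-2,0)$ we have $2+\gamma\in(0,2)$, so the elementary inequality $|v-v_*|^{2+\gamma} \ls \vb^{2+\gamma} + \langle v_*\rangle^{2+\gamma}$ holds. Substituting and using $\langle v_*\rangle^{2+\gamma}\leq \langle v_*\rangle^{4}$ together with $\vb^{2+\gamma}\geq 1$, I would bound
$$|\rd_x^\alp \rd_v^\bt Y^\sigma \bar{a}_{ij}|(t,x,v) \ls \vb^{2+\gamma}\int_{\Rt}\langle v_*\rangle^{4}\,|\rd_x^\alp \rd_v^\bt Y^\sigma f|(t,x,v_*)\,\ud v_*.$$
The integral on the right is controlled by $\|\vb^{4}\wb\,\rd_x^\alp\rd_v^\bt Y^\sigma f\|_{L^\infty_x L^1_v}(t)$ (dropping the harmless $\wb$ weight, which is $\geq 1$).

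Finally, I would invoke Lemma~\ref{lem:Li.5} in the two regimes of $|\alp|+|\bt|+|\sigma|$. In the lower-derivative regime $|\alp|+|\bt|+|\sigma|\leq \Mm-4-\max\{2,\lceil\f{2}{2+\gamma}\rceil\}$ this yields the factor $\ep^{\f34}(1+t)^{-3+|\bt|}$, while in the higher-derivative regime it yields $\ep^{\f34}(1+t)^{-\f32+|\bt|-(\Mm-4-k)\min\{\f34,\f{3(2+\gamma)}{4}\}}$, which are exactly the two claimed bounds.

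There is no genuine obstacle here: the proposition is essentially a corollary of Proposition~\ref{prop:a.expressions} combined with the $L^\infty_x L^1_v$ decay of Lemma~\ref{lem:Li.5}. The only point worth flagging is that the kernel has a positive (not negative) power of $|v-v_*|$, so one must be careful to produce the $\vb^{2+\gamma}$ weight on the right-hand side rather than attempting a singular-integral estimate; the split $|v-v_*|^{2+\gamma}\ls \vb^{2+\gamma}+\langle v_*\rangle^{2+\gamma}$ together with the polynomial $v$-weights already present in Lemma~\ref{lem:Li.5} handles this cleanly.
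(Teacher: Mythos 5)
Your proposal is correct and follows essentially the same route as the paper: apply \eqref{a.0.1}, use $|v-v_*|^{2+\gamma}\ls |v|^{2+\gamma}+|v_*|^{2+\gamma}$ (valid since $2+\gamma>0$) to extract the $\vb^{2+\gamma}$ weight, and conclude via Lemma~\ref{lem:Li.5}. The intermediate step of replacing $\langle v_*\rangle^{2+\gamma}$ by $\langle v_*\rangle^{4}$ and inserting the harmless $\wb\geq 1$ factor matches what is needed to invoke Lemma~\ref{lem:Li.5} exactly as in the paper.
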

\begin{proof}
This follows from \eqref{a.0.1} in Proposition~\ref{prop:a.expressions}, the bound $|v-v_*|^{2+\gamma}\ls |v|^{2+\gamma}+|v_*|^{2+\gamma}$ (since $2+\gamma>0$), and Lemma~\ref{lem:Li.5}. \qedhere
\end{proof}

The next proposition is a variant of Proposition~\ref{prop:ab.Li.1}: it gives an improved $t$-decay rate under the assumption $|\bt|\geq 1$.

\begin{proposition}\label{prop:ab.Li.2}
If $|\alp|+|\bt|+|\sigma| \leq \Mm-4-\max\{2,\lceil \f{2}{2+\gamma} \rceil \}$ \underline{and $|\bt|\geq 1$}, then for $(t,x,v)\in [0,T_{Boot})\times \mathbb R^3\times \mathbb R^3$,
$$|\rd_x^\alp \rd_v^\bt Y^\sigma \bar{a}_{ij}|(t,x,v) \ls \ep^{\f 34} \vb^{2+\gamma}(1+t)^{-\min\{4,5+\gamma\}+|\bt|}.$$
If $\Mm-3-\max\{2,\lceil \f{2}{2+\gamma} \rceil \} \leq |\alp|+|\bt|+|\sigma|=:k \leq \Mm-5$ \underline{and $|\bt|\geq 1$}, then for $(t,x,v)\in [0,T_{Boot})\times \mathbb R^3\times \mathbb R^3$,
$$|\rd_x^\alp \rd_v^\bt Y^\sigma \bar{a}_{ij}|(t,x,v) \ls \ep^{\f 34} \vb^{2+\gamma}(1+t)^{-\min\{\f 52, \f 72+\gamma\}+|\bt|-(\Mm-4-k)\min\{\f 34,\f{3(2+\gamma)}{4}\}}.$$

\end{proposition}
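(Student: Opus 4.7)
The key idea is to exploit $|\bt|\geq 1$ by writing $\rd_v^\bt = \rd_v^{\bt'}\rd_{v_\ell}$ with $|\bt'|=|\bt|-1$, then commuting $\rd_{v_\ell}$ through $\rd_x^\alp\rd_v^{\bt'}Y^\sigma$ so that it hits $\bar{a}_{ij}$ directly. Since $[\rd_{v_\ell},Y_k]=0$ (because $Y_k=t\rd_{x_k}+\rd_{v_k}$ and $t$ is independent of $v$), this commutation is free. One can then invoke estimate \eqref{a.1.1} of Proposition~\ref{prop:a.expressions}, whose kernel is $|v-v_*|^{1+\gamma}$ rather than $|v-v_*|^{2+\gamma}$. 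The net effect is precisely to trade one $\rd_v$ derivative on $f$ (which costs a factor of $(1+t)$ in the bootstrap assumptions) for one lower power of $|v-v_*|$ in the kernel.

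The estimate thus reduces to bounding $\int |v-v_*|^{1+\gamma}|\rd_x^\alp\rd_v^{\bt'}Y^\sigma f|(t,x,v_*)\,\ud v_*$ in $L^\i_x L^\i_v$, and I would split according to the sign of $1+\gamma$. When $\gamma\in[-1,0)$, so $1+\gamma\geq 0$, the subadditivity $|v-v_*|^{1+\gamma}\ls \vb^{1+\gamma}+\langle v_*\rangle^{1+\gamma}$ together with $\langle v_*\rangle^{1+\gamma}\leq \vb^4$ reduces matters to the weighted $L^1_v$ estimate of Lemma~\ref{lem:Li.5} applied to $\rd_x^\alp\rd_v^{\bt'}Y^\sigma f$, yielding $\ep^{3/4}\vb^{1+\gamma}(1+t)^{-3+|\bt|-1}\leq \ep^{3/4}\vb^{2+\gamma}(1+t)^{-4+|\bt|}$, which matches the claim since $\min\{4,5+\gamma\}=4$ in this range. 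When $\gamma\in(-2,-1)$, so $1+\gamma<0$ and the kernel is a genuine singular integral, Lemma~\ref{lem:Li.6} applies with $\nu=-1-\gamma\in(0,1)$ and gives $\ep^{3/4}(1+t)^{-3+\nu+|\bt|-1}=\ep^{3/4}(1+t)^{-5-\gamma+|\bt|}$; using $\vb^{2+\gamma}\geq 1$, this matches the claim since $\min\{4,5+\gamma\}=5+\gamma$ here.

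For the higher-order range $\Mm-3-\max\{2,\lceil 2/(2+\gamma)\rceil\}\leq k\leq \Mm-5$, the same strategy applies, using the second parts of Lemmas~\ref{lem:Li.5} and \ref{lem:Li.6}. The shift $k'=k-1$ in the derivative count enters the descent weight $(\Mm-4-k')\min\{3/4,3(2+\gamma)/4\}$, producing a bound that is stronger than the asserted one by exactly $\min\{3/4,3(2+\gamma)/4\}$, leaving some slack. The one boundary case needing separate attention is $k=\Mm-3-\max\{2,\lceil 2/(2+\gamma)\rceil\}$, where $k'=k-1$ drops back into the first range and only the first part of the relevant lemma is available; a short arithmetic check using $\lceil x\rceil-1\leq x$ with $x=2/(2+\gamma)$ shows the resulting bound is still consistent with the claim.

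The main obstacle is essentially pure bookkeeping: matching the two regimes of $\gamma$ with the two branches of the asserted exponent, and verifying that the descent coefficient $(\Mm-4-k)\min\{3/4,3(2+\gamma)/4\}$ survives the $k\to k-1$ shift. Conceptually, the proposition is an immediate consequence of \eqref{a.1.1}: every $\rd_v$ derivative landing on $\bar{a}_{ij}$ lowers the homogeneity of the convolution kernel by one and is strictly preferable to letting the same derivative land on $f$.
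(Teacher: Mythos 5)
Your proposal matches the paper's proof essentially exactly: both peel off one $\rd_v$ derivative to land on $\bar{a}_{ij}$, invoke the $|v-v_*|^{1+\gamma}$ kernel bound \eqref{a.1.1}, and then split on the sign of $1+\gamma$, applying Lemma~\ref{lem:Li.5} (with the subadditivity $|v-v_*|^{1+\gamma}\ls |v|^{1+\gamma}+|v_*|^{1+\gamma}$) when $\gamma\in[-1,0)$ and Lemma~\ref{lem:Li.6} with $\nu=-(1+\gamma)$ when $\gamma\in(-2,-1)$. You also correctly note, as the paper's closing remark does, that the resulting $\vb$-weight is actually a bit stronger than $\vb^{2+\gamma}$ in the $1+\gamma\geq 0$ case but the statement is deliberately weakened for uniformity.
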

\begin{proof}
Assume throughout the proof that $|\bt|\geq 1$. We start with \eqref{a.1.1} in Proposition~\ref{prop:a.expressions} and consider separately $1+\gamma\geq 0$ and $1+\gamma< 0$.

Suppose $1+\gamma \geq 0$. Then we have
$$|\rd_x^\alp \rd_v^\bt Y^\sigma \bar{a}_{ij}|(t,x,v) \ls \sum_{|\bt'|\leq |\bt|-1} \int_{\mathbb R^3} (|v|^{1+\gamma}+|v_*|^{1+\gamma}) |\rd_x\rd_v^{\bt'} Y^\sigma f|(t,x,v_*)\,\ud v_*.$$
The desired estimate then follows from Lemma~\ref{lem:Li.5}.

Suppose $1+\gamma < 0$. Then we have
$$|\rd_x^\alp \rd_v^\bt Y^\sigma \bar{a}_{ij}|(t,x,v) \ls \sum_{|\bt'|\leq |\bt|-1} \int_{\mathbb R^3} |v-v_*|^{1+\gamma} |\rd_x\rd_v^{\bt'} Y^\sigma f|(t,x,v_*)\,\ud v_*.$$
The desired estimate then follows from Lemma~\ref{lem:Li.6} with $\nu = -(1+\gamma)$.

(We note explicitly that indeed an estimate with an even better $\vb$ weight still holds, but we will be content with the stated weaker estimate since this allows for an easier comparison with the estimates in Proposition~\ref{prop:ab.Li.weighted}, which will in turn allow us to handle our estimates more systematically later.) \qedhere
\end{proof}

The next proposition is another variant of Proposition~\ref{prop:ab.Li.1} which gives an improved decay rate under the assumption $|\alp|\geq 1$. (Note that the estimate is very weak as $t\to 0$.)
\begin{proposition}\label{prop:ab.Li.3}
If $|\alp|+|\bt|+|\sigma| \leq \Mm-4-\max\{2,\lceil \f{2}{2+\gamma} \rceil \}$ \underline{and $|\alp|\geq 1$}, then for $(t,x,v)\in [0,T_{Boot})\times \mathbb R^3\times \mathbb R^3$,
$$|\rd_x^\alp \rd_v^\bt Y^\sigma \bar{a}_{ij}|(t,x,v) \ls \ep^{\f 34} \vb^{2+\gamma}t^{-1}(1+t)^{-\min\{3,4+\gamma\}+|\bt|}.$$
If $\Mm-3-\max\{2,\lceil \f{2}{2+\gamma} \rceil \} \leq |\alp|+|\bt|+|\sigma|=:k \leq \Mm-5$ \underline{and $|\alp|\geq 1$}, then for $(t,x,v)\in [0,T_{Boot})\times \mathbb R^3\times \mathbb R^3$,
$$|\rd_x^\alp \rd_v^\bt Y^\sigma \bar{a}_{ij}|(t,x,v) \ls \ep^{\f 34} \vb^{2+\gamma}t^{-1}(1+t)^{-\min\{\f 32, \f 52+\gamma\}+|\bt|-(\Mm-4-k)\min\{\f 34,\f{3(2+\gamma)}{4}\}}.$$
\end{proposition}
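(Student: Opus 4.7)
The plan is exactly the argument previewed in Section~\ref{sec:hierarchy.of.norms}, item~3: trade one of the $\rd_x$ derivatives for a factor of $t^{-1}$ via the algebraic identity $\rd_{x_k} = t^{-1}(Y_k - \rd_{v_k})$ coming from \eqref{def:Y}, and then invoke the two preceding propositions. Because $|\alp|\geq 1$ we may pick some index $k$ with $\alp_k\geq 1$, write $\alp = \alp' + e_k$, and use that $\rd_x$, $\rd_v$ and $Y$ all mutually commute as differential operators to write
\begin{equation*}
\rd_x^\alp \rd_v^\bt Y^\sigma \bar{a}_{ij} \;=\; t^{-1} \rd_x^{\alp'} \rd_v^{\bt} Y^{\sigma + e_k} \bar{a}_{ij} \;-\; t^{-1} \rd_x^{\alp'} \rd_v^{\bt + e_k} Y^{\sigma} \bar{a}_{ij}.
\end{equation*}

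The first term on the right has the same total number of derivatives and the same $\rd_v$-multi-index $\bt$ as the original, so Proposition~\ref{prop:ab.Li.1} applies directly (with the appropriate low- or high-order regime inherited from that of $\rd_x^\alp\rd_v^\bt Y^\sigma$). The second term also has the same total derivative count, but now a $\rd_v$-multi-index $\bt+e_k$ of length $\geq 1$; hence the extra hypothesis of Proposition~\ref{prop:ab.Li.2} is met and we may apply it. Summing the two bounds and keeping the weaker (larger) power of $(1+t)$ gives the claim.

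A quick exponent check closes it out. In the low-order regime the two applications give $t^{-1}\ep^{\f 34}\vb^{2+\gamma}(1+t)^{-3+|\bt|}$ and $t^{-1}\ep^{\f 34}\vb^{2+\gamma}(1+t)^{-\min\{4,5+\gamma\}+(|\bt|+1)} = t^{-1}\ep^{\f 34}\vb^{2+\gamma}(1+t)^{-\min\{3,4+\gamma\}+|\bt|}$; since $\min\{3,4+\gamma\}\leq 3$ the second bound is weaker and absorbs the first, matching the stated claim. In the high-order regime the shift $\bt\mapsto \bt+e_k$ in Proposition~\ref{prop:ab.Li.2} converts its exponent $-\min\{\f 52, \f 72+\gamma\}+(|\bt|+1)$ into $-\min\{\f 32, \f 52+\gamma\}+|\bt|$, which is precisely the exponent in the statement, while the corresponding bound from Proposition~\ref{prop:ab.Li.1} is again strictly stronger.

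There is no genuine obstacle beyond bookkeeping: Propositions~\ref{prop:ab.Li.1}--\ref{prop:ab.Li.2} do all of the real analytic work, and commutativity of the vector fields makes the reduction painless. The one point worth highlighting is that we must spend a $\rd_x$ (rather than a $\rd_v$ or $Y$) to generate the $t^{-1}$, which is exactly why the hypothesis $|\alp|\geq 1$ enters in an essential way; this also explains why the bound degenerates as $t\to 0^+$ and is consistent with the asymmetric role played by $\rd_x$ throughout the hierarchy of weighted norms.
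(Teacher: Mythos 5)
Your proposal is correct and is essentially identical to the paper's own proof: both peel off one $\rd_{x_k}$ via $\rd_{x_k} = t^{-1}Y_k - t^{-1}\rd_{v_k}$, apply Proposition~\ref{prop:ab.Li.1} to the $Y$-term and Proposition~\ref{prop:ab.Li.2} to the $\rd_v$-term, and observe that the $\rd_v$-term bound dominates. The exponent bookkeeping also checks out in both regimes.
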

\begin{proof}
We rely on the following simple pointwise bound, which is obtained by writing $\rd_x = t^{-1} (t\rd_x + \rd_v) - t^{-1}\rd_v = t^{-1} Y - t^{-1}\rd_v$:
\begin{equation*}
\begin{split}
&\: |\rd_x^\alp \rd_v^\bt Y^\sigma \bar{a}_{ij}(t,x,v)|  \\
\leq &\: t^{-1} \sum_{\substack{|\alp'|= |\alp|-1\\ |\sigma'|= |\sigma|+1}} |\rd_x^{\alp'} \rd_v^{\bt} Y^{\sigma'} \bar{a}_{ij}(t,x,v)| + t^{-1} \sum_{\substack{|\alp'|= |\alp|-1\\ |\bt'|= |\bt|+1}} |\rd_x^{\alp'} \rd_v^{\bt'} Y^{\sigma} \bar{a}_{ij}(t,x,v)|.
\end{split}
\end{equation*}
The desired estimate is then an immediate consequence of Propositions~\ref{prop:ab.Li.1} (for the first term) and \ref{prop:ab.Li.2} (for the second term). \qedhere
\end{proof}

We have another variant of Proposition~\ref{prop:ab.Li.1}, which again has better $t$-decay rate as $t\to +\infty$. Unlike Propositions~\ref{prop:ab.Li.2} and \ref{prop:ab.Li.3}, this does not require $|\bt|\geq 1$ or $|\alp|\geq 1$, but there is a loss in $\wb$ weights. (Note also that the estimate is very weak as $t\to 0$.)
\begin{proposition}\label{prop:ab.Li.null.cond}
If $|\alp|+|\bt|+|\sigma|\leq \Mm-4-\max\{2,\lceil \f{2}{2+\gamma}\rceil\}$, then for $(t,x,v)\in [0,T_{Boot})\times \mathbb R^3\times \mathbb R^3$,
$$\max_{i,j}|\rd_x^\alp \rd_v^\bt Y^\sigma \bar{a}_{ij}|(t,x,v)\ls \ep^{\f 34} \wb^{\min\{1,2+\gamma\}}\vb^{\max\{0,1+\gamma\}}t^{-\min\{2+\gamma,1\}}(1+t)^{-3+|\bt|}.$$
If $\Mm-3-\max\{2,\lceil \f{2}{2+\gamma}\rceil\}\leq|\alp|+|\bt|+|\sigma| =: k \leq \Mm-5$, then for $(t,x,v)\in [0,T_{Boot})\times \mathbb R^3\times \mathbb R^3$,
$$\max_{i,j}|\rd_x^\alp \rd_v^\bt Y^\sigma \bar{a}_{ij}|(t,x,v)\ls \ep^{\f 34} \wb^{\min\{1,2+\gamma\}}\vb^{\max\{0,1+\gamma\}}t^{-\min\{2+\gamma,1\}}(1+t)^{-\f 32+|\bt|-(\Mm-4-k)\min\{\f 34,\f{3(2+\gamma)}{4}\}}.$$
\end{proposition}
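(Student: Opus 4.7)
My plan is to begin from the pointwise bound \eqref{a.0.1} of Proposition~\ref{prop:a.expressions}, which reads
\[
|\rd_x^\alp\rd_v^\bt Y^\sigma \bar{a}_{ij}|(t,x,v) \ls \int_{\Rt} |v-v_*|^{2+\gamma}\,|\rd_x^\alp\rd_v^\bt Y^\sigma f|(t,x,v_*)\,\ud v_*,
\]
and then convert powers of $|v-v_*|$ into $\wb$-weights through the elementary geometric inequality
\[
|v-v_*| \;=\; t^{-1}\bigl|(tv-x)-(tv_*-x)\bigr| \;\le\; t^{-1}\bigl(\wb + \langle x - tv_*\rangle\bigr).
\]
This is precisely the quantitative incarnation of the null structure described in Section~\ref{sec:hierarchy.of.norms}: a single power of $|v-v_*|$ buys one $t^{-1}$ at the cost of a $\wb$ weight. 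The rest of the proof is to distribute the $2+\gamma$ power across this inequality in the most economical way.

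Next I would split on the sign of $1+\gamma$. For $\gamma\in[-1,0)$, so that $2+\gamma\ge 1$ and $1+\gamma\ge 0$, I would write $|v-v_*|^{2+\gamma} = |v-v_*|^{1+\gamma}\cdot|v-v_*|$, bound $|v-v_*|^{1+\gamma}\ls \vb^{1+\gamma}+\langle v_*\rangle^{1+\gamma}$ by the triangle inequality (using that $1+\gamma\in[0,1)$), and apply the geometric inequality above only to the single linear factor. For $\gamma\in(-2,-1)$, so that $2+\gamma\in(0,1)$, I would apply concavity $(a+b)^{2+\gamma}\le a^{2+\gamma}+b^{2+\gamma}$ directly to the geometric bound raised to the $(2+\gamma)$-th power. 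In both regimes the outcome is a finite sum of terms of the form $t^{-\min\{1,2+\gamma\}}\,\phi(v)\,\psi(v_*)$ with $\phi(v)\le \vb^{\max\{0,1+\gamma\}}\wb^{\min\{1,2+\gamma\}}$ and $\psi(v_*)\le \langle v_*\rangle^{\max\{0,1+\gamma\}}\langle x-tv_*\rangle^{\min\{1,2+\gamma\}}$.

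To conclude, I would pull the $v$-only factor $\phi(v)$ out of the $v_*$ integral and invoke Lemma~\ref{lem:Li.5} to control $\int \psi(v_*)\,|\rd_x^\alp\rd_v^\bt Y^\sigma f|(t,x,v_*)\,\ud v_*$. Since $\max\{0,1+\gamma\}\le 1\le 4$ and $\min\{1,2+\gamma\}\le 1$, the weight $\psi(v_*)$ is comfortably dominated by the available $\langle v_*\rangle^4\langle x-tv_*\rangle$ in Lemma~\ref{lem:Li.5}, which then supplies the factor $\ep^{\f 34}(1+t)^{-3+|\bt|}$ in the lower-order regime and the corresponding higher-order bound in the intermediate regime. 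Multiplying the two pieces produces exactly the stated $\ep^{\f 34}\,\wb^{\min\{1,2+\gamma\}}\vb^{\max\{0,1+\gamma\}}\,t^{-\min\{2+\gamma,1\}}(1+t)^{-3+|\bt|}$ (or its higher-order analogue).

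The only obstacle I anticipate is bookkeeping: one must verify that the exponents coming out of the two cases $\gamma\ge -1$ and $\gamma<-1$ match the $\min$/$\max$ expressions in the statement. No new analytic input is needed beyond the geometric inequality above and the weighted $L^1_v$ bound of Lemma~\ref{lem:Li.5} already in hand.
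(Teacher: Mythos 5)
Your proposal is correct and takes essentially the same route as the paper. Both start from \eqref{a.0.1}, trade a $\min\{2+\gamma,1\}$-power of $|v-v_*|$ for $t^{-\min\{2+\gamma,1\}}(|x-tv|+|x-tv_*|)^{\min\{2+\gamma,1\}}$, keep the leftover $|v-v_*|^{\max\{0,1+\gamma\}}$ as a $\vb$--$\langle v_*\rangle$ weight, and close via Lemma~\ref{lem:Li.5}; the only difference is presentational — you case-split on the sign of $1+\gamma$ (using subadditivity of $x\mapsto x^p$ for $p\in(0,1)$ when $\gamma<-1$), whereas the paper packages both cases into the single inequality $|v-v_*|^{2+\gamma}\ls t^{-\min\{2+\gamma,1\}}(|x-tv|+|x-tv_*|)^{\min\{2+\gamma,1\}}|v-v_*|^{\max\{0,1+\gamma\}}$.
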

\begin{proof}
The idea is to make use of the weight $|v-v_*|^{2+\gamma}$ and write $|v-v_*|^{2+\gamma} \ls  t^{-\min\{2+\gamma,1\}}(|x-tv|+|x-tv_*|)^{\min\{2+\gamma,1\}} |v-v_*|^{\max\{0,1+\gamma\}}$. Hence, by \eqref{a.0.1} in Proposition~\ref{prop:a.expressions},
\begin{equation*}
\begin{split}
&\: |\rd_x^\alp \rd_v^\bt Y^\sigma \bar{a}_{ij}|(t,x,v)\\
\ls &\: \int_{\mathbb R^3} |v-v_*|^{2+\gamma}|\rd_x^\alp \rd_v^\bt Y^\sigma f|(t,x,v_*)\,\ud v_* \\
\ls &\: t^{-\min\{2+\gamma,1\}}\int_{\mathbb R^3} (|x-tv|+|x-tv_*|)^{\min\{2+\gamma,1\}} |v-v_*|^{\max\{0,1+\gamma\}}|\rd_x^\alp \rd_v^\bt Y^\sigma f|(t,x,v_*)\,\ud v_* \\
\ls &\: t^{-\min\{2+\gamma,1\}}\wb^{\min\{2+\gamma,1\}} \vb^{\max\{0,1+\gamma\}}\\
&\: \qquad \times\int_{\mathbb R^3} \langle x-tv_*\rangle^{\min\{2+\gamma,1\}} \langle v_* \rangle^{\max\{0,1+\gamma\}}|\rd_x^\alp \rd_v^\bt Y^\sigma f|(t,x,v_*)\,\ud v_*.
\end{split}
\end{equation*}
The desired estimate then follows from Lemma~\ref{lem:Li.5}. \qedhere
\end{proof}

Our final estimate in the subsubsection is an analogue of Proposition~\ref{prop:ab.Li.1}, but we now also allow contracting $\bar{a}_{ij}$ with $v$'s.
\begin{proposition}\label{prop:ab.Li.weighted}
If $|\alp|+|\bt|+|\sigma|\leq \Mm-4-\max\{2,\lceil \f{2}{2+\gamma} \rceil \}$, then for $(t,x,v)\in [0,T_{Boot})\times \mathbb R^3\times \mathbb R^3$,
$$\max_{i}|\rd_x^\alp \rd_v^\bt Y^\sigma (\bar{a}_{ij}v_j)|(t,x,v)+ |\rd_x^\alp \rd_v^\bt Y^\sigma (\bar{a}_{ij}v_iv_j)|(t,x,v) \ls \ep^{\f 34} \vb^{\max\{2+\gamma,1\}}(1+t)^{-3+|\bt|}.$$
If $\Mm-3-\max\{2,\lceil \f{2}{2+\gamma} \rceil \} \leq |\alp|+|\bt|+|\sigma| =:k \leq \Mm-5$, then for $(t,x,v)\in [0,T_{Boot})\times \mathbb R^3\times \mathbb R^3$,
\begin{equation*}
\begin{split}
&\: \max_{i}|\rd_x^\alp \rd_v^\bt Y^\sigma (\bar{a}_{ij}v_j)|(t,x,v) + |\rd_x^\alp \rd_v^\bt Y^\sigma (\bar{a}_{ij}v_iv_j)|(t,x,v)\\
\ls &\: \ep^{\f 34} \vb^{\max\{2+\gamma,1\}} (1+t)^{-\f 32+|\bt|-(\Mm-4-k)\min\{\f 34,\f{3(2+\gamma)}{4}\}}.
\end{split}
\end{equation*}
\end{proposition}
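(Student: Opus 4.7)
The plan is to combine the pointwise representations in Proposition~\ref{prop:a.expressions} with the weighted $L^\infty_x L^1_v$ estimate of Lemma~\ref{lem:Li.5}, exactly in the style of the proof of Proposition~\ref{prop:ab.Li.1}. The key observation is that all the polynomial weights in $v_\ast$ that appear below are bounded above by $\langle v_\ast\rangle^4$, which is precisely the $v$-weight built into Lemma~\ref{lem:Li.5}.

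First I would handle the term $\bar a_{ij}v_j$. By \eqref{a.0.2} in Proposition~\ref{prop:a.expressions}, one has
\[
\bigl|\rd_x^\alp \rd_v^\bt Y^\sigma(\bar a_{ij} v_i)\bigr|(t,x,v) \ls \vb^{\max\{2+\gamma,1\}}\int_{\mathbb R^3}\langle v_\ast\rangle^{\max\{2+\gamma,1\}}\,\bigl|\rd_x^\alp\rd_v^\bt Y^\sigma f\bigr|(t,x,v_\ast)\,\ud v_\ast.
\]
Since $\gamma\in(-2,0)$ gives $\max\{2+\gamma,1\}\leq 2\leq 4$, we have $\langle v_\ast\rangle^{\max\{2+\gamma,1\}}\leq \vb_\ast^4\leq \vb_\ast^4\wb_\ast\cdot 1$, so the integral is controlled by $\|\vb^4\wb\,\rd_x^\alp\rd_v^\bt Y^\sigma f\|_{L^\infty_x L^1_v}(t)$. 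Lemma~\ref{lem:Li.5} then yields the desired decay in $t$ in both the low-order and high-order regimes, with $\vb^{\max\{2+\gamma,1\}}$ coming out as a prefactor.

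For the term $\bar a_{ij} v_i v_j$, I would invoke \eqref{a.0.3} of Proposition~\ref{prop:a.expressions} to write
\[
\bigl|\rd_x^\alp\rd_v^\bt Y^\sigma(\bar a_{ij}v_iv_j)\bigr|(t,x,v)\ls \vb^{2+\gamma}\int_{\mathbb R^3}|v_\ast|^2 |\rd_x^\alp\rd_v^\bt Y^\sigma f|(t,x,v_\ast)\,\ud v_\ast + \int_{\mathbb R^3}|v_\ast|^{4+\gamma}|\rd_x^\alp\rd_v^\bt Y^\sigma f|(t,x,v_\ast)\,\ud v_\ast.
\]
Both weights $|v_\ast|^2$ and $|v_\ast|^{4+\gamma}$ are bounded by $\langle v_\ast\rangle^4$ (using $\gamma<0$ for the latter), so once again Lemma~\ref{lem:Li.5} controls each $v_\ast$-integral by $\ep^{3/4}$ times the appropriate power of $(1+t)$. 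Since $\vb\geq 1$ and $\vb^{2+\gamma}\leq \vb^{\max\{2+\gamma,1\}}$, both contributions combine into the stated prefactor $\vb^{\max\{2+\gamma,1\}}$ and the result follows.

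There is essentially no obstacle here beyond verifying the weight bookkeeping: the only point that needs care is checking that $\max\{2+\gamma,1\}$, $2+\gamma$ and $4+\gamma$ all lie in $[0,4]$ for $\gamma\in(-2,0)$, so that Lemma~\ref{lem:Li.5}, which is tuned to the weight $\vb^4\wb$, applies directly to each piece. This is a routine adaptation of the proof of Proposition~\ref{prop:ab.Li.1} and carries over verbatim to both the low-order and high-order regimes of $|\alp|+|\bt|+|\sigma|$.
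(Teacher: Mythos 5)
Your proposal is correct and follows exactly the same route as the paper: the paper's proof is the one-line remark that the estimates follow from combining \eqref{a.0.2}, \eqref{a.0.3}, and Lemma~\ref{lem:Li.5}, and your write-up is a careful expansion of that line. The weight bookkeeping you check (that $\max\{2+\gamma,1\}$, $2+\gamma$, and $4+\gamma$ all lie in $[0,4]$ so that the $v_*$-weights are dominated by $\langle v_*\rangle^4\langle x-tv_*\rangle$, and that $\vb\geq 1$ absorbs the $\vb$-free piece of \eqref{a.0.3} into the prefactor $\vb^{\max\{2+\gamma,1\}}$) is exactly the content being elided in the paper.
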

\begin{proof}
The follows from combining \eqref{a.0.2} and \eqref{a.0.3} in Proposition~\ref{prop:a.expressions} and Lemma~\ref{lem:Li.5}. \qedhere
\end{proof}

\subsubsection{$L^\i_x L^\i_v$ estimates for $\bar{c}$ and its derivatives}

\begin{proposition}\label{prop:cb.Li}
If $|\alp|+|\bt|+|\sigma|\leq \Mm-4-\max\{2,\lceil \f{2}{2+\gamma}\rceil\}$, then for $(t,x,v)\in [0,T_{Boot})\times \mathbb R^3\times \mathbb R^3$,
$$|\rd_x^\alp \rd_v^\bt Y^\sigma \bar{c}|(t,x,v)\ls \ep^{\f 34} (1+t)^{-3-\gamma+|\bt|}.$$
If $\Mm-3-\max\{2,\lceil \f{2}{2+\gamma}\rceil\}\leq|\alp|+|\bt|+|\sigma| =: k \leq \Mm-5$, then for $(t,x,v)\in [0,T_{Boot})\times \mathbb R^3\times \mathbb R^3$,
$$|\rd_x^\alp \rd_v^\bt Y^\sigma \bar{c}|(t,x,v)\ls \ep^{\f 34} (1+t)^{-\f 32-\gamma+|\bt|-(\Mm-4-k)\min\{\f 34,\f{3(2+\gamma)}{4}\}}.$$
\end{proposition}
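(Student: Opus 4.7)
The plan is to reduce the proposition to a direct application of Lemma~\ref{lem:Li.6}, much as Proposition~\ref{prop:ab.Li.1} was deduced from Lemma~\ref{lem:Li.5}. The crucial preliminary fact is that, since $\gamma \in (-2, 0)$, the kernel $c(z) = -2(\gamma+3)|z|^\gamma$ is locally integrable on $\mathbb R^3$. Therefore, exactly as in Step~0 of Proposition~\ref{prop:a.expressions} (specialized to $|\bt'|=0$), the commutativity of $\rd_x$ and $t\rd_x$ with convolution in $v$, combined with integration by parts in $v_*$ to transfer each $\rd_v$ off the kernel, will give the identity
\begin{equation*}
\rd_x^\alp \rd_v^\bt Y^\sigma \bar{c}(t,x,v) = \int_{\mathbb R^3} c(v-v_*)\,(\rd_x^\alp \rd_v^\bt Y^\sigma f)(t,x,v_*)\,\ud v_*.
\end{equation*}

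From here, the pointwise bound $|c(v-v_*)| \ls |v-v_*|^\gamma$, together with the fact that $-\gamma \in (0,2) \subset (0,3)$, allows me to invoke Lemma~\ref{lem:Li.6} with $\nu = -\gamma$. This directly produces the factor $(1+t)^{-3 + \nu + |\bt|} = (1+t)^{-3-\gamma+|\bt|}$ in the low-derivative regime, and the factor $(1+t)^{-\f 32 - \gamma + |\bt| - (\Mm-4-k)\min\{\f 34, \f{3(2+\gamma)}{4}\}}$ in the intermediate-derivative regime, both matching the stated bounds.

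There is essentially no obstacle: this proposition is genuinely simpler than Proposition~\ref{prop:ab.Li.1}. The kernel $c$ carries no polynomial growth in $|v-v_*|$, so there is no need for a splitting of the form $|v-v_*|^{2+\gamma} \ls |v|^{2+\gamma} + |v_*|^{2+\gamma}$, and consequently no $\vb$-weight appears on the right-hand side. The singularity $|v-v_*|^\gamma$ at the diagonal is precisely the kind that Lemma~\ref{lem:Li.6} is designed to absorb.
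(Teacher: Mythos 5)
Your proposal matches the paper's proof exactly: bound $|\rd_x^\alp\rd_v^\bt Y^\sigma \bar{c}|$ pointwise by the convolution of $|v-v_*|^\gamma$ against $|\rd_x^\alp\rd_v^\bt Y^\sigma f|$, then apply Lemma~\ref{lem:Li.6} with $\nu=-\gamma$. The observation that no $\vb$-weight is needed (unlike for $\bar a_{ij}$) is also correct and implicit in the paper's treatment.
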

\begin{proof}
By \eqref{c.def} and \eqref{bar.def}, we have
$$|\rd_x^\alp \rd_v^\bt Y^\sigma \bar{c}|(t,x,v) \ls \int_{\mathbb R^3} |v-v_*|^{\gamma}  |\rd_x^\alp \rd_v^\bt Y^\sigma f|(t,x,v_*)\,\ud v_*.$$
The conclusion then follows from Lemma~\ref{lem:Li.6} with $\nu = -\gamma$. \qedhere
\end{proof}

\subsection{The $L^2_x$ estimates for $\bar{a}_{ij}$, $\bar{c}$ and their derivatives}\label{sec:coeff.L2}

\subsubsection{Preliminary estimates}

\begin{lemma}\label{lem:decay.using.weights}
Let $h:[0,T_{Boot})\times \mathbb R^3\times \mathbb R^3$ be a smooth function. Then 
\begin{equation}\label{eq:decay.using.weights}
\|h\|_{L^1_v}(t,x)\ls  (1+t)^{-\f {3}{2}} \|\vb^2 \wb^2 h(t,x,v)\|_{L^2_v}.
\end{equation}
\end{lemma}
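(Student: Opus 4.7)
The plan is to apply Cauchy--Schwarz to split off the weight $\vb^2 \wb^2$ and then estimate the remaining weight integral $\int \vb^{-4} \wb^{-4}\, \ud v$ by $(1+t)^{-3}$.

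More concretely, I would first write
\begin{equation*}
\int_{\mathbb R^3} |h|(t,x,v)\,\ud v = \int_{\mathbb R^3} \frac{1}{\vb^{2}\wb^{2}}\cdot \vb^{2}\wb^{2}|h|(t,x,v)\,\ud v
\leq \left(\int_{\mathbb R^3} \frac{\ud v}{\vb^{4}\wb^{4}}\right)^{\!1/2} \|\vb^{2}\wb^{2} h\|_{L^2_v}(t,x),
\end{equation*}
so the problem reduces to proving
\begin{equation*}
\int_{\mathbb R^3} \frac{\ud v}{\langle v\rangle^{4}\langle x-tv\rangle^{4}} \lesssim (1+t)^{-3}.
\end{equation*}

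For $t \leq 1$ this is immediate from $\langle x-tv\rangle^{-4}\leq 1$ and $\int \langle v\rangle^{-4}\,\ud v < \infty$. For $t > 1$, I would perform the change of variables $w = x - tv$ (so $\ud v = t^{-3}\,\ud w$), which gives
\begin{equation*}
\int_{\mathbb R^3} \frac{\ud v}{\langle v\rangle^{4}\langle x-tv\rangle^{4}}
= t^{-3}\int_{\mathbb R^3} \frac{\ud w}{\langle (x-w)/t\rangle^{4}\langle w\rangle^{4}}
\leq t^{-3}\int_{\mathbb R^3} \frac{\ud w}{\langle w\rangle^{4}} \lesssim t^{-3},
\end{equation*}
where we simply threw away the factor $\langle (x-w)/t\rangle^{-4}\leq 1$. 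Combining the two regimes and taking the square root yields the claimed bound.

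I do not expect any obstacle here; this is essentially the observation already used in \eqref{stupid.dispersion} of the introduction, recast in $L^2_v$ rather than $L^\infty_v$. The only small point to be careful with is the uniform treatment of small and large $t$ via the $(1+t)^{-3/2}$ factor, which is handled by the split above.
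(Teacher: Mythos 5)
Your proof is correct, and it is in fact a little cleaner than the one in the paper. Both arguments split into $t\leq 1$ and $t>1$, but differ in how they handle $t>1$: the paper partitions the $v$-domain into $\{|v-x/t|\leq t^{-1}\}$ and its complement, applies Cauchy--Schwarz separately in each region, and uses only the $\wb$-weight (the $\vb$-weight is used only in the short-time regime). You instead apply a single Cauchy--Schwarz with the full weight $\vb^2\wb^2$ and then bound the weight integral $\int \vb^{-4}\wb^{-4}\,\ud v$ uniformly in $x$ by the change of variables $w=x-tv$, discarding the factor $\langle (x-w)/t\rangle^{-4}\leq 1$. Your change-of-variables bound on the weight integral is a unified computation that avoids the region splitting and gives the same conclusion; the only trade-off is that you use both weights in the long-time regime where the paper gets away with only $\wb$, but since both are available this costs nothing. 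Both the decay rate $(1+t)^{-3}$ in the weight integral and the uniformity in $x$ are correctly established.
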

\begin{proof}
\pfstep{Step~1: $0\leq t< 1$} Suppose $t\in [0,1)$. This is the easy case: we simply use H\"older's inequality to obtain
\begin{equation*}
\begin{split}
 \int_{\mathbb R^3} |h| (t,x,v)\, \ud v \ls &\: (\int_{\mathbb R^3} \vb^4 h^2 (t,x,v)\, \ud v)^{\f 12}(\int_{\mathbb R^3} \vb^{-4}\, \ud v)^{\f{1}{2}}\\
\ls &\: (\int_{\mathbb R^3} \vb^4 h^2 (t,x,v)\, \ud v)^{\f 12}.
\end{split}
\end{equation*}
This implies \eqref{eq:decay.using.weights} for $0 \leq t < 1$.

\pfstep{Step~2: $t\geq 1$} Suppose now $t\geq 1$. We again use H\"older's inequality, except that we need to partition the region of integration in order to obtain decay from the $|x-tv|$-weights. More precisely,
\begin{equation*}
\begin{split}
&\: \int_{\mathbb R^3} |h| (t,x,v)\, \ud v \\
\ls &\: \int_{\mathbb R^3\cap \{|v-\f{x}{t}|\leq t^{-1}\}} |h| (t,x,v)\, \ud v + \int_{\mathbb R^3\cap \{|v-\f{x}{t}|\geq t^{-1}\}} |h|(t,x,v)\, \ud v \\
\ls &\: (\int_{\mathbb R^3\cap \{|v-\f{x}{t}|\leq t^{-1}\}} h^2(t,x,v)\, \ud v)^{\f 12}(\int_{\mathbb R^3\cap \{|v-\f{x}{t}|\leq t^{-1}\}}\, \ud v)^{\f 12} \\
&\: + (\int_{\mathbb R^3\cap \{|v-\f{x}{t}|\geq t^{-1}\}} |v-\f{x}{t}|^4 h^2(t,x,v)\, \ud v)^{\f 12} (\int_{\mathbb R^3\cap \{|v-\f{x}{t}|\geq t^{-1}\}} |v-\f{x}{t}|^{-4} \,\ud v)^{\f 12}\\
\ls &\: t^{-\f 32}  (\int_{\mathbb R^3} h^2(t,x,v)\, \ud v)^{\f 12} + t^{-2}\cdot (\int_{\mathbb R^3} \wb^4 h^2(t,x,v)\, \ud v)^{\f p2} \cdot t^{\f 12}\\
\ls &\: t^{-\f 32} (\int_{\mathbb R^3} \wb^4 h^2(t,x,v)\, \ud v)^{\f 12}.
\end{split}
\end{equation*} 
This yields \eqref{eq:decay.using.weights} for $t\geq 1$. \qedhere
\end{proof}

Lemma~\ref{lem:decay.using.weights} implies the following $L^2_xL^1_v$ estimate.
\begin{lemma}\label{lem:L1L2}
Let $h:[0,T_{Boot})\times \mathbb R^3\times \mathbb R^3$ be a smooth function. Then 
$$\|h\|_{L^2_x L^1_v}(t) \ls (1+t)^{-\f 32}\|\vb^2\wb^2 h \|_{L^2_xL^2_v}.$$
\end{lemma}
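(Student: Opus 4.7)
The plan is essentially to combine the pointwise-in-$x$ estimate of Lemma~\ref{lem:decay.using.weights} with Fubini/Minkowski. More precisely, Lemma~\ref{lem:decay.using.weights} gives, for each fixed $(t,x)$, the bound
$$\|h(t,x,\cdot)\|_{L^1_v}\ls (1+t)^{-\f 32}\|\vb^2\wb^2 h(t,x,\cdot)\|_{L^2_v},$$
with implicit constant independent of $x$. Since the exponent on the outer $L^p_x$ we want matches the exponent on the $L^2_v$ appearing on the right, the argument is just squaring and integrating in $x$.

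Explicitly, I would write
\begin{equation*}
\|h\|_{L^2_xL^1_v}^2(t) = \int_{\mathbb R^3} \|h(t,x,\cdot)\|_{L^1_v}^2\,\ud x \ls (1+t)^{-3}\int_{\mathbb R^3}\|\vb^2\wb^2 h(t,x,\cdot)\|_{L^2_v}^2\,\ud x = (1+t)^{-3}\|\vb^2\wb^2 h\|_{L^2_xL^2_v}^2,
\end{equation*}
and then take the square root. There is no real obstacle here: the only point to note is that the $L^2_x$ norm is the $L^2$ norm that already appears on the right-hand side of the pointwise bound, so no interpolation or Minkowski inequality is needed beyond swapping the order of squaring and $x$-integration. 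The lemma follows directly.
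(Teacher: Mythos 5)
Your proof is correct and follows essentially the same route as the paper: both apply Lemma~\ref{lem:decay.using.weights} pointwise in $x$ and then integrate the squared inequality over $x$ (the paper writes this with square roots outside, but the content is identical). No issues.
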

\begin{proof}
By Lemma~\ref{lem:decay.using.weights}, we have
\begin{equation*}
\begin{split}
\|h\|_{L^2_x L^1_v}(t) = &\: (\int_{\mathbb R^3} \|h\|_{L^1_v}^2(t,x)\,\ud x)^{\f 12} \\
\ls &\: (1+t)^{-\f 32} (\int_{\mathbb R^3} \|\vb^2 \wb^2 h\|_{L^2_v}^2(t,x)\,\ud x)^{\f 12} \\
= &\: (1+t)^{-\f 32}\|\vb^2 \wb^2 h \|_{L^2_xL^2_v}.
\end{split}
\end{equation*}
\end{proof}

\begin{lemma}\label{lemma:HLS.type}
Let $h:\mathbb R^3\to \mathbb R$ be a smooth function.

For $\nu \in (\f 32, 3)$,
\begin{equation}\label{eq:HLS.type.1}
\left\| \int_{\mathbb R^3} |v-v_*|^{-\nu} |h|(v_*) \, \ud v_* \right\|_{L^2_v} \ls \|h\|_{L^1_v}^{-\f{2\nu}{3}+2} \|h\|_{L^2_v}^{\f{2\nu}3-1}.
\end{equation}
For $\nu \in [0, \f 32]$,
\begin{equation}\label{eq:HLS.type.2}
\left\| \int_{\mathbb R^3} |v-v_*|^{-\nu} |h|(v_*)\, dv_* \right\|_{L^{\f{15}{4\nu}}_v} \ls \|h\|_{L^1_v}^{1-\f{2\nu}{15}}\|h\|_{L^2_v}^{\f{2\nu}{15}}.
\end{equation}
\end{lemma}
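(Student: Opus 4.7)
The plan is to deduce both estimates from the classical Hardy--Littlewood--Sobolev (HLS) inequality composed with log-convex interpolation between $L^1_v$ and $L^2_v$. Recall that for $\nu \in (0,3)$ and exponents $1 < p < q < \infty$ satisfying the scaling relation $\f{1}{q} = \f{1}{p} + \f{\nu}{3} - 1$, one has
\[
\left\| \int_{\mathbb R^3} |v-v_*|^{-\nu} |h|(v_*)\,\ud v_*\right\|_{L^q_v} \ls \|h\|_{L^p_v},
\]
and, by H\"older's inequality in $v$, the log-convex bound $\|h\|_{L^p_v} \le \|h\|_{L^1_v}^{\theta}\|h\|_{L^2_v}^{1-\theta}$ holds whenever $p \in [1,2]$, with $\theta = \f{2}{p} - 1$.

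For \eqref{eq:HLS.type.1}, given $\nu \in (3/2,3)$, I would take the target exponent $q = 2$; the HLS scaling identity $\f{1}{2} = \f{1}{p} + \f{\nu}{3} - 1$ then forces $p = \f{6}{9-2\nu} \in (1,2)$, with interpolation weights $\theta = \f{2}{p} - 1 = 2 - \f{2\nu}{3}$ and $1 - \theta = \f{2\nu}{3} - 1$. Chaining HLS with this interpolation reproduces exactly the claimed exponents on the right-hand side. For \eqref{eq:HLS.type.2} with $\nu \in (0, 3/2]$, I would instead take $q = \f{15}{4\nu} \in [5/2, \infty)$; the scaling identity yields $p = \f{15}{15-\nu} \in (1, 10/9]$, a range in which HLS is still applicable, and the interpolation weights are $\theta = 1 - \f{2\nu}{15}$ and $1-\theta = \f{2\nu}{15}$, matching the stated exponents. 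The endpoint $\nu = 0$ is trivial: the left-hand side reduces to the constant $\|h\|_{L^1_v}$, which agrees with the right-hand side.

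There is no substantive obstacle here beyond the bookkeeping of choosing the HLS target $q$ and the interpolation weight $\theta$ so that the powers of $\|h\|_{L^1_v}$ and $\|h\|_{L^2_v}$ on the right-hand side come out correct. An alternative, self-contained route, avoiding HLS as a black box, would be a dyadic decomposition of the singular kernel $|v-v_*|^{-\nu}$ together with H\"older's inequality on each annulus (as in the proof of Lemma~\ref{lem:Li.1}), but this amounts to reproving HLS and would be longer than necessary.
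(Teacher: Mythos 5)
Your proposal is correct, and your proof of \eqref{eq:HLS.type.2} is essentially identical to the paper's: the same HLS exponents $p = \f{15}{15-\nu}$, $q = \f{15}{4\nu}$, followed by the same H\"older interpolation between $L^1_v$ and $L^2_v$, and the same observation that $\nu = 0$ is trivial. For \eqref{eq:HLS.type.1}, however, you and the paper part ways. You derive it from HLS with target $q = 2$ (so $p = \f{6}{9-2\nu} \in (1,2)$, legitimate since $\nu > \f{3}{2}$) chained with the same interpolation; the paper instead proves it directly by splitting the convolution integral into $\{|v-v_*| \le \lambda\}$ and $\{|v-v_*| > \lambda\}$, applying Cauchy--Schwarz on the near region and Minkowski on the far region, and optimizing over $\lambda$ — exactly the self-contained ``dyadic'' alternative you flag at the end of your writeup. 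Your route is more economical and treats both estimates uniformly through HLS; the paper's direct argument for \eqref{eq:HLS.type.1} has the minor virtue of not invoking HLS for that case (it is essentially the $L^2$-analogue of the elementary Lemma~\ref{lem:Li.1}). Either way the exponents come out the same, and your bookkeeping of $\theta = \f{2}{p} - 1$ and the resulting powers $2 - \f{2\nu}{3}$, $\f{2\nu}{3}-1$ (respectively $1 - \f{2\nu}{15}$, $\f{2\nu}{15}$) is correct.
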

\begin{proof}
\pfstep{Step~1: Proof of \eqref{eq:HLS.type.1}} Without loss of generality, we assume that $h$ is not identically $0$ (for otherwise the estimates are trivial).

Let $\lambda>0$ be a constant to be determined. We estimate as follows (see the justification of each step after the estimates):
\begin{align}
&\: \|\int_{\mathbb R^3} |v-v_*|^{-\nu} |h|(v_*) \, \ud v_*\|_{L^2_v} \notag \\
= &\: \left(\int_{\mathbb R^3} (\int_{\{v_*: |v-v_*|\leq \lambda\}} |v-v_*|^{-\nu} |h|(v_*) \, \ud v_* )^2\, \ud v \right)^{\f 12} + \left(\int_{\mathbb R^3} (\int_{\{v_*: |v-v_*|> \lambda\}} |v-v_*|^{-\nu} |h|(v_*) \, \ud v_* )^2\, \ud v \right)^{\f 12} \label{HLS.RHS.1}\\
\ls &\: \left(\int_{\mathbb R^3} \left(\int_{\{v_*: |v-v_*|\leq \lambda\}} |v-v_*|^{-\nu} |h|^2(v_*) \, \ud v_* \right)\left(\int_{\{v_*: |v-v_*|\leq \lambda\}} |v-v_*|^{-\nu} \, \ud v_* \right)\, \ud v \right)^{\f 12} \label{HLS.RHS.2}\\
&\: + \int_{\mathbb R^3} (\int_{\{v: |v-v_*|> \lambda\}} |v-v_*|^{-2\nu} \, \ud v )^{\f 12} |h|(v_*)\, \ud v_* \label{HLS.RHS.3}\\
\ls &\: \lambda^{-\f \nu 2+ \f 32} \|h\|_{L^2_v} (\int_{\{v: |v-v_*|\leq \lambda\}} |v-v_*|^{-\nu} \,\ud v )^{\f 12} + \lambda^{-\nu+\f 32} \|h\|_{L^1_v} \label{HLS.RHS.4}\\
\ls &\: \lambda^{-\nu+3} \|h\|_{L^2_v} + \lambda^{-\nu+\f 32} \|h\|_{L^1_v}. \label{HLS.RHS.5}
\end{align}

In \eqref{HLS.RHS.1}, we divided the integral into regions $|v-v_*|\leq \lambda$ and $|v-v_*|> \lambda$; in \eqref{HLS.RHS.2}, we used the Cauchy--Schwarz inequality; in \eqref{HLS.RHS.3}, we used the Minkowski inequality; in the first term in \eqref{HLS.RHS.4}, we noted that $(\int_{\{v_*: |v-v_*|\leq \lambda\}} |v-v_*|^{-\nu} \, \ud v_*)^{\f 12} \ls \lambda^{-\f \nu 2+\f 32}$ and then used Fubini's theorem; in the second term in \eqref{HLS.RHS.4} we simply used $(\int_{\{v: |v-v_*|> \lambda\}} |v-v_*|^{-2\nu} \, \ud v )^{\f 12} \ls \lambda^{-\nu+\f 32}$; in \eqref{HLS.RHS.5} we used $(\int_{\{v: |v-v_*|\leq \lambda\}} |v-v_*|^{-\nu} \,\ud v )^{\f 12} \ls \lambda^{- \f \nu 2+\f 32}$. (Note that in \eqref{HLS.RHS.4} and \eqref{HLS.RHS.5}, we have relied on $\nu \in (\f 32,3)$ in our estimates.)

Let $\lambda := \|h\|_{L^1_v}^{\f 23} \|h\|_{L^2_v}^{-\f 23}$ (which is possible since $h$ is not identically $0$). We then obtain
$$\left\|\int_{\mathbb R^3} |v-v_*|^{-\nu} |h|(v_*) \, \ud v_* \right\|_{L^2_v} \ls \|h\|_{L^1_v}^{-\f{2\nu}{3}+2} \|h\|_{L^2_v}^{\f{2\nu}3-1},$$
as desired.

\pfstep{Step~2: Proof of \eqref{eq:HLS.type.2}} For this inequality we use the Hardy--Littlewood--Sobolev inequality in $\mathbb R^3$: for $0<\nu<3$, $1<p<q<+\infty$, and $\f 1q = \f 1p -\f{(3-\nu)} 3$, 
\begin{equation}\label{HLS}
\left\| \int_{\mathbb R^3} |v-v_*|^{-\nu} |h|(v_*)\, dv_* \right\|_{L^q_v} \ls \| h\|_{L^p_v}.
\end{equation}
For $\nu \in [0,\f 32]$, we now apply \eqref{HLS} with\footnote{Note that $\nu=0$ is technically not allowed in \eqref{HLS}, but for the specific $(p,q)$ under consideration, the inequality is trivially true.} $\f 1p= -\f{1}{15}\nu + 1$ and $\f 1q= \f{4}{15}\nu$. It then follows that from H\"older's inequality that
$$\left\| \int_{\mathbb R^3} |v-v_*|^{-\nu} |h|(v_*)\, dv_* \right\|_{L^{\f{15}{4\nu}}_v} \ls \| h\|_{L^{\f{15}{15-\nu}}_v} \ls \|h\|_{L^1_v}^{1-\f{2\nu}{15}}\|h\|_{L^2_v}^{\f{2\nu}{15}},$$
as claimed. \qedhere
\end{proof}

Combining Lemmas~\ref{lem:decay.using.weights} and \ref{lemma:HLS.type}, and taking the $L^2_x$ norm, we obtain
\begin{lemma}\label{lem:h.HLS.type.decay}
Let $h:[0,T_{Boot})\times \mathbb R^3\times \mathbb R^3$ be a smooth function. 

For $\nu \in (\f 32, 3)$,
\begin{equation}\label{eq:h.HLS.type.decay.1}
\left\| \int_{\mathbb R^3} |v-v_*|^{-\nu} |h|(t,x,v_*) \, \ud v_* \right\|_{L^2_xL^2_v} \ls (1+t)^{\nu-3} \|\vb^2\wb^2 h\|_{L^2_xL^2_v}(t,x).
\end{equation}
For $\nu \in [0, \f 32]$,
\begin{equation}\label{eq:h.HLS.type.decay.2}
\left\| \int_{\mathbb R^3} |v-v_*|^{-\nu} |h|(t,x,v_*)\, dv_* \right\|_{L^2_x L^{\f{15}{4\nu}}_v} \ls (1+t)^{-\f 32+\f{\nu}{5}} \|\vb^2\wb^2 h\|_{L^2_xL^2_v}(t,x).
\end{equation}
\end{lemma}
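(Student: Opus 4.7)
\medskip

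\noindent\textbf{Proof proposal.} The lemma simply glues Lemma~\ref{lemma:HLS.type} (convolution estimates in $v$) together with Lemma~\ref{lem:decay.using.weights} (which turns an $L^1_v$ norm into a time-decaying $L^2_v$ norm with weights), and then takes $L^2_x$. My plan is to execute the bound pointwise in $x$ first, and only at the end integrate in $x$ via H\"older.

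\smallskip

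\emph{Case $\nu\in(3/2,3)$.} For each fixed $(t,x)$, apply \eqref{eq:HLS.type.1} to $h(t,x,\cdot)$ to obtain
\[
\left\|\int_{\mathbb R^3}|v-v_*|^{-\nu}|h|(t,x,v_*)\,\ud v_*\right\|_{L^2_v}
\ls \|h(t,x,\cdot)\|_{L^1_v}^{\,2-\frac{2\nu}{3}}\,\|h(t,x,\cdot)\|_{L^2_v}^{\,\frac{2\nu}{3}-1}.
\]
Since $\nu\in(3/2,3)$, both exponents $a:=2-\tfrac{2\nu}{3}$ and $b:=\tfrac{2\nu}{3}-1$ lie in $(0,1)$ and satisfy $a+b=1$. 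Insert Lemma~\ref{lem:decay.using.weights} into the first factor; this produces a multiplicative $(1+t)^{-\frac{3}{2}a}=(1+t)^{\nu-3}$ and replaces $\|h\|_{L^1_v}$ by $\|\vb^2\wb^2 h(t,x,\cdot)\|_{L^2_v}$. Finally take $L^2_x$ of the resulting pointwise-in-$x$ bound; since $a+b=1$, the exponents $2/a$ and $2/b$ are H\"older conjugate in the sense $\tfrac{a}{2}+\tfrac{b}{2}=\tfrac{1}{2}$, so H\"older in $x$ gives
\[
\mathrm{LHS}\ls(1+t)^{\nu-3}\|\vb^2\wb^2 h\|_{L^2_xL^2_v}^{\,a}\,\|h\|_{L^2_xL^2_v}^{\,b}
\ls(1+t)^{\nu-3}\|\vb^2\wb^2 h\|_{L^2_xL^2_v},
\]
using in the last step the trivial pointwise bound $|h|\le |\vb^2\wb^2 h|$.

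\smallskip

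\emph{Case $\nu\in[0,3/2]$.} The argument is the same, now invoking \eqref{eq:HLS.type.2} in the $v$-variable pointwise in $x$, which yields
\[
\left\|\int_{\mathbb R^3}|v-v_*|^{-\nu}|h|(t,x,v_*)\,\ud v_*\right\|_{L^{15/(4\nu)}_v}
\ls \|h(t,x,\cdot)\|_{L^1_v}^{\,1-\frac{2\nu}{15}}\,\|h(t,x,\cdot)\|_{L^2_v}^{\,\frac{2\nu}{15}}.
\]
Again set $a:=1-\tfrac{2\nu}{15}$ and $b:=\tfrac{2\nu}{15}$, so $a+b=1$. Apply Lemma~\ref{lem:decay.using.weights} to the $L^1_v$ factor, producing the factor $(1+t)^{-\frac{3}{2}a}=(1+t)^{-3/2+\nu/5}$. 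Then take $L^2_x$ using the same H\"older pairing $L^{2/a}_x\times L^{2/b}_x\to L^2_x$ to conclude \eqref{eq:h.HLS.type.decay.2}. The (minor) degenerate case $\nu=0$ is immediate since the kernel is then $1$ and the inequality reduces directly to Lemma~\ref{lem:decay.using.weights} integrated in $x$.

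\smallskip

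\emph{Expected obstacles.} Essentially none. Everything is bookkeeping: verify that the exponents from Lemma~\ref{lemma:HLS.type} combine with the $(1+t)^{-3/2}$ from Lemma~\ref{lem:decay.using.weights} to give precisely $(1+t)^{\nu-3}$ and $(1+t)^{-3/2+\nu/5}$ respectively, and check that the interpolation exponents $(a,b)$ are H\"older-conjugate in $x$ in the right sense ($\tfrac{a}{2}+\tfrac{b}{2}=\tfrac{1}{2}$) so that the mixed-norm step is legal. The only place to be mildly careful is the endpoint $\nu=3/2$, which is excluded from \eqref{eq:HLS.type.1} but included in \eqref{eq:HLS.type.2}; since the two statements of Lemma~\ref{lem:h.HLS.type.decay} likewise split at $\nu=3/2$, no issue arises.
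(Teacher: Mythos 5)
Your proof is correct and is essentially the same as the paper's: the paper simply states ``Combining Lemmas~\ref{lem:decay.using.weights} and \ref{lemma:HLS.type}, and taking the $L^2_x$ norm, we obtain [the result]''. One minor remark: the detour through H\"older in $x$ is not needed---you could instead use the trivial pointwise bound $\|h(t,x,\cdot)\|_{L^2_v}\le\|\vb^2\wb^2h(t,x,\cdot)\|_{L^2_v}$ \emph{before} integrating in $x$, which turns the right-hand side into a single power of $\|\vb^2\wb^2h(t,x,\cdot)\|_{L^2_v}$ and makes the passage to $L^2_x$ immediate; but your H\"older step is valid as written (the exponents $2/a$, $2/b$ with $a/2+b/2=1/2$ do compose correctly into $L^2_x$).
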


\subsubsection{Estimates for weighted $v$-integrals of $f$}

\begin{proposition}\label{prop:main.L2.decay}
Let $|\alp|+|\bt|+|\sigma|\leq \Mm$. Then the following three estimates\footnote{The reader may find the notation in \eqref{eq:L2.decay.notsing} slightly confusing since the LHS does not depend on $v$. We use such notation so that we have a more unified estimate later; see Proposition~\ref{prop:L2.p*}.} hold for all $t\in [0,T_{Boot})$:
\begin{equation}\label{eq:L2.decay.notsing}
\left\| \int_{\mathbb R^3} \langle v_*\rangle^4 |\rd_x^\alp \rd_v^\bt Y^\sigma f|(t,x,v_*)\, \ud v_* \right\|_{L^2_x L^\i_v} \ls \ep^{\f 34}(1+t)^{-\f 32+|\bt|}.
\end{equation}
For $\nu \in (\f 32, 3)$, 
\begin{equation}\label{eq:HLS.type.decay.1}
\left\| \int_{\mathbb R^3} |v-v_*|^{-\nu}\langle v_*\rangle^4 \langle x-tv_*\rangle^2 |\rd_x^\alp \rd_v^\bt Y^\sigma f|(t,x,v_*) \, \ud v_* \right\|_{L^2_x L^2_v} \ls \ep^{\f 34}(1+t)^{\nu-3+|\bt|}.
\end{equation}
For $\nu \in [0, \f 32]$,
\begin{equation}\label{eq:HLS.type.decay.2}
\left\| \int_{\mathbb R^3} |v-v_*|^{-\nu} \langle v_*\rangle^4 \langle x-tv_*\rangle^2 |\rd_x^\alp \rd_v^\bt Y^\sigma f|(t,x,v_*)\, dv_* \right\|_{L^2_x L^{\f{15}{4\nu}}_v} \ls \ep^{\f 34}(1+t)^{-\f 32+\f{\nu}{5}+|\bt|} \ls \ep^{\f 34}(1+t)^{-\f 65+|\bt|}.
\end{equation}
\end{proposition}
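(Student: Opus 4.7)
The three estimates have a common structure: reduce the integrand from $f$ to $g$ using the Gaussian weight, then apply the appropriate preliminary lemma from Section~\ref{sec:coeff.L2}, and close using the bootstrap assumption \eqref{BA}. The first estimate uses the non-singular case (Lemma~\ref{lem:L1L2}) while the second and third rely on the Hardy--Littlewood--Sobolev-type bounds in Lemma~\ref{lem:h.HLS.type.decay}.

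First I would apply Lemma~\ref{lem:Li.4} to absorb all polynomial $\vb$ weights on $f$ into the weight $e^{d(t)\vb^2}$ that defines $g$. Concretely, for any $m \in \mathbb N$,
$$\vb^m |\rd_x^\alp \rd_v^\bt Y^\sigma f|(t,x,v_*) \ls \sum_{|\bt'|\le |\bt|,\,|\sigma'|\le |\sigma|} |\rd_x^\alp \rd_v^{\bt'} Y^{\sigma'} g|(t,x,v_*),$$
so all the $\langle v_*\rangle^4$ factors cost us nothing.

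For \eqref{eq:L2.decay.notsing}, note that the integrand is independent of $v$, so the $L^\i_v$ norm is trivial. I would apply Lemma~\ref{lem:L1L2} with $h = \vb^4 \rd_x^\alp \rd_v^\bt Y^\sigma f$, giving
$$\|h\|_{L^2_xL^1_v}(t) \ls (1+t)^{-\f 32} \|\vb^{6}\wb^2 \rd_x^\alp \rd_v^\bt Y^\sigma f\|_{L^2_xL^2_v}(t),$$
then invoke Lemma~\ref{lem:Li.4} to replace $\vb^6 f$-derivatives by $g$-derivatives, and finally use $E(T)\le \ep^{\f 34}$ from \eqref{BA} together with the fact that $2 \le \Mm+5-|\sigma'|$ (so the weight $\wb^2$ is dominated by $\wb^{\Mm+5-|\sigma'|}$) to conclude that the right-hand side is $\ls \ep^{\f 34}(1+t)^{-\f 32 + |\bt|}$.

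For \eqref{eq:HLS.type.decay.1} with $\nu \in (\f 32, 3)$, I would apply \eqref{eq:h.HLS.type.decay.1} of Lemma~\ref{lem:h.HLS.type.decay} to $h = \vb^4 \wb^2 \rd_x^\alp \rd_v^\bt Y^\sigma f$, which gives
$$\mathrm{LHS} \ls (1+t)^{\nu-3} \|\vb^6 \wb^4 \rd_x^\alp \rd_v^\bt Y^\sigma f\|_{L^2_xL^2_v}(t),$$
and then proceed exactly as above via Lemma~\ref{lem:Li.4} and \eqref{BA}, using $4 \le \Mm+5-|\sigma|$. For \eqref{eq:HLS.type.decay.2} with $\nu \in [0,\f 32]$, the argument is identical except that I apply \eqref{eq:h.HLS.type.decay.2}, obtaining a prefactor $(1+t)^{-\f 32+\f \nu 5}$; the final inequality $\le \ep^{\f 34}(1+t)^{-\f 65+|\bt|}$ is immediate since $\f \nu 5 \le \f 3{10}$ on this range of $\nu$.

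There is no substantial obstacle here: everything reduces to checking that the polynomial $v$-weights picked up from $\langle v_*\rangle^4$ and from the input to the preliminary lemmas ($\vb^2\wb^2$) remain well within the $\wb^{\Mm+5-|\sigma|}$ weights controlled by the energies $E_k$. The one thing to watch is that the implicit constants and the $\wb$ exponents match; for this we simply need $\max\{2,4\} \le \Mm+5-|\sigma|$, which holds since $|\sigma|\le \Mm$ and $\Mm \ge 2$. All three bounds then follow uniformly in the range $|\alp|+|\bt|+|\sigma|\le \Mm$.
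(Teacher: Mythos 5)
Your proof is correct and uses exactly the paper's approach: Lemma~\ref{lem:Li.4} to absorb the polynomial $\langle v\rangle$ weights into the Gaussian, then Lemma~\ref{lem:L1L2} for \eqref{eq:L2.decay.notsing} and Lemma~\ref{lem:h.HLS.type.decay} for the two HLS-type estimates, closed by the bootstrap assumption \eqref{BA} (the paper's own proof simply cites these three ingredients without elaboration). Your weight-accounting check ($\wb^2,\wb^4 \le \wb^{\Mm+5-|\sigma'|}$, and $(1+t)^{|\bt'|}\le(1+t)^{|\bt|}$ since $|\bt'|\le|\bt|$) is the right thing to verify and holds as you say.
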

\begin{proof}
\eqref{eq:L2.decay.notsing} follows from Lemmas~\ref{lem:Li.4}, \ref{lem:L1L2} and the bootstrap assumption \eqref{BA}.

\eqref{eq:HLS.type.decay.1} follows from Lemma~\ref{lem:Li.4}, \eqref{eq:h.HLS.type.decay.1} in Lemma~\ref{lem:h.HLS.type.decay} and the bootstrap assumption \eqref{BA}.

Finally, the first inequality in \eqref{eq:HLS.type.decay.2} follows from Lemma~\ref{lem:Li.4}, \eqref{eq:h.HLS.type.decay.2} in Lemma~\ref{lem:h.HLS.type.decay} and the bootstrap assumption \eqref{BA}. The very last inequality in \eqref{eq:HLS.type.decay.2} is simply an assertion that $-\f 32+\f{\nu}{5}\leq -\f 32 +\f 3{10} = -\f 65$ when $\nu \in [0,\f 32]$. \qedhere
\end{proof}

The different $L^p$ spaces used in Proposition~\ref{prop:main.L2.decay} motivates the following definitions. The notation is intended to be suggestive of the following: we will control one $v$-derivative of $\bar{a}_{ij}$ in $L^{p_*}_v$ and we will control two $v$-derivatives of $\bar{a}_{ij}$ in $L^{p_{**}}_v$. (Zeroth $v$-derivatives of $\bar{a}_{ij}$ will be estimated in $L^\i_v$.)
\begin{definition}\label{p*.def}
Define $p_*$ and $p_{**}$ by
$$p_*:= \begin{cases}
\infty &\mbox{ if }\gamma \in [-1,0) \\
-\f{15}{4(\gamma+1)} &\mbox{ if }\gamma \in (-2,-1)
\end{cases},\qquad
p_{**}:= \begin{cases}
-\f{15}{4\gamma} &\mbox{ if }\gamma \in [-\f 32,0) \\
2 &\mbox{ if }\gamma \in (-2,-\f 32)
\end{cases}.
$$
Note that $p_*,\,p_{**}\in [2,\infty]$ (for any $\gamma \in (-2,0)$).
\end{definition}

With this convention for $p_*$ and $p_{**}$, let us rephrase the last two inequalities in Proposition~\ref{prop:main.L2.decay}:
\begin{proposition}\label{prop:L2.p*}
Let $|\alp|+|\bt|+|\sigma|\leq \Mm$. Then the following two estimates hold for all $t\in [0,T_{Boot})$:
\begin{equation}\label{eq:gamma.Lp*}
\left\|\vb^{-\max\{0,1+\gamma\}} \int_{\mathbb R^3} |v-v_*|^{1+\gamma} \langle v_*\rangle^2 \langle x-tv_*\rangle^2 |\rd_x^\alp \rd_v^\bt Y^\sigma f|(t,x,v_*) \, \ud v_* \right\|_{L^2_x L^{p_*}_v} \ls \ep^{\f 34}(1+t)^{-\f 65+|\bt|},
\end{equation}
\begin{equation}\label{eq:gamma.Lp**}
\left\| \int_{\mathbb R^3} |v-v_*|^{\gamma} \langle v_*\rangle^2 \langle x-tv_*\rangle^2 |\rd_x^\alp \rd_v^\bt Y^\sigma f|(t,x,v_*)\, dv_* \right\|_{L^2_x L^{p_{**}}_v} \ls \ep^{\f 34}(1+t)^{-\min\{\f 65, 3+\gamma\}+|\bt|}.
\end{equation}
\end{proposition}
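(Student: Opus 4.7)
The proposition is a reformulation of the two Hardy--Littlewood--Sobolev-type estimates \eqref{eq:HLS.type.decay.1}--\eqref{eq:HLS.type.decay.2} of Proposition~\ref{prop:main.L2.decay}, packaged via the definitions of $p_*$ and $p_{**}$. The plan is to perform a case analysis on $\gamma$ and, in each case, apply the appropriate estimate from Proposition~\ref{prop:main.L2.decay} with a carefully chosen $\nu$ so that the $L^p_v$ exponents match $p_*$ or $p_{**}$.

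For \eqref{eq:gamma.Lp*}, the split is at $\gamma = -1$. When $\gamma \in [-1,0)$ we have $p_* = \infty$ and $\max\{0,1+\gamma\} = 1+\gamma \geq 0$, so the kernel $|v-v_*|^{1+\gamma}$ is non-singular; the elementary bound $|v-v_*|^{1+\gamma} \ls \vb^{1+\gamma} + \langle v_*\rangle^{1+\gamma}$ combined with the prefactor $\vb^{-(1+\gamma)}$ reduces the LHS to a bound on $\|\int \langle v_*\rangle^{3+\gamma}\langle x-tv_*\rangle^2 |\rd_x^\alp\rd_v^\bt Y^\sigma f|\,\ud v_*\|_{L^2_xL^\i_v}$, which is controlled via Lemma~\ref{lem:L1L2}, Lemma~\ref{lem:Li.4} and the bootstrap assumption \eqref{BA} with decay $(1+t)^{-\f32+|\bt|} \leq (1+t)^{-\f65+|\bt|}$. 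When $\gamma \in (-2,-1)$ we have $\max\{0,1+\gamma\}=0$ and $p_* = \f{15}{-4(1+\gamma)}$; setting $\nu = -(1+\gamma) \in (0,1) \subset [0,\f32]$ in \eqref{eq:HLS.type.decay.2} gives exactly \eqref{eq:gamma.Lp*} (after using $\langle v_*\rangle^2 \leq \langle v_*\rangle^4$).

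For \eqref{eq:gamma.Lp**}, the split is at $\gamma = -\f32$. When $\gamma \in [-\f32,0)$ set $\nu = -\gamma \in (0,\f32]$ and apply \eqref{eq:HLS.type.decay.2}, so that $p_{**} = \f{15}{4\nu}$ matches the definition and the resulting decay is $(1+t)^{-\f32-\f{\gamma}{5}+|\bt|} \leq (1+t)^{-\f65+|\bt|}$; since $3+\gamma \geq \f32 > \f65$ in this range, this equals $(1+t)^{-\min\{\f65,3+\gamma\}+|\bt|}$. When $\gamma \in (-2,-\f32)$ set $\nu = -\gamma \in (\f32,2) \subset (\f32,3)$ and apply \eqref{eq:HLS.type.decay.1}, giving $p_{**} = 2$ and decay $(1+t)^{\nu-3+|\bt|} = (1+t)^{-(3+\gamma)+|\bt|}$; one directly checks $-(3+\gamma) \leq -\min\{\f65,3+\gamma\}$ for every $\gamma$ in this range. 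The entire argument is bookkeeping rather than genuinely hard analysis: the thresholds $-1$ and $-\f32$ in Definition~\ref{p*.def} were engineered precisely so that the regime where $|v-v_*|^{1+\gamma}$ (respectively $|v-v_*|^\gamma$) is too singular for \eqref{eq:HLS.type.decay.2} corresponds exactly to where \eqref{eq:HLS.type.decay.1} is available. The only point to watch is that in the first case of \eqref{eq:gamma.Lp*} the gain comes not from an HLS-type estimate but from the non-singular cancellation $\vb^{-(1+\gamma)}\vb^{1+\gamma}=1$, so one must verify that the resulting temporal decay is at least $(1+t)^{-\f65+|\bt|}$; this is immediate since $\f32 > \f65$.
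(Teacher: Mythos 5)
Your proposal is correct and follows essentially the same route as the paper's own proof: split \eqref{eq:gamma.Lp*} at $\gamma=-1$ (non-singular bound via Lemma~\ref{lem:L1L2} vs.\ \eqref{eq:HLS.type.decay.2} with $\nu=-(1+\gamma)$) and split \eqref{eq:gamma.Lp**} at $\gamma=-\f32$ (use \eqref{eq:HLS.type.decay.2} with $\nu=-\gamma$ when $\nu\leq\f32$, and \eqref{eq:HLS.type.decay.1} otherwise), then verify the exponents against Definition~\ref{p*.def}. The only small difference is that you spell out the numerology for \eqref{eq:gamma.Lp**} explicitly where the paper just cites \eqref{eq:HLS.type.decay.1}--\eqref{eq:HLS.type.decay.2} directly, and you correctly trace the $\gamma\in[-1,0)$ case of \eqref{eq:gamma.Lp*} back to Lemma~\ref{lem:L1L2}, Lemma~\ref{lem:Li.4} and \eqref{BA} rather than citing \eqref{eq:L2.decay.notsing} verbatim (which omits the $\langle x-tv_*\rangle^2$ weight) — a cleaner bookkeeping than the paper's.
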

\begin{proof}
To prove \eqref{eq:gamma.Lp*}, we consider separately $\gamma\in [-1,0)$ and $\gamma\in (-2,-1)$. If $\gamma \in [-1,0)$, $p_*=\infty$. Also, $|v-v_*|^{1+\gamma}\ls |v|^{1+\gamma}+|v_*|^{1+\gamma}$. Hence, by \eqref{eq:L2.decay.notsing} in Proposition~\ref{prop:main.L2.decay}, we obtain
\begin{equation*}
\begin{split}
&\: \left\|\vb^{-\max\{0,1+\gamma\}} \int_{\mathbb R^3} |v-v_*|^{1+\gamma} \langle v_*\rangle^2 \langle x-tv_*\rangle^2 |\rd_x^\alp \rd_v^\bt Y^\sigma f|(t,x,v_*) \, \ud v_* \right\|_{L^2_x L^{p_*}_v} \\
\ls &\: \left\| \int_{\mathbb R^3} \langle v_*\rangle^{3+\gamma} \langle x-tv_*\rangle^2 |\rd_x^\alp \rd_v^\bt Y^\sigma f|(t,x,v_*) \, \ud v_* \right\|_{L^2_x L^{\i}_v} \\
\ls &\: \ep^{\f 34}(1+t)^{-\f 32+|\bt|},
\end{split}
\end{equation*}
which is slightly better than \eqref{eq:gamma.Lp*}.

Consider now the case $\gamma \in (-2,-1)$. In this case, $p_* = -\f{54}{5(\gamma+1)}$ and $1+\gamma \in (-1,0)$. Hence, by \eqref{eq:HLS.type.decay.2} in Proposition~\ref{prop:main.L2.decay}, we obtain
\begin{equation*}
\begin{split}
&\: \left\|\vb^{-\max\{0,1+\gamma\}} \int_{\mathbb R^3} |v-v_*|^{1+\gamma} \langle v_*\rangle^2 \langle x-tv_*\rangle^2 |\rd_x^\alp \rd_v^\bt Y^\sigma f|(t,x,v_*) \, \ud v_* \right\|_{L^2_x L^{p_*}_v} \\
\ls &\: \left\| \int_{\mathbb R^3} |v-v_*|^{1+\gamma} \langle v_*\rangle^2 \langle x-tv_*\rangle^2 |\rd_x^\alp \rd_v^\bt Y^\sigma f|(t,x,v_*) \, \ud v_* \right\|_{L^2_x L^{-\f{54}{5\gamma}}_v} \\
\ls &\: \ep^{\f 34}(1+t)^{-\f 65+|\bt|}.
\end{split}
\end{equation*}
which is as in \eqref{eq:gamma.Lp*}. We have thus concluded the proof of \eqref{eq:gamma.Lp*}. 

We now prove \eqref{eq:gamma.Lp**}. Now since $\gamma<0$, we can directly use \eqref{eq:HLS.type.decay.1} and \eqref{eq:HLS.type.decay.2} in Proposition~\ref{prop:main.L2.decay} to obtain
\begin{equation*}
\begin{split}
&\: \left\| \int_{\mathbb R^3} |v-v_*|^{\gamma} \langle v_*\rangle^2 \langle x-tv_*\rangle^2 |\rd_x^\alp \rd_v^\bt Y^\sigma f|(t,x,v_*)\, dv_* \right\|_{L^2_x L^{p_{**}}_v} \\
\ls &\: \ep^{\f 34}(1+t)^{-\min\{\f 65, 3+\gamma\}+|\bt|},
\end{split}
\end{equation*}
as desired. \qedhere
\end{proof}

\subsubsection{$L^2_x$ estimates for $\bar{a}_{ij}$ and its derivatives}
With the above preparation, we now prove the $L^2_x$ estimates for $\bar{a}_{ij}$ and its derivatives.

\begin{proposition}\label{prop:ab.L2}
If $|\alp|+|\bt|+|\sigma|\leq \Mm$, then
$$\max_{i,j} \|\vb^{-(2+\gamma)}\rd_x^\alp \rd_v^\bt Y^\sigma \bar{a}_{ij} \|_{L^2_x L^\i_v}(t) \ls \ep^{\f 34} (1+t)^{-\f 32+|\bt|}.$$
\end{proposition}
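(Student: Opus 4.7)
My plan is to reduce the statement to the velocity-average estimate \eqref{eq:L2.decay.notsing} in Proposition~\ref{prop:main.L2.decay}, which was proved precisely so that such $L^2_xL^\infty_v$ bounds on $\bar a_{ij}$ and its derivatives become essentially immediate. Concretely, I would start from the pointwise bound \eqref{a.0.1} in Proposition~\ref{prop:a.expressions}:
\begin{equation*}
|\rd_x^\alp \rd_v^\bt Y^\sigma \bar a_{ij}|(t,x,v) \ls \int_{\mathbb R^3} |v-v_*|^{2+\gamma}\,|\rd_x^\alp \rd_v^\bt Y^\sigma f|(t,x,v_*)\,\ud v_*.
\end{equation*}

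Since $\gamma\in(-2,0)$ gives $2+\gamma\in(0,2)$, the elementary inequality
\begin{equation*}
|v-v_*|^{2+\gamma}\leq (\langle v\rangle+\langle v_*\rangle)^{2+\gamma}\ls \vb^{2+\gamma}\langle v_*\rangle^{2+\gamma}\leq \vb^{2+\gamma}\langle v_*\rangle^{4}
\end{equation*}
holds pointwise. Multiplying the displayed pointwise bound by $\vb^{-(2+\gamma)}$ therefore produces a right-hand side that is completely independent of $v$, so that taking $\sup_v$ is free and
\begin{equation*}
\vb^{-(2+\gamma)}\,|\rd_x^\alp \rd_v^\bt Y^\sigma \bar a_{ij}|(t,x,v)\ls \int_{\mathbb R^3} \langle v_*\rangle^{4}\,|\rd_x^\alp \rd_v^\bt Y^\sigma f|(t,x,v_*)\,\ud v_*.
\end{equation*}
Taking the $L^2_x$ norm of the right-hand side and invoking \eqref{eq:L2.decay.notsing} from Proposition~\ref{prop:main.L2.decay} (applicable because $|\alpha|+|\beta|+|\sigma|\leq \Mm$) then yields the advertised bound $\ls \ep^{3/4}(1+t)^{-3/2+|\beta|}$, uniformly in $i,j$.

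There is essentially no obstacle here: the entire content is the algebraic observation that the weight $\vb^{-(2+\gamma)}$ is tuned exactly to absorb the $v$-dependent part of the convolution kernel $|v-v_*|^{2+\gamma}$, leaving behind a purely $v_*$-weighted integral of $f$ to which the $L^2_xL^\infty_v$ estimate in Proposition~\ref{prop:main.L2.decay} applies. The only small point to double-check is that the exponent $4$ on $\langle v_*\rangle$ appearing in \eqref{eq:L2.decay.notsing} is indeed enough, which it is since $2+\gamma<2<4$.
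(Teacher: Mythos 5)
Your argument is correct and matches the paper's proof essentially verbatim: the paper also proves Proposition~\ref{prop:ab.L2} by combining \eqref{a.0.1} with \eqref{eq:L2.decay.notsing}, with the same tuning of the $\vb^{-(2+\gamma)}$ weight (using $2+\gamma>0$ to split $|v-v_*|^{2+\gamma}$) done implicitly. The only cosmetic difference is that the paper splits via $|v-v_*|^{2+\gamma}\ls |v|^{2+\gamma}+|v_*|^{2+\gamma}$ rather than your multiplicative bound $|v-v_*|^{2+\gamma}\ls \vb^{2+\gamma}\langle v_*\rangle^{2+\gamma}$, but both lead to the identical conclusion after multiplying by $\vb^{-(2+\gamma)}$.
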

\begin{proof}
This follows from \eqref{a.0.1} in Proposition~\ref{prop:a.expressions} and \eqref{eq:L2.decay.notsing} in Proposition~\ref{prop:main.L2.decay}. \qedhere
\end{proof}

The next proposition improves the decay rate in $t$, but requires $|\bt|\geq 2$ (compare Proposition~\ref{prop:ab.Li.2}).
\begin{proposition}\label{prop:ab.L2.improved}
If $|\alp|+|\bt|+|\sigma|\leq \Mm$ \underline{and $|\bt|\geq 2$}, then
$$\|\rd_x^\alp \rd_v^\bt Y^\sigma \bar{a}_{ij} \|_{L^2_x L^{p_{**}}_v} \ls \ep^{\f 34} (1+t)^{-\min\{\f{16}5,5+\gamma\}+|\bt|},$$
where $p_{**}$ is as in Definition~\ref{p*.def}.
\end{proposition}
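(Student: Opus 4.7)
The plan is to exploit the hypothesis $|\bt|\geq 2$ to relocate two $v$-derivatives off of $f$ and onto $a_{ij}(v-v_*)$ inside the defining convolution, which converts the $|v-v_*|^{2+\gamma}$ kernel into a $|v-v_*|^\gamma$ kernel, after which Proposition~\ref{prop:L2.p*} applies directly. This strategy is parallel to the $L^\i$ improvement in Proposition~\ref{prop:ab.Li.2}, but now in the $L^2_x$ setting and pushing two $v$-derivatives over instead of one.

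Concretely, I would split the multi-index $\bt = \bar{\bt} + \bt'$ with $|\bt'|=2$ (so $|\bar\bt| = |\bt|-2$), write $\rd_v^{\bt'} = \rd^2_{v_\ell v_m}$ for some $\ell,m$, and then invoke \eqref{a.2.1} in Proposition~\ref{prop:a.expressions} (valid since $|\bt'|\leq 2$) to get the pointwise bound
\begin{equation*}
|\rd_x^\alp \rd_v^\bt Y^\sigma \bar a_{ij}|(t,x,v)
\ls \int_{\mathbb R^3} |v-v_*|^{\gamma}\,|\rd_x^\alp \rd_v^{\bar\bt} Y^\sigma f|(t,x,v_*)\,\ud v_*.
\end{equation*}
Since this integrand is pointwise dominated by the one in \eqref{eq:gamma.Lp**} (the extra weights $\langle v_*\rangle^2\langle x-tv_*\rangle^2$ are $\geq 1$), taking the $L^2_x L^{p_{**}}_v$ norm and applying Proposition~\ref{prop:L2.p*} to the $\Mm$-admissible multi-index $(\alp,\bar\bt,\sigma)$ yields
\begin{equation*}
\|\rd_x^\alp \rd_v^\bt Y^\sigma \bar a_{ij}\|_{L^2_xL^{p_{**}}_v}
\ls \ep^{\f 34}(1+t)^{-\min\{\f 65,\,3+\gamma\}+|\bar\bt|}
= \ep^{\f 34}(1+t)^{-\min\{\f{16}5,\,5+\gamma\}+|\bt|},
\end{equation*}
which is the desired estimate. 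Note that $|\bar\bt|+|\alp|+|\sigma|\leq \Mm$ since $|\bt|\leq \Mm$, so the hypothesis of Proposition~\ref{prop:L2.p*} is met.

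The only real subtlety is verifying that the Step~0 commutation formula in Proposition~\ref{prop:a.expressions} genuinely applies with the $Y^\sigma$ included: one must check that $Y^\sigma$ commutes with the convolution, which follows from the identity $(t\rd_{x_k}+\rd_{v_k})\bar a_{ij}(t,x,v) = \int a_{ij}(v-v_*)\,(t\rd_{x_k}+\rd_{(v_*)_k}) f(t,x,v_*)\,\ud v_*$ (obtained by integration by parts and the antisymmetry $\rd_{v_k}a_{ij}(v-v_*) = -\rd_{(v_*)_k}a_{ij}(v-v_*)$). Given this, there is no real obstacle; the proof is a one-line reduction to the already established Proposition~\ref{prop:L2.p*}, with the arithmetic $-\min\{\f 65,3+\gamma\}-2 = -\min\{\f{16}5,5+\gamma\}$ producing exactly the claimed rate.
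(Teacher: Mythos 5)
Your proof is correct and matches the paper's argument essentially line for line: both apply \eqref{a.2.1} from Proposition~\ref{prop:a.expressions} to shift two $v$-derivatives onto the kernel (converting it to $|v-v_*|^\gamma$ and dropping the order of $\rd_v$ on $f$ by two), then invoke \eqref{eq:gamma.Lp**} from Proposition~\ref{prop:L2.p*} and do the arithmetic $-\min\{\f 65,3+\gamma\}-2 = -\min\{\f{16}5,5+\gamma\}$. Your added remark about checking the $Y$-commutation with the convolution is a reasonable point of care, though the paper subsumes it into Step~0 of Proposition~\ref{prop:a.expressions}.
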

\begin{proof}
By \eqref{a.2.1} in Proposition~\ref{prop:a.expressions} and \eqref{eq:gamma.Lp**} in Proposition~\ref{prop:L2.p*}, we have
\begin{equation*}
\begin{split}
&\: \|\rd_x^\alp \rd_v^\bt Y^\sigma \bar{a}_{ij}\|_{L^2_x L^{p_{**}}_v}(t)\\
\ls &\: \sum_{|\bt'|\leq |\bt|-2} \left\| \int_{\mathbb R^3} |v-v_*|^{-\gamma} |\rd_x^\alp \rd_v^{\bt'}Y^{\sigma} f|(t,x,v_*) \,\ud v_* \right\|_{L^2_x L^{p_{**}}_v}(t)\\
\ls &\: \sum_{|\bt'|\leq |\bt|-2} \ep^{\f 34} (1+t)^{-\min\{\f 65, 3+\gamma\}+|\bt'|}
\ls  \ep^{\f 34} (1+t)^{-\min\{\f {16}5, 5+\gamma\}+|\bt|}.
\end{split}
\end{equation*}
\qedhere
\end{proof}

The next proposition improves the decay rate in $t$, but requires $|\alp|\geq 2$ (compare Proposition~\ref{prop:ab.Li.3}). It is also very weak as $t\to 0$.
\begin{proposition}\label{prop:ab.L2.improved.2}
If $|\alp|+|\bt|+|\sigma|\leq \Mm$ \underline{and $|\alp|\geq 2$}, then
$$\|\vb^{-(2+\gamma)} \rd_x^\alp \rd_v^\bt Y^\sigma \bar{a}_{ij} \|_{L^2_xL^\i_v +L^2_x L^2_v} \ls \ep^{\f 34} t^{-2}(1+t)^{-\min\{\f{6}5,3+\gamma\}+|\bt|},$$
where $p_{**}$ is as in Definition~\ref{p*.def}.
\end{proposition}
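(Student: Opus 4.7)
The proof mirrors that of Proposition~\ref{prop:ab.Li.3}: since $|\alp|\geq 2$, we apply the identity $\rd_{x_i} = t^{-1}(Y_i - \rd_{v_i})$ twice to extract a prefactor of $t^{-2}$, at the cost of converting two $\rd_x$'s into $Y$'s or $\rd_v$'s. This produces the pointwise bound
\begin{equation*}
|\rd_x^\alp \rd_v^\bt Y^\sigma \bar{a}_{ij}|(t,x,v) \ls t^{-2}\sum |\rd_x^{\alp'}\rd_v^{\bt'}Y^{\sigma'}\bar{a}_{ij}|(t,x,v),
\end{equation*}
where each summand has $|\alp'|=|\alp|-2$, $|\alp'|+|\bt'|+|\sigma'|\leq \Mm$, and the pair $(|\bt'|-|\bt|,|\sigma'|-|\sigma|)$ equals one of $(0,2)$, $(1,1)$, $(2,0)$. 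We treat the three cases separately.

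For the $(0,2)$ case, Proposition~\ref{prop:ab.L2} gives an $L^2_xL^\i_v$ bound of size $\ep^{\f 34}(1+t)^{-\f 32+|\bt|}$, stronger than the target since $\f 32 > \f 65\geq \min\{\f 65,3+\gamma\}$. For the $(2,0)$ case, $|\bt'|=|\bt|+2\geq 2$, so Proposition~\ref{prop:ab.L2.improved} applies and yields an $L^2_xL^{p_{**}}_v$ bound of size $\ep^{\f 34}(1+t)^{-\min\{\f{16}5,5+\gamma\}+|\bt|+2}$; combined with the $t^{-2}$ prefactor and the identity $-\min\{\f{16}5,5+\gamma\}+2 = -\min\{\f 65,3+\gamma\}$, this matches the target exactly and is the sharp case that dictates the overall rate. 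For the mixed $(1,1)$ case with $|\bt'|=|\bt|+1\geq 1$, we peel off one $v$-derivative using \eqref{a.1.1} of Proposition~\ref{prop:a.expressions} to express $\rd_x^{\alp'}\rd_v^{\bt'}Y^{\sigma'}\bar{a}_{ij}$ as an integral against the $|v-v_*|^{1+\gamma}$ kernel, and invoke \eqref{eq:gamma.Lp*} of Proposition~\ref{prop:L2.p*} to obtain $\|\vb^{-\max\{0,1+\gamma\}}\rd_x^{\alp'}\rd_v^{\bt'}Y^{\sigma'}\bar{a}_{ij}\|_{L^2_xL^{p_*}_v}\ls \ep^{\f 34}(1+t)^{-\f 65+|\bt|}$.

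To assemble the bound in $L^2_xL^\i_v+L^2_xL^2_v$, two elementary observations suffice. Because $2+\gamma\in (0,2)$ and $\vb\geq 1$, the factor $\vb^{-(2+\gamma)}$ is pointwise at most both $1$ and $\vb^{-\max\{0,1+\gamma\}}$, so inserting it in the $(2,0)$ and $(1,1)$ estimates is free. Moreover, for any $p\in[2,\infty]$, the pointwise truncation $h = h\chi_{|h|\leq M(x)}+h\chi_{|h|>M(x)}$ with $M(x)=\|h(x,\cdot)\|_{L^p_v}$ establishes $L^2_xL^p_v\hookrightarrow L^2_xL^\i_v+L^2_xL^2_v$; applied with $p\in\{p_*,p_{**}\}$, both of which lie in $[2,\infty]$ by Definition~\ref{p*.def}, this absorbs the $(1,1)$ and $(2,0)$ contributions into the target space. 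The only bookkeeping obstacle is reconciling the $t$-exponents across cases, which is controlled by the sharp $(2,0)$ case; no ideas beyond those already appearing in Propositions~\ref{prop:ab.Li.3} and \ref{prop:ab.L2.improved} are required.
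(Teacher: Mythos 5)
Your proof is correct and follows essentially the same route as the paper: apply $\rd_x = t^{-1}Y - t^{-1}\rd_v$ twice, split the resulting sum according to how the two converted derivatives distribute between $\rd_v$ and $Y$, estimate the $(0,2)$, $(1,1)$, $(2,0)$ pieces via Proposition~\ref{prop:ab.L2}, (a.1.1)+(eq:gamma.Lp*), and Proposition~\ref{prop:ab.L2.improved} respectively, and finish with the embedding $L^2_xL^{p}_v\subset L^2_xL^\infty_v + L^2_xL^2_v$ for $p\in[2,\infty]$. The paper packages the $(1,1)$ estimate as a preliminary Claim before the main argument, and cites the embedding $L^q\subset L^p+L^r$ rather than constructing the splitting explicitly via the $x$-dependent threshold $M(x)=\|h(x,\cdot)\|_{L^p_v}$, but these are presentational rather than substantive differences.
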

\begin{proof}
\pfstep{Step~1: Preliminary estimate for the $|\bt|\geq 1$ case} The purpose of this step is to establish the following claim, which can be viewed as an analogue of Proposition~\ref{prop:ab.L2.improved}, but with only $|\bt|\geq 1$.

\textbf{Claim:} If $|\alp|+|\bt|+|\sigma|\leq \Mm$ \underline{and $|\bt|\geq 1$}, then
$$\|\vb^{-(2+\gamma)} \rd_x^\alp \rd_v^\bt Y^\sigma \bar{a}_{ij} \|_{L^2_x L^{p_{*}}_v} \ls \ep^{\f 34} (1+t)^{-\min\{\f{11}5,3+\gamma\}+|\bt|},$$
where $p_{*}$ is as in Definition~\ref{p*.def}.

To prove this claim, it suffices to combine \eqref{a.1.1} in Proposition~\ref{prop:a.expressions} and \eqref{eq:gamma.Lp*} in Proposition~\ref{prop:L2.p*}.

\pfstep{Step~2: Main argument} Arguing as in the proof of Proposition~\ref{prop:ab.Li.3}, we write $\rd_x = t^{-1} Y - t^{-1}\rd_v$. Using this identity twice, we obtain the pointwise estimate
\begin{equation*}
\begin{split}
&\: |\rd_x^\alp \rd_v^\bt Y^\sigma \bar{a}_{ij}(t,x,v)|  \\
\ls &\: t^{-2} \sum_{\substack{|\alp'|= |\alp|-2\\ |\sigma'|= |\sigma|+2}} |\rd_x^{\alp'} \rd_v^{\bt} Y^{\sigma'} \bar{a}_{ij}(t,x,v)| + t^{-2} \sum_{\substack{|\alp'|= |\alp|-2\\ |\bt'|=|\bt|+1\\ |\sigma'|= |\sigma|+1}} |\rd_x^{\alp'} \rd_v^{\bt'} Y^{\sigma'} \bar{a}_{ij}(t,x,v)| \\
&\: + t^{-2} \sum_{\substack{|\alp'|= |\alp|-2\\ |\bt'|= |\bt|+2}} |\rd_x^{\alp'} \rd_v^{\bt'} Y^{\sigma} \bar{a}_{ij}(t,x,v)|.
\end{split}
\end{equation*}
We now estimate the first term with Proposition~\ref{prop:ab.L2} (with $(\alp',\bt,\sigma')$ in place of $(\alp,\bt,\sigma)$), estimate the second term with the Claim in Step~1 (with $(\alp',\bt',\sigma)$ in place of $(\alp,\bt,\sigma)$, noting that $|\bt'|\geq 1$), and estimate the last term with Proposition~\ref{prop:ab.L2.improved} (with $(\alp',\bt',\sigma)$ in place of $(\alp,\bt,\sigma)$, noting that $|\bt'|\geq 2$), we obtain
\begin{equation*}
\begin{split}
&\: \|\vb^{-(2+\gamma)} \rd_x^\alp \rd_v^\bt Y^\sigma \bar{a}_{ij} \|_{L^2_xL^\i_v + L^2_xL^{p_*}_v+L^2_x L^{p_{**}}_v} \\
\ls &\: \ep^{\f 34} \max\{t^{-2}(1+t)^{-\f 32+|\bt|}, t^{-2}(1+t)^{-\f 65+|\bt|},t^{-2}(1+t)^{-\min\{\f{6}5,3+\gamma\}+|\bt|}\}.
\end{split}
\end{equation*}
Since $\f 32 \geq \f 65$, this implies 
\begin{equation}\label{eq:L2.improved.alp.step2}
\|\vb^{2+\gamma} \rd_x^\alp \rd_v^\bt Y^\sigma \bar{a}_{ij} \|_{L^2_xL^\i_v + L^2_xL^{p_*}_v+L^2_x L^{p_{**}}_v} \ls \ep^{\f 34} t^{-2}(1+t)^{-\min\{\f{6}5,3+\gamma\}+|\bt|}.
\end{equation}

\pfstep{Step~3: Calculus lemma and conclusion of the proof} Noticing the elementary embedding
$$L^q \subset L^p + L^r$$
when $1\leq p\leq q \leq r \leq +\infty$, and using $p_*,\,p_{**}\in [2,+\infty]$, the conclusion thus follows from \eqref{eq:L2.improved.alp.step2}. \qedhere
\end{proof}

The next estimate gives a better decay rate as $t\to +\infty$, but it is very weak as $t\to 0$ and requires an additional weight of $\wb^{2}$ (compare Proposition~\ref{prop:ab.Li.null.cond}).
\begin{proposition}\label{prop:ab.L2.null.cond}
For $p_{**}\in [2,\infty]$ as in Definition~\ref{p*.def}, if $|\alp|+|\bt|+|\sigma|\leq \Mm$, then for $t\in [0,T_{Boot})$,
$$\max_{i,j} \|\wb^{-2}\rd_x^{\alp} \rd_v^\bt Y^\sigma \bar{a}_{ij}\|_{L^2_xL^\i_v + L^2_x L^{p_{**}}_v}(t) \ls \ep^{\f 34}t^{-2}(1+t)^{-\min\{\f{6}{5},3+\gamma\}+|\bt|}.$$
\end{proposition}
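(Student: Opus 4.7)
\medskip
\noindent\textbf{Proof plan.} The plan is to combine the null structure identity already used in Proposition~\ref{prop:ab.Li.null.cond} with the mixed-norm singular-kernel estimate \eqref{eq:gamma.Lp**} from Proposition~\ref{prop:L2.p*}. Starting from the pointwise bound \eqref{a.0.1} of Proposition~\ref{prop:a.expressions},
$$|\rd_x^\alp \rd_v^\bt Y^\sigma \bar{a}_{ij}|(t,x,v) \ls \int_{\mathbb R^3} |v-v_*|^{2+\gamma}\,|\rd_x^\alp \rd_v^\bt Y^\sigma f|(t,x,v_*)\,\ud v_*,$$
I would exploit the trivial identity $v-v_* = t^{-1}\bigl((x-tv_*)-(x-tv)\bigr)$ for $t>0$, which yields
$$|v-v_*|\;\leq\; t^{-1}\bigl(\wb(t,x,v)+\wb(t,x,v_*)\bigr).$$
This is the quantitative version of the null structure in the context of a weighted $L^2_x$ estimate.

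The key step is to split the power $|v-v_*|^{2+\gamma}=|v-v_*|^2\cdot|v-v_*|^{\gamma}$ and to apply the null structure identity only to the $|v-v_*|^2$ factor (so as to extract the desired $t^{-2}$ rate), while leaving the singular piece $|v-v_*|^\gamma$ intact to be absorbed later by \eqref{eq:gamma.Lp**}. More precisely, squaring the bound above gives
$$|v-v_*|^{2+\gamma}\ls t^{-2}\bigl(\wb(t,x,v)^2+\wb(t,x,v_*)^2\bigr)|v-v_*|^{\gamma},$$
so that after dividing by $\wb(t,x,v)^2$ and using $\wb(t,x,v)^{-2}\leq 1$,
$$\wb(t,x,v)^{-2}|\rd_x^\alp \rd_v^\bt Y^\sigma \bar{a}_{ij}|(t,x,v)\ls t^{-2}\int_{\mathbb R^3}|v-v_*|^{\gamma}\bigl(1+\wb(t,x,v_*)^2\bigr)|\rd_x^\alp \rd_v^\bt Y^\sigma f|(t,x,v_*)\,\ud v_*.$$

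From here the proof is essentially a direct invocation of \eqref{eq:gamma.Lp**} in Proposition~\ref{prop:L2.p*} (whose stated hypothesis even carries an additional $\langle v_*\rangle^2$ factor that we do not need to use). Taking the $L^2_xL^{p_{**}}_v$ norm of both sides and noting $L^2_xL^{p_{**}}_v\subset L^2_xL^\i_v+L^2_xL^{p_{**}}_v$ gives the target estimate $\ls \ep^{3/4}\,t^{-2}(1+t)^{-\min\{6/5,3+\gamma\}+|\bt|}$. The only small point to be careful about is that the extraction of the $t^{-2}$ factor via the null structure is valid only for $t>0$; the stated bound is trivial (or uninteresting) as $t\to 0^+$ due to the $t^{-2}$ factor, so this causes no actual difficulty.

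I do not anticipate any serious obstacle here: the technical heart is the splitting $|v-v_*|^{2+\gamma}=|v-v_*|^2\cdot|v-v_*|^{\gamma}$, which cleanly separates the "null" gain in $t$ from the singular kernel controlled in mixed $L^2_x L^{p_{**}}_v$ by Proposition~\ref{prop:L2.p*}. The mild subtlety is simply verifying that one extra power of $\wb$ on the left (two, rather than the $\min\{1,2+\gamma\}$ used for the $L^\i_x$ version in Proposition~\ref{prop:ab.Li.null.cond}) is precisely what is needed to absorb the full $|v-v_*|^2$ factor — which is why the hypothesis of the proposition asks for exactly $\wb^{-2}$ on the left-hand side.
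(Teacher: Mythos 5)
Your proposal is correct and follows essentially the same route as the paper's own proof: both start from \eqref{a.0.1}, use the identity $v-v_* = t^{-1}\bigl((x-tv_*)-(x-tv)\bigr)$ to trade $|v-v_*|^2$ for $t^{-2}\bigl(\wb(t,x,v)^2+\wb(t,x,v_*)^2\bigr)$, absorb the $\wb(t,x,v)^2$ with the left-hand $\wb^{-2}$ weight, and finish by invoking \eqref{eq:gamma.Lp**}. The only cosmetic difference is that the paper writes the intermediate step with the unregularized quantities $|x-tv|$, $|x-tv_*|$ rather than Japanese brackets, which is immaterial.
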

\begin{proof}
By \eqref{a.0.1} in Proposition~\ref{prop:a.expressions} and the triangle inequality,
\begin{equation*}
\begin{split}
&\: |\rd_x^{\alp}\rd_v^\bt Y^{\sigma} \bar{a}_{ij}|(t,x,v) \\
\ls &\: \int_{\mathbb R^3} |v-v_*|^{2+\gamma} |\rd_x^{\alp} \rd_v^{\bt} Y^\sigma f|(t,x,v_*)\,\ud v_* \\
\ls &\: \int_{\mathbb R^3} |v-v_*|^{\gamma} (|v-\f{x}{t}|^2 + |v_*-\f xt|^2) |\rd_x^{\alp} \rd_v^{\bt} Y^\sigma f|(t,x,v_*)\,\ud v_* \\
\ls &\: t^{-2}\int_{\mathbb R^3} |v-v_*|^{\gamma} (|x-tv|^2 + |x-tv_*|^2) |\rd_x^{\alp} \rd_v^{\bt} Y^\sigma f|(t,x,v_*)\,\ud v_*.
\end{split}
\end{equation*}
Therefore,
\begin{equation*}
\begin{split}
&\:\max_{i,j} \|\wb^{-2}\rd_x^{\alp} \rd_v^\bt Y^\sigma \bar{a}_{ij}\|_{L^2_x L^{p_{**}}_v}(t) \\
\ls &\: t^{-2} \|\int_{\mathbb R^3} |v-v_*|^{\gamma} \langle x-tv_*\rangle^2 |\rd_x^{\alp} \rd_v^{\bt} Y^\sigma f|(t,x,v_*)\,\ud v_*\|_{L^2_x L^{p_{**}}_v}(t).
\end{split}
\end{equation*}
The desired conclusion then follows from \eqref{eq:gamma.Lp**} in Proposition~\ref{prop:L2.p*}. \qedhere
\end{proof}

We next prove estimates for $\bar{a}_{ij}$ for its derivatives \emph{when contracted with $v$}  (compare Proposition~\ref{prop:ab.Li.weighted}). When $|\alp|\geq 1$ or $|\bt|\geq 1$, we have an improvement in the decay rate (although the estimate is very weak as $t\to 0$).
\begin{proposition}\label{prop:ab.L2.weighted}
If $|\alp|+|\bt|+|\sigma|\leq \Mm$, then
\begin{equation}\label{prop:ab.Lw.weighted.basic}
\max_j\|\vb^{-\max\{2+\gamma,1\}}\rd_x^\alp \rd_v^\bt Y^\sigma (\bar{a}_{ij} v_i)\|_{L^2_x L^\i_v}(t) \ls \ep^{\f 34} (1+t)^{-\f 32+|\bt|}
\end{equation}
and
$$\|\vb^{-(2+\gamma)}\rd_x^\alp \rd_v^\bt Y^\sigma (\bar{a}_{ij} v_i v_j)\|_{L^2_x L^\i_v}(t) \ls \ep^{\f 34} (1+t)^{-\f 32+|\bt|}.$$
If $|\alp|+|\bt|+|\sigma|\leq \Mm$ \underline{and $\max\{|\alp|,|\bt|\}\geq 1$}, then
\begin{equation}\label{prop:ab.L2.weighted.improved}
\max_j \|\vb^{-\max\{2+\gamma,1\}}\rd_x^\alp \rd_v^\bt Y^\sigma (\bar{a}_{ij} v_i)\|_{L^2_x L^\i_v + L^2_x L^{p_*}_v}(t) \ls \ep^{\f 34} t^{-1}(1+t)^{-\min\{\f 65,3+\gamma\}+|\bt|}.
\end{equation}
where $p_{*}$ is as in Definition~\ref{p*.def}.
\end{proposition}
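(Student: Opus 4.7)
The plan is to follow the three-part structure of the statement, proving Part~1 and Part~2 first (the basic $L^2_xL^\i_v$ bounds), then the improved estimate in the case $|\bt|\geq 1$, and finally the improved estimate in the case $|\alp|\geq 1$, $|\bt|=0$. For Parts~1 and~2, I would apply the pointwise formulas \eqref{a.0.2} and \eqref{a.0.3} of Proposition~\ref{prop:a.expressions}, divide by the $\vb^{\max\{2+\gamma,1\}}$ and $\vb^{-(2+\gamma)}$ weights respectively, and use $\langle v_*\rangle^{\max\{2+\gamma,1\}}$ and $|v_*|^{2+\gamma}\langle v_*\rangle^2$ are both bounded by $\langle v_*\rangle^4$; the result then reduces directly to \eqref{eq:L2.decay.notsing} of Proposition~\ref{prop:main.L2.decay}.

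For the improved bound when $|\bt|\geq 1$, I would write $\rd_v^\bt=\rd_v^{\bt'}\rd_{v_\ell}$ with $|\bt'|=|\bt|-1$ and invoke \eqref{a.1.2}, which produces a kernel $|v||v-v_*|^{1+\gamma}+|v-v_*|^{2+\gamma}$ convolved with $|\rd_x^\alp\rd_v^{\bt'}Y^\sigma f|$. For the first piece, the weight $|v|\cdot\vb^{-\max\{2+\gamma,1\}}$ equals $\vb^{-\max\{0,1+\gamma\}}$ (an identity valid for all $\gamma\in(-2,0)$), so \eqref{eq:gamma.Lp*} of Proposition~\ref{prop:L2.p*} yields control in $L^2_xL^{p_*}_v$ by $\ep^{3/4}(1+t)^{-6/5+|\bt'|}=\ep^{3/4}(1+t)^{-11/5+|\bt|}$. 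For the second piece, since $2+\gamma\in(0,2)$, I would split $|v-v_*|^{2+\gamma}\ls\vb^{2+\gamma}+\langle v_*\rangle^{2+\gamma}$, reducing via \eqref{eq:L2.decay.notsing} to $\ep^{3/4}(1+t)^{-3/2+|\bt'|}$ in $L^2_xL^\i_v$. Embedding both into $L^2_xL^\i_v+L^2_xL^{p_*}_v$ yields a bound stronger than the one claimed.

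For the case $|\alp|\geq 1$ and $|\bt|=0$, I would use the identity $\rd_x=t^{-1}Y-t^{-1}\rd_v$ a single time, producing an overall $t^{-1}$ and two terms: one with $(\alp-1,\bt,\sigma+1)$ multi-indices and one with $(\alp-1,\bt+1,\sigma)$ multi-indices. The first term still has $|\bt|=0$ and is handled by the basic estimate \eqref{prop:ab.Lw.weighted.basic} of Part~1, giving $\ep^{3/4}t^{-1}(1+t)^{-3/2}$ in $L^2_xL^\i_v$. The second term has $|\bt|=1$ and is handled by the $|\bt|\geq 1$ case just established, giving $\ep^{3/4}t^{-1}(1+t)^{-11/5+1}=\ep^{3/4}t^{-1}(1+t)^{-6/5}$ in $L^2_xL^\i_v+L^2_xL^{p_*}_v$. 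Both are dominated by $\ep^{3/4}t^{-1}(1+t)^{-\min\{6/5,3+\gamma\}}$, concluding the case.

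The principal obstacle is bookkeeping the $\vb$-weights so that the $|v|$ factor appearing from differentiating $\bar{a}_{ij}v_i$ via \eqref{a.1.2} is exactly absorbed by the $\vb^{-\max\{2+\gamma,1\}}$ normalization to match the $\vb^{-\max\{0,1+\gamma\}}$ weight required by \eqref{eq:gamma.Lp*}. The structural hurdle is the order of proof: the $|\alp|\geq 1$, $|\bt|=0$ subcase of the improved bound can only be closed by invoking the $|\bt|\geq 1$ subcase, so Parts~1 and~2 together with the $|\bt|\geq 1$ subcase must be in hand before the reduction $\rd_x=t^{-1}Y-t^{-1}\rd_v$ is applied.
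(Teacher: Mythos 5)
Your proposal matches the paper's own proof step for step: the basic $L^2_xL^\i_v$ bounds from \eqref{a.0.2}/\eqref{a.0.3} plus \eqref{eq:L2.decay.notsing}, the $|\bt|\geq 1$ case from \eqref{a.1.2} together with \eqref{eq:L2.decay.notsing} and \eqref{eq:gamma.Lp*}, and the remaining $|\alp|\geq 1$ case by the single substitution $\rd_x = t^{-1}Y - t^{-1}\rd_v$ reducing to the two previously established bounds. The only cosmetic nit is that $|v|\,\vb^{-\max\{2+\gamma,1\}}$ is \emph{bounded by} $\vb^{-\max\{0,1+\gamma\}}$ rather than equal to it, but since only the inequality is used the argument is unaffected.
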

\begin{proof}
The first two inequalities follow from combining \eqref{a.0.2} and \eqref{a.0.3} in Proposition~\ref{prop:a.expressions} with \eqref{eq:L2.decay.notsing} in Proposition~\ref{prop:main.L2.decay}.

We now prove \eqref{prop:ab.L2.weighted.improved}. First we consider the $|\bt|\geq 1$ case. Using \eqref{a.1.2} in Proposition~\ref{prop:a.expressions}, \eqref{eq:L2.decay.notsing} in Proposition~\ref{prop:main.L2.decay} and \eqref{eq:gamma.Lp*} in Proposition~\ref{prop:L2.p*}, we obtain
\begin{equation}\label{prop:ab.L2.weighted.improved.v}
\begin{split}
&\: \max_j \|\vb^{-\max\{2+\gamma,1\}}\rd_x^\alp \rd_v^\bt Y^\sigma (\bar{a}_{ij} v_i)\|_{L^2_x L^\i_v + L^2_x L^{p_*}_v}(t) \\
\ls &\: \ep^{\f 34} (1+t)^{-\f 32+|\bt|-1} + \ep^{\f 34}(1+t)^{-\min\{\f 65, 3+\gamma\}+|\bt|-1} \\
\ls &\: \ep^{\f 34} (1+t)^{-\min\{\f{11}5,4+\gamma\}+|\bt|},
\end{split}
\end{equation}
which implies \eqref{prop:ab.L2.weighted.improved} when $|\bt|\geq 1$.

For the $|\alp|\geq 1$ case, using $\rd_x = t^{-1} Y -t^{-1}\rd_v$, we have the pointwise bound
$$|\rd_x^\alp \rd_v^\bt Y^\sigma (\bar{a}_{ij} v_i)|(t,x,v) \ls t^{-1}\sum_{\substack{|\alp'|= |\alp|-1\\ |\sigma'|= |\sigma|+1}} |\rd_x^{\alp'} \rd_v^{\bt} Y^{\sigma'} (\bar{a}_{ij} v_i)|(t,x,v) + t^{-1} \sum_{\substack{|\alp'|= |\alp|-1\\ |\bt'|= |\bt|+1}} |\rd_x^{\alp'} \rd_v^{\bt'} Y^{\sigma} (\bar{a}_{ij} v_i)|(t,x,v).$$
The desired estimate \eqref{prop:ab.L2.weighted.improved} (in the $|\alp|\geq 1$ case) then follows from \eqref{prop:ab.Lw.weighted.basic} and \eqref{prop:ab.L2.weighted.improved.v} applied to the first and second term respectively. \qedhere
\end{proof}

\subsubsection{$L^2_x$ estimates for $\bar{c}$ and its derivatives}

\begin{proposition}\label{prop:cb.L2}
For $p_{**}\in [2,\infty]$ as in Definition~\ref{p*.def}, if $|\alp|+|\bt|+|\sigma|\leq \Mm$, then for $t\in [0,T_{Boot})$,
$$\|\rd_x^\alp \rd_v^\bt Y^\sigma \bar{c} \|_{L^2_x L^{p_{**}}_v}(t) \ls \ep^{\f 34} (1+t)^{-\min\{\f 65, 3+\gamma\}+|\bt|}.$$
\end{proposition}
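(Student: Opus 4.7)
The plan is to reduce the estimate to a direct application of \eqref{eq:gamma.Lp**} in Proposition~\ref{prop:L2.p*}. First I would observe that, by \eqref{c.def} and \eqref{bar.def}, we have $\bar c(t,x,v)=-2(\gamma+3)\int_{\mathbb R^3}|v-v_*|^\gamma f(t,x,v_*)\,\ud v_*$, which is of exactly the same shape as the expression for $\rd^2_{v_\ell v_m}\bar a_{ij}$ that was estimated in \eqref{a.2.1} of Proposition~\ref{prop:a.expressions}. Repeating Step~0 of the proof of Proposition~\ref{prop:a.expressions} verbatim with the kernel $c(z)=-2(\gamma+3)|z|^\gamma$ in place of $a_{ij}$ — $\rd_x^\alp$ commutes trivially with the $v_*$-convolution, the $\rd_v^\bt$ and the $\rd_v$ part of $Y^\sigma$ are transferred to the $v_*$ variable by integration by parts, and the $t\rd_x$ part of $Y^\sigma$ again commutes with the convolution — yields the pointwise bound
$$|\rd_x^\alp \rd_v^\bt Y^\sigma \bar c|(t,x,v)\ls \int_{\mathbb R^3}|v-v_*|^\gamma |\rd_x^\alp\rd_v^\bt Y^\sigma f|(t,x,v_*)\,\ud v_*.$$

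Second, since $\langle v_*\rangle^2\langle x-tv_*\rangle^2\geq 1$ pointwise, the right-hand side is dominated (pointwise in $(t,x,v)$) by the integrand appearing in \eqref{eq:gamma.Lp**}. Taking the $L^2_x L^{p_{**}}_v$ norm and invoking \eqref{eq:gamma.Lp**} of Proposition~\ref{prop:L2.p*} immediately gives
$$\|\rd_x^\alp\rd_v^\bt Y^\sigma\bar c\|_{L^2_x L^{p_{**}}_v}(t)\ls \ep^{3/4}(1+t)^{-\min\{\tfrac 65,3+\gamma\}+|\bt|},$$
which is the desired bound.

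The only subtlety is the integration-by-parts step when $\gamma\in(-2,-1]$, where $|z|^\gamma$ is not classically differentiable at $z=0$. However $|z|^\gamma$ is locally integrable on $\mathbb R^3$ (as $\gamma>-3$), so all derivatives are handled in the distributional sense, exactly as implicitly done in Proposition~\ref{prop:a.expressions} for $\rd^2_{v_\ell v_m}\bar a_{ij}$; alternatively, one can regularize $c$ and pass to the limit. No new difficulty arises, and this is essentially the main (minor) obstacle. The entire argument is very short once \eqref{eq:gamma.Lp**} is in hand.
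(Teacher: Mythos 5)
Your proposal is correct and follows essentially the same route as the paper: establish the pointwise bound $|\rd_x^\alp \rd_v^\bt Y^\sigma \bar{c}|(t,x,v) \ls \int_{\mathbb R^3} |v-v_*|^{\gamma} |\rd_x^\alp \rd_v^\bt Y^\sigma f|(t,x,v_*)\,\ud v_*$ (which the paper also uses, cf.~the proof of Proposition~\ref{prop:cb.Li}) and then invoke \eqref{eq:gamma.Lp**} of Proposition~\ref{prop:L2.p*}, discarding the harmless factor $\langle v_*\rangle^2\langle x-tv_*\rangle^2\geq 1$. The extra remarks on distributional differentiation of $|z|^\gamma$ and on transferring $\rd_v$ to the $v_*$ variable are fine but not a different argument.
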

\begin{proof}
This is an easy consequence of \eqref{eq:gamma.Lp**} in Proposition~\ref{prop:L2.p*} since
\begin{equation*}
\begin{split}
\|\rd_x^\alp \rd_v^\bt Y^\sigma \bar{c} \|_{L^2_x L^{p_{**}}_v}(t) \ls \left\| \int_{\mathbb R^3} |v-v_*|^{\gamma} |\rd_x^\alp \rd_v^\bt Y^\sigma f|(t,x,v_*)\, dv_* \right\|_{L^2_x L^{p_{**}}_v} \ls \ep^{\f 34}(1+t)^{-\min\{\f 65, 3+\gamma\}+|\bt|}.
\end{split}
\end{equation*}
\end{proof}

\section{The maximum principle argument and the $L^\i_xL^\i_v$ estimates}\label{sec:MP}

We continue to work under the assumptions of Theorem~\ref{thm:BA}.

In this section, we prove $L^\i_xL^\i_v$ bounds for $g$ and its derivatives. These estimates are based on an application of the maximum principle In the process, we need to obtain a hierarchy of estimates in a descent scheme; see Section~\ref{sec:descent.scheme}. By the end of the section, we will have improved in particular the constants in the bootstrap assumptions \eqref{BA.sim.1} and \eqref{BA.sim.2} (see~Proposition~\ref{sec:recover.Li}).

This section is structured as follows. First, in \textbf{Section~\ref{sec:prelim.Li.Sobolev}}, we prove some preliminary $L^\infty_x L^\infty_v$ estimates, which are Sobolev-embedding based (and are by themselves too weak to close the argument). In \textbf{Section~\ref{sec:max.prin}}, we derive a general maximum principle for linear inhomogeneous equation that is suitable in our setting. We then introduce our hierarchy of estimates in \textbf{Section~\ref{sec:hierarchy}}. In the same section, we initiate an induction argument aim at proving this hierarchy of estimates. In the next few subsection, the goal will be to use the maximum principle in Section~\ref{sec:max.prin} in the context of the induction argument introduced in Section~\ref{sec:hierarchy}. This consists of a few steps. (a) In \textbf{Section~\ref{sec:est.inho.Li}}, we classify the different types of inhomogeneous terms arising in the equation for $g$ and its derivatives. (b) In \textbf{Section~\ref{sec:Lierror}}, we then control the error terms that we classified in Section~\ref{sec:est.inho.Li}. (c) In \textbf{Section~\ref{sec:Li.everything}}, we put together the bounds from (a) and (b) above to conclude the induction. Finally, we end the section with \textbf{Section~\ref{sec:recover.Li}} in which we improve the constants in the bootstrap assumptions \eqref{BA.sim.1} and \eqref{BA.sim.2}.

\subsection{Preliminary $L^\infty_x L^\infty_v$ estimates}\label{sec:prelim.Li.Sobolev}

We begin with some preliminary $L^\infty_x L^\infty_v$ estimates (see already Proposition~\ref{prop:prelim.Linfty}), which are completely based on Sobolev embedding. These estimates are not optimal in either $t$ or $\vb$.

\begin{lemma}\label{lem:stupid.Sobolev.embedding}
Let $h:[0,T_{\mathrm{Boot}})\times \mathbb R^3\times \mathbb R^3\to \mathbb R$ be a $C^\infty$ function. Then for every $t\in [0,T_{Boot})$, the following estimate holds:
$$\|h\|_{L^\i_x L^\i_v} \ls \left(\| h\|_{L^2_x L^2_v} + \sum_{|\alp|=4} \|\rd_x^{\alp} h\|_{L^2_x L^2_v}\right)^{\f 58} \left(\sum_{|\bt|= 4} \|\rd_v^\bt h\|_{L^2_x L^2_v}\right)^{\f 38}.$$
\end{lemma}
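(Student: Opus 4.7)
The strategy I would take combines a crude anisotropic Sobolev embedding with a rescaling in the $v$-variable. First, I would establish the ``sum'' version
$$
\|h\|_{L^\infty_x L^\infty_v}\ls \|h\|_{L^2_xL^2_v}+\sum_{|\alpha|=4}\|\partial_x^{\alpha}h\|_{L^2_xL^2_v}+\sum_{|\beta|=4}\|\partial_v^{\beta}h\|_{L^2_xL^2_v}
$$
by Fourier inversion. Denoting by $\hat h$ the joint Fourier transform in $(x,v)$ with dual variables $(\xi,\eta)$, one has $\|h\|_{L^\infty_{x,v}}\le\|\hat h\|_{L^1_{\xi,\eta}}$, and Cauchy--Schwarz with the weight $w(\xi,\eta):=1+|\xi|^8+|\eta|^8$ gives $\|\hat h\|_{L^1}\le(\int w^{-1})^{1/2}(\int w|\hat h|^2)^{1/2}$. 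The second factor is equivalent via Plancherel to the square of the right-hand side above, while the first is finite because of the pointwise inequality $1+|\xi|^8+|\eta|^8\ge (1+|\xi|^8)^{1/2}(1+|\eta|^8)^{1/2}$, combined with $\int_{\mathbb R^3}(1+|\xi|^8)^{-1/2}\,d\xi<\infty$ (as $8/2>3$).

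Next I would upgrade this to the interpolated bound via a rescaling in $v$ only. For $\nu>0$ set $h_\nu(x,v):=h(x,\nu v)$. Then $\|h_\nu\|_{L^\infty_xL^\infty_v}=\|h\|_{L^\infty_xL^\infty_v}$, while a change of variables gives $\|h_\nu\|_{L^2_xL^2_v}=\nu^{-3/2}\|h\|_{L^2_xL^2_v}$, $\|\partial_x^\alpha h_\nu\|_{L^2_xL^2_v}=\nu^{-3/2}\|\partial_x^\alpha h\|_{L^2_xL^2_v}$, and $\|\partial_v^\beta h_\nu\|_{L^2_xL^2_v}=\nu^{5/2}\|\partial_v^\beta h\|_{L^2_xL^2_v}$. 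Writing $A$ and $B$ for the two factors on the right-hand side of the lemma and applying the sum estimate above to $h_\nu$, one obtains
$$
\|h\|_{L^\infty_xL^\infty_v}\ls \nu^{-3/2}A+\nu^{5/2}B\qquad\text{for every }\nu>0.
$$

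Finally, I would optimise in $\nu$: when $A,B>0$, the choice $\nu=(A/B)^{1/4}$ balances the two terms and produces $\|h\|_{L^\infty_xL^\infty_v}\ls A^{5/8}B^{3/8}$, as desired. The inequality is vacuous when $A$ or $B$ is infinite, and the cases $A=0$ or $B=0$ with both finite force $h\equiv 0$ and are recovered by sending $\nu\to\infty$ or $\nu\to 0$, respectively. The argument has no serious obstacle; the point to watch is that the rescaling must be in $v$ alone, since $A$ already lumps $\|h\|_{L^2_xL^2_v}$ and $\|\partial_x^4 h\|_{L^2_xL^2_v}$ together (two quantities with incompatible $x$-scalings), so only the $x$-$v$ split, captured by the two scalings $\nu^{-3/2}$ versus $\nu^{5/2}$, remains to be interpolated.
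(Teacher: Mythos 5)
Your proof is correct and follows essentially the same strategy as the paper: establish the anisotropic Sobolev estimate with only the zeroth and fourth-order $x$- and $v$-derivatives, then rescale in $v$ alone and optimize over the scaling parameter. The only cosmetic difference is that you derive the intermediate sum estimate directly from Fourier inversion and Cauchy--Schwarz with the weight $1+|\xi|^8+|\eta|^8$, while the paper quotes standard Sobolev embedding in $\mathbb R^6$ and then invokes Plancherel to discard the intermediate derivatives; your rescaling $h_\nu(x,v)=h(x,\nu v)$ is the paper's $h_\lambda(x,v)=h(x,\lambda^{-1}v)$ with $\nu=\lambda^{-1}$.
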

\begin{proof}
Without loss of generality, we assume that $h\not\equiv 0$ (for otherwise the estimate is trivial). 

Standard Sobolev embedding in $\mathbb R^6$ gives
$$\|h\|_{L^\i_x L^\i_v} \ls \sum_{|\alp|+|\bt|+|\sigma|\leq 4} \|\rd_x^{\alp} \rd_v^\bt h\|_{L^2_x L^2_v}.$$
An easy argument (for instance using Plancherel's theorem) allows us to control the RHS with only the lowest and the highest derivatives, i.e.
\begin{equation}\label{scaled.6D}
\|h\|_{L^\i_x L^\i_v} \ls \| h\|_{L^2_x L^2_v} + \sum_{|\alp|=4} \|\rd_x^{\alp} h\|_{L^2_x L^2_v}+ \sum_{|\bt|= 4} \|\rd_v^\bt h\|_{L^2_x L^2_v}.
\end{equation}
We scale $h$ in the $v$-variable, i.e.~introduce, for $\lambda>0$,
$$h_\lambda(x,v) := h(x, \lambda^{-1}v).$$
One then computes that
$$\|h_\lambda \|_{L^\i_x L^\i_v} = \|h\|_{L^\i_x L^\i_v},\quad \| h_{\lambda} \|_{L^2_x L^2_v} = \lambda^{\f 32}\| h\|_{L^2_x L^2_v},$$
$$\sum_{|\alp|=4} \|\rd_x^{\alp} h_{\lambda} \|_{L^2_x L^2_v} = \lambda^{\f 32} \sum_{|\alp|=4} \|\rd_x^{\alp} h\|_{L^2_x L^2_v},\quad \sum_{|\bt|= 4} \|\rd_v^\bt h_\lambda\|_{L^2_x L^2_v} = \lambda^{-\f 52} \sum_{|\bt|= 4} \|\rd_v^\bt h\|_{L^2_x L^2_v}.$$
Applying \eqref{scaled.6D} to $h_\lambda$ and using the above computations then imply that for any $\lambda > 0$,
\begin{equation}\label{scaled.6D.2}
\|h\|_{L^\i_x L^\i_v} \ls \lambda^{\f 32} \left(\| h\|_{L^2_x L^2_v} + \sum_{|\alp|=4} \|\rd_x^{\alp} h\|_{L^2_x L^2_v}\right) + \lambda^{-\f 52} \sum_{|\bt|= 4} \|\rd_v^\bt h\|_{L^2_x L^2_v}.
\end{equation}
Let $\lambda = \left(\| h\|_{L^2_x L^2_v} + \sum_{|\alp|=4} \|\rd_x^{\alp} h\|_{L^2_x L^2_v}\right)^{-\f 14} \left(\sum_{|\bt|= 4} \|\rd_v^\bt h\|_{L^2_x L^2_v}\right)^{\f 14}$. (We can do this since $h\not\equiv 0$.) By \eqref{scaled.6D.2},
$$\|h\|_{L^\i_x L^\i_v} \ls \left(\| h\|_{L^2_x L^2_v} + \sum_{|\alp|=4} \|\rd_x^{\alp} h\|_{L^2_x L^2_v}\right)^{\f 58} \left(\sum_{|\bt|= 4} \|\rd_v^\bt h\|_{L^2_x L^2_v}\right)^{\f 38},$$
as claimed.
\end{proof}

We apply Lemma~\ref{lem:stupid.Sobolev.embedding} in our context to estimate the derivative of $g$.
\begin{proposition}\label{prop:prelim.Linfty}
For $|\alp|+|\bt|+|\sigma|\leq \Mm-4$,
$$\|\wb^{\Mm+5-|\sigma|}\rd_x^\alp \rd_v^\bt Y^\sigma g\|_{L^\i_x L^\i_v}(t) \ls \ep^{\f 34}(1+t)^{\f 32+|\bt|}.$$
\end{proposition}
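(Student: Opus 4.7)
The plan is to apply Lemma~\ref{lem:stupid.Sobolev.embedding} directly to $h := \wb^{\Mm+5-|\sigma|}\rd_x^\alp\rd_v^\bt Y^\sigma g$. This reduces the problem to bounding three $L^2_xL^2_v$ quantities: $\|h\|_{L^2_xL^2_v}$, $\sum_{|\alp'|=4}\|\rd_x^{\alp'}h\|_{L^2_xL^2_v}$, and $\sum_{|\bt'|=4}\|\rd_v^{\bt'}h\|_{L^2_xL^2_v}$. The first is immediate from the bootstrap assumption \eqref{BA}: since $|\alp|+|\bt|+|\sigma|\leq \Mm-4\leq \Mm$, the definition \eqref{eq:energy.def} combined with $E(T)\leq \ep^{3/4}$ gives $\|h\|_{L^2_xL^2_v}(t)\ls \ep^{3/4}(1+t)^{|\bt|}$.

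For the higher-derivative quantities I would invoke the Leibniz rule together with the pointwise bounds $|\rd_x^{\alp_1}\wb^m|\ls_m \wb^{m-|\alp_1|}$ (no $t$-growth) and $|\rd_v^{\bt_1}\wb^m|\ls_m t^{|\bt_1|}\wb^{m-|\bt_1|}$, the latter being the key asymmetry coming from $\rd_{v_i}\wb = -t(x_i-tv_i)/\wb$. Distributing $\rd_x^{\alp'}$ (with $|\alp'|=4$) over the product defining $h$ produces terms controlled by $\|\wb^{\Mm+5-|\sigma|}\rd_x^{\alp+\alp''}\rd_v^\bt Y^\sigma g\|_{L^2_xL^2_v}$ with total derivative count $\leq\Mm$, so $E(T)\leq\ep^{3/4}$ bounds these by $\ep^{3/4}(1+t)^{|\bt|}$. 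Distributing $\rd_v^{\bt'}$ (with $|\bt'|=4$) is more delicate: when $|\bt_1|$ derivatives land on the weight one picks up a factor $t^{|\bt_1|}$, while the remaining $|\bt_2|=4-|\bt_1|$ derivatives on $g$ cost $(1+t)^{|\bt_2|}$ in the energy estimate; combining yields $t^{|\bt_1|}(1+t)^{|\bt|+|\bt_2|}\leq (1+t)^{|\bt|+4}$, so $\|\rd_v^{\bt'}h\|_{L^2_xL^2_v}(t)\ls \ep^{3/4}(1+t)^{|\bt|+4}$. The hypothesis $|\alp|+|\bt|+|\sigma|\leq \Mm-4$ is exactly what keeps every resulting term within the range covered by $E$.

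Inserting these three bounds into Lemma~\ref{lem:stupid.Sobolev.embedding} with the interpolation exponents $5/8$ and $3/8$ yields
\begin{equation*}
\|h\|_{L^\i_xL^\i_v}\ls \bigl(\ep^{3/4}(1+t)^{|\bt|}\bigr)^{5/8}\bigl(\ep^{3/4}(1+t)^{|\bt|+4}\bigr)^{3/8}=\ep^{3/4}(1+t)^{|\bt|+3/2},
\end{equation*}
which is the claim. No substantial obstacle is anticipated; the argument is essentially bookkeeping, with the only real subtlety being the careful accounting of the $t$-losses produced when $\rd_v$ hits the $\wb$-weight. As emphasized in Section~\ref{sec:descent.scheme}, this estimate is far from sharp in either $t$ or $\vb$, and indeed the exponent $|\bt|+\tfrac 32$ exceeds the desired linear-transport rate $|\bt|$; nevertheless this crude bound will provide the initial input for the maximum-principle descent scheme developed in the remainder of the section.
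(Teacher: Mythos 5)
Your proof is correct and follows essentially the same route as the paper: apply the anisotropically scaled Sobolev embedding of Lemma~\ref{lem:stupid.Sobolev.embedding} to $h = \wb^{\Mm+5-|\sigma|}\rd_x^\alp\rd_v^\bt Y^\sigma g$, distribute the four extra $\rd_x$ or $\rd_v$ derivatives via Leibniz (tracking the $t^{|\bt_1|}$ loss each time $\rd_v$ hits $\wb$), and then invoke the bootstrap assumption \eqref{BA}. Your bookkeeping of the exponents, including the interpolation $\tfrac 58\cdot|\bt|+\tfrac 38\cdot(|\bt|+4)=|\bt|+\tfrac 32$, matches the paper exactly.
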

\begin{proof}
The goal is to apply Lemma~\ref{lem:stupid.Sobolev.embedding} to 
\begin{equation}\label{special.h}
h = \wb^{\Mm+5-|\sigma|} (\rd_x^\alp \rd_v^\bt Y^\sigma g).
\end{equation}
For the rest of the proof, we fix $h$ as in \eqref{special.h}. We now compute the derivatives of $h$ (in terms of weighted derivatives of $g$). The $\rd_x$ derivatives are easier to compute: since $|\rd_x^{\alp'} \wb^{\Mm+5-|\sigma|}| \ls \wb^{\Mm+5-|\sigma|-|\alp'|}$ for $|\alp'|\leq 4$, we have
\begin{equation}\label{Sobolev.x.derivative}
\begin{split}
\sum_{|\alp'| = 4} |\rd_x^{\alp'} h| \ls &\: \sum_{|\alp''|\leq |\alp|+4}\wb^{\Mm+5-|\sigma|-|\alp''|} |\rd_x^{\alp''} \rd_v^\bt Y^\sigma g|\\
\ls &\: \sum_{|\alp''|\leq |\alp|+4}\wb^{\Mm+5-|\sigma|} |\rd_x^{\alp''} \rd_v^\bt Y^\sigma g|.
\end{split}
\end{equation}
For the $\rd_v$ derivative, note that when $\rd_v$ acts on $\wb^{\Mm+5-|\sigma|}$, we get a power of $t$, i.e.~$|\rd_v^{\bt'} \wb^{\Mm+5-|\sigma|}| \ls t^{|\bt'|}\wb^{\Mm+5-|\sigma|-|\bt'|}$ for $|\bt'|\leq 4$. Hence,
\begin{equation}\label{Sobolev.v.derivative}
\begin{split}
\sum_{|\bt'| = 4} |\rd_v^{\bt'} h| \ls &\: \sum_{|\bt''|+|\bt'''|\leq |\bt|+4} t^{|\bt'''|}\wb^{\Mm+5-|\sigma|-|\bt'''|} |\rd_x^{\alp} \rd_v^{\bt''} Y^\sigma g| \\
\ls &\: \sum_{|\bt''|+|\bt'''|\leq |\bt|+4} t^{|\bt'''|}\wb^{\Mm+5-|\sigma|} |\rd_x^{\alp} \rd_v^{\bt''} Y^\sigma g|.
\end{split}
\end{equation}
Applying Lemma~\ref{lem:stupid.Sobolev.embedding} to $h$ and using \eqref{Sobolev.x.derivative}, \eqref{Sobolev.v.derivative} and the bootstrap assumption \eqref{BA}, we obtain
\begin{equation*}
\begin{split}
&\: \|\wb^{\Mm+5-|\sigma|}(\rd_x^\alp \rd_v^\bt Y^\sigma g)\|_{L^\i_x L^\i_v}(t) = \|h\|_{L^\i_xL^\i_v}(t)\\
\ls &\: (\| h\|_{L^2_x L^2_v}(t) + \sum_{|\alp'|=4} \|\rd_x^{\alp'} h\|_{L^2_x L^2_v}(t))^{\f 58} (\sum_{|\bt'|= 4} \|\rd_v^{\bt'} h\|_{L^2_x L^2_v}(t) )^{\f 38}\\
\ls &\: (\sum_{|\alp''|\leq |\alp|+4} \|\wb^{\Mm+5-|\sigma|}(\rd_x^{\alp''} \rd_v^\bt Y^\sigma g)\|_{L^2_x L^2_v}(t))^{\f 58}\\
&\:\quad \times(\sum_{|\bt''|+|\bt'''|\leq |\bt|+4} t^{|\bt'''|} \|\wb^{\Mm+5-|\sigma|}(\rd_x^\alp \rd_v^{\bt''} Y^\sigma g)\|_{L^2_x L^2_v}(t))^{\f 38} \\
\ls &\: \ep^{\f 34}(1+t)^{\f{5|\bt|}{8}}(\sum_{|\bt''|+|\bt'''|\leq |\bt|+4} t^{\f{3|\bt'''|}{8}}(1+t)^{\f {3|\bt''|}8}) \ls 
\ep^{\f 34}(1+t)^{\f{5|\bt|}{8} + \f{3|\bt|}{8} + \f{3\cdot 4}{8}} = \ep^{\f 34}(1+t)^{|\bt|+\f{3}{2}},
\end{split}
\end{equation*}
as claimed. \qedhere
\end{proof}

\subsection{The maximum principle}\label{sec:max.prin}

The goal of this subsection is to establish a general maximum principle; see Proposition~\ref{prop:max.prin}. Before we precisely state the maximum principle, let us already give some remarks:
\begin{enumerate}
\item Despite the various technicalities, the main point of Proposition~\ref{prop:max.prin} is to get a bound for the solution $h$ to the equation \eqref{h.equation}. The bound that we derive (see~\eqref{H.bound} and \eqref{max.prin.conclusion}) is such that we either gain two powers in $\vb$ and lose $(1+\de)$-power in $(1+t)$, or we have no gain in $\vb$ and lose exactly one power of $(1+t)$.

Such a statement is straightforward if \eqref{h.equation} is replaced by the transport equation $\rd_t h + v_i\rd_{x_i} h + \f{\de d_0}{(1+t)^{1+\de}}\vb^2 h = H$. The key point of Proposition~\ref{prop:max.prin} is therefore to ensure that the term $-\bar{a}_{ij} \rd_{v_i v_j}^2 h$ on the LHS of \eqref{h.equation} does not destroy the transport estimate.
\item In order to carry out the argument, we need some a priori control on $h$; see \eqref{h.max.a.priori}. This is a very weak bound which can have very bad dependence on $t$ (compare this with the conclusion of Proposition~\ref{prop:max.prin}, which gives a much stronger bound), but importantly for every fixed $t$ we need the estimate to be uniform in $x$ and $v$ so as to control various cutoffs we introduce.
\item In additional to an estimate on $h$, we also need an a priori bound on $\rd_v h$; see \eqref{dvh.bound}. This is a technical condition necessary to carry out a cut-off argument. The bounds that we need are sufficiently weak to be consistent with the descent scheme (see~Section~\ref{sec:induction}).
\end{enumerate}

The following is our main general maximum principle. The reader can keep in mind that Proposition~\ref{prop:max.prin} will be applied for $h$ being appropriate derivatives of $g$.
\begin{proposition}[Maximum principle]\label{prop:max.prin}
Let $N\in \mathbb N$ with $N\leq \Mm+5$. Let $h:[0,T_{\mathrm{Boot}})\times \mathbb R^3\times \mathbb R^3\to \mathbb R$ be a $C^\infty$ function such that the following four conditions hold for some $p_H\in [1,2)$, $r_H\geq -1-\de$, and $C_{H}\geq 1$:
\begin{enumerate}
\item $h$ is bounded on compact subintervals of $[0,T_{\mathrm{Boot}})$: For every $T\in [0,T_{\mathrm{Boot}})$, there exists a constant $C_{T}>0$ such that for every $(t,x,v)\in [0,T]\times \mathbb R^3\times \mathbb R^3$,
\begin{equation}\label{h.max.a.priori}
\wb^{N}|h(t,x,v)|\leq C_{T}.
\end{equation}
\item $\rd_{v_i} h$ satisfy the following estimate: for $i=1,2,3$ and for every $(t,x,v)\in [0,T_{\mathrm{Boot}})\times \mathbb R^3\times \mathbb R^3$,
\begin{equation}\label{dvh.bound}
\wb^{N}|\rd_{v_i} h(t,x,v)| \leq C_{H} \ep^{\f 34} \vb^{\min\{p_H-2-\gamma,p_H-1\}}(1+t)^{r_H+2+\min\{2+\gamma,1\}}.
\end{equation}
\item $h$ satisfies the following equation:
\begin{equation}\label{h.equation}
\rd_t h + v_i\rd_{x_i} h + \f{\de d_0}{(1+t)^{1+\de}}\vb^2 h- \bar{a}_{ij} \rd_{v_i v_j}^2 h = H,
\end{equation}
where $H:[0,T_{Boot})\times \mathbb R^3\times \mathbb R^3$  is a smooth function satisfying the bound
\begin{equation}\label{H.bound}
\wb^{N}|H|(t,x,v) \leq 
\begin{cases}
C_{H}\ep \vb^{p_H}(1+t)^{r_H} + C_H \ep \vb^{p_H-2}(1+t)^{r_H+\de} & \mbox{if $r_H+\de\geq 0$}\\
C_{H}\ep \vb^{p_H}(1+t)^{r_H} & \mbox{if $r_H+\de \in [-1,0)$}
\end{cases}.
\end{equation}
\item The initial data for $h$ satisfy the bound
\begin{equation}\label{h.data.bound}
\vb\xb^N|h|(0,x,v) \leq C_{H}\epsilon.
\end{equation}
\end{enumerate}

Then for $\ep_0$ is sufficiently small (depending only on $\gamma$ and $d_0$, and in particular independent of $C_{T}$, $C_{H}$, $p_H$ and $r_H$ above), the following estimate holds for all $(t,x,v)\in [0,T_{Boot})\times \mathbb R^3\times \mathbb R^3$:
\begin{equation}\label{max.prin.conclusion}
\begin{split}
|\wb^{N} h|(t,x,v) \leq (3+\f{6}{d_0\de}) C_H \ep \vb^{p_H-2}(1+t)^{r_H+1+\de}.
\end{split}
\end{equation}
\end{proposition}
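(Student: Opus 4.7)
My approach is a barrier comparison argument via the weak maximum principle. Define the barrier
$$\Phi(t,x,v) := K\,\wb^{-N}\vb^{p_H-2}(1+t)^{r_H+1+\de},\qquad K := \bigl(3 + \tfrac{6}{d_0\de}\bigr)C_H\ep.$$
It suffices to prove $h\leq \Phi$; applying the same argument to $-h$ (which satisfies the same equation with source $-H$, bounded by the same right-hand side) then yields $-h\leq \Phi$, and together these two inequalities are \eqref{max.prin.conclusion}. At $t=0$ the data bound \eqref{h.data.bound} gives $\wb^N|h|(0)\leq C_H\ep\vb^{-1}\leq K\vb^{p_H-2}$, since $p_H\geq 1$ and $K\geq 3C_H\ep$, so $h(0)\leq \Phi(0)$ pointwise.

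The heart of the proof is the strict inequality $L\Phi > H$ pointwise on $[0,T_{Boot})\times\mathbb R^3\times\mathbb R^3$, where $L := \rd_t+v_i\rd_{x_i}+\tfrac{\de d_0}{(1+t)^{1+\de}}\vb^2 - \bar{a}_{ij}\rd^2_{v_iv_j}$. Because $(\rd_t+v_i\rd_{x_i})\wb=0$, the transport-plus-damping part of $L\Phi$ computes exactly to
$$L_0\Phi := \bigl(\rd_t + v_i\rd_{x_i} + \tfrac{\de d_0}{(1+t)^{1+\de}}\vb^2\bigr)\Phi = K\wb^{-N}\bigl[(r_H{+}1{+}\de)\vb^{p_H-2}(1+t)^{r_H+\de} + \de d_0\vb^{p_H}(1+t)^{r_H}\bigr].$$
The constants $3$ and $6/(d_0\de)$ in $K$ are calibrated so that $K\de d_0 - C_H\ep\geq 5C_H\ep$ and, when $r_H+\de\geq 0$, also $K(r_H+1+\de) - C_H\ep\geq 2C_H\ep$ (using $r_H+1+\de\geq 1$ in that regime); when $r_H+\de<0$, the coefficient of the first term in $L_0\Phi$ is nonnegative and there is no corresponding term in $|H|$. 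Matching \eqref{H.bound} term by term thus yields the strictly positive excess
$$L_0\Phi - |H| \;\geq\; 5C_H\ep\,\wb^{-N}\vb^{p_H}(1+t)^{r_H} + 2C_H\ep\,\wb^{-N}\vb^{p_H-2}(1+t)^{r_H+\de}\mathbf{1}_{r_H+\de\geq 0}.$$

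The remaining step is to absorb the elliptic perturbation $\bar{a}_{ij}\rd^2_{v_iv_j}\Phi$ into this excess. Differentiating $\Phi$ twice in $v$ produces contributions of size $K\wb^{-N}\vb^{p_H-2}(1+t)^{r_H+1+\de}$ times $\vb^{-2}$, $t\vb^{-1}\wb^{-1}$, and $t^2\wb^{-2}$ respectively. For the first two, the basic bound $|\bar{a}_{ij}|\ls \ep^{3/4}\vb^{2+\gamma}(1+t)^{-3}$ from Proposition~\ref{prop:ab.Li.1} already contributes an $O(\ep^{3/4})$ multiple of the excess, since $\ep^{3/4}\cdot\vb^{\gamma}(1+t)^{-2}$ times either $\vb^{p_H}(1+t)^{r_H}$ or $\vb^{p_H-2}(1+t)^{r_H+\de}$ is clearly absorbable. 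The dangerous $t^2\wb^{-2}$ piece is controlled using the null-structure estimate of Proposition~\ref{prop:ab.Li.null.cond}, $|\bar{a}_{ij}|\ls \ep^{3/4}\wb^{\min\{1,2+\gamma\}}\vb^{\max\{0,1+\gamma\}}t^{-\min\{2+\gamma,1\}}(1+t)^{-3}$: the $\wb$ gain absorbs the $\wb^{-2}$, the $t^{-\min\{2+\gamma,1\}}$ factor reduces $t^2$ to $t^{\max\{-\gamma,1\}}\leq (1+t)^2$, and the total is again an $O(\ep^{3/4})$ multiple of the excess. Choosing $\ep_0$ small enough (depending only on $\gamma$ and $d_0$) therefore yields $L\Phi > H$ pointwise.

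Finally, fix $T\in[0,T_{Boot})$ and set $F := h - \Phi$ on $[0,T]\times\mathbb R^3\times\mathbb R^3$. By \eqref{h.max.a.priori} together with the decay of $\Phi$, $F\to 0$ at infinity in $(x,v)$ uniformly in $t\in[0,T]$, so if $\sup F > 0$ then the sup is attained at some $(t_*,x_*,v_*)$, with $t_*>0$ by the initial comparison above. At such a maximum, $\rd_t F\geq 0$, $\nabla_{x,v}F = 0$, and $D^2_v F\leq 0$; combined with the positive semi-definiteness of $\bar{a}_{ij}$ this forces $LF(t_*,x_*,v_*)\geq \tfrac{\de d_0}{(1+t_*)^{1+\de}}\vb^2\,F(t_*,x_*,v_*)>0$. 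But $LF = H - L\Phi < 0$ by the previous paragraph, a contradiction. (Hypothesis \eqref{dvh.bound}, whose $\vb$- and $t$-weights are precisely those of $\rd_v\Phi$ after one application of $\bar{a}_{ij}\rd_v$, is used in the rigorous cut-off/approximation that makes the classical max principle applicable in the unbounded phase space.) Running the symmetric argument for $-h$ then completes the proof. The main technical obstacle is the elliptic bookkeeping in paragraph three: the naive estimate on the $t^2\wb^{-2}$ piece allows an unacceptable $(1+t)^\de$ growth, and only the null structure of $\bar{a}_{ij}$ restores the absorbable rate.
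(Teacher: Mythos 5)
Your barrier $\Phi = K\wb^{-N}\vb^{p_H-2}(1+t)^{r_H+1+\de}$ is precisely the bound the paper derives (the paper's conjugated variable $\widetilde h_N = \vb\wb^Ne^{-d(t)\vb^2}h$ and barrier $u_N^{(n),\pm}$ decode, after undoing the weight $w_N$, to $\wb^N|h|\leq(3+\tfrac{6}{d_0\de})C_H\ep\vb^{p_H-2}(1+t)^{r_H+1+\de}$). Your computation of $L_0\Phi$, the identification of the excess $\geq 5C_H\ep\wb^{-N}\vb^{p_H}(1+t)^{r_H}$, and the verification that $\bar{a}_{ij}\rd^2_{v_iv_j}\Phi$ is $O(\ep^{3/4})$ of that excess --- with the $t^2\wb^{-2}$ piece needing Proposition~\ref{prop:ab.Li.null.cond} rather than Proposition~\ref{prop:ab.Li.1} --- all parallel the paper's Steps 3(a)--3(f) and Step 4 correctly, including the calibration of the constants $3$ and $6/(d_0\de)$.

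There is, however, a genuine gap in your passage from the pointwise inequality $L\Phi>H$ to the conclusion. Your assertion that ``$F = h-\Phi\to 0$ at infinity in $(x,v)$ uniformly in $t\in[0,T]$'' is false: along a sequence with $x\approx tv$ and $|v|\to\infty$ (and $t>0$), the weight $\wb$ stays bounded, so \eqref{h.max.a.priori} gives only $|h|\leq C_T$, with no decay, while $\Phi\to 0$ because $p_H<2$. Consequently the supremum of $F$ over $[0,T]\times\mathbb R^3\times\mathbb R^3$ may fail to be attained, and the classical maximum principle does not apply. The parenthetical remark that \eqref{dvh.bound} ``is used in the rigorous cut-off/approximation'' identifies what is missing but does not supply it. This is exactly what occupies Steps 2 and 3 of the paper's proof: one conjugates by $w_N=\vb\wb^Ne^{-d(t)\vb^2}$ (the exponential gives decay in $|v|$), cuts off by $\chi(|x|^2/R_n^6)\chi(|v|^2/R_n^2)$ so that $\widetilde h_{N,R_n}$ is compactly supported and its max is attained, estimates the cutoff commutator errors by $C'_{T_n}R_n^{\min\{-1,\gamma,p_H-2\}}$ using \eqref{h.max.a.priori}, \eqref{dvh.bound}, and the coefficient bounds, chooses $R_n$ via \eqref{Rn.condition} so these errors are dominated by the auxiliary term $\pm\tfrac{e^{\widetilde d(t)\vb^2}}{n}$ inserted into the barrier, runs a continuity-in-$T'_n$ argument together with the max/min principles on the compact set, and finally lets $n\to\infty$. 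Without some version of this scheme, your argument establishes only the formal differential inequality, not the conclusion \eqref{max.prin.conclusion}.
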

\begin{proof}

\pfstep{Step~1: Deriving an equation with weights} Define 
\begin{equation}\label{def:wN}
w_N(t,x,v):= \vb\wb^N e^{-d(t)\vb^2},
\end{equation}
and
\begin{equation}\label{def:thN}
\widetilde{h}_N(t,x,v) := (w_N h)(t,x,v).
\end{equation}

We now derive an equation for $\widetilde{h}_N(t,x,v)$ (see already \eqref{h.tilde.computations}). To simplify the notations, let us suppress the explicit dependence on $(t,x,v)$ when there is no risk of confusion.

We first compute
\begin{equation*}
\begin{split}
w_N \rd_{v_i} h = \rd_{v_i} \widetilde{h}_N - (\rd_{v_i} \log w_N)\widetilde{h}_N,
\end{split}
\end{equation*}
which implies
\begin{equation}\label{d2hwithweights}
\begin{split}
&\: w_N \rd^2_{v_iv_j} h \\
=&\: \rd_{v_i} (\rd_{v_j} \widetilde{h}_N - (\rd_{v_j} \log w_N)\widetilde{h}_N) - (\rd_{v_i} \log w_N)(\rd_{v_j} \widetilde{h}_N - (\rd_{v_j} \log w_N)\widetilde{h}_N)\\
=&\: \rd^2_{v_i v_j} \widetilde{h}_N - (\rd_{v_i}\log w_N)(\rd_{v_j}\widetilde{h}_N) - (\rd_{v_j}\log w_N)(\rd_{v_i}\widetilde{h}_N) - [(\rd^2_{v_i v_j}\log w_N) -(\rd_{v_i}\log w_N)(\rd_{v_j}\log w_N)]\widetilde{h}_N\\
=&\: \rd^2_{v_i v_j} \widetilde{h}_N - (\rd_{v_i}\log w_N)w_N(\rd_{v_j}h)  - (\rd_{v_j}\log w_N)w_N(\rd_{v_i}h) - [(\rd^2_{v_i v_j}\log w_N) +(\rd_{v_i}\log w_N)(\rd_{v_j}\log w_N)]\widetilde{h}_N.
\end{split}
\end{equation}
On the other hand, we have
$$(\rd_t + v_i\rd_{x_i})(\log w_N) = \f{d_0\de}{(1+t)^{1+\de}}\vb^2$$
so that
\begin{equation}\label{transporthwithweights}
w_N (\rd_t + v_i\rd_{x_i})h  = (\rd_t + v_i\rd_{x_i})\widetilde{h}_N -\f{d_0\de}{(1+t)^{1+\de}}\vb^2 \widetilde{h}_N.
\end{equation}

By \eqref{h.equation}, \eqref{d2hwithweights} and \eqref{transporthwithweights}, and using $\bar{a}_{ij} = \bar{a}_{ji}$, we obtain
\begin{equation}\label{h.tilde.computations}
\begin{split}
&\: \rd_t \widetilde{h}_N + v_i\rd_{x_i} \widetilde{h}_N -\bar{a}_{ij} \rd_{v_i v_j}^2 \widetilde{h}_N \\
= &\: -2\bar{a}_{ij} w_N(\rd_{v_i} h)(\rd_{v_j} \log w_N)-\bar{a}_{ij} \widetilde{h}_N [\rd^2_{v_i v_j} \log w_N+(\rd_{v_i}\log w_N)(\rd_{v_j}\log w_N)] + w_N H.
\end{split}
\end{equation}

\pfstep{Step~2: Cutoff at infinity} In order to avoid the difficulty with applying the maximum principle in non-compact domains, we cut off the function $h$. Introduce a smooth cutoff function $\chi:\mathbb R\to \mathbb R_{\geq 0}$ such that $\chi(x) = 1$ for $|x|\leq 1$, $\chi(x) = 0$ for $|x|\geq 2$ and $\|\chi'\|_{L^\i},\,\|\chi''\|_{L^\i} \leq 10$. We cut off $\widetilde{h}_N$ and define
$$\widetilde{h}_{N,R}(t,x,v) = \chi(\f{|x|^2}{R^6}) \chi(\f{|v|^2}{R^2}) \widetilde{h}_N(t,x,v)$$
for $R>1$ large and to be chosen. (Note that $|x|$ is allowed to be much larger, with $|x|\ls R^3$, as opposed to $|v|$, for which the cutoff only allows $|v|\ls R$.)

\textbf{Fix an arbitrary $T\in [0,T_{Boot})$.} We will allow our choice of $R$ in the cutoffs to depend on $T$. In order to emphasize that the implicit constant depends on $T$ (in addition to $d_0$ and $\gamma$), we will use the convention $\ls_T$. 

Our next goal will be to estimate $|\rd_t \widetilde{h}_{N,R} + v_i\rd_{x_i} \widetilde{h}_{N,R} -\bar{a}_{ij} \rd_{v_i v_j}^2 \widetilde{h}_{N,R}|(t,x,v)$. To this end, we will use \eqref{h.tilde.computations} and estimate all the error terms arising from differentiating the cut-off functions.

We first note the following simple estimate which we will repeatedly use. Since $d(t)\geq d_0$, it follows that $e^{-d(t)\vb^2}\vb\ls 1$. Therefore, by \eqref{h.max.a.priori}, we have
\begin{equation}\label{widetilde.h}
|\widetilde{h}_N|(t,x,v) \ls_T 1.
\end{equation}

Now we compute
$$v_i\rd_{x_i}\widetilde{h}_{N,R} = \chi(\f{|x|^2}{R^6}) \chi(\f{|v|^2}{R^2}) v_i \rd_{x_i}\widetilde{h}_N + 2 (v\cdot x) R^{-6} \chi'(\f{|x|^2}{R^6}) \chi(\f{|v|^2}{R^2}) \widetilde{h}_N.$$
Now on the support of the cutoff functions, we have $|v\cdot x| \ls R\cdot R^3 = R^4$. Combining this with \eqref{widetilde.h}, we obtain
\begin{equation}\label{cutoff.term.1}
|v_i\rd_{x_i}\widetilde{h}_{N,R}(t,x,v) - \chi(\f{|x|^2}{R^6}) \chi(\f{|v|^2}{R^2}) v_i \rd_{x_i}\widetilde{h}_N(t,x,v)| \ls_{T} R^{-2}.
\end{equation}

On the other hand, we compute
\begin{equation}\label{cutoff.term.2}
\begin{split}
&\: \bar{a}_{ij} \rd^2_{v_i v_j} \widetilde{h}_{N,R} \\
= &\: \bar{a}_{ij} \chi(\f{|x|^2}{R^6}) \chi(\f{|v|^2}{R^2}) \rd^2_{v_i v_j} \widetilde{h}_N + \bar{a}_{ij} \chi(\f{|x|^2}{R^6}) \big(2 \de_{ij} R^{-2} \chi'(\f{|v|^2}{R^2}) + 4 v_i v_j R^{-4} \chi''(\f{|v|^2}{R^2}) \big) \widetilde{h}_N \\
&\: + 4 \bar{a}_{ij} \chi(\f{|x|^2}{R^6}) \chi'(\f{|v|^2}{R^2}) v_i R^{-2} \rd_{v_j} \widetilde{h}_N\\
= &\: \bar{a}_{ij} \chi(\f{|x|^2}{R^6}) \chi(\f{|v|^2}{R^2}) \rd^2_{v_i v_j} \widetilde{h}_N + \bar{a}_{ij} \chi(\f{|x|^2}{R^6}) \big(2 \de_{ij} R^{-2} \chi'(\f{|v|^2}{R^2}) + 4 v_i v_j R^{-4} \chi''(\f{|v|^2}{R^2}) \big) \widetilde{h}_N \\
&\: + 4 \bar{a}_{ij} \chi(\f{|x|^2}{R^6}) \chi'(\f{|v|^2}{R^2}) v_i R^{-2} (w_N\rd_{v_j} h + (\rd_{v_j}\log w_N)\widetilde{h}_N).
\end{split}
\end{equation}
We now control the difference $\bar{a}_{ij} \rd^2_{v_i v_j} \widetilde{h}_{N,R} - \bar{a}_{ij} \chi(\f{|x|^2}{R^6}) \chi(\f{|v|^2}{R^2}) \rd^2_{v_i v_j} \widetilde{h}_N$ using \eqref{cutoff.term.2}. First, by \eqref{widetilde.h}, Propositions~\ref{prop:ab.Li.1} and \ref{prop:ab.Li.weighted}, we obtain
\begin{equation}\label{cutoff.term.3}
\begin{split}
&\: |\bar{a}_{ij} \chi(\f{|x|^2}{R^6}) \big(2 \de_{ij} R^{-2} \chi'(\f{|v|^2}{R^2}) + 4 v_i v_j R^{-4} \chi''(\f{|v|^2}{R^2}) \big) \widetilde{h}_N| \ls_T R^{\max\{2+\gamma,1\}} R^{-2} \ls_T R^{\max\{-1,\gamma\}}.
\end{split}
\end{equation}
To control the term with $\rd_{v_j}h$, we use \eqref{dvh.bound} and Proposition~\ref{prop:ab.Li.weighted} to obtain
\begin{equation}\label{cutoff.term.4}
\begin{split}
&\: |\bar{a}_{ij} \chi(\f{|x|^2}{R^6}) \chi'(\f{|v|^2}{R^2}) v_i R^{-2} w_N\rd_{v_j} h| \\
\ls_T &\: C_H \ep^{\f 74} R^{\max\{2+\gamma,1\}} R^{-2}R^{\min\{p_H-2-\gamma,p_H-1\}} \ls_T C_H\ep^{\f 74} R^{p_H-2}.
\end{split}
\end{equation}
To handle the remaining term in \eqref{cutoff.term.2}, we compute
\begin{equation}\label{eq:dw}
\rd_{v_i}\log w_N = \f 12\rd_{v_i}(\log (1+|v|^2)) + \f N2\rd_{v_i}(\log (1+|x-tv|^2)) -2d(t)v_i = \f{v_i}{1+|v|^2} - \f{N t(x-tv)_i}{1+|x-tv|^2}-2d(t)v_i.
\end{equation}
Using \eqref{widetilde.h}, \eqref{eq:dw}, Propositions~\ref{prop:ab.Li.1} and \ref{prop:ab.Li.weighted}, we obtain
\begin{equation}\label{cutoff.term.5}
\begin{split}
&\: |4 \bar{a}_{ij} \chi(\f{|x|^2}{R^6}) \chi'(\f{|v|^2}{R^2}) v_i R^{-2} (\rd_{v_j}\log w_N)\widetilde{h}_N| \\
\ls_T &\: (\max_j\bar{a}_{ij}v_i) \chi(\f{|x|^2}{R^6}) \chi'(\f{|v|^2}{R^2}) R^{-2} |\widetilde{h}_N| + \bar{a}_{ij}v_iv_j \chi(\f{|x|^2}{R^6}) \chi'(\f{|v|^2}{R^2}) R^{-2} |\widetilde{h}_N| \\
\ls_T &\: R^{\max\{2+\gamma,1\}-2} \ls_T R^{\max\{-1,\gamma\}}.
\end{split}
\end{equation}
Combining \eqref{cutoff.term.3}, \eqref{cutoff.term.4} and \eqref{cutoff.term.5} and plugging the estimates into \eqref{cutoff.term.2}, we obtain
\begin{equation}\label{cutoff.term.6}
|\bar{a}_{ij} \rd^2_{v_i v_j} \widetilde{h}_{N,R}(t,x,v) - \bar{a}_{ij} \chi(\f{|x|^2}{R^6}) \chi(\f{|v|^2}{R^2}) \rd^2_{v_i v_j} \widetilde{h}_N(t,x,v)| \ls_{C_H,T} R^{\max\{-1,\gamma,p_H-2\}}.
\end{equation}

Combining \eqref{cutoff.term.1} and \eqref{cutoff.term.6}, we get that for any $T\in [0,T_{Boot})$, there exists $C'_T>0$ (depending on $T$, $C_H$ in addition to $d_0$ and $\gamma$) such that for every $(t,x,v) \in [0,T)\times B(0,\sqrt{2}R^3)\times B(0,\sqrt{2}R)$
\begin{equation}\label{tildeh.main.est}
\begin{split}
 |\rd_t \widetilde{h}_{N,R} + v_i\rd_{x_i} \widetilde{h}_{N,R} -\bar{a}_{ij} \rd_{v_i v_j}^2 \widetilde{h}_{N,R}|(t,x,v) 
\leq &\: \chi(\f{|x|^2}{R^6}) \chi(\f{|v|^2}{R^2})\times |\mbox{RHS of \eqref{h.tilde.computations}}| + C'_T R^{\min\{-1,\gamma, p_H-2\}}.
\end{split}
\end{equation}

At this point, we fix a sequence of cutoff parameters $\{R_n\}_{n=1}^\infty$. For $n\in \mathbb N$, let $T_n = T_{Boot} - \f 1n$. We define $R_n > 0$ so that
\begin{equation}\label{Rn.condition}
C'_{T_n}R_n^{\min\{-1,\gamma, p_H-2\}} \leq \f {1}{4n} \f{d_0\de}{(1+T_{Boot})^{1+\de}}\vb^2.
\end{equation}
Such a sequence of $R_n$ exists since $\gamma<0$ and $p_H<2$. We assume moreover without loss of generality that $R_n$ is increasing and $R_n \to +\infty$ so that 
\begin{equation}\label{exhaustion.1}
[0,T_n]\times B(0,R^3_n) \times B(0,R_n)\subset [0,T_{n+1}]\times B(0,R^3_{n+1}) \times B(0,R_{n+1}),\,\forall n\in \mathbb N
\end{equation}
and
\begin{equation}\label{exhaustion.2}
\cup_{n=1}^\infty \left([0,T_n]\times B(0,R^3_n) \times B(0,R_n) \right) = [0,T_{Boot})\times \mathbb R^3\times \mathbb R^3.
\end{equation}

\pfstep{Step~3: Continuity argument and estimating $\chi(\f{|x|^2}{R_n^6}) \chi(\f{|v|^2}{R_n^2})\times (\mbox{RHS of \eqref{h.tilde.computations}})$} Let $R_n$ be as in the previous step (so that \eqref{Rn.condition}, \eqref{exhaustion.1} and \eqref{exhaustion.2} hold). Our goal in this step is to bound $\chi(\f{|x|^2}{R_n^6}) \chi(\f{|v|^2}{R_n^2})\times (\mbox{RHS of \eqref{h.tilde.computations}})$. To carry out these estimates, we introduce a continuity argument.

Define $\widetilde{d}:[0,+\infty)\to \mathbb R$ by
\begin{equation}\label{def:td}
\widetilde{d}(t) := d_0(1-(1+t)^{-\de}),
\end{equation}
and define
$T'_n \in [0,T_n]$ by
\begin{equation}\label{T'.def}
\begin{split}
T'_n := \sup\{t\in [0,T_n]\colon |\widetilde{h}_{N,R_n} (s,x,v)|\leq  (6+\f{12}{d_0\de})C_H\ep &\,e^{-d(s)\vb^2}\vb^{p_H-1}(1+s)^{r_H+1+\de} + \f{2e^{\widetilde{d}(s)\vb^2}}{n},\\
&\:\forall (s,x,v)\in [0,t]\times B(0,\sqrt{2}R^3_n) \times B(0,\sqrt{2}R_n)\}.
\end{split}
\end{equation}
By \eqref{h.data.bound} and the continuity of $\widetilde{h}_{N,R_n}$ (and the fact that we are only considering a compact set), $T_n'>0$. Moreover, again using the continuity of $\widetilde{h}_{N,R_n}$, the following estimate holds for $(t,x,v)\in [0,T'_n]\times B(0,\sqrt{2}R^3_n) \times B(0,\sqrt{2}R_n)$:
\begin{equation}\label{eq:BA.h}
|\widetilde{h}_{N,R_n} (t,x,v)|\leq (6+\f{12}{d_0\de})C_H\ep e^{-d(t)\vb^2}\vb^{p_H-1}(1+t)^{r_H+1+\de} + \f{2e^{\widetilde{d}(t)\vb^2}}{n}.
\end{equation}

From now on until Step~7, we will carry out our estimates using the bound \eqref{eq:BA.h}. The goal will be to prove an estimate that is better than \eqref{eq:BA.h} so that we conclude by continuity that $T_n'=T_n$.

In the remainder of Step~3, we bound $\chi(\f{|x|^2}{R_n^6}) \chi(\f{|v|^2}{R_n^2})\times |\mbox{RHS of \eqref{h.tilde.computations}}|$ using \eqref{eq:BA.h}. We first carry out preliminary calculations in Step~3(a), and then in Steps~3(b) to 3(e) we consider each of the terms on RHS of \eqref{h.tilde.computations}. Finally, we will combine everything in Step~3(f) and show that under \eqref{eq:BA.h}, the estimate \eqref{cutoff.RHS} holds.

\pfstep{Step~3(a): Preliminary computations} 
Using \eqref{eq:dw}, we compute
\begin{equation}\label{eq:d2w}
\begin{split}
&\:\rd^2_{v_i v_j}\log w_N = \rd_{v_j}\left( \f{v_i}{1+|v|^2} - \f{N t(x-tv)_i}{1+|x-tv|^2}-2d(t)v_i\right)\\
 = &\:\f{\de_{ij}(1+|v|^2) -2v_i v_j}{(1+|v|^2)^2} - \f{N t^2\de_{ij}(1+|x-tv|^2) - 2Nt^2 (x-tv)_i (x-tv)_j}{(1+ |x-tv|^2)^2} -2d(t)\de_{ij}.
\end{split}
\end{equation}

\pfstep{Step~3(b): Estimating $2\chi(\f{|x|^2}{R_n^6}) \chi(\f{|v|^2}{R_n^2})\bar{a}_{ij} w_N(\rd_{v_i} h)(\rd_{v_j} \log w_N)$} Using \eqref{eq:dw} and bounding $\chi\leq 1$, we obtain
\begin{equation}\label{max:2a.1}
\begin{split}
&\: |2\chi(\f{|x|^2}{R_n^6}) \chi(\f{|v|^2}{R_n^2})\bar{a}_{ij} w_N(\rd_{v_i} h)(\rd_{v_j} \log w_N)|(t,x,v) \\
\ls &\: (\max_j|\bar{a}_{ij}|) w_N |\rd_{v_i}h|(t,x,v) +  |\bar{a}_{ij}v_j|w_N |\rd_{v_i} h|(t,x,v) + t \wb^{-1}(\max_j|\bar{a}_{ij}|) w_N |\rd_{v_i}h|(t,x,v).
\end{split}
\end{equation}
The first two terms in \eqref{max:2a.1} can be controlled in a similar manner: we use Propositions~\ref{prop:ab.Li.1} and \ref{prop:ab.Li.weighted} to bound $\bar{a}_{ij}$ and $\bar{a}_{ij}v_j$ respectively, and then use also \eqref{dvh.bound} to obtain
\begin{equation}\label{max:2a.2}
\begin{split}
&\: (\max_j|\bar{a}_{ij}|) w_N |\rd_{v_i}h|(t,x,v) +  |\bar{a}_{ij}v_j|w_N |\rd_{v_i} h|(t,x,v)\\
\ls &\: \ep^{\f 34}\vb^{\max\{2+\gamma,1\}}(1+t)^{-3} \cdot e^{-d(t)\vb^2}\vb \cdot C_H\ep^{\f 34} \vb^{\min\{p_H-2-\gamma,p_H-1\}}(1+t)^{r_H+2+\min\{2+\gamma,1\}} \\
\ls &\: C_H\ep^{\f 32}e^{-d(t)\vb^2}\vb^{p_H+1}(1+t)^{r_H-1+\min\{2+\gamma,1\}}.
\end{split}
\end{equation}
For the last term in \eqref{max:2a.1}, we use instead Proposition~\ref{prop:ab.Li.null.cond} to bound $\wb^{-1}|\bar{a}_{ij}|$. Combining Proposition~\ref{prop:ab.Li.null.cond} with \eqref{dvh.bound}, we obtain
\begin{equation}\label{max:2a.3}
\begin{split}
&\: t\wb^{-1}(\max_j|\bar{a}_{ij}|) w_N |\rd_{v_i}h|(t,x,v)\\
\ls &\: t\cdot \ep^{\f 34}\vb^{\max\{0,1+\gamma\}} t^{-\min\{1,2+\gamma\}} (1+t)^{-3} \cdot e^{-d(t)|v|^2}\vb\cdot C_H\ep^{\f 34}\vb^{\min\{p_H-2-\gamma,p_H-1\}}(1+t)^{r_H+2+\min\{2+\gamma,1\}} \\
\ls &\: C_H\ep^{\f 32}e^{-d(t)\vb^2}\vb^{p_H}(1+t)^{r_H}.
\end{split}
\end{equation}
Plugging \eqref{max:2a.2} and \eqref{max:2a.3} into \eqref{max:2a.1}, and estimating crudely, we obtain
\begin{equation}\label{max:2a.final}
|2\chi(\f{|x|^2}{R_n^6}) \chi(\f{|v|^2}{R_n^2})\bar{a}_{ij} w_N(\rd_{v_i} h)(\rd_{v_j} \log w_N)|(t,x,v)\ls C_H\ep^{\f 32}e^{-d(t)\vb^2}\vb^{p_H+1}(1+t)^{r_H}.
\end{equation}

\pfstep{Step~3(c): Estimating $\chi(\f{|x|^2}{R_n^6}) \chi(\f{|v|^2}{R_n^2})\bar{a}_{ij} \widetilde{h}_N \rd^2_{v_i v_j} \log w_N$}
By \eqref{eq:d2w},
\begin{equation}\label{max:2b.1}
\begin{split}
|\chi(\f{|x|^2}{R_n^6}) \chi(\f{|v|^2}{R_n^2})\bar{a}_{ij} \widetilde{h}_N \rd^2_{v_i v_j} \log w_N|(t,x,v) \ls &\: \max_{i,j}(|\bar{a}_{ij}| |\widetilde{h}_{N,R_n}|(t,x,v) +  t^2\wb^{-2}|\bar{a}_{ij}| |\widetilde{h}_{N,R_n}|(t,x,v)).
\end{split}
\end{equation}
To handle the first term in \eqref{max:2b.1}, we use Proposition~\ref{prop:ab.Li.1} and \eqref{eq:BA.h} to obtain
\begin{equation}\label{max:2b.2}
\begin{split}
&\: \max_{i,j}|\bar{a}_{ij}| |\widetilde{h}_{N,R_n}|(t,x,v) \\
\ls &\:\ep^{\f 34}\vb^{2+\gamma}(1+t)^{-3}\cdot (C_H\ep e^{-d(t)\vb^2}\vb^{p_H-1}(1+t)^{r_H+1+\de} + \f{e^{\widetilde{d}(t)\vb^2}}{n}) \\
\ls &\: C_H\ep^{\f 74}e^{-d(t)\vb^2}\vb^{p_H+1+\gamma}(1+t)^{r_H-2+\de} + \f{\ep^{\f 34}e^{\widetilde{d}(t)\vb^2}}{n}\vb^{2+\gamma}(1+t)^{-3}.
\end{split}
\end{equation}
To bound the second term in \eqref{max:2b.1}, we will in fact prove an estimate when the $t$ and $\wb$ weights are even slightly worse (since such a stronger estimate will be useful later). By Proposition~\ref{prop:ab.Li.null.cond} and \eqref{eq:BA.h},
\begin{equation}\label{max:2b.3}
\begin{split}
&\: \max_{i,j}t(1+t)\wb^{-1}|\bar{a}_{ij}| |\widetilde{h}_{N,R_n}|(t,x,v) \\
\ls &\: t(1+t)\cdot \ep^{\f 34}\vb^{\max\{0,1+\gamma\}}t^{-\min\{1,2+\gamma\}}(1+t)^{-3}\cdot (C_H\ep e^{-d(t)\vb^2}\vb^{p_H-1}(1+t)^{r_H+1+\de} + \f{e^{\widetilde{d}(t)\vb^2}}{n} ) \\
\ls &\: C_H\ep^{\f 74}e^{-d(t)\vb^2}\vb^{p_H}(1+t)^{r_H-\min\{2+\gamma,1\}+\de} + \f{\ep^{\f 34}e^{\widetilde{d}(t)\vb^2}}{n} (1+t)^{-1-\min\{2+\gamma,1\}} \vb^{\max\{0,1+\gamma\}}.
\end{split}
\end{equation}

Taking the worse bounds from \eqref{max:2b.2} and \eqref{max:2b.3} and plugging into \eqref{max:2b.1}, we obtain
\begin{equation}\label{max:2b.final}
\begin{split}
&\: |\chi(\f{|x|^2}{R_n^6}) \chi(\f{|v|^2}{R_n^2})\bar{a}_{ij} \widetilde{h}_N \rd^2_{v_i v_j} \log w_N|(t,x,v) \\
\ls &\: C_H\ep^{\f 74}e^{-d(t)\vb^2}\vb^{p_H+1}(1+t)^{r_H-\min\{2+\gamma,1\}+\de} + \f{\ep^{\f 34}e^{\widetilde{d}(t)\vb^2}}{n} (1+t)^{-1-\min\{2+\gamma,1\}} \vb^{2+\gamma} \\
\ls &\: C_H\ep^{\f 74}e^{-d(t)\vb^2}\vb^{p_H+1}(1+t)^{r_H} + \f{\ep^{\f 34}e^{\widetilde{d}(t)\vb^2}}{n} (1+t)^{-1-\de} \vb^{2},
\end{split}
\end{equation}
where in the last line we have used \eqref{def:de}.

\pfstep{Step~3(d): Estimating $\chi(\f{|x|^2}{R_n^6}) \chi(\f{|v|^2}{R_n^2})\bar{a}_{ij} \widetilde{h}_N (\rd_{v_i}\log w_N)(\rd_{v_j}\log w_N)$} By \eqref{eq:dw},
\begin{equation}\label{max:2c.1}
\begin{split}
&\: |\chi(\f{|x|^2}{R_n^6}) \chi(\f{|v|^2}{R_n^2})\bar{a}_{ij} \widetilde{h}_N (\rd_{v_i}\log w_N)(\rd_{v_j}\log w_N)|(t,x,v) \\
\ls &\: |\bar{a}_{ij} \widetilde{h}_{N,R_n} (\rd_{v_i}\log w_N)(\rd_{v_j}\log w_N)|(t,x,v) \\
\ls &\: ((\max_{i,j}\bar{a}_{ij}) + (\max_{i} \bar{a}_{ij} v_j) + \bar{a}_{ij}v_iv_j) |\widetilde{h}_{N,R_n}|(t,x,v) +  t\wb^{-1}((\max_{i,j}\bar{a}_{ij}) + (\max_{i} \bar{a}_{ij} v_j))|\widetilde{h}_{N,R_n}|(t,x,v) \\
&\: + t^2 \wb^{-2}(\max_{i,j}\bar{a}_{ij})|\widetilde{h}_{N,R_n}|(t,x,v).
\end{split}
\end{equation}
The first term on the RHS of \eqref{max:2c.1} can be estimated in a similar way as \eqref{max:2b.2}, except that we also apply Proposition~\ref{prop:ab.Li.weighted} to bound $(\max_{i} \bar{a}_{ij} v_j)$ and $\bar{a}_{ij}v_iv_j$. We then obtain a similar estimate as \eqref{max:2b.2} (after accounting for the difference in $\vb$ weights between Propositions~\ref{prop:ab.Li.1} and \ref{prop:ab.Li.weighted}), i.e.
\begin{equation}\label{max:2c.2}
\begin{split}
&\: ((\max_{i,j}\bar{a}_{ij}) + (\max_{i} \bar{a}_{ij} v_j) + \bar{a}_{ij}v_iv_j) |\widetilde{h}_{N,R_n}|(t,x,v) \\
\ls &\: C_H\ep^{\f 74}e^{-d(t)\vb^2}\vb^{\max\{p_H+1+\gamma,p_H\}}(1+t)^{r_H-2+\de} + \f{\ep^{\f 34}e^{\widetilde{d}(t)\vb^2}}{n}\vb^{\max\{2+\gamma,1\}}(1+t)^{-3}.
\end{split}
\end{equation}
For the second term on the RHS of \eqref{max:2c.1}, we first use the simple bound $(\max_{i,j}\bar{a}_{ij}) + (\max_{i} \bar{a}_{ij} v_j) \ls \max_{i,j} |\bar{a}_{ij}|\vb$ and then apply \eqref{max:2b.3} to obtain
\begin{equation}\label{max:2c.2.5}
\begin{split}
&\: t\wb^{-1}((\max_{i,j}\bar{a}_{ij}) + (\max_{i} \bar{a}_{ij} v_j))|\widetilde{h}_{N,R_n}|(t,x,v)\\
\ls &\: C_H\ep^{\f 74}e^{-d(t)\vb^2}\vb^{p_H+1}(1+t)^{r_H-\min\{2+\gamma,1\}+\de} + \f{\ep^{\f 34}e^{\widetilde{d}(t)\vb^2}}{n} (1+t)^{-1-\min\{2+\gamma,1\}} \vb^{\max\{1,2+\gamma\}}.
\end{split}
\end{equation}
The last term on the RHS of \eqref{max:2c.1} can also be controlled by \eqref{max:2b.3} so that we obtain
\begin{equation}\label{max:2c.3}
\begin{split}
&\: t^2 \wb^{-2}(\max_{i,j}\bar{a}_{ij})|\widetilde{h}_{N,R_n}|(t,x,v) \\
\ls &\: C_H\ep^{\f 74}e^{-d(t)\vb^2}\vb^{p_H}(1+t)^{r_H-\min\{2+\gamma,1\}+\de} + \f{\ep^{\f 34}e^{\widetilde{d}(t)\vb^2}}{n} (1+t)^{-1-\min\{2+\gamma,1\}} \vb^{\max\{0,1+\gamma\}}.
\end{split}
\end{equation}

We now take the worse bounds in \eqref{max:2c.2}, \eqref{max:2c.2.5} and \eqref{max:2c.3} and plug them into \eqref{max:2c.1} to obtain
\begin{equation}\label{max:2c.final}
\begin{split}
&\: |\chi(\f{|x|^2}{R_n^6}) \chi(\f{|v|^2}{R_n^2})\bar{a}_{ij} \widetilde{h}_N (\rd_{v_i}\log w_N)(\rd_{v_j}\log w_N)|(t,x,v) \\
\ls &\: C_H\ep^{\f 74}e^{-d(t)\vb^2}\vb^{\max\{p_H+1+\gamma,p_H\}}(1+t)^{r_H-\min\{2+\gamma,1\}+\de} + \f{\ep^{\f 34}e^{\widetilde{d}(t)\vb^2}}{n} (1+t)^{-1-\min\{2+\gamma,1\}} \vb^{\max\{2+\gamma,1\}}\\
\ls &\: C_H\ep^{\f 74}e^{-d(t)\vb^2}\vb^{p_H+1}(1+t)^{r_H} + \f{\ep^{\f 34}e^{\widetilde{d}(t)\vb^2}}{n} (1+t)^{-1-\de} \vb^{2},
\end{split}
\end{equation}
where in the last line we have used \eqref{def:de}.

\pfstep{Step~3(e): Estimating $w_NH$} Using \eqref{def:wN} and plugging in \eqref{H.bound}, we obtain
\begin{equation}\label{max:2d.final}
|w_N H|(t,x,v)\leq 
\begin{cases}
C_H\ep e^{-d(t)\vb^2}\vb^{p_H+1}(1+t)^{r_H} + C_H\ep e^{-d(t)\vb^2}\vb^{p_H-1}(1+t)^{r_H+\de} & \mbox{if $r_H+\de\geq 0$}\\
C_H\ep e^{-d(t)\vb^2}\vb^{p_H+1}(1+t)^{r_H}  & \mbox{if $r_H+\de\in [-1,0)$}.
\end{cases}
\end{equation}

\pfstep{Step~3(f): Putting everything together}
By \eqref{h.tilde.computations} and the estimates \eqref{max:2a.final}, \eqref{max:2b.final}, \eqref{max:2c.final} and \eqref{max:2d.final}, after choosing $\ep_0$ (and therefore also $\ep$) sufficiently small and using \eqref{def:de}, we obtain
\begin{equation}\label{cutoff.RHS}
\begin{split}
&\: \chi(\f{|x|^2}{R_n^6}) \chi(\f{|v|^2}{R_n^2})\times|\mbox{RHS of \eqref{h.tilde.computations}}|(t,x,v)\\
\leq &\: \begin{cases}
2C_H\ep e^{-d(t)\vb^2}\vb^{p_H+1}(1+t)^{r_H} + C_H\ep e^{-d(t)\vb^2}\vb^{p_H-1}(1+t)^{r_H+\de} & \\
\qquad + \f{1}{4n}e^{\widetilde{d}(t)\vb^2}\vb^2 (1+t)^{-1-\de} & \mbox{if $r_H+\de\geq 0$}\\
2C_H\ep e^{-d(t)\vb^2}\vb^{p_H+1}(1+t)^{r_H} + \f{1}{4n}e^{\widetilde{d}(t)\vb^2} \vb^2 (1+t)^{-1-\de} & \mbox{if $r_H+\de\in [-1,0)$}.
\end{cases}
\end{split}
\end{equation}

\pfstep{Step~4: The functions $u_{N}^{(n),\pm}$} For every $n\in \mathbb N$, define $u_{N}^{(n), \pm} \colon [0,T_n]\times \overline{B(0,\sqrt{2}R_n^3)}\times \overline{B(0,\sqrt{2}R_n)} \to \mathbb R$ by
\begin{equation}\label{u.def}
\begin{split}
&\: u_{N}^{(n), \pm}(t,x,v) \\
:= &\: \begin{cases}
 \widetilde{h}_{N,R_n}(t,x,v) \pm \f {e^{\widetilde{d}(t)\vb^2}}n \pm 3C_H\ep \int_0^t e^{-d(s)\vb^2} \vb^{p_H+1} (1+s)^{r_H}\, \ud s & \\
 \qquad \pm 2C_H\ep e^{-d(t)\vb^2}\vb^{p_H-1} (1+t)^{r_H+1+\de} & \quad \mbox{if $r_H+\de \geq 0$}\\
\widetilde{h}_{N,R_n}(t,x,v) \pm \f {e^{\widetilde{d}(t)\vb^2}}n \pm 3C_H\ep \int_0^t e^{-d(s)\vb^2} \vb^{p_H+1} (1+s)^{r_H}\, \ud s & \quad \mbox{if $r_H+\de \in [-1,0)$}.
\end{cases}
\end{split}
\end{equation}
We will apply the maximum principle (respectively, the minimum principle) to $u_N^{(n),-}$ (respectively, $u_N^{(n),+}$) in Step~5 (respectively, Step~6) below. In preparation for these steps, we show that (a) $u_{N}^{(n), \pm}$ satisfy appropriate differential inequalities, and (b) $u_{N}^{(n), \pm}$ satisfy appropriate boundary conditions. This is carried out in Steps~4(a) and 4(b) below. Step~4(a) is further divided into Step~4(a)(i) and Step~4(a)(ii) according to whether $r_H+\de<0$ or $r_H+\de\geq 0$.

\pfstep{Step~4(a)(i): Differential inequalities for $u_{N}^{(n),\pm}$ when $r_H+\de<0$} Assume now that $r_H+\de<0$. (This case is slightly simpler than the $r_H+\de\geq 0$ case.) Differentiating \eqref{u.def} and recalling the definitions of $d(t)$ and $\widetilde{d}(t)$ in \eqref{def:d} and \eqref{def:td}, we have
\begin{equation}\label{eq:u}
\begin{split}
&\: \rd_t u_{N}^{(n),\pm}(t,x,v) + v_i\rd_{x_i} u_{N}^{(n),\pm}(t,x,v) -\bar{a}_{ij} \rd_{v_i v_j}^2 u_{N}^{(n),\pm}(t,x,v) \\
=&\: \underbrace{\rd_t \widetilde{h}_{N,R_n}(t,x,v) + v_i\rd_{x_i}\widetilde{h}_{N,R_n}(t,x,v) -\bar{a}_{ij} \rd_{v_i v_j}^2 \widetilde{h}_{N,R_n}(t,x,v)}_{=:\mbox{h contribution}} \\
&\: \underbrace{\pm \f {e^{\widetilde{d}(t)\vb^2}}n \f{d_0\de}{(1+t)^{1+\de}}\vb^2}_{=:\mbox{good term}_1} \underbrace{\mp \f {e^{\widetilde{d}(t)\vb^2}}n\bar{a}_{ij} \widetilde{d}(t)(2\de_{ij} + 4\widetilde{d}(t)v_iv_j) }_{=:\mbox{error term}_1} \underbrace{\pm  3C_H\ep e^{-d(t)\vb^2} \vb^{p_H+1}(1+t)^{r_H}}_{=:\mbox{good term}_2} \\
&\: \underbrace{\pm 3C_H\ep\bar{a}_{ij}(t,x,v)  \int_0^t  d(s)(2\de_{ij} - 4d(s) v_iv_j) e^{-d(s)\vb^2}\vb^{p_H+1}(1+s)^{r_H}\, \ud s}_{=:\mbox{error term}_{2,1}}\\
&\: \underbrace{\pm 6C_H(p_H+1)\ep\bar{a}_{ij}(t,x,v) \int_0^t d(s)v_iv_j e^{-d(s)\vb^2} \vb^{p_H-1}(1+s)^{r_H}\, \ud s}_{=:\mbox{error term}_{2,2}}\\
&\: \underbrace{\mp 3C_H(p_H+1)\ep\bar{a}_{ij}(t,x,v) \int_0^t (\de_{ij} + \f{(p_H -1)v_i v_j}{\vb^2}) e^{-d(s)\vb^2} \vb^{p_H-1}(1+s)^{r_H}\, \ud s}_{=:\mbox{error term}_{2,3}}.
\end{split}
\end{equation}

Let us first explain the strategy in handling RHS of \eqref{eq:u}. For the ``h contribution'' term, we use the equation \eqref{h.tilde.computations} and the estimates obtained in \eqref{cutoff.RHS}. We then have two good terms, which are good in the sense that they are $\geq 0$ in the ``$+$'' case and $\leq 0$ in the ``$-$'' case. Finally, we have four error terms. We will show that the term ``error term$_1$'' can be controlled by ``good term$_1$''; while the terms ``error term$_{2,1}$'', ``error term$_{2,2}$'' and ``error term$_{2,3}$'' can all be controlled by ``good term$_2$''.

To carry this out, we first compare ``good term$_1$'' and ``error term$_1$''. Noting that $\widetilde{d}(t) \leq 2d_0$, estimating $\bar{a}_{ii}$ and $\bar{a}_{ij}v_iv_j$ by Propositions~\ref{prop:ab.Li.1} and \ref{prop:ab.Li.weighted} respectively, we obtain
\begin{equation*}
\begin{split}
|\mbox{error term}_1|\ls &\: \f{e^{\widetilde{d}(t)\vb^2}}{n} \ep^{\f 34} \vb^{\max\{2+\gamma,1\}}(1+t)^{-3}.
\end{split}
\end{equation*}
Choosing $\ep_0$ sufficiently small (so that $\ep$ is also small), we see that ``good term$_1$'' controls ``error term$_1$'' and in fact
\begin{equation}\label{eq:MP.pf.term1}
\f {e^{\widetilde{d}(t)\vb^2}}n \f{d_0\de}{(1+t)^{1+\de}}\vb^2 -|\mbox{error term}_1| \geq \f {3e^{\widetilde{d}(t)\vb^2}}{4n} \f{d_0\de}{(1+t)^{1+\de}}\vb^2.
\end{equation}

Next, we compare ``good term$_2$'', ``error term$_{2,1}$'', ``error term$_{2,2}$'' and ``error term$_{2,3}$''. For this purpose we first consider the integral $\int_0^t e^{-d(s)\vb^2} \vb^{p_H+3} (1+s)^{r_H} \ud s$. Let us assume for the moment that $r_H+1+\de\neq 0$. Then since $d'(t) = -\f{d_0 \de}{(1+t)^{1+\de}}$ and $d(t)$ is monotonically decreasing, we obtain the following estimate after integrating by parts:
\begin{equation}\label{int.gain.weight.1}
\begin{split}
&\: C_H\ep \int_0^t e^{-d(s)\vb^2} \vb^{p_H+3} (1+s)^{r_H} \ud s\\
= &\: -C_H\ep \vb^{p_H+3} \int_0^t  \big(\f{1}{d'(s) \vb^2}\f{\ud}{\ud s} e^{- d(s)\vb^2}\big)(1+s)^{r_H}\, \ud s \\
\leq &\: \f{C_H\ep}{d_0 \de} \vb^{p_H+1} \int_0^t e^{-d(s)\vb^2} \left|\f{\ud}{\ud s} (1+s)^{r_H+1+\de} \right|\, \ud s + \f{C_H\ep}{d_0 \de}\vb^{p_H+1} (e^{-d(t)\vb^2} (1+t)^{r_H+1+\de} -  e^{-2d_0\vb^2}) \\
\leq &\: \f{C_H\ep |r_H+1+\de|}{d_0 \de} \vb^{p_H+1} e^{-d(t)\vb^2} \int_0^t  (1+s)^{r_H+\de}\, \ud s + \f{C_H\ep}{d_0 \de}\vb^{p_H+1}e^{-d(t)\vb^2} (1+t)^{r_H+1+\de}\\
\leq &\: \f{C_H\ep |r_H+1+\de|}{d_0 \de|r_H+1+\de|} \vb^{p_H+1} e^{-d(t)\vb^2} \max\{1, (1+t)^{r_H+1+\de}\} + \f{C_H\ep}{d_0 \de}\vb^{p_H+1} e^{-d(t)\vb^2} (1+t)^{r_H+1+\de} \\
\leq &\: \f{2 C_H\ep}{d_0 \de} \vb^{p_H+1} e^{-d(t)\vb^2} \max\{1, (1+t)^{r_H+1+\de}\}\leq \f{2 C_H\ep}{d_0 \de} \vb^{p_H+1} e^{-d(t)\vb^2} (1+t)^{r_H+1+\de},
\end{split}
\end{equation}
since $r_H+1+\de\geq 0$ by assumption. In the case $r_H+1+\de = 0$, we argue in a similar, but simpler, way:
\begin{equation}\label{int.gain.weight.2}
\begin{split}
&\: C_H\ep \int_0^t e^{-d(s)\vb^2} \vb^{p_H+3} (1+s)^{r_H} \ud s\\
= &\: -C_H\ep \vb^{p_H+3} \int_0^t  \big(\f{1}{d'(s) \vb^2}\f{\ud}{\ud s} e^{- d(s)\vb^2}\big)(1+s)^{r_H}\, \ud s \\
= &\: \f{C_H\ep}{d_0 \de}\vb^{p_H+1}e^{-d(t)\vb^2} \leq \f{2 C_H\ep}{d_0 \de} \vb^{p_H+1} e^{-d(t)\vb^2} (1+t)^{r_H+1+\de}.
\end{split}
\end{equation}
We now estimate the terms ``error term$_{2,1}$'', ``error term$_{2,2}$'' and ``error term$_{2,3}$'' in \eqref{eq:u}. Noting that $d(t) \leq 2d_0$ for all $t\geq 0$, we apply Propositions~\ref{prop:ab.Li.1} and \ref{prop:ab.Li.weighted}, \eqref{int.gain.weight.1} and \eqref{int.gain.weight.2} to obtain
\begin{equation*}
\begin{split}
&\: |\mbox{error term$_{2,1}|+|$error term$_{2,2}|+|$error term$_{2,3}|$} \\
\ls &\: C_H \ep^{\f 74} \int_0^t e^{-d(s)\vb^2} \vb^{p_H+3} (1+s)^{r_H} \ud s \ls C_H \ep^{\f 74} \vb^{p_H+1} e^{-d(t)\vb^2} (1+t)^{r_H+1+\de}.
\end{split}
\end{equation*}
Choosing $\ep_0$ (and therefore also $\ep$) sufficiently small, we can bound ``error term$_{2,1}$'', ``error term$_{2,2}$'' and ``error term$_{2,3}$'' by ``good term$_2$'' and in fact
\begin{equation}\label{eq:MP.pf.term2}
\begin{split}
&\: 3C_H\ep e^{-d(t)\vb^2} \vb^{p_H+1}(1+t)^{r_H} - |\mbox{error term$_{2,1}|+|$error term$_{2,2}|+|$error term$_{2,3}|$}\\
> &\: 2C_H\ep e^{-d(t)\vb^2} \vb^{p_H+1}(1+t)^{r_H}.
\end{split}
\end{equation}

We now consider the ``$+$'' case in \eqref{eq:u}. By \eqref{tildeh.main.est}, \eqref{cutoff.RHS}, \eqref{eq:MP.pf.term1} and \eqref{eq:MP.pf.term2}, we obtain
\begin{equation}\label{eq:du+.prelim}
\begin{split}
&\: \rd_t u_{N}^{(n),+}(t,x,v) + v_i\rd_{x_i} u_{N}^{(n),+}(t,x,v) -\bar{a}_{ij} \rd_{v_i v_j}^2 u_{N}^{(n),+}(t,x,v)\\
> &\: -2C_H\ep e^{-d(t)\vb^2}\vb^{p_H+1}(1+t)^{r_H} - \f{e^{\widetilde{d}(t)\vb^2}}{4n}\vb^2 (1+t)^{-1-\de} -C'_{T_n} R_n^{\min\{-1,\gamma, p_H-2\}} \\
&\: + \f {3e^{\widetilde{d}(t)\vb^2}}{4n} \f{d_0\de}{(1+t)^{1+\de}}\vb^2 +2C_H\ep e^{-d(t)\vb^2} \vb^{p_H+1}(1+t)^{r_H} \\
\end{split}
\end{equation}
Now note that the first term on the RHS of \eqref{eq:du+.prelim} is $\leq$ the last term on the RHS of \eqref{eq:du+.prelim}. For the remaining three terms, note that by \eqref{Rn.condition}, for any $(t,x,v)\in [0,T_{Boot})\times \mathbb R^3\times \mathbb R^3$, we have
\begin{equation*}
\begin{split}
&\: - \f{e^{\widetilde{d}(t)\vb^2}}{4n}\vb^2 (1+t)^{-1-\de} -C'_{T_n} R_n^{\min\{-1,\gamma, p_H-2\}} + \f {3e^{\widetilde{d}(t)\vb^2}}{4n} \f{d_0\de}{(1+t)^{1+\de}}\vb^2 \\
\geq &\: \f {e^{\widetilde{d}(t)\vb^2}}{2n} \f{d_0\de}{(1+t)^{1+\de}}\vb^2 - \f {1}{4n} \f{d_0\de}{(1+T_{Boot})^{1+\de}}\vb^2 \geq \f {d_0\de}{4n(1+T_{Boot})^{1+\de}}.
\end{split}
\end{equation*}
Putting all these together, we thus obtain
\begin{equation}\label{eq:du+}
\begin{split}
\rd_t u_{N}^{(n),+}(t,x,v) + v_i\rd_{x_i} u_{N}^{(n),+}(t,x,v) -\bar{a}_{ij} \rd_{v_i v_j}^2 u_{N}^{(n),+}(t,x,v) \geq &\: \f {d_0\de}{4n(1+T_{Boot})^{1+\de}}.
\end{split}
\end{equation}

In a completely analogous manner, in the ``$-$'' case we obtain
\begin{equation}\label{eq:du-}
\begin{split}
\rd_t u_{N}^{(n),-}(t,x,v) + v_i\rd_{x_i} u_{N}^{(n),-}(t,x,v) -\bar{a}_{ij} \rd_{v_i v_j}^2 u_{N}^{(n),-}(t,x,v) \leq -\f {d_0\de}{4n(1+T_{Boot})^{1+\de}}.
\end{split}
\end{equation}

\pfstep{Step~4(a)(ii): Differential inequalities for $u_{N}^{(n),\pm}$ when $r_H+\de \geq 0$} We now consider the case $r_H+\de\geq 0$. Our goal will be to show that \eqref{eq:du+} and \eqref{eq:du-} also hold in this case.

To proceed, we carry out a computation as in \eqref{eq:u}, using \eqref{u.def}, \eqref{def:d} and \eqref{def:td}. Note that in the $r_H+\de\geq 0$ case that we are considering, there are a few more terms arising from various derivatives of $2C_H\ep e^{-d(t)\vb^2}\vb^{p_H-1} (1+t)^{r_H+1+\de}$.
\begin{equation}\label{eq:u.2}
\begin{split}
&\: \rd_t u_{N}^{(n),\pm}(t,x,v) + v_i\rd_{x_i} u_{N}^{(n),\pm}(t,x,v) -\bar{a}_{ij} \rd_{v_i v_j}^2 u_{N}^{(n),\pm}(t,x,v) \\
=&\: \mbox{RHS of \eqref{eq:u}} \\
&\: \underbrace{\pm 2C_H \ep (r+\de+1) e^{-d(t)\vb^2} \vb^{p_H-1} (1+t)^{r_H+\de}}_{=:\mbox{good term}_3} \underbrace{\pm 2C_H \ep d_0\de e^{-d(t)\vb^2} \vb^{p_H+1} (1+t)^{r_H}}_{=:\mbox{good term}_4} \\
&\: \underbrace{\pm 2C_H\ep \bar{a}_{ij} (2d(t)\de_{ij}-4(d(t))^2 v_iv_j) e^{-d(t)\vb^2}\vb^{p_H-1}(1+t)^{r_H+\de+1}}_{=:\mbox{error term}_{4,1}} \\
&\: \underbrace{\pm 2C_H\ep \bar{a}_{ij} (-\f{(p_H-1)(4d(t)v_iv_j+\de_{ij})}{\vb^2}  - \f{(p_H-1)(p_H-3)v_iv_j}{\vb^{4}}) e^{-d(t)\vb^2}\vb^{p_H-1}(1+t)^{r_H+\de+1}}_{=:\mbox{error term}_{4,2}}.
\end{split}
\end{equation}

We first treat the terms that arise in RHS of \eqref{eq:u}. Note that this can almost be treated exactly as in \eqref{eq:u}, except that now since $r_H+\de \geq 0$, in the ``h contribution'' term, we have an extra contribution on the RHS of \eqref{cutoff.RHS}, which is bounded in magnitude by $C_H \ep e^{-d(t)\vb^2} \vb^{p_H-1} (1+t)^{r_H+\de}$. Now note that this can be absorbed by ``good term$_3$'' since when $r+\de\geq 0$ we have
\begin{equation*}
\begin{split}
&\: 2C_H \ep (r+\de+1) e^{-d(t)\vb^2} \vb^{p_H-1} (1+t)^{r_H+\de} - C_H \ep e^{-d(t)\vb^2} \vb^{p_H-1} (1+t)^{r_H+\de} \\
\geq &\: C_H \ep e^{-d(t)\vb^2} \vb^{p_H-1} (1+t)^{r_H+\de}.
\end{split}
\end{equation*}

The other terms on the RHS of \eqref{eq:u} can be treated exactly as in Step~4(a)(i).

To handle the RHS of \eqref{eq:u.2}, it therefore remains to show the terms ``error term$_{4,1}$'' and ``error term$_{4,2}$'' can be absorbed by ``good term$_4$''. Note that by Propositions~\ref{prop:ab.Li.1} and \ref{prop:ab.Li.weighted} and \eqref{def:de},
\begin{equation}\label{bad.term.4}
\begin{split}
&\: C_H\ep (\bar{a}_{ii} + \bar{a}_{ij} v_iv_j) e^{-d(t)\vb^2}\vb^{p_H-1}(1+t)^{r_H+\de+1} \\
\ls &\: C_H\ep \cdot \ep^{\f 34} \vb^{\max\{2+\gamma,1\}} (1+t)^{-3} \cdot e^{-d(t)\vb^2}\vb^{p_H-1}(1+t)^{r_H+\de+1} \\
\ls &\: C_H\ep^{\f 74} e^{-d(t)\vb^2}\vb^{p_H-1+\max\{2+\gamma,1\}}(1+t)^{r_H+\de-2} \\
\ls &\: C_H\ep^{\f 74} e^{-d(t)\vb^2}\vb^{p_H+1}(1+t)^{r_H}.
\end{split}
\end{equation}
Now it is easy to observe that
\begin{equation}\label{bad.term.4.1}
|\mbox{error term}_{4,1}| + |\mbox{error term}_{4,2}| \ls C_H\ep (\bar{a}_{ii} + \bar{a}_{ij} v_iv_j) e^{-d(t)\vb^2}\vb^{p_H-1}(1+t)^{r_H+\de+1}.
\end{equation}
It therefore follows from \eqref{bad.term.4} and \eqref{bad.term.4.1} that
\begin{equation}\label{bad.term.4.2}
\begin{split}
&\: 2C_H \ep d_0\de e^{-d(t)\vb^2} \vb^{p_H+1} (1+t)^{r_H} - |\mbox{error term}_{4,1}| - |\mbox{error term}_{4,2}| \\
\geq &\: C_H\ep d_0\de e^{-d(t)\vb^2}\vb^{p_H+1}(1+t)^{r_H}.
\end{split}
\end{equation}

Since all the other terms are exactly as in the $r_H+\de<0$ case, we therefore conclude that both inequalities \eqref{eq:du+} and \eqref{eq:du-} hold also in the case $r_H+\de\geq 0$.

\pfstep{Step~4(b): Boundary conditions for $u_N^{(n),\pm}$} Since for every $t\in [0,T_n]$, $\widetilde{h}_{N,R_n}(t,x,v)$ is compactly support in $\overline{B(0,\sqrt{2}R_n^3)}\times \overline{B(0,\sqrt{2}R_n)}$, $\widetilde{h}_{N,R_n}\restriction_{\rd(B(0,\sqrt{2}R^3)\times B(0,\sqrt{2}R))}(t)=0$. Noting also the obvious signs for the other terms in the definition of $u_{N,R}^{(n),\pm}$ in \eqref{u.def}, we obtain that for every $t\in [0,T'_n]$,
\begin{equation}\label{u.boundary}
u_{N}^{(n),+}\restriction_{\rd(B(0,\sqrt{2}R_n^3)\times B(0,\sqrt{2}R_n))}(t) \geq 0,\quad u_{N}^{(n),-}\restriction_{\rd(B(0,\sqrt{2}R_n^3)\times B(0,\sqrt{2}R_n))}(t) \leq 0.
\end{equation}

\pfstep{Step~5: Maximum principle argument} We now apply the maximum principle to obtain an upper bound for $u_{N}^{(n),-}$. To simplify notation, let $\mathcal D_n:=[0,T'_n]\times \overline{B(0,\sqrt{2}R^3_n)}\times \overline{B(0,\sqrt{2}R_n)}$.

Our goal is to show that 
\begin{equation}\label{eq:u-.goal}
\sup_{(t,x,v)\in \mathcal D_n} u_{N}^{(n),-}(t,x,v) \leq C_H \ep.
\end{equation}

Since $\mathcal D_n$ is compact, $u_{N}^{(n),-}$ achieves a maximum in $\mathcal D_n$. It is clearly sufficient to bound the maximum of $u_N^{(n),-}$. At least one\footnote{It is possible that more than one of the following possibilities hold since these sets are not disjoint and moreover the maximum can be achieved at two distinct points.} of the following holds:
\begin{enumerate}
\item The maximum is achieved on the initial time slice $\mathcal D_n\cap \{(t,x,v): t=0\}$.
\item The maximum is achieved on the set $(0,T_n')\times \rd(B(0,\sqrt{2}R^3_n)\times B(0,\sqrt{2}R_n))$.
\item The maximum is achieved at an interior point $\mathcal D_n^{\mathrm{o}}$.
\item The maximum is achieved in the interior of the future boundary $\mathcal D_n\cap \{(T'_n,x,v)\colon |x|<\sqrt{2}R^3_n\mbox{ or }|v|< \sqrt{2}R_n\}$.
\end{enumerate}

In Case~1, the estimate \eqref{eq:u-.goal} is trivially true by \eqref{h.data.bound}. 

In Case~2, the estimate \eqref{eq:u-.goal} is also trivially true by \eqref{u.boundary}.

We next argue that Case~3~is impossible. If there exists an interior maximum point $p$ of $u_N^{(n),-}$, it holds that $\rd_t u_N^{(n),-}(p) = \rd_{x_i} u_N^{(n),-}(p) = \rd_{v_i} u_N^{(n),-}(p) = 0$ for $i=1,2,3$, and $\bar{a}_{ij}\rd^2_{v_i v_j} u_N^{(n),-}(p)\leq 0$. Hence, \eqref{eq:du-} evaluated at $p$ implies that $0\leq -\f {d_0\de}{4n(1+T_{Boot})^{1+\de}}$, which is a contradiction.

Finally, we argue that Case~4~is also impossible. Suppose there is a point $p\in \mathcal D_n\cap \{(T'_n,x,v)\colon |x|<\sqrt{2}R^3_n\mbox{ or }|v|< \sqrt{2}R_n\}$ so that $u_N^{(n),-}$ assumes its maximum at $p$. Then $\rd_{x_i} u_N^{(n),-}(p) = \rd_{v_i} u_N^{(n),-}(p) = 0$ for $i=1,2,3$, and $\bar{a}_{ij}\rd^2_{v_i v_j} u_N^{(n),-}(p) \leq 0$. Hence, \eqref{eq:du-} evaluated at point $p$ implies that
$$\rd_t u_N^{(n),-}(p) \leq -\f {d_0\de}{4n(1+T_{Boot})^{1+\de}}.$$
As a consequence, by considering the Taylor expansion of $u_N^{(n),-}$ at $p$, one concludes that $u_N^{(n),-}(p)$ is in fact not a maximum, contradicting the definition of $p$.

Combining the considerations in the four cases above, we have established \eqref{eq:u-.goal}.

\pfstep{Step~6: Minimum principle} In an entirely analogous manner as Step~5, but considering instead the minimum of $u_N^{(n),+}$, we obtain
\begin{equation}\label{eq:u+.goal}
\inf_{(t,x,v)\in \mathcal D_n} u_{N}^{(n),+}(t,x,v) \geq -C_H \ep.
\end{equation}

\pfstep{Step~7: Completion of the continuity argument} By the definition of $u_N^{(n),\pm}$ in \eqref{u.def}, the estimates \eqref{eq:u-.goal} and \eqref{eq:u+.goal}, and the triangle inequality,
\begin{equation}\label{h.tilde.final}
\begin{split}
|\widetilde{h}_{N,R_n}(t,x,v)|
\leq &\: \begin{cases}
C_H\ep + \f {e^{\widetilde{d}(t)\vb^2}}n + 3C_H\ep \int_0^t e^{-d(s)\vb^2} \vb^{p_H+1} (1+s)^{r_H}\, \ud s \\
\qquad + 2C_H\ep e^{-d(t)\vb^2}\vb^{p_H-1} (1+t)^{r_H+1+\de} & \quad \mbox{if $r_H+\de \geq 0$}\\
C_H\ep + \f {e^{\widetilde{d}(t)\vb^2}}n + 3C_H\ep \int_0^t e^{-d(s)\vb^2} \vb^{p_H+1} (1+s)^{r_H}\, \ud s & \quad \mbox{if $r_H+\de \in [-1,0)$}
\end{cases}
\end{split}
\end{equation}
for every $(t,x,v)\in \mathcal D_n$. 

To proceed, we need to control the term $3C_H\ep \int_0^t e^{-d(s)\vb^2} \vb^{p_H+1} (1+s)^{r_H}\, \ud s$ in \eqref{h.tilde.final}. By \eqref{int.gain.weight.1} and \eqref{int.gain.weight.2} (and multiplying by $\vb^{-2}$), we obtain
$$3C_H\ep \int_0^t e^{-d(s)\vb^2} \vb^{p_H+1} (1+s)^{r_H}\, \ud s \leq \f{6C_H\ep}{d_0\de}\vb^{p_H-1} e^{-d(t)\vb^2}(1+t)^{r_H+1+\de}.$$
Plugging this into \eqref{h.tilde.final}, we obtain
\begin{equation}\label{eq:h.BA.improved}
\begin{split}
|\widetilde{h}_{N,R_n}(t,x,v)|
\leq \begin{cases} (3+\f{6}{d_0 \de})C_H\ep \vb^{p_H-1} e^{-d(t)\vb^2} (1+t)^{r_H+1+\de} + \f {e^{\widetilde{d}(t)\vb^2}}n & \quad \mbox{if $r_H+\de\geq 0$} \\
(1+\f{6}{d_0 \de})C_H\ep \vb^{p_H-1} e^{-d(t)\vb^2} (1+t)^{r_H+1+\de} + \f {e^{\widetilde{d}(t)\vb^2}}n & \quad \mbox{if $r_H+\de \in [-1,0)$}.
\end{cases}
\end{split}
\end{equation}
for every $(t,x,v)\in \mathcal D_n$. Note that this \emph{improves} the constant in \eqref{eq:BA.h}.

We now complete the continuity argument initiated in Step~3; namely, we show that for every $n\in \mathbb N$, $T'_n = T_n$ (where $T'_n$ is as in \eqref{T'.def}). Suppose not, then by \eqref{eq:h.BA.improved} and continuity of $\widetilde{h}_{N,R_n}$, \eqref{eq:BA.h} must hold for some short time beyond $T'_n$. This then contradicts the definition of $T_n'$. 

Therefore, we have proven that $\mathcal D_n= [0,T_n]\times \overline{B(0,\sqrt{2}R_n^3)}\times \overline{B(0,\sqrt{2}R_n)}$ and thus \eqref{eq:h.BA.improved} holds in the whole region $[0,T_n]\times \overline{B(0,\sqrt{2}R_n^3)}\times \overline{B(0,\sqrt{2}R_n)}$.

\pfstep{Step~8: Putting everything together} Fix a point $(t,x,v)\in [0,T_{Boot})\times\mathbb R^3\times \mathbb R^3$. By \eqref{exhaustion.1} and \eqref{exhaustion.2}, there exists $n_0\in \mathbb N$ such that $(t,x,v)\in [0,T_n]\times B(0,R^3_n) \times B(0,R_n)$ for all $n\geq n_0$.

By \eqref{eq:h.BA.improved} and the fact that $\mathcal D_n= [0,T_n]\times \overline{B(0,\sqrt{2}R_n^3)}\times \overline{B(0,\sqrt{2}R_n)}$, we know that
$$|\widetilde{h}_N(t,x,v)|\leq |\widetilde{h}_{N,R_n}(t,x,v)|\leq (3+\f{6}{d_0 \de}) C_H\ep \vb^{p_H-1} e^{-d(t)\vb^2} (1+t)^{r_H+1+\de} + \f {e^{\widetilde{d}(t)\vb^2}}n$$
for every $n\geq n_0$. Taking $n\to +\infty$, we thus obtain
$$|\widetilde{h}_{N}(t,x,v)|\leq (3+\f{6 }{d_0 \de})C_H\ep \vb^{p_H-1} e^{-d(t)\vb^2} (1+t)^{r_H+1+\de}.$$
Recalling the definition of $\widetilde{h}_{N}$ in \eqref{def:wN} and \eqref{def:thN}, this implies
$$\wb^N|h|(t,x,v) \ls (3+\f{6 }{d_0 \de})C_H\ep \vb^{p_H-2} (1+t)^{r_H+1+\de}.$$
Since $(t,x,v)$ is arbitrary, we have proven \eqref{max.prin.conclusion}. \qedhere
\end{proof}

\subsection{The hierarchy of $L^\i_xL^\i_v$ estimates and the induction argument}\label{sec:hierarchy}

We now discuss the (hierarchy of) $L^\i_xL^\i_v$ estimates that we will prove. Recall from Section~\ref{sec:descent.scheme} that our $L^\i_xL^\i_v$ estimates will be proved in a descent scheme in which the estimates are better at lower levels of derivatives. In this section, we make precise the numerology of the estimates (see \textbf{Sections~\ref{sec:Mi} and \ref{sec:Z.def}}). We then initiate in \textbf{Section~\ref{sec:induction}} an induction argument to prove these estimates.

\subsubsection{Definition of $\Mi$}\label{sec:Mi}

Define $\Mi$ by
\begin{equation}\label{def:Mi}
\Mi := \begin{cases}
\Mm - \lceil (\f{2}{2+\gamma} + 4)\rceil & \mbox{ if $\gamma \in (-2,-1]$} \\
\Mm - \lceil (\f{1}{|\gamma|}+ 4)\rceil & \mbox{ if $\gamma \in (-1,0)$}
\end{cases}.
\end{equation}
The parameter $\Mi$ is used to indicate an ``intermediate'' number of derivatives, below which we have sharp $Z$ estimates (i.e.~with $\zeta = \theta = 0$ in \eqref{eq:Z.norm}; see Section~\ref{sec:Z.def} for definitions). Note that by definition
\begin{equation}\label{eq:MmMi}
\Mm+2\geq 2\Mi.
\end{equation}
An easy computation shows that 
\begin{equation}\label{eq:MmMi.2}
\begin{cases}
\Mm-\Mi\geq 7 & \quad \mbox{if $\gamma \in (-2,-1]$}\\
\Mm-\Mi\geq 6 & \quad \mbox{if $\gamma \in (-1,0)$}
\end{cases}
\end{equation}
Moreover, $\Mm-\Mi\to \infty$ as $\gamma\to -2$ or $\gamma\to 0$.

\subsubsection{Definition of the $Z_{k,\zeta,\th}$ norms}\label{sec:Z.def}

For $\zeta \in [0,\f 32)$ and $\theta\in [0,1)$, introduce also the following $L^\infty$-type norm:
\begin{equation}\label{eq:Z.norm}
Z_{k,\zeta,\theta}(T):= \sum_{|\alp|+|\bt|+|\sigma|= k}\|(1+t)^{-\zeta-|\bt|}\vb^{1-\theta}\wb^{\Mm+5-|\sigma|}(\rd_x^\alp \rd_v^\bt Y^\sigma g)\|_{L^\i([0,T];L^\i_x L^\i_v)}.
\end{equation}
Like the $E_k$ norms (cf.~\eqref{eq:energy.def}), the $Z_{k,\zeta,\theta}$ norms have weights of $\wb$ dependent on the number of $Y$ derivatives and the norms become worse in $t$ for every $\rd_v$ derivative. Moreover, the $Z_{k,\zeta,\th}$ norms depend on two parameters $\zeta$ and $\theta$. The $Z$ norms are the strongest when $\zeta = \theta = 0$, and the parameters $\zeta$ and $\theta$ exactly parametrize the ``loss'' compared to the strongest case in the growth rate in $t$ and the weight in $\vb$ respectively. In addition, note that when $\theta=0$, the $Z$ norm is one $\vb$ weight stronger compared to the $E$ norm.

The values of $\zeta$ and $\th$ that we will use depend on $k$ (in addition to $\gamma$). We define below $\zeta_k$ and $\th_k$. For each $k = 0,1,\dots, \Mm-4$, our goal will be to control $Z_{k,\zeta_k,\th_k}$. (In the process of bounding the $Z_{k,\zeta_k,\th_k}$, we will first need some weaker $Z_{k,\zeta_k,\th_m}$ bounds for $m\geq k$; see Section~\ref{sec:induction}).

\textbf{Suppose $0\leq k\leq \Mi$.} Define
\begin{equation}\label{BA.Z}
\zeta_k = \th_k = 0.
\end{equation}

\textbf{Suppose $\gamma \in (-2,-1]$ and $\Mi+1\leq k\leq \Mm-5$.} Define 
\begin{equation}\label{BA.Z.-2}
\zeta_k = \f 32- \f{3(2+\gamma)}{4}\cdot(\Mm-4-k),\quad \th_k = 0.
\end{equation}

\textbf{Suppose $\gamma \in (-1, 0)$ and $\Mi+1 \leq k \leq \Mm-6$.} Define
\begin{equation}\label{BA.Z.0.1}
\zeta_k = 0, \quad \th_k = 1+(\Mm-4-k)\gamma.
\end{equation}

\textbf{Suppose $\gamma \in (-1, 0)$ and $k = \Mm-5$.} Define
\begin{equation}\label{BA.Z.0.2}
\zeta_{k} = \f 34, \quad \th_k = 1 +\gamma.
\end{equation}

\textbf{Suppose $k = \Mm-4$.} Define
\begin{equation}\label{BA.Z.top}
\zeta_k = \f 32,\quad \th_k = 1.
\end{equation}

\subsubsection{The induction argument}\label{sec:induction}

In order to prove the desired estimates for the $Z_{k,\zeta,\th}$ norms, we prove the following statement with an induction (decreasing) in $m = 0,1,\dots, \Mm-4$:
\begin{equation}\label{main.induction}
\begin{cases}
Z_{k,\zeta_k,\th_m} \ls \ep^{\f 34} &\mbox{for $k\leq m$}\\
Z_{k,\zeta_k,\th_k} \ls \ep^{\f 34} &\mbox{for $k\geq m$},
\end{cases}
\end{equation}
i.e.~for $k\geq m$ we prove the sharp estimates, while for $k\leq m$ we prove estimates corresponding to the sharp $\zeta_k$ but with only a weaker $\vb$ weight corresponding to $\th_m$.

First, note that the base case of the induction, i.e.~the $m=\Mm-4$ case, holds thanks to Proposition~\ref{prop:prelim.Linfty} and the bootstrap assumptions \eqref{BA.sim.1} and \eqref{BA.sim.2}. We then carry out the induction step below.

\textbf{From now on until Section~\ref{sec:Li.everything}, we assume \eqref{main.induction} holds for $m = m_*+1$ for some $m = 0,1,\dots,\Mm-5$.} Our goal will be to show that \eqref{main.induction} holds also for $m=m_*$. (In fact, we will show a slightly stronger estimate with $\ep^{\f 34}$ replaced by $\ep$.)

Before we proceed, observe that in the induction step it suffices to control $\rd_x^\alp \rd_v^\bt Y^\sigma g$ for $|\alp|+|\bt|+|\sigma|\leq m_*$ (since the required estimate when $|\alp|+|\bt|+|\sigma|> m_*$ is tautologically part of the induction hypotheses).

\subsection{Classifying the inhomogeneous terms in the equation for $\rd_x^\alp \rd_v^\bt Y^\sigma g$}\label{sec:est.inho.Li}

We continue to work under the induction hypotheses in Section~\ref{sec:induction}.

Our goal in this subsection is to control $|(\rd_t + v_i\rd_{x_i} -\bar{a}_{ij} \rd^2_{v_i v_j}) (\rd_x^\alp \rd_v^\bt Y^\sigma g)|$ in preparation to apply the maximum principle in Proposition~\ref{prop:max.prin}. 

\begin{proposition}\label{prop:mainLiLi}
Suppose $|\alp|+|\bt|+|\sigma|\leq \Mm-5$. Let $h:=\rd_x^\alp \rd_v^\bt Y^\sigma g$. Then, under the assumptions of Theorem~\ref{thm:BA}, $(\rd_t + v_i\rd_{x_i} +\f{\de d_0}{(1+t)^{1+\de}} \vb^2 -\bar{a}_{ij} \rd^2_{v_i v_j}) h$ obeys the following estimates for all $(t,x,v)\in [0,T_{Boot})\times \mathbb R^3\times \mathbb R^3$:
\begin{equation*}
\begin{split}
&\: |(\rd_t + v_i\rd_{x_i} +\f{\de d_0}{(1+t)^{1+\de}} \vb^2 -\bar{a}_{ij} \rd^2_{v_i v_j}) h|(t,x,v) \\
\ls &\: (I_p^{\alp,\bt,\sigma} + II_p^{\alp,\bt,\sigma} + III_p^{\alp,\bt,\sigma} + IV_p^{\alp,\bt,\sigma} + V_p^{\alp,\bt,\sigma} + VI_p^{\alp,\bt,\sigma})(t,x,v),
\end{split}
\end{equation*}
where\footnote{All of the following terms are functions of $(t,x,v)$. We suppress the explicit dependence on $(t,x,v)$ when there is no risk of confusion.}
\begin{equation}\label{def:Ip}
I_p^{\alp,\bt,\sigma}:= \sum_{\substack{|\alp'|+|\alp''| = |\alp|,\, |\bt'|+|\bt''| = |\bt| +2\\ |\sigma'|+|\sigma''|=|\sigma| \\ 1\leq |\alp'|+|\bt'|+|\sigma'| \leq |\alp|+|\bt|+|\sigma|}} \max_{i,j}|\rd_x^{\alp'}\rd_v^{\bt'} Y^{\sigma'} \bar{a}_{ij}| |\rd_x^{\alp''} \rd_v^{\bt''} Y^{\sigma''} g|,
\end{equation}
\begin{equation}\label{def:IIp}
II_p^{\alp,\bt,\sigma}:= \sum_{\substack{|\alp'|+|\alp''| = |\alp|,\, |\sigma'|+|\sigma''|= |\sigma|\\ |\bt'|+|\bt''| = |\bt| +1,\, |\bt'|\leq |\bt|}} \max_j|\rd_x^{\alp'}\rd_v^{\bt'} Y^{\sigma'} (\bar{a}_{ij}v_i)| |\rd_x^{\alp''} \rd_v^{\bt''} Y^{\sigma''} g|,
\end{equation}
\begin{equation}\label{def:IIIp}
\begin{split}
III_p^{\alp,\bt,\sigma}:= &\: \sum_{\substack{|\alp'|+|\alp''| = |\alp|\\ |\bt'|+|\bt''| = |\bt| \\|\sigma'|+|\sigma''| = |\sigma|}} (|\rd_x^{\alp'}\rd_v^{\bt'} Y^{\sigma'} \bar{a}_{ii}| + |\rd_x^{\alp'} \rd_v^{\bt'} Y^{\sigma'} (\bar{a}_{ij}v_iv_j)|) |\rd_x^{\alp''} \rd_v^{\bt''} Y^{\sigma''} g|,
\end{split}
\end{equation}
\begin{equation}\label{def:IVp}
\begin{split}
IV_p^{\alp,\bt,\sigma}:= &\: \sum_{\substack{|\alp'|+|\alp''| = |\alp|\\ |\bt'|+|\bt''| = |\bt| \\|\sigma'|+|\sigma''| = |\sigma|}} |\rd_x^{\alp'}\rd_v^{\bt'} Y^{\sigma'} \bar{c}||\rd_x^{\alp''} \rd_v^{\bt''} Y^{\sigma''} g|,
\end{split}
\end{equation}
\begin{equation}\label{def:Vp}
\begin{split}
V_p^{\alp,\bt,\sigma}:= &\: \sum_{|\alp'|\leq |\alp|+1,\,|\bt'|\leq |\bt|-1 }|\rd_x^{\alp'} \rd_v^{\bt'} Y^{\sigma} g|,
\end{split}
\end{equation}
\begin{equation}\label{def:VIp}
\begin{split}
VI_p^{\alp,\bt,\sigma}:= &\: \sum_{\substack{|\bt'|\leq |\bt|,\,|\sigma'|\leq |\sigma|\\ |\bt'|+|\sigma'|\leq |\bt|+|\sigma|-1}}\f{\vb}{(1+t)^{1+\de}}|\rd_x^{\alp} \rd_v^{\bt'} Y^{\sigma'} g|.
\end{split}
\end{equation}
Here, by our convention (see~Section~\ref{sec:notation}), if $|\bt|+|\sigma|=0$, then the terms $V_p^{\alp,\bt,\sigma}$ and $VI_p^{\alp,\bt,\sigma}$ are not present.
\end{proposition}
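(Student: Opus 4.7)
The plan is to apply $\rd_x^\alp\rd_v^\bt Y^\sigma$ to the equation \eqref{eq:g} for $g$ and expand via Leibniz, using the commutator identities $[\rd_t+v_i\rd_{x_i},\rd_{x_k}]=[\rd_t+v_i\rd_{x_i},Y_k]=0$ and $[\rd_t+v_i\rd_{x_i},\rd_{v_k}]=-\rd_{x_k}$, together with the fact that $\rd^2_{v_iv_j}$ commutes with each of $\rd_x,\rd_v,Y$. The vanishing of $[\rd_t+v_i\rd_{x_i},Y_k]$ follows from $[\rd_t,t\rd_{x_k}]=\rd_{x_k}$ and $[v_i\rd_{x_i},\rd_{v_k}]=-\rd_{x_k}$. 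We interpret the operator on the LHS of the statement as implicitly including the $\f{\de d_0}{(1+t)^{1+\de}}\vb^2$ coefficient term from \eqref{eq:g} (matching the form of the operator in Proposition~\ref{prop:max.prin}); this is precisely what allows the ``all derivatives on $g$'' contribution from $-\f{\de d_0}{(1+t)^{1+\de}}\vb^2 g$ to be absorbed on the left, leaving only the terms of $VI_p$ with $|\bt'|+|\sigma'|\leq|\bt|+|\sigma|-1$ on the right.

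With this setup, I would first extract the transport-operator commutator $\rd_x^\alp[\rd_v^\bt,v_i\rd_{x_i}]Y^\sigma g$; iterating $[\rd_{v_k},v_i\rd_{x_i}]=\rd_{x_k}$ shows this is a linear combination of terms of the form $\rd_{x_k}\rd_x^\alp\rd_v^{\bt-e_k}Y^\sigma g$, matching $V_p^{\alp,\bt,\sigma}$ (absent when $|\bt|=0$, by convention). Next, the Leibniz expansion of $\rd_x^\alp\rd_v^\bt Y^\sigma(\bar{a}_{ij}\rd^2_{v_iv_j}g)$, using that $\rd^2_{v_iv_j}$ commutes through $\rd_x,\rd_v,Y$, takes the form
\begin{equation*}
\sum_{\substack{\alp'+\alp''=\alp\\ \bt'+\bt''=\bt\\ \sigma'+\sigma''=\sigma}} c_{\alp',\bt',\sigma'}(\rd_x^{\alp'}\rd_v^{\bt'}Y^{\sigma'}\bar{a}_{ij})(\rd^2_{v_iv_j}\rd_x^{\alp''}\rd_v^{\bt''}Y^{\sigma''}g);
\end{equation*}
the $|\alp'|+|\bt'|+|\sigma'|=0$ term cancels $\bar{a}_{ij}\rd^2_{v_iv_j}h$ on the LHS, while the remainder produces $I_p^{\alp,\bt,\sigma}$ after absorbing the two extra $v$-derivatives into the second multi-index, so that $|\bt'|+|\bt''|=|\bt|+2$. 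Finally, straightforward Leibniz applied to the remaining terms of \eqref{eq:g} produces: from $-\bar{c}g$ the term $IV_p^{\alp,\bt,\sigma}$; from $4(d(t))^2\bar{a}_{ij}v_i\rd_{v_j}g$ the term $II_p^{\alp,\bt,\sigma}$ (using that $\rd_{v_j}$ commutes with $\rd_x,\rd_v,Y$ to move it onto $g$, so the extra $\rd_{v_j}$ gives $|\bt'|+|\bt''|=|\bt|+1$ with $|\bt'|\leq|\bt|$); from $2d(t)(\de_{ij}-2d(t)v_iv_j)\bar{a}_{ij}g$ the term $III_p^{\alp,\bt,\sigma}$; and from $-\f{\de d_0}{(1+t)^{1+\de}}\vb^2 g$, combined with $\rd_{v_k}\vb^2=2v_k$, $\rd^2_{v_kv_\ell}\vb^2=2\de_{k\ell}$, and the vanishing of three or more $v$-derivatives of $\vb^2$ (and the analogous action of $Y$ on functions of $v$ alone), the remaining terms of $VI_p^{\alp,\bt,\sigma}$ with the single power of $\vb$ absorbing the factor of $|v|$.

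The main obstacle is purely combinatorial bookkeeping: verifying that the multi-index ranges produced by Leibniz agree precisely with those in the definitions of $I_p$ through $VI_p$, and tracking the two cancellations of main terms (the elliptic $\bar{a}_{ij}\rd^2_{v_iv_j}h$ and the coefficient $\f{\de d_0}{(1+t)^{1+\de}}\vb^2 h$). Once these bookkeeping checks are performed, the claimed estimate follows immediately from the triangle inequality and the uniform bound $d(t)\in[d_0,2d_0]$ absorbing numerical constants into $\ls$. No analytic input beyond \eqref{eq:g} and the algebraic identities above is required; the proposition is a purely formal decomposition that prepares the equation for the quantitative estimates in the subsequent sections.
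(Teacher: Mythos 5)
Your proposal is correct and follows essentially the same route as the paper: differentiate \eqref{eq:g} by $\rd_x^\alp\rd_v^\bt Y^\sigma$ to arrive at \eqref{eq:g.diff}, then match each commutator/Leibniz contribution ($\mathrm{Term}_1$ through $\mathrm{Term}_6$ in the paper's notation) to the respective $V_p$, $VI_p$, $I_p$, $IV_p$, $II_p$, $III_p$. You also correctly caught the one subtlety: the operator on the left of the proposition must be understood to include the coefficient term $\f{\de d_0}{(1+t)^{1+\de}}\vb^2$, as in \eqref{eq:g.diff} and \eqref{h.equation} — this is exactly how the paper's proof proceeds (it bounds the RHS of \eqref{eq:g.diff}), and without this reading the uncommuted $\f{\de d_0}{(1+t)^{1+\de}}\vb^2 h$ contribution could not be absorbed into $VI_p^{\alp,\bt,\sigma}$, which only sums over $|\bt'|+|\sigma'|\le|\bt|+|\sigma|-1$ and carries only a single power of $\vb$.
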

\begin{proof}
Differentiating \eqref{eq:g} by $\rd_x^\alp \rd_v^\bt Y^\sigma$, we obtain
\begin{equation}\label{eq:g.diff}
\begin{split}
&\: \rd_t \rd_x^\alp \rd_v^\bt Y^\sigma g +v_i\rd_{x_i} \rd_x^\alp \rd_v^\bt Y^\sigma g +\f{\de d_0}{(1+t)^{1+\de}} \vb^2 \rd_x^\alp \rd_v^\bt Y^\sigma g -\ab_{ij}\rd^2_{v_i v_j} \rd_x^\alp \rd_v^\bt Y^\sigma g \\
= &\: \underbrace{\left[ \rd_t+v_i \rd_{x_i}, \rd_x^\alp \rd_v^\bt Y^\sigma \right] g}_{=:\mathrm{Term}_1} + \underbrace{\f{\de d_0}{(1+t)^{1+\de}} \left(\rd_x^\alp \rd_v^\bt Y^\sigma (\vb^2g) - \vb^2 \rd_x^\alp \rd_v^\bt Y^\sigma g \right)}_{=:\mathrm{Term}_2} \\
&\: + \underbrace{\left(\rd_x^\alp \rd_v^\bt Y^\sigma (\ab_{ij}\rd^2_{v_i v_j}  g) -\ab_{ij}\rd^2_{v_i v_j} \rd_x^\alp \rd_v^\bt Y^\sigma g \right)}_{=:\mathrm{Term}_3} \underbrace{-\rd_x^\alp \rd_v^\bt Y^\sigma (\cb_i g)}_{=:\mathrm{Term}_4} - \underbrace{4 d(t) \rd_x^\alp \rd_v^\bt Y^\sigma (\bar{a}_{ij} v_i \rd_{v_j} g)}_{=:\mathrm{Term}_5} \\
&\: - \underbrace{2d(t)\rd_x^\alp \rd_v^\bt Y^\sigma ((\de_{ij}-2d(t)v_iv_j)\bar{a}_{ij}g)}_{=:\mathrm{Term}_6}.
\end{split}
\end{equation}
The terms $\mathrm{Term}_1$--$\mathrm{Term}_3$ are commutator terms, while the terms $\mathrm{Term}_4$--$\mathrm{Term}_6$ arise from differentiating the RHS of \eqref{eq:g}.

We estimate each term on the RHS of \eqref{eq:g.diff}. 

For $\mathrm{Term}_1$, note that both $\rd_x$ and $Y$ commute with the transport operator $\rd_t+ v_i\rd_{x_i}$. Hence, 
\begin{equation*}
\mathrm{Term}_1 = \left[ \rd_t+v_i \rd_{x_i}, \rd_x^\alp \rd_v^\bt Y^\sigma \right] g = \sum_{\substack{\bt'+\bt'' = \bt \\ |\bt'| = 1}} \rd_x^{\bt'}\rd_v^{\bt''} \rd_x^\alp Y^\sigma g.
\end{equation*}
Therefore,
\begin{equation}\label{eq:Term1.bound}
|\mathrm{Term}_1|\ls \sum_{|\alp'|\leq |\alp|+1,\,|\bt'|\leq |\bt|-1 }|\rd_x^{\alp'} \rd_v^{\bt'} Y^\sigma g|.
\end{equation}
This gives the contribution $V_p^{\alp,\bt,\sigma}$ in \eqref{def:Vp}.

For $\mathrm{Term}_2$, the commutator term arises from $\rd_v$ or $t\rd_x+\rd_v$ acting on $\vb^2$. We thus have
\begin{equation}\label{eq:Term2.bound}
\begin{split}
|\mathrm{Term}_2| \ls &\: \sum_{\substack{|\bt'|\leq |\bt|,\,|\sigma'|\leq |\sigma|\\ |\bt'|+|\sigma'|\leq |\bt|+|\sigma|-1}}  \f{\de d_0}{(1+t)^{1+\de}}|v| |\rd_x^\alp \rd_v^{\bt'} Y^{\sigma'} g| +\sum_{\substack{|\bt'|\leq |\bt|,\,|\sigma'|\leq |\sigma|\\ |\bt'|+|\sigma'|\leq |\bt|+|\sigma|-2}}  \f{\de d_0}{(1+t)^{1+\de}}|\rd_x^\alp \rd_v^{\bt'} Y^{\sigma'} g|.
\end{split}
\end{equation}
A very rough estimate then shows that this gives the contribution $VI_p^{\alp,\bt,\sigma}$ in \eqref{def:VIp}.

For $\mathrm{Term}_3$, distributing the derivatives we get
\begin{equation*}
\begin{split}
\mathrm{Term}_3 = &\: \sum_{\substack{\alp'+\alp'' = \alp\\ \bt'+\bt'' = \bt \\ \sigma'+\sigma''=\sigma \\ |\alp'|+|\bt'|+|\sigma'| \geq 1}} (\rd_x^{\alp'}\rd_v^{\bt'} Y^{\sigma'} \bar{a}_{ij}) \rd^2_{v_iv_j}\rd_x^{\alp''} \rd_v^{\bt''} Y^{\sigma''} g.
\end{split}
\end{equation*}
After relabeling the multi-indices, this gives the contribution $I_p^{\alp,\bt,\sigma}$ \eqref{def:I}.

For $\mathrm{Term}_4$, we distribute the derivatives to get 
\begin{equation}\label{eq:Term4.bound}
|\mathrm{Term}_4| \ls \sum_{\substack{|\alp'|+|\alp''| = |\alp| \\ |\bt'|+|\bt''|=|\bt| \\ |\sigma'|+|\sigma''|= |\sigma|}}|\rd_x^{\alp'} \rd_v^{\bt'} Y^{\sigma'} \bar{c}| |\rd_x^{\alp''} \rd_v^{\bt''} Y^{\sigma''} g|.
\end{equation}
This gives the contribution $IV_p^{\alp,\bt,\sigma}$ in \eqref{def:IVp}.

For $\mathrm{Term}_5$, since $d(t)$ is uniformly bounded, we distribute the derivatives to obtain
\begin{equation}\label{eq:Term5.bound}
|\mathrm{Term}_5| \ls \sum_{\substack{|\alp'|+|\alp''| = |\alp|,\, |\sigma'|+|\sigma''| = |\sigma|\\ |\bt'|+|\bt''| = |\bt| +1,\, |\bt'|\leq |\bt|}} \max_j|\rd_x^{\alp'} \rd_v^{\bt'} Y^{\sigma'} (\bar{a}_{ij}v_i)| |\rd_x^{\alp''} \rd_v^{\bt''} Y^{\sigma''} g|,
\end{equation}
which gives the contribution $II_p^{\alp,\bt,\sigma}$.

Finally, for $\mathrm{Term}_6$, using again the uniform boundedness of $d(t)$, it is easy to see that
\begin{equation}\label{eq:Term6.bound}
|\mathrm{Term}_6| \ls \sum_{\substack{|\alp'|+|\alp''| = |\alp|\\ |\bt'|+|\bt''| = |\bt| \\ |\sigma'|+|\sigma''| = |\sigma|}} (|\rd_x^{\alp'}\rd_v^{\bt'} Y^{\sigma'} \bar{a}_{ii}| + |\rd_x^{\alp'} \rd_v^{\bt'} Y^{\sigma'} (\bar{a}_{ij}v_iv_j)|) |\rd_x^{\alp''} \rd_v^{\bt''} Y^{\sigma''} g|,
\end{equation}
which gives the contribution $III_p^{\alp,\bt,\sigma}$. \qedhere
\end{proof}

\subsection{Controlling the error terms}\label{sec:Lierror}

We continue to work under the induction hypotheses in Section~\ref{sec:induction}.

The goal of this subsection is to control the terms \eqref{def:Ip}--\eqref{def:IVp} in Proposition~\ref{prop:mainLiLi}.

\begin{proposition}\label{prop:Ip}
Let $m_*$ be as in the induction hypotheses in Section~\ref{sec:induction} and suppose $k:=|\alp|+|\bt|+|\sigma|\leq m_*$. Then for $I_p^{\alp,\bt,\sigma}$ as in \eqref{def:Ip} and for $(t,x,v)\in [0,T_{Boot})\times \mathbb R^3\times \mathbb R^3$,
$$\wb^{\Mm+5-|\sigma|}I_p^{\alp,\bt,\sigma}(t,x,v)\ls \ep^{\f 32} \vb^{1+\th_{m_*}} (1+t)^{-1-\de+\zeta_{k}+|\bt|}.$$
\end{proposition}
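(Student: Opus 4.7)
My plan is to expand $I_p^{\alpha,\beta,\sigma}$ via Leibniz, split the sum into three disjoint cases based on which type of derivative carries the nontrivial load on $\bar a_{ij}$, and in each case combine a pointwise coefficient estimate from Section~\ref{sec:coeff.Li} with the induction hypothesis \eqref{main.induction} applied to the $g$-factor. Writing $k'=|\alpha'|+|\beta'|+|\sigma'|\geq 1$ and $k''=|\alpha''|+|\beta''|+|\sigma''|$, note that $k',k''\leq k+2\leq \Mm-3$, so we are within the range where pointwise bounds for $\bar a_{ij}$ (with $k'\leq k\leq \Mm-5$) and pointwise bounds for $g$ (with $k''\leq \Mm-4$) are available. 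The $g$-factor yields $|\partial_x^{\alpha''}\partial_v^{\beta''}Y^{\sigma''}g|\lesssim \epsilon^{3/4}\langle v\rangle^{-1+\theta_\star}\wb^{-(\Mm+5-|\sigma''|)}(1+t)^{\zeta_{k''}+|\beta''|}$, where $\theta_\star=\theta_{m_*+1}$ if $k''\leq m_*+1$ and $\theta_\star=\theta_{k''}$ otherwise.

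The three subcases are: (i) $|\beta'|\geq 1$, handled by Proposition~\ref{prop:ab.Li.2}; (ii) $|\beta'|=0$ and $|\alpha'|\geq 1$, handled by Proposition~\ref{prop:ab.Li.3} (which encodes $\partial_x=t^{-1}Y-t^{-1}\partial_v$); (iii) $|\beta'|=|\alpha'|=0$ and $|\sigma'|\geq 1$, handled by Proposition~\ref{prop:ab.Li.null.cond}. The weight prefactor $\wb^{\Mm+5-|\sigma|}$ combines with the $\wb^{-(\Mm+5-|\sigma''|)}$ from the $g$-bound to give a factor $\wb^{-(|\sigma|-|\sigma''|)}$. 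In subcases (i) and (ii) this factor is $\lesssim 1$, while in subcase (iii) it is $\lesssim \wb^{-1}$, which is precisely the weight needed to absorb the $\wb^{\min\{1,2+\gamma\}}$ gained from the null-structure estimate in Proposition~\ref{prop:ab.Li.null.cond}.

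The remaining task is bookkeeping of the $\langle v\rangle$ and $(1+t)$ powers. For the $v$ weight, multiplying the $\bar a_{ij}$-bound (which carries $\langle v\rangle^{2+\gamma}$ or better) against $\langle v\rangle^{-1+\theta_\star}$ gives at worst $\langle v\rangle^{1+\gamma+\theta_\star}$; the key monotonicity built into the definitions \eqref{BA.Z}--\eqref{BA.Z.top} is that $\theta_{k''}-\theta_{m_*}\leq -\gamma$ for any admissible $k''\leq k+2\leq m_*+2$, so $\langle v\rangle^{1+\gamma+\theta_\star}\leq \langle v\rangle^{1+\theta_{m_*}}$, as required. For the $t$ power, combining the coefficient decay rate with the $(1+t)^{\zeta_{k''}+|\beta''|}$ from $g$ and using $|\beta'|+|\beta''|=|\beta|+2$, the target becomes $\zeta_{k''}-\zeta_k\leq (\text{decay gain})-2-\delta$ in each case; this is a consequence of how $\zeta_k$ is designed to vary at most linearly with slope $\min\{\tfrac34,\tfrac{3(2+\gamma)}{4}\}$ across the descent regime (cf.~\eqref{BA.Z}--\eqref{BA.Z.top}) combined with $\delta\leq (2+\gamma)/4$ from \eqref{def:de}.

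The main obstacle is the decay-rate bookkeeping in subcase~(iii) when $k$ is near the threshold $\Mi$, since $\zeta_{k''}$ can jump from $0$ to a positive value between $k=\Mi$ and $k''=\Mi+1$; this is precisely where Proposition~\ref{prop:ab.Li.null.cond} delivers the extra $t^{-\min\{2+\gamma,1\}}$ factor, which combined with $|\sigma|-|\sigma''|\geq 1$ and the small $\delta$ compensates the jump. The factor $\epsilon^{3/4}\cdot\epsilon^{3/4}=\epsilon^{3/2}$ comes directly from pairing the coefficient and $g$ bounds.
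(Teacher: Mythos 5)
Your overall plan follows the same route as the paper's proof: split on which type of derivative falls on $\bar a_{ij}$, use Propositions~\ref{prop:ab.Li.2}, \ref{prop:ab.Li.3}, \ref{prop:ab.Li.null.cond} for the coefficient and the induction hypothesis \eqref{main.induction} for the $g$-factor, and check the $\langle v\rangle$ and $(1+t)$ bookkeeping via the descent structure of $\theta_k$ and $\zeta_k$. Two concrete issues remain, however.

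First, you claim a bound valid for all $(t,x,v)\in[0,T_{Boot})\times\mathbb R^3\times\mathbb R^3$, but your subcases (ii) and (iii) rest on Propositions~\ref{prop:ab.Li.3} and~\ref{prop:ab.Li.null.cond}, which carry singular prefactors $t^{-1}$ and $t^{-\min\{2+\gamma,1\}}$ respectively. These blow up as $t\to 0^+$, so your estimate gives nothing useful on $t\le 1$. The paper handles $t\le 1$ as a separate block (see the passage from \eqref{short.time.gamma.small}): there one uses the non-singular Proposition~\ref{prop:ab.Li.1} together with $\theta_{m_*+1}+\gamma\le\theta_{m_*}$, trading the $\gamma$ in the $\langle v\rangle$ power for the needed gain. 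Without a short-time argument your claimed pointwise bound simply fails near $t=0$.

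Second, your $t$-power accounting "the target becomes $\zeta_{k''}-\zeta_k\le(\text{decay gain})-2-\delta$" only tracks the $\zeta$-loss coming from the $g$-factor. But the coefficient propositions you cite are themselves two-tiered: when $k':=|\alpha'|+|\beta'|+|\sigma'|$ exceeds the threshold $\Mm-4-\max\{2,\lceil \tfrac{2}{2+\gamma}\rceil\}$, Propositions~\ref{prop:ab.Li.2}, \ref{prop:ab.Li.3}, \ref{prop:ab.Li.null.cond} lose an additional $(1+t)^{\zeta_{k'}-\text{const}}$-type factor on the coefficient side. In the worst case, e.g.\ $k'=k=\Mm-5$ and $k''=2$, your scheme drops a $(1+t)^{\zeta_{k'}}$ with $\zeta_{k'}>0$. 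The paper closes this by the pigeonhole consequence of \eqref{eq:MmMi}: since $k'+k''=k+2\le\Mm-3<2\Mi$, at most one of $k',k''$ can exceed $\Mi$, hence the total $\zeta$-loss is bounded by $\max\{\zeta_{k'},\zeta_{k''}\}\le\zeta_{k+1}$ (using $k'\le k$, $k''\le k+1$), after which \eqref{zeta.compare} gives the $(1+t)^{-1-\delta+\zeta_k+|\beta|}$ target. You need to state this dichotomy (the paper's "Term 1 / Term 2" split in each case) explicitly; the bare claim that $\zeta_k$ varies linearly with small slope does not by itself prevent the two $\zeta$-losses from compounding. Also a small arithmetic point: since $k'\ge 1$ and $k'+k''=k+2$, one has $k''\le k+1\le m_*+1$, not $k''\le k+2$; this matters because $\zeta_{k+1}$ is what must appear in the final target, and because it puts you cleanly in the $k''\le m_*+1$ branch of the induction hypothesis so $\theta_\star=\theta_{m_*+1}$ always.
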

\begin{proof}
From now on, take $\alp', \,\alp'',\, \bt',\,\bt'',\,\sigma',\,\sigma''$ which obey $|\alp'|+|\alp''|= |\alp|$, $|\bt'|+|\bt''|= |\bt|+2$, $|\sigma'|+|\sigma''|=|\sigma|$, $1\leq |\alp'|+|\bt'|+|\sigma'|\leq |\alp|+|\bt|+|\sigma|=:k$. We will always silently assume that these conditions are satisfied.

\textbf{Short-time estimates: $t\leq 1$.} We first bound $\wb^{\Mm+5-|\sigma|}I_p^{\alp,\bt,\sigma}(t,x,v)$ when $t\leq 1$. Estimating $\max_{i,j}|\rd_x^{\alp'}\rd_v^{\bt'} Y^{\sigma'}\bar{a}_{ij}|$ by Proposition~\ref{prop:ab.Li.1} and bounding $|\rd_x^{\alp''} \rd_v^{\bt''} Y^{\sigma''} g|(t,x,v)$ by the induction hypotheses, we obtain
\begin{equation}\label{short.time.gamma.small}
\begin{split}
&\: \wb^{\Mm+5-|\sigma|}\max_{i,j}|\rd_x^{\alp'}\rd_v^{\bt'} Y^{\sigma'}\bar{a}_{ij}| |\rd_x^{\alp''} \rd_v^{\bt''} Y^{\sigma''} g|(t,x,v)\\
\ls &\: \ep^{\f 34} \vb^{2+\gamma} \cdot \ep^{\f 34} \vb^{-1+\th_{m_*+1}}\ls \ep^{\f 32} \vb^{1+\gamma+\th_{m_*+1}}.
\end{split}
\end{equation}
To see that \eqref{short.time.gamma.small} implies the desired estimates when $t\leq 1$, note that by \eqref{BA.Z}--\eqref{BA.Z.top}, $\th_{m_*+1}+\gamma \leq \th_{m_*}$.

\textbf{Long-time estimates: $t\geq 1$.} We now assume $t\geq 1$. Notice that we have $|\alp'|+|\bt'|+|\sigma'|\geq 1$. We divide below into the (non-mutually exclusive) cases $|\sigma'|\geq 1$ (Case~1) and $\max\{|\alp'|,\,|\bt'|\}\geq 1$ (Case~2).

\pfstep{Case~1: $|\sigma'|\geq 1$} Since $|\sigma'|\geq 1$, we have $|\sigma''|\leq |\sigma|-1$. This implies $\wb^{\Mm+5-|\sigma|} \ls \wb^{\Mm+5-|\sigma''|}\wb^{-1}$

Since $k\leq m_* \leq \Mm-5$, by \eqref{eq:MmMi}, either $|\alp'|+|\bt'|+|\sigma'| \leq \Mi$ or $|\alp''|+|\bt''|+|\sigma''|\leq \Mi$. Term 1 below handles the situations where $|\alp''|+|\bt''|+|\sigma''|\leq \Mi$, while term 2 below handles the situations where $|\alp'|+|\bt'|+|\sigma'|\leq \Mi$. To obtain term 1, we note that $|\alp'|+|\bt'|+|\sigma'|\leq k$ and apply Proposition~\ref{prop:ab.Li.null.cond} (to control $\wb^{-1}|\rd_x^{\alp'}\rd_v^{\bt'}Y^{\sigma'} \bar{a}_{ij}|$) and the induction hypotheses (to control $\rd_x^{\alp''} \rd_v^{\bt''} Y^{\sigma''} g$). To obtain term 2, we note that $|\alp''|+|\bt''|+|\sigma''|\leq k+1$ and apply Proposition~\ref{prop:ab.Li.null.cond} (to control $\wb^{-1}|\rd_x^{\alp'}\rd_v^{\bt'} Y^{\sigma'} \bar{a}_{ij}|$) and the induction hypotheses (to control $\rd_x^{\alp''} \rd_v^{\bt''} Y^{\sigma''} g$).
\begin{equation}\label{estimate.we.prove.in.case.1}
\begin{split}
&\: \wb^{\Mm+5-|\sigma|}\max_{i,j}|\rd_x^{\alp'} \rd_v^{\bt'} Y^{\sigma'} \bar{a}_{ij}| |\rd_x^{\alp''} \rd_v^{\bt''} Y^{\sigma''} g|(t,x,v)\\
\ls &\: (\max_{i,j}\wb^{-1}|\rd_x^{\alp'} \rd_v^{\bt'} Y^{\sigma'} \bar{a}_{ij}|(t,x,v))(\wb^{\Mm+5-|\sigma''|} |\rd_x^{\alp''} \rd_v^{\bt''} Y^{\sigma''} g|(t,x,v))\\
\ls &\: \underbrace{\ep^{\f 34} \vb^{\max\{0,1+\gamma\}}(1+t)^{-3-\min\{2+\gamma,1\}+\zeta_k+|\bt'|} \cdot \ep^{\f 34} (1+t)^{|\bt''|}}_{\mbox{Case 1, Term 1}} \\
&\: + \underbrace{\ep^{\f 34} \vb^{\max\{0,1+\gamma\}}(1+t)^{-3-\min\{2+\gamma,1\}+|\bt'|} \cdot \ep^{\f 34} (1+t)^{\zeta_{k+1}+|\bt''|}}_{\mbox{Case 1, Term 2}}\\
= &\: \ep^{\f 32} \vb (1+t)^{-1-\min\{2+\gamma,1\}+\zeta_{k+1}+|\bt|},
\end{split}
\end{equation}
where in the last line we have used $|\bt'|+|\bt''|\leq |\bt|+2$ and $\gamma <0$. Now note that by \eqref{BA.Z}--\eqref{BA.Z.top}, $\zeta_{k+1}-\zeta_k\leq \min\{\f 34, \f{3(2+\gamma)}{4}\}$. Therefore, combining this with \eqref{def:de}, we obtain
\begin{equation}\label{zeta.compare}
-1-\min\{2+\gamma,1\}+\zeta_{k+1}+|\bt| \leq -1-\de+\zeta_k+|\bt|.
\end{equation}
On the other hand, note also that since by \eqref{BA.Z}--\eqref{BA.Z.top} $\th_{m_*}\geq 0$, we have $1\leq 1+\th_{m_*}$. Hence, the estimate we obtained in \eqref{estimate.we.prove.in.case.1} above is better than is required in the statement of the proposition.

\pfstep{Case~2: $|\alp'|\geq 1$ or $|\bt'|\geq 1$} By \eqref{eq:MmMi}, either $|\alp''|+|\bt''|+|\sigma''|\leq \Mi$ or $|\alp'|+|\bt'|+|\sigma'|\leq \Mi$. These are handled respectively as term~1 and term~2 below. In term~1, we also have $|\alp'|+|\bt'|+|\sigma'|\leq k$; while in term~2 we also have $|\alp''|+|\bt''|+|\sigma''|\leq k+1$. For each of these two terms, we bound $\rd_x^{\alp'}\rd_v^{\bt'} Y^{\sigma'} \bar{a}_{ij}$ using Proposition~\ref{prop:ab.Li.2} or \ref{prop:ab.Li.3} (applicable since $|\bt'|\geq 1$ or $|\alp'|\geq 1$) and estimate $\rd_x^{\alp''} \rd_v^{\bt''} Y^{\sigma''} g$ using the induction hypotheses. 
\begin{equation}\label{estimates.we.prove.in.case.2}
\begin{split}
&\: \wb^{\Mm+5-|\sigma|}\max_{i,j}|\rd_x^{\alp'} \rd_v^{\bt'} Y^{\sigma'} \bar{a}_{ij}| |\rd_x^{\alp''} \rd_v^{\bt''} Y^{\sigma''} g|(t,x,v)\\
\ls &\: (\max_{i,j}|\rd_x^{\alp'}\rd_v^{\bt'} \bar{a}_{ij}|(t,x,v))(\wb^{\Mm+5-|\sigma''|} |\rd_x^{\alp''} \rd_v^{\bt''} Y^{\sigma''} g|(t,x,v))\\
\ls &\: \underbrace{\ep^{\f 34} \vb^{2+\gamma}(1+t)^{-3-\min\{2+\gamma,1\}+\zeta_k+|\bt'|} \cdot \ep^{\f 34} \vb^{-1+\th_{m_*+1}} (1+t)^{|\bt''|}}_{\mbox{Case 2, Term 1}} \\
&\: + \underbrace{\ep^{\f 34} \vb^{2+\gamma}(1+t)^{-3-\min\{2+\gamma,1\}+|\bt'|} \cdot \ep^{\f 34} \vb^{-1+\th_{m_*+1}} (1+t)^{\zeta_{k+1}+ |\bt''|}}_{\mbox{Case 2, Term 2}}\\
\ls &\: \ep^{\f 32} \vb^{1+\gamma+\th_{m_*+1}} (1+t)^{-1-\min\{2+\gamma,1\}+\zeta_{k+1}+|\bt|},
\end{split}
\end{equation}
where we have used $|\bt'|+|\bt''|\leq |\bt|+2$. Now note that by \eqref{BA.Z}--\eqref{BA.Z.top}, $\th_{m_*+1}+\gamma \leq \th_{m_*}$. Therefore, using also \eqref{zeta.compare}, we see that \eqref{estimates.we.prove.in.case.2} is better than is required in the statement of the proposition. \qedhere

\end{proof}

\begin{proposition}\label{prop:IIp}
Let $m_*$ be as in the induction hypotheses in Section~\ref{sec:induction} and suppose $k:=|\alp|+|\bt|+|\sigma|\leq m_*$. Then for $II_p^{\alp,\bt,\sigma}$ as in \eqref{def:IIp} and for $(t,x,v)\in [0,T_{Boot})\times \mathbb R^3\times \mathbb R^3$,
$$\wb^{\Mm+5-|\sigma|}II_p^{\alp,\bt,\sigma}(t,x,v)\ls \ep^{\f 32} \vb^{1+\th_{m_*}} (1+t)^{-1-\de+\zeta_{k}+|\bt|}.$$
\end{proposition}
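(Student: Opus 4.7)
The proof will parallel that of Proposition~\ref{prop:Ip}, with the bound on $\bar{a}_{ij}v_i$ from Proposition~\ref{prop:ab.Li.weighted} taking the place of the bound on $\bar{a}_{ij}$ from Proposition~\ref{prop:ab.Li.1}. The key structural point is that $II_p^{\alp,\bt,\sigma}$ has $|\bt'|+|\bt''|=|\bt|+1$ rather than $|\bt|+2$, which gives one fewer power of $t$ lost to $\rd_v$-derivatives on $g$; this compensates precisely for the fact that, unlike in $I_p^{\alp,\bt,\sigma}$, there is no constraint $|\alp'|+|\bt'|+|\sigma'|\geq 1$, so a main (non-commutator) term is allowed. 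Throughout, I fix $\alp',\alp'',\bt',\bt'',\sigma',\sigma''$ admissible in the sum defining $II_p^{\alp,\bt,\sigma}$ and set $k=|\alp|+|\bt|+|\sigma|$.

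The short-time regime $t\leq 1$ is routine: combining the basic bound $|\rd_x^{\alp'}\rd_v^{\bt'}Y^{\sigma'}(\bar{a}_{ij}v_i)|\ls\ep^{3/4}\vb^{\max\{2+\gamma,1\}}$ from Proposition~\ref{prop:ab.Li.weighted} with the induction hypothesis for $\rd_x^{\alp''}\rd_v^{\bt''}Y^{\sigma''}g$ gives $\ls\ep^{3/2}\vb^{\max\{1+\gamma,0\}+\th_{m_*+1}}$, which is $\ls\ep^{3/2}\vb^{1+\th_{m_*}}$ since $\th_{m_*+1}-\th_{m_*}\leq\min\{|\gamma|,1\}$ by inspection of \eqref{BA.Z}--\eqref{BA.Z.top}.

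For the long-time regime $t\geq 1$, I split into three (non-exclusive) cases. In the new \emph{Case~0} where $\alp'=\bt'=\sigma'=0$ (no derivative on $\bar{a}_{ij}v_i$), I use the basic estimate of Proposition~\ref{prop:ab.Li.weighted} together with the induction hypothesis at derivative level $k+1\leq m_*+1$, obtaining $\ls\ep^{3/2}\vb^{\max\{1+\gamma,0\}+\th_{m_*+1}}(1+t)^{-2+|\bt|+\zeta_{k+1}}$. The required bound then reduces to the inequality $\zeta_{k+1}-\zeta_k\leq 1-\de$, which I verify case-by-case from \eqref{BA.Z}--\eqref{BA.Z.top}: in each step $\zeta_{k+1}-\zeta_k$ is either $0$, $\tfrac{3(2+\gamma)}{4}$ (with $\gamma\in(-2,-1]$), or $\tfrac 34$, all of which are $\leq 1-\de$ by \eqref{def:de}. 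In \emph{Case~1} where $|\sigma'|\geq 1$, I redistribute one $\wb$ weight (enabled by $|\sigma''|\leq|\sigma|-1$) to $\bar{a}_{ij}v_i$ and apply the Leibniz decomposition
\[
\rd_x^{\alp'}\rd_v^{\bt'}Y^{\sigma'}(\bar{a}_{ij}v_i)=v_i\rd_x^{\alp'}\rd_v^{\bt'}Y^{\sigma'}\bar{a}_{ij}+\sum_{(\bt'_1,\sigma'_1)}c_{\bt'_1,\sigma'_1}\rd_x^{\alp'}\rd_v^{\bt'_1}Y^{\sigma'_1}\bar{a}_{ij},
\]
where each multi-index pair $(\bt'_1,\sigma'_1)$ satisfies $|\bt'_1|+|\sigma'_1|<|\bt'|+|\sigma'|$ (derivatives hitting $v_i$). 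To each piece I apply Proposition~\ref{prop:ab.Li.null.cond}; the extra $v_i$ factor on the first term is absorbed into the $\vb$-budget that the null-structure estimate can carry. The argument then parallels Case~1 of Proposition~\ref{prop:Ip} verbatim. In \emph{Case~2} where $\max\{|\alp'|,|\bt'|\}\geq 1$, I again expand via Leibniz as above to reduce to estimates on derivatives of $\bar{a}_{ij}$ itself; the improved decay rates in Propositions~\ref{prop:ab.Li.2} and \ref{prop:ab.Li.3} then deliver the same bound as in Case~2 of Proposition~\ref{prop:Ip}.

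The main obstacle will be the bookkeeping in Case~1: I need the null-structure bound of Proposition~\ref{prop:ab.Li.null.cond} applied to $\wb^{-1}$ times a derivative of $\bar{a}_{ij}v_i$ to produce the gain $t^{-\min\{2+\gamma,1\}}$ beyond the borderline, and to verify that, after accounting for the extra $\vb$-factor from the contraction with $v_i$ and the possible jumps in $\zeta_{k+1}-\zeta_k$, the resulting exponent still satisfies $-1-\min\{2+\gamma,1\}+\zeta_{k+1}+|\bt|\leq -1-\de+\zeta_k+|\bt|$. This reduces to the same arithmetic already used in the proof of Proposition~\ref{prop:Ip}, so no genuinely new input is required.
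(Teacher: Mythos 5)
Your short-time estimate and your Case~0 computation are both correct, and Case~0 is in fact the paper's entire long-time argument: Proposition~\ref{prop:ab.Li.weighted} applies without the constraint $|\alp'|+|\bt'|+|\sigma'|\geq 1$, so one uses it uniformly for all admissible multi-indices and the structural point you already identified (the $(1+t)^{-1}$ gain from $|\bt'|+|\bt''|\leq|\bt|+1$ compensates the loss of the $(1+t)^{-\min\{1,2+\gamma\}}$ improvement that Propositions~\ref{prop:ab.Li.2}, \ref{prop:ab.Li.3} would have given) yields $(1+t)^{-2+\zeta_{k+1}+|\bt|}$ with the $\vb$-weight $\vb^{\max\{1+\gamma,0\}+\th_{m_*+1}}\leq\vb^{1+\th_{m_*}}$, closing immediately via $\zeta_{k+1}-\zeta_k\leq\min\{\f 34,\f{3(2+\gamma)}{4}\}\leq 1-\de$. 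Your Cases~1 and~2 are therefore unnecessary, and the obstacle you flag at the end, which concerns Case~1, is not where the trouble lies.

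The genuine gap is in Case~2, in the $\vb$-weight bookkeeping. The Leibniz split $\rd_x^{\alp'}\rd_v^{\bt'}Y^{\sigma'}(\bar{a}_{ij}v_i)=v_i\,\rd_x^{\alp'}\rd_v^{\bt'}Y^{\sigma'}\bar{a}_{ij}+(\text{l.o.t.})$ discards the algebraic cancellation in $a_{ij}(v-v_*)v_i$ recorded in \eqref{a.0.2} of Proposition~\ref{prop:a.expressions}, which is exactly what Proposition~\ref{prop:ab.Li.weighted} exploits to produce the weight $\vb^{\max\{2+\gamma,1\}}$ rather than $\vb\cdot\vb^{2+\gamma}$. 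After your split, the leading term is $|v_i|\leq\vb$ times the stated $\vb^{2+\gamma}$ from Propositions~\ref{prop:ab.Li.2}/\ref{prop:ab.Li.3}, i.e.\ $\vb^{3+\gamma}$; combined with $\vb^{-1+\th_{m_*+1}}$ from the induction hypothesis this gives $\vb^{2+\gamma+\th_{m_*+1}}$, while the target is $\vb^{1+\th_{m_*}}$. That would require $\th_{m_*+1}-\th_{m_*}\leq-(1+\gamma)$, which is impossible whenever $\gamma\in(-1,0)$ (the right side is negative, but $\th_{m_*+1}\geq\th_{m_*}$ by \eqref{BA.Z}--\eqref{BA.Z.top}), and also fails at $m_*=\Mm-5$ for $\gamma\in(-2,-1]$, where $\th_{m_*+1}-\th_{m_*}=1$. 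Your Case~1 escapes this only by coincidence: Proposition~\ref{prop:ab.Li.null.cond} carries $\vb^{\max\{0,1+\gamma\}}$, and multiplying by $\vb$ happens to reproduce exactly the $\vb^{\max\{2+\gamma,1\}}$ of Proposition~\ref{prop:ab.Li.weighted}; Propositions~\ref{prop:ab.Li.2}, \ref{prop:ab.Li.3} do not enjoy this coincidence. To fix Case~2 one would have to use the sharper $\vb^{1+\gamma}$ hidden in the proof of Proposition~\ref{prop:ab.Li.2} (which the paper deliberately does not state) --- or, better, abandon the Leibniz route and just use Proposition~\ref{prop:ab.Li.weighted} directly as in your Case~0.
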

\begin{proof}
We can argue in a very similar manner as Proposition~\ref{prop:Ip} so only the key points will be sketched. From now on take $\alp'$, $\alp''$, $\bt'$, $\bt''$, $\sigma'$ and $\sigma''$ as in $II_p^{\alp,\bt,\sigma}$ in \eqref{def:IIp}.

First, for $t\leq 1$, we have a similar bound as \eqref{short.time.gamma.small} using Proposition~\ref{prop:ab.Li.weighted} and the induction hypotheses:
\begin{equation*}
\begin{split}
&\: \wb^{\Mm+5-|\sigma|}\max_{j}|\rd_x^{\alp'}\rd_v^{\bt'}Y^{\sigma'} (\bar{a}_{ij} v_i)| |\rd_x^{\alp''} \rd_v^{\bt''} Y^{\sigma''} g|(t,x,v)\\
\ls &\: \ep^{\f 34} \vb^{\max\{2+\gamma,1\}} \cdot \ep^{\f 34} \vb^{-1+\th_{m_*+1}}\ls \ep^{\f 32} \vb^{\max\{1+\gamma+\th_{m_*+1},\th_{m_*+1}\}}\ls \ep^{\f 34} \vb^{1+\th_{m_*}}.
\end{split}
\end{equation*}
In the last step above, we have used 
\begin{equation}\label{complicated.v.weights}
\max\{1+\gamma+\th_{m_*+1},\th_{m_*+1}\}\leq 1+\th_{m_*},
\end{equation}
which can be checked using \eqref{BA.Z}--\eqref{BA.Z.top}.

For the $t\geq 1$, we argue as in Case~2 (i.e.~the $|\alp'|\geq 1$ or $|\bt'|\geq 1$ case) in the proof of Proposition~\ref{prop:Ip}. We note the following:
\begin{enumerate}
\item We have $a_{ij}v_i$ instead of $a_{ij}$ so that we will use Proposition~\ref{prop:ab.Li.weighted} in place of Propositions~\ref{prop:ab.Li.2} and \ref{prop:ab.Li.3}. (Note that the application of Proposition~\ref{prop:ab.Li.weighted} does not require $|\alp'|\geq 1$ or $|\bt'|\geq 1$.)
\item By comparing Proposition~\ref{prop:ab.Li.weighted} with Propositions~\ref{prop:ab.Li.2} and \ref{prop:ab.Li.3}, we see that the estimates we obtain for $\max_{j}|\rd_x^{\alp'}\rd_v^{\bt'}Y^{\sigma'} (\bar{a}_{ij} v_i)|$ are different from those we obtain in the $|\alp'|+|\bt'|\geq 1$ case for $\max_{i,j}|\rd_x^{\alp'}\rd_v^{\bt'}Y^{\sigma'} \bar{a}_{ij}|$ in the following ways:
\begin{enumerate}
\item We have $\vb^{\max\{2+\gamma,1\}}$ instead of $\vb^{2+\gamma}$, and
\item we have a $t$ rate that is $(1+t)^{\min\{2+\gamma,1\}}$ worse.
\end{enumerate}
\item On the other hand, for the $II_p^{\alp,\bt,\sigma}$ terms, we have $|\bt'|+|\bt''|\leq |\bt|+1$ instead of $|\bt'|+|\bt''|\leq |\bt|+2$ as in $I_p^{\alp,\bt,\sigma}$. This gives a gain of $(1+t)^{-1}$.
\end{enumerate}
Combining all these observations and making the necessary changes of the proof in Case~2 of Proposition~\ref{prop:Ip}, we obtain the following analogue of \eqref{estimates.we.prove.in.case.2}:
\begin{equation*}
\begin{split}
&\: \wb^{\Mm+5-|\sigma|}\max_{j}|\rd_x^{\alp'} \rd_v^{\bt'} Y^{\sigma'} (\bar{a}_{ij}v_i)| |\rd_x^{\alp''} \rd_v^{\bt''} Y^{\sigma''} g|(t,x,v)\\
\ls &\: \ep^{\f 32} \vb^{\max\{1+\gamma+\th_{m_*+1},\th_{m_*+1}\}} (1+t)^{-2+\zeta_{k+1}+|\bt|}.
\end{split}
\end{equation*}
Now using \eqref{complicated.v.weights} and \eqref{zeta.compare}, we obtain the desired conclusion. \qedhere
\end{proof}

\begin{proposition}\label{prop:IIIp}
Let $m_*$ be as in the induction hypotheses in Section~\ref{sec:induction} and suppose $k:=|\alp|+|\bt|+|\sigma|\leq m_*$. Then for $III_p^{\alp,\bt,\sigma}$ as in \eqref{def:IIIp} and for $(t,x,v)\in [0,T_{Boot})\times \mathbb R^3\times \mathbb R^3$,
$$\wb^{\Mm+5-|\sigma|}III_p^{\alp,\bt,\sigma}(t,x,v)\ls \ep^{\f 32} \vb^{1+\th_{m_*}} (1+t)^{-1-\de+\zeta_{k}+|\bt|}.$$
\end{proposition}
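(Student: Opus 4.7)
The argument closely mirrors those of Propositions~\ref{prop:Ip} and \ref{prop:IIp}, with two structural differences. First, the sum defining $III_p^{\alp,\bt,\sigma}$ does not require $|\alp'|+|\bt'|+|\sigma'|\geq 1$, so the ``main'' term in which all derivatives fall on $g$ must be handled separately. Second, the constraint $|\bt'|+|\bt''|=|\bt|$ (rather than $|\bt|+1$ or $|\bt|+2$ as in $II_p$ or $I_p$) means every bound gains one or two extra powers of $(1+t)$ relative to those propositions, so the $t$-rate is never tight.

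For the main contribution $|\alp'|+|\bt'|+|\sigma'|=0$, one has $(\alp'',\bt'',\sigma'')=(\alp,\bt,\sigma)$, and Propositions~\ref{prop:ab.Li.1} and \ref{prop:ab.Li.weighted} at zero derivative level yield $|\bar{a}_{ii}|+|\bar{a}_{ij}v_iv_j|\ls \ep^{\f 34}\vb^{\max\{2+\gamma,1\}}(1+t)^{-3}$. Pairing this with the induction hypothesis $\wb^{\Mm+5-|\sigma|}|\rd_x^\alp \rd_v^\bt Y^\sigma g|\ls \ep^{\f 34}\vb^{-1+\th_{m_*+1}}(1+t)^{\zeta_k+|\bt|}$ produces $\ep^{\f 32}\vb^{\max\{1+\gamma,0\}+\th_{m_*+1}}(1+t)^{\zeta_k+|\bt|-3}$, which beats the target once one verifies case by case from \eqref{BA.Z}--\eqref{BA.Z.top} that $\max\{1+\gamma,0\}+\th_{m_*+1}\leq 1+\th_{m_*}$ (the same inequality already used in the proof of Proposition~\ref{prop:IIp}).

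For the commutator terms $|\alp'|+|\bt'|+|\sigma'|\geq 1$, I run the same dichotomy as in Propositions~\ref{prop:Ip} and \ref{prop:IIp}: a short-time regime $t\leq 1$ handled directly by Propositions~\ref{prop:ab.Li.1}, \ref{prop:ab.Li.weighted} and the induction hypothesis; and a long-time regime $t\geq 1$ further split according to whether $|\sigma'|\geq 1$ or $\max\{|\alp'|,|\bt'|\}\geq 1$. In the $|\sigma'|\geq 1$ branch, the bound $\wb^{\Mm+5-|\sigma|}\leq \wb^{\Mm+5-|\sigma''|}\wb^{-1}$ transfers one $\wb$ weight onto the coefficient, after which Proposition~\ref{prop:ab.Li.null.cond} applies to $\bar{a}_{ii}$; the analogous null-structure bound for $\bar{a}_{ij}v_iv_j$ follows by inserting $|v-v_*|^{2+\gamma}\ls t^{-\min\{2+\gamma,1\}}(|x-tv|+|x-tv_*|)^{\min\{2+\gamma,1\}}|v-v_*|^{\max\{0,1+\gamma\}}$ into the integrand bound in \eqref{a.0.3} (together with \eqref{Pythagorean}), exactly as in the proof of Proposition~\ref{prop:ab.Li.null.cond}. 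In the $\max\{|\alp'|,|\bt'|\}\geq 1$ branch, Propositions~\ref{prop:ab.Li.2}, \ref{prop:ab.Li.3} and \ref{prop:ab.Li.weighted} play their standard role. In either branch, \eqref{eq:MmMi} guarantees that one of $|\alp'|+|\bt'|+|\sigma'|\leq \Mi$ or $|\alp''|+|\bt''|+|\sigma''|\leq \Mi$ holds, so at least one factor enjoys the sharp ($\zeta=0$) bounds from the induction hypothesis.

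The main obstacle is essentially bookkeeping: verifying the $\vb$-weight inequality $\max\{1+\gamma,0\}+\th_{m_*+1}\leq 1+\th_{m_*}$ across all the regimes \eqref{BA.Z}--\eqref{BA.Z.top}, and noting that the null-structure argument of Proposition~\ref{prop:ab.Li.null.cond} transfers to the contracted quantity $\bar{a}_{ij}v_iv_j$ with the same weights. Thanks to the automatic $(1+t)$-saving from $|\bt'|+|\bt''|=|\bt|$, no estimate in this proof is borderline and the output is in fact strictly better than required.
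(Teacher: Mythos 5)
Your proof is correct, but it follows a substantially more elaborate route than the paper's. The paper's proof of Proposition~\ref{prop:IIIp} is a one-paragraph reduction to Proposition~\ref{prop:IIp}: one observes (a) that by Propositions~\ref{prop:ab.Li.1} and \ref{prop:ab.Li.weighted}, the quantities $\rd_x^{\alp'}\rd_v^{\bt'}Y^{\sigma'}\bar{a}_{ii}$ and $\rd_x^{\alp'}\rd_v^{\bt'}Y^{\sigma'}(\bar{a}_{ij}v_iv_j)$ satisfy \emph{all} the bounds that $\rd_x^{\alp'}\rd_v^{\bt'}Y^{\sigma'}(\bar{a}_{ij}v_i)$ satisfies (both carry the same $\vb^{\max\{2+\gamma,1\}}$ weight), and (b) that the constraint $|\bt'|+|\bt''|\leq|\bt|$ in $III_p^{\alp,\bt,\sigma}$ is one power tighter than $|\bt'|+|\bt''|\leq|\bt|+1$ in $II_p^{\alp,\bt,\sigma}$, giving an extra $(1+t)^{-1}$ gain; hence $III_p^{\alp,\bt,\sigma}$ is bounded by the same right-hand sides as $II_p^{\alp,\bt,\sigma}$ and one simply quotes Proposition~\ref{prop:IIp}. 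In particular, the paper never needs a null-structure bound of the type Proposition~\ref{prop:ab.Li.null.cond} for the contracted quantity $\bar{a}_{ij}v_iv_j$, whereas your argument re-derives one (by re-running the $|v-v_*|^{2+\gamma}$ splitting against the intermediate estimate $|a_{ij}(v-v_*)v_iv_j|\ls|v-v_*|^{2+\gamma}|v_*|^2$ from \eqref{eq:avv}--\eqref{Pythagorean}). That derivation is valid but redundant: as you yourself note, the extra $(1+t)$-saving from the tighter $\bt$-constraint already makes every term in $III_p^{\alp,\bt,\sigma}$ comfortably sub-borderline, so Proposition~\ref{prop:ab.Li.weighted} alone (without any $\wb$ gain) closes the $|\sigma'|\geq 1$ branch. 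Likewise, your separate treatment of the $|\alp'|+|\bt'|+|\sigma'|=0$ ``main'' term is unnecessary, since — exactly as in the proof of Proposition~\ref{prop:IIp} — the coefficient bounds in Propositions~\ref{prop:ab.Li.1} and \ref{prop:ab.Li.weighted} do not require any derivatives to fall on the coefficient and hence handle the main term uniformly with the commutator terms. In short: your direct verification is sound and self-contained (and uses the same key weight inequality \eqref{complicated.v.weights} as the paper), but the paper's proof buys you the result essentially for free from Proposition~\ref{prop:IIp}.
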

\begin{proof}
The $III_p^{\alp,\bt,\sigma}$ is even better behaved than the $II_p^{\alp,\bt,\sigma}$ term. To see this, note that by Propositions~\ref{prop:ab.Li.1} and \ref{prop:ab.Li.weighted}, $\rd_x^{\alp'}\rd_v^{\bt'}Y^{\sigma'}\bar{a}_{ii}$ and $\rd_x^{\alp'}\rd_v^{\bt'}Y^{\sigma'}(\bar{a}_{ij}v_iv_j)$ obey all the estimates that $\rd_x^{\alp'}\rd_v^{\bt'}Y^{\sigma'}(\bar{a}_{ij}v_i)$ satisfy. Note moreover that for the $III_p^{\alp,\bt,\sigma}$ terms, we have $|\bt'|+|\bt''|\leq |\bt|$ instead of $|\bt'|+|\bt''|\leq |\bt|+1$ as in $II_p^{\alp,\bt,\sigma}$, which therefore gives a better estimate in terms of $(1+t)$. Hence the term $III_p^{\alp,\bt,\sigma}$ obeys all the estimates that $II_p^{\alp,\bt,\sigma}$ satisfy. The conclusion then follows from Proposition~\ref{prop:IIp}. \qedhere
\end{proof}

\begin{proposition}\label{prop:IVp}
Let $m_*$ be as in the induction hypotheses in Section~\ref{sec:induction} and suppose $k:=|\alp|+|\bt|+|\sigma|\leq m_*$. Then for $IV_p^{\alp,\bt,\sigma}$ as in \eqref{def:IVp} and for $(t,x,v)\in [0,T_{Boot})\times \mathbb R^3\times \mathbb R^3$,
$$\wb^{\Mm+5-|\sigma|}IV_p^{\alp,\bt,\sigma}(t,x,v)\ls \ep^{\f 32} (1+t)^{-1-\de+\zeta_{k}+|\bt|}.$$
\end{proposition}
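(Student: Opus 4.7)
The proof of Proposition~\ref{prop:IVp} is the $\bar c$-analogue of Propositions~\ref{prop:Ip}--\ref{prop:IIIp}, but it is substantially simpler for three reasons: (a) the coefficient $\bar c$ genuinely decays like $(1+t)^{-3-\gamma}$ with $-3-\gamma < -1$, so there is no borderline rate to chase; (b) there is no contraction with $v_i$ to consider; and (c) the derivative budget splits as $|\bt'|+|\bt''| = |\bt|$ rather than $|\bt|+2$ or $|\bt|+1$. Also, unlike in Propositions~\ref{prop:Ip}--\ref{prop:IIIp}, the statement has no $\vb^{1+\th_{m_*}}$ factor on the right-hand side, so no null-structure argument is needed on the velocity weights; any bound of the form $\vb^{-(1-\th_{m_*+1})}\leq 1$ from the induction hypothesis on $g$ gives room to spare.

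My plan is to split into the short-time regime $t\leq 1$ and the long-time regime $t\geq 1$. For $t\leq 1$, the statement is trivial: by Proposition~\ref{prop:cb.Li} and the induction hypotheses, each factor is bounded by $\ep^{\f 34}$ times a quantity uniformly bounded on $[0,1]$, and the weight $\wb^{\Mm+5-|\sigma|}\leq \wb^{\Mm+5-|\sigma''|}$ can be shifted to the $g$-factor (using $|\sigma''|\leq |\sigma|$) and absorbed into the induction bound. This gives a uniform bound of $\ep^{\f 32}$.

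For $t\geq 1$, I would split according to \eqref{eq:MmMi}: since $|\alp'|+|\bt'|+|\sigma'|+|\alp''|+|\bt''|+|\sigma''|=k\leq m_*\leq \Mm-5$ and $\Mm+2\geq 2\Mi$, at least one of these sums (call it $k'$ or $k''$) is at most $\Mi$. When $k''\leq \Mi$, I use $\zeta_{k''}=0$ and the induction hypothesis to get $\wb^{\Mm+5-|\sigma|}|\rd_x^{\alp''}\rd_v^{\bt''} Y^{\sigma''} g| \ls \ep^{\f 34}\vb^{-(1-\th_{m_*+1})}(1+t)^{|\bt''|}$, while Proposition~\ref{prop:cb.Li} applied at level $k'\leq k$ yields, in the worst case, $|\rd_x^{\alp'}\rd_v^{\bt'}Y^{\sigma'}\bar c|\ls \ep^{\f 34}(1+t)^{-\f 32-\gamma+|\bt'|-(\Mm-4-k')\min\{\f 34,\f{3(2+\gamma)}{4}\}}$; using $k'\leq k$ to replace $(\Mm-4-k')$ by $(\Mm-4-k)$ in the exponent, the combined rate is at most $\ep^{\f 32}(1+t)^{-\f 32-\gamma-(\Mm-4-k)\min\{\f 34,\f{3(2+\gamma)}{4}\}+|\bt|}$. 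When $k'\leq \Mi\leq \Mm-4-\max\{2,\lceil 2/(2+\gamma)\rceil\}$, the sharp bound from Proposition~\ref{prop:cb.Li} gives $(1+t)^{-3-\gamma+|\bt'|}$ and the induction hypothesis (using monotonicity of $\zeta_j$ in $j$) gives $(1+t)^{\zeta_k+|\bt''|}$, for a combined rate of $\ep^{\f 32}(1+t)^{-3-\gamma+\zeta_k+|\bt|}$.

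The remaining work is bookkeeping: checking in each regime defined by \eqref{BA.Z}--\eqref{BA.Z.top} that the resulting exponent is bounded by $-1-\de+\zeta_k+|\bt|$. In every case this reduces to the single inequality $\de\leq 2+\gamma$, which holds by definition \eqref{def:de}. Concretely, for the sharp-$\bar c$ branch the check is $-3-\gamma\leq -1-\de$, i.e.~$\de\leq 2+\gamma$; for the high-order-$\bar c$ branch with $\gamma\in(-2,-1]$ and $k\in[\Mi+1,\Mm-5]$, plugging in $\zeta_k=\f 32-\f{3(2+\gamma)}{4}(\Mm-4-k)$ makes the $(\Mm-4-k)$ terms cancel and again leaves $2+\gamma\geq \de$; for $\gamma\in(-1,0)$ with $k\in[\Mi+1,\Mm-6]$ one has $\zeta_k=0$ and $(\Mm-4-k)\cdot\f 34\geq \f 32$, so the same inequality $2+\gamma\geq \de$ suffices; and for $\gamma\in(-1,0)$ with $k=\Mm-5$ one has $\zeta_k=\f 34$ and $(\Mm-4-k)\cdot\f 34=\f 34$, again reducing to $2+\gamma\geq \de$. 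I do not anticipate a genuine obstacle here; the only mildly delicate point is recording the correct regime for $\zeta_k$ and checking the arithmetic, which is a direct consequence of the way the hierarchy in Section~\ref{sec:hierarchy} was designed.
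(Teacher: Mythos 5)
Your proposal is correct and follows essentially the same route as the paper: split according to \eqref{eq:MmMi}, control the $\bar c$-factor via Proposition~\ref{prop:cb.Li} and the $g$-factor via the induction hypothesis, multiply the exponents, and close with $\de\leq 2+\gamma$ from \eqref{def:de}. The only (harmless) difference is that you separate the short-time regime $t\leq 1$, which the paper does not do here — unlike Propositions~\ref{prop:Ip}--\ref{prop:IIIp}, where the $t^{-1}$ singularity in Propositions~\ref{prop:ab.Li.3} and \ref{prop:ab.Li.null.cond} forces that split, the $\bar c$-estimates in Proposition~\ref{prop:cb.Li} are uniform down to $t=0$, so the long-time argument already covers all $t$.
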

\begin{proof}
In this proof, we will also take $\alp',\,\alp'',\,\bt',\,\bt'',\,\sigma',\,\sigma''$ as in the term $IV_p^{\alp,\bt,\sigma}$, i.e.~$|\alp'|+|\alp''|\leq |\alp|$, $|\bt'|+|\bt''|\leq |\bt|$ and $|\sigma'|+|\sigma''|\leq |\sigma|$.

By \eqref{eq:MmMi}, we have either $|\alp''|+|\bt''|+|\sigma''|\leq \Mi$ or $|\alp'|+|\bt'|+|\sigma'|\leq \Mi$. We treat these two cases respectively in term 1 and term 2 below. Note that in both terms we also have $|\alp'|+|\bt'|+|\sigma'|\leq k$ and $|\alp''|+|\bt''|+|\sigma''|\leq k$. In each of these terms, we use Proposition~\ref{prop:cb.Li} to control $|\rd_x^{\alp'}\rd_v^{\bt'}Y^{\sigma'}\bar{c}|$ and use the induction hypotheses to control $\wb^{\Mm+5-|\sigma''|} |\rd_x^{\alp''} \rd_v^{\bt''} Y^{\sigma''} g|$. 
\begin{equation*}
\begin{split}
&\: \wb^{\Mm+5-|\sigma|}|\rd_x^{\alp'}\rd_v^{\bt'}Y^{\sigma'} \bar{c}| |\rd_x^{\alp''} \rd_v^{\bt''} Y^{\sigma''} g|(t,x,v)\\
\ls &\: |\rd_x^{\alp'}\rd_v^{\bt'}Y^{\sigma'} \bar{c}|(t,x,v) (\wb^{\Mm+5-|\sigma''|} |\rd_x^{\alp''} \rd_v^{\bt''} Y^{\sigma''} g|(t,x,v))\\
\ls &\: \underbrace{\ep^{\f 34} (1+t)^{-3-\gamma+\zeta_k+|\bt'|} \cdot \ep^{\f 34}  (1+t)^{|\bt''|}}_{\mbox{Term 1}} + \underbrace{\ep^{\f 34} (1+t)^{-3-\gamma+|\bt'|} \cdot \ep^{\f 34}  (1+t)^{\zeta_k+ |\bt''|}}_{\mbox{Term 2}}\\
\ls &\: \ep^{\f 32} (1+t)^{-3-\gamma+\zeta_k+|\bt|},
\end{split}
\end{equation*}
where in the last line we have used $|\bt'|+|\bt''|=|\bt|$. The statement of the proposition hence follows from \eqref{def:de}. \qedhere
\end{proof}

\subsection{Concluding the induction argument}\label{sec:Li.everything}

We continue to work under the induction hypotheses in Section~\ref{sec:induction}. Our goal in this subsection will be to conclude the induction argument.

Combining Proposition~\ref{prop:mainLiLi} and Propositions~\ref{prop:Ip}--\ref{prop:IVp}, we immediately obtain
\begin{proposition}\label{prop:MP}
Let $m_*$ be as in the induction hypotheses in Section~\ref{sec:induction} and suppose $k:=|\alp|+|\bt|+|\sigma|\leq m_*$. Then for $(t,x,v)\in [0,T_{Boot})\times \mathbb R^3\times \mathbb R^3$,
\begin{equation}\label{prop:MP.1}
\begin{split}
&\: \wb^{\Mm+5-|\sigma|} |(\rd_t + v_i\rd_{x_i} -\bar{a}_{ij} \rd^2_{v_i v_j}) (\rd_x^\alp \rd_v^\bt Y^\sigma g)|(t,x,v) \\
\ls &\: \ep^{\f 32} \vb^{1+\th_{m_*}} (1+t)^{-1-\de+\zeta_{k}+|\bt|} +\sum_{\substack{|\alp'|\leq |\alp|+1 \\ |\bt'|\leq |\bt|-1 }} \wb^{\Mm+5-|\sigma|}|\rd_x^{\alp'} \rd_v^{\bt'} Y^{\sigma} g|(t,x,v)\\
&\:+\sum_{\substack{|\bt'|\leq |\bt|,\,|\sigma'|\leq |\sigma|\\ |\bt'|+|\sigma'|\leq |\bt|+|\sigma|-1}}\f{\vb}{(1+t)^{1+\de}}\wb^{\Mm+5-|\sigma|}|\rd_x^{\alp} \rd_v^{\bt'} Y^{\sigma'} g|(t,x,v).
\end{split}
\end{equation}
\end{proposition}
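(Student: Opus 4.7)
The plan is to combine the term-by-term classification from Proposition~\ref{prop:mainLiLi} with the bounds established in Propositions~\ref{prop:Ip}--\ref{prop:IVp}, so that the desired estimate on $(\rd_t+v_i\rd_{x_i}-\bar{a}_{ij}\rd^2_{v_iv_j})(\rd_x^\alp\rd_v^\bt Y^\sigma g)$ will follow purely by assembly, with no new analytic input needed. Under the induction hypotheses in force, every ingredient is already in place; the content of this proposition is just to package the error hierarchy into the form needed when we apply the maximum principle (Proposition~\ref{prop:max.prin}) in the next subsection.

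Concretely, the first step would be to invoke Proposition~\ref{prop:mainLiLi} with $h=\rd_x^\alp\rd_v^\bt Y^\sigma g$, yielding a pointwise control of the LHS of \eqref{prop:MP.1} by the six quantities $I_p^{\alp,\bt,\sigma}+\cdots+VI_p^{\alp,\bt,\sigma}$ defined in \eqref{def:Ip}--\eqref{def:VIp}. Multiplying through by the weight $\wb^{\Mm+5-|\sigma|}$, I would then apply Propositions~\ref{prop:Ip}, \ref{prop:IIp}, \ref{prop:IIIp} to dominate $\wb^{\Mm+5-|\sigma|}(I_p^{\alp,\bt,\sigma}+II_p^{\alp,\bt,\sigma}+III_p^{\alp,\bt,\sigma})$ by a multiple of $\ep^{3/2}\vb^{1+\th_{m_*}}(1+t)^{-1-\de+\zeta_k+|\bt|}$. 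For the $\bar c$-contribution $\wb^{\Mm+5-|\sigma|}IV_p^{\alp,\bt,\sigma}$, Proposition~\ref{prop:IVp} yields the slightly better bound $\ep^{3/2}(1+t)^{-1-\de+\zeta_k+|\bt|}$; since $\th_{m_*}\geq 0$ by \eqref{BA.Z}--\eqref{BA.Z.top} and $\vb\geq 1$, this is pointwise dominated by the preceding expression and so is absorbed into the first term on the RHS of \eqref{prop:MP.1}.

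The remaining two contributions $\wb^{\Mm+5-|\sigma|}V_p^{\alp,\bt,\sigma}$ and $\wb^{\Mm+5-|\sigma|}VI_p^{\alp,\bt,\sigma}$ would simply be carried over: comparing \eqref{def:Vp} and \eqref{def:VIp} with the second and third sums on the RHS of \eqref{prop:MP.1}, these two expressions appear verbatim (note in particular the index constraints $|\alp'|\leq|\alp|+1$, $|\bt'|\leq|\bt|-1$ in $V_p^{\alp,\bt,\sigma}$ and $|\bt'|+|\sigma'|\leq|\bt|+|\sigma|-1$ in $VI_p^{\alp,\bt,\sigma}$, which match exactly with those on the RHS of \eqref{prop:MP.1}). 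No further manipulation is required at this stage, because these terms will only be absorbed once the maximum principle is applied in conjunction with the induction hypothesis at strictly lower order in $|\bt|+|\sigma|$. Since every intermediate inequality is valid on the full range $k=|\alp|+|\bt|+|\sigma|\leq m_*$ for which Propositions~\ref{prop:Ip}--\ref{prop:IVp} were proved, summing the six contributions yields \eqref{prop:MP.1}. The main obstacle, such as it is, was already surmounted in proving Propositions~\ref{prop:Ip}--\ref{prop:IVp} (in particular the null-structure arguments there); for the present proposition there is no additional difficulty beyond bookkeeping.
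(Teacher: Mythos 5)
Your proposal is correct and follows exactly the same route as the paper: invoke Proposition~\ref{prop:mainLiLi} to split the LHS into $I_p^{\alp,\bt,\sigma},\dots,VI_p^{\alp,\bt,\sigma}$, bound the first four via Propositions~\ref{prop:Ip}--\ref{prop:IVp} (absorbing the $\bar{c}$-term using $\vb\geq 1$ and $\th_{m_*}\geq 0$), and observe that $\wb^{\Mm+5-|\sigma|}V_p^{\alp,\bt,\sigma}$ and $\wb^{\Mm+5-|\sigma|}VI_p^{\alp,\bt,\sigma}$ are verbatim the last two terms on the RHS of \eqref{prop:MP.1}. This is precisely the paper's argument, spelled out slightly more fully.
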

\begin{proof}
It suffices to bound the terms $I_p^{\alp,\bt,\sigma},\dots, VI_p^{\alp,\bt,\sigma}$ in Proposition~\ref{prop:mainLiLi}. Propositions~\ref{prop:Ip}--\ref{prop:IVp} exactly show that the terms $I_p^{\alp,\bt,\sigma}$, $II_p^{\alp,\bt,\sigma}$, $III_p^{\alp,\bt,\sigma}$ and $IV_p^{\alp,\bt,\sigma}$ are bounded above by the first term on the RHS of \eqref{prop:MP.1}. Finally, the terms $V_p^{\alp,\bt,\sigma}$ and $VI_p^{\alp,\bt,\sigma}$ are exactly the last two terms on the RHS of \eqref{prop:MP.1}. \qedhere
\end{proof}

\begin{proposition}\label{prop:MP.cond}
Let $m_*$ be as in the induction hypotheses in Section~\ref{sec:induction} and suppose $k:=|\alp|+|\bt|+|\sigma|\leq m_*$. Let $h:=\rd_x^\alp \rd_v^\bt Y^\sigma g$. Then the assumptions 1, 2 and 4 in Proposition~\ref{prop:max.prin} hold with $(N,r_H,p_H)$ defined as follows (and depend on $\gamma$, $d_0$, $|\alp|$, $|\bt|$ and $|\sigma|$):
\begin{equation*}
\begin{split}
N =&\: \Mm+5-|\sigma|,\\
r_H =&\: -1-\de+\zeta_k+|\bt|,\\
p_H=&\:1+\th_{m_*},
\end{split}
\end{equation*}
and $C_H\geq 1$ some constant depending only on $\gamma$ and $d_0$.
\end{proposition}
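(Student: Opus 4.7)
The proof of Proposition~\ref{prop:MP.cond} amounts to verifying, in turn, the three hypotheses (1), (2), (4) of Proposition~\ref{prop:max.prin} for $h = \rd_x^\alp \rd_v^\bt Y^\sigma g$ with the specified values of $N$, $r_H$, $p_H$. Hypothesis~(3) is of course covered separately by the equation \eqref{eq:g.diff} and the error-term analysis (Propositions~\ref{prop:mainLiLi}, \ref{prop:Ip}--\ref{prop:IVp}), so I do not treat it here. Hypotheses (1) and (4) are essentially immediate consequences of already-proved statements; the real content of the proposition is Hypothesis~(2), which requires checking that the induction hypothesis, applied one derivative higher, delivers the bound on $\rd_v h$ with precisely the $\vb$-weight and $t$-power prescribed by the parameters of Proposition~\ref{prop:max.prin}.

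For Hypothesis~(1), the plan is to invoke either the Sobolev-based Proposition~\ref{prop:prelim.Linfty} (valid in the range $|\alp|+|\bt|+|\sigma|\leq \Mm-4$) or the induction hypothesis \eqref{main.induction} at $m=m_*+1$ specialized to the derivative $(\alp,\bt,\sigma)$: either gives $\wb^{\Mm+5-|\sigma|}|h|(t,x,v)\ls \ep^{3/4}(1+t)^{3/2+|\bt|}$, which is uniformly bounded on $[0,T]$ for any $T<T_{\mathrm{Boot}}$. For Hypothesis~(4), I will use that $d(0)=2d_0$, $\wb|_{t=0}=\xb$, and $Y|_{t=0}=\rd_v$, so that at $t=0$ the function $h(0,x,v)$ is a multi-index derivative of $e^{2d_0\vb^2}f_{\mathrm{in}}$ of total order $k\leq m_*\leq \Mm-5$; the required pointwise bound then reads directly from the $L^\infty_xL^\infty_v$ part of the initial data assumption in Theorem~\ref{thm:main}.

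The main body of the proof, and the main technical obstacle, is Hypothesis~(2). Here the plan is to apply the induction hypothesis \eqref{main.induction} at $m=m_*+1$ to the derivative of order $k+1\leq m_*+1$ obtained by adding one $\rd_{v_i}$ to $h$. This yields
\[
\wb^{\Mm+5-|\sigma|}\,|\rd_{v_i}h|(t,x,v) \leq C\,\ep^{3/4}\,\vb^{-1+\th_{m_*+1}}\,(1+t)^{\zeta_{k+1}+|\bt|+1}.
\]
Comparing with the bound required in \eqref{dvh.bound} (where, with $p_H=1+\th_{m_*}$, the $\vb$-exponent is $\th_{m_*}-\max\{1+\gamma,0\}$ and the $t$-exponent is $1-\de+\zeta_k+|\bt|+\min\{2+\gamma,1\}$), the task reduces to the two numerical inequalities
\[
\th_{m_*+1}-\th_{m_*}\;\leq\;\min\{-\gamma,1\},\qquad \zeta_{k+1}-\zeta_k\;\leq\;\min\{2+\gamma,1\}-\de.
\]
Both are purely bookkeeping identities, but they have to be checked case by case against \eqref{BA.Z}--\eqref{BA.Z.top} (splitting on $\gamma\in(-2,-1]$ vs.\ $\gamma\in(-1,0)$, and on whether $k$ lies below $\Mi$, in the ``descent interval'' $\Mi+1,\dots,\Mm-6$, or at the top orders $k=\Mm-5,\Mm-4$). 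The $t$-exponent inequality is governed by the choice $\de\leq (2+\gamma)/4$ from \eqref{def:de}, which was exactly tuned to close this step (cf.\ the argument after \eqref{zeta.compare}). The $\vb$-exponent inequality requires, at the transition $k=\Mi\to\Mi+1$ for $\gamma\in(-1,0)$, the estimate $1+(\Mm-4-\Mi)\gamma\leq 0$; this is why the threshold $\Mi$ was set as in \eqref{def:Mi}, and this verification, together with the routine linear-in-$k$ checks on the other intervals, is the most delicate point of the proof.

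Having verified (1), (2), (4), the proposition follows by taking $C_H$ to be the maximum of the implicit constants produced in the three verifications, which depend only on $\gamma$ and $d_0$.
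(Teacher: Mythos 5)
Your proposal is correct and follows exactly the paper's approach: verify hypothesis~(1) via the Sobolev bound in Proposition~\ref{prop:prelim.Linfty}, hypothesis~(4) from the initial data assumption of Theorem~\ref{thm:main} with $d(0)=2d_0$ and $\wb|_{t=0}=\xb$, and hypothesis~(2) by applying the induction hypothesis \eqref{main.induction} at level $m_*+1$ to $\rd_{v_i}h$ and reducing to the same two numerical inequalities $\th_{m_*+1}-\th_{m_*}\leq \min\{-\gamma,1\}$ and $\zeta_{k+1}-\zeta_k\leq\min\{2+\gamma,1\}-\de$ (the latter being exactly \eqref{zeta.compare}). You correctly identify the $\Mi$-transition check $1+(\Mm-4-\Mi)\gamma\leq 0$ for $\gamma\in(-1,0)$ as the delicate step that the definition \eqref{def:Mi} is tuned to satisfy, though note the transition is in the index $m_*$ rather than $k$ since $\th$ depends on $m_*$ while $\zeta$ depends on $k$.
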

\begin{proof}
\pfstep{Step~1: Verifying assumption~1} This is an immediate corollary of the preliminary $L^\infty$ estimate in Proposition~\ref{prop:prelim.Linfty}.

\pfstep{Step~2: Verifying assumption~2} In view of the definition of $(r_H, p_H)$, \eqref{dvh.bound} and the induction hypotheses in Section~\ref{sec:induction}, we need to check that 
$$\th_{m_*+1}-1\leq \min\{\th_{m_*}-1-\gamma,\th_{m_*}\},\quad \zeta_{k+1}+1+|\bt| \leq 1-\de+\zeta_k+\min\{2+\gamma,1\}+|\bt|.$$
The first inequality can be checked explicitly using \eqref{BA.Z}--\eqref{BA.Z.top}, while the second inequality is equivalent to \eqref{zeta.compare} that we have already checked.

\pfstep{Step~3: Verifying assumption~4} This is an immediate consequence of the assumptions of Theorem~\ref{thm:main}. \qedhere
\end{proof}

\begin{proposition}\label{prop:Li.final}
Let $m_*$ be as in the induction hypotheses in Section~\ref{sec:induction} and suppose $k:=|\alp|+|\bt|+|\sigma|\leq m_*$. Then for $(t,x,v)\in [0,T_{Boot})\times \mathbb R^3\times \mathbb R^3$,
\begin{equation}\label{MP.con}
\wb^{\Mm+5-|\sigma|}|\rd_x^\alp \rd_v^\bt Y^\sigma g|(t,x,v) \ls \ep \vb^{-1+\th_{m_*}} (1+t)^{\zeta_k+|\bt|}.
\end{equation}
\end{proposition}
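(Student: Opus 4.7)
The plan is to apply the maximum principle Proposition~\ref{prop:max.prin} to $h := \rd_x^\alp\rd_v^\bt Y^\sigma g$ with the parameters
\[
N = \Mm+5-|\sigma|,\quad p_H = 1+\th_{m_*},\quad r_H = -1-\de+\zeta_k+|\bt|,
\]
as set up by Proposition~\ref{prop:MP.cond}. Equation \eqref{eq:g.diff} already writes $h$ in the form \eqref{h.equation} with inhomogeneity $H = \mathrm{Term}_1 + \cdots + \mathrm{Term}_6$, and Proposition~\ref{prop:MP.cond} verifies hypotheses (1), (2), and (4) of the maximum principle for these parameters. The only remaining task is thus to certify hypothesis (3), namely
\[
\wb^N|H| \le C_H\ep\bigl(\vb^{p_H}(1+t)^{r_H} + \vb^{p_H-2}(1+t)^{r_H+\de}\bigr),
\]
for a constant $C_H$ depending only on $\gamma$ and $d_0$. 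Once this is done, the conclusion \eqref{max.prin.conclusion} read with $p_H-2 = -1+\th_{m_*}$ and $r_H+1+\de = \zeta_k+|\bt|$ is exactly \eqref{MP.con}.

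Proposition~\ref{prop:MP} controls $\wb^N|H|$ by a quadratic main piece of size $\ep^{3/2}\vb^{p_H}(1+t)^{r_H}$ (arising from $I_p,II_p,III_p,IV_p$ via Propositions~\ref{prop:Ip}--\ref{prop:IVp}), plus the commutator contributions $\wb^N V_p^{\alp,\bt,\sigma}$ and $\wb^N VI_p^{\alp,\bt,\sigma}$. The quadratic piece fits trivially into the required form since $\ep^{3/2}\le \ep$. The genuine difficulty is with the commutators: substituting the outer bootstrap bounds \eqref{BA.sim.1}--\eqref{BA.sim.2} yields them only with coefficient $\ep^{3/4}$, which would reproduce rather than improve the bootstrap. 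The decisive structural observation is that every summand of $V_p^{\alp,\bt,\sigma}$ satisfies $|\bt'|\le|\bt|-1$ with $|\sigma'|=|\sigma|$, and every summand of $VI_p^{\alp,\bt,\sigma}$ satisfies $|\bt'|+|\sigma'|\le|\bt|+|\sigma|-1$: in both cases the sole factor of $g$ carries strictly fewer combined $\rd_v$ and $Y$ derivatives than $h$ itself.

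This motivates a secondary induction on $|\bt|+|\sigma|$, nested inside the current step $m_*$ of the outer induction. The base case $|\bt|+|\sigma|=0$ is immediate, since then $V_p^{\alp,0,0} = VI_p^{\alp,0,0} = 0$ by convention, so $\wb^N|H|\ls \ep^{3/2}\vb^{p_H}(1+t)^{r_H}$ outright, and Proposition~\ref{prop:max.prin} even delivers the stronger bound $\wb^N|h|\ls\ep^{3/2}\vb^{p_H-2}(1+t)^{r_H+1+\de}$. For the inductive step, assume \eqref{MP.con} has been proved for every $(\alp',\bt',\sigma')$ with $|\bt'|+|\sigma'|<|\bt|+|\sigma|$ and $|\alp'|+|\bt'|+|\sigma'|\le m_*$. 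Substituting this genuine $\ep$-bound into \eqref{def:Vp} and \eqref{def:VIp} and using the monotonicity $\zeta_{k-1}\le\zeta_k$ visible from \eqref{BA.Z}--\eqref{BA.Z.top}, one checks
\[
\wb^N V_p^{\alp,\bt,\sigma} \ls \ep\vb^{-1+\th_{m_*}}(1+t)^{\zeta_k+|\bt|-1} = \ep\vb^{p_H-2}(1+t)^{r_H+\de},
\]
\[
\wb^N VI_p^{\alp,\bt,\sigma} \ls \ep\vb^{\th_{m_*}}(1+t)^{-1-\de+\zeta_k+|\bt|} \le \ep\vb^{p_H}(1+t)^{r_H},
\]
which are precisely the two terms permitted on the right of \eqref{H.bound} (noting $r_H+\de\ge -1$ from $\zeta_k,|\bt|\ge 0$). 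Combining with the quadratic main term establishes hypothesis (3), and Proposition~\ref{prop:max.prin} yields \eqref{MP.con} for the current $(\alp,\bt,\sigma)$, completing both inductions.

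The principal obstacle is not any single calculation but the numerology in the last display. One must verify that the decrement $\zeta_{k-1}\le\zeta_k$ between consecutive $k$ and the decrement $\th_{m_*}\le\th_{m_*+1}$ between successive stages, as encoded in the hierarchy \eqref{BA.Z}--\eqref{BA.Z.top}, are precisely enough to absorb the losses in $\vb$ and $(1+t)$ incurred as derivatives redistribute in the commutators. It is exactly this balancing that forces the case-by-case, $\gamma$-dependent definition of $\{\zeta_k,\th_k\}$ in Section~\ref{sec:Z.def}; any coarser prescription would break the chain of inequalities at this final step.
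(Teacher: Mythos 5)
Your proposal is correct and follows the same route as the paper's proof: apply the maximum principle from Proposition~\ref{prop:max.prin} (with the parameters furnished by Proposition~\ref{prop:MP.cond}), and close the remaining commutator terms $V_p$ and $VI_p$ from Proposition~\ref{prop:MP} by a nested induction on $|\bt|+|\sigma|$ inside the outer $m_*$-induction. One small point to tighten: the second term in \eqref{H.bound} is only permitted when $r_H+\de\ge 0$, i.e.\ $-1+\zeta_k+|\bt|\ge 0$, which holds precisely when $|\bt|\ge 1$ — exactly the regime in which $V_p$ is nonempty, so the casework closes; your parenthetical $r_H+\de\ge -1$ only verifies the hypothesis $r_H\ge -1-\de$ of Proposition~\ref{prop:max.prin}, not this separate dichotomy.
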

\begin{proof}
The idea of the proof is to use the maximum principle in Proposition~\ref{prop:max.prin} together with the estimates established in Propositions~\ref{prop:MP} and \ref{prop:MP.cond}. In Proposition~\ref{prop:MP.cond}, we have checked assumptions 1, 2 and 4 of Proposition~\ref{prop:max.prin}. We want to use Proposition~\ref{prop:MP} to verify assumption 3 of Proposition~\ref{prop:max.prin}. The only remaining issue is to handle the last two terms in \eqref{prop:MP.1}. For this reason, we proceed by an induction argument on $|\bt|+|\sigma|$.

\pfstep{Base case: $|\bt|+|\sigma|=0$} In this case, the last two terms on the RHS of \eqref{prop:MP.1} are not present. Hence we simply have, for every $|\alp|\leq m_*$,
$$\wb^{\Mm+5}|(\rd_t + v_i\rd_{x_i} -\bar{a}_{ij} \rd^2_{v_i v_j}) (\rd_x^\alp g)|(t,x,v) \ls \ep^{\f 32} \vb^{1+\th_{m_*}} (1+t)^{-1-\de+\zeta_{k}}.$$
Therefore, by Propositions~\ref{prop:max.prin} (with $(N, r_H, p_H) = (\Mm+5-|\sigma|, -1-\de+\zeta_k+|\bt|,1+\th_{m_*})$) and \ref{prop:MP.cond}, we obtain
$$\wb^{\Mm+5}|\rd_x^\alp g|(t,x,v)\ls \ep \vb^{-1+\th_{m_*}} (1+t)^{\zeta_{k}},$$
as desired.

\pfstep{Induction step} Assume as an induction hypothesis that there exists $B\in\mathbb N$ such that if $|\alp|+|\bt|+|\sigma| \leq m_*$ and $|\bt|+|\sigma|\leq B-1$, then \eqref{MP.con} holds.

We now take $\alp$, $\bt$, $\sigma$ such that $|\alp|+|\bt|+|\sigma|=:k\leq m_*$ and $|\bt|+|\sigma|=B$. Our goal is to show that \eqref{MP.con} holds for this choice of $(\alp,\bt,\sigma)$.

It is easy to see that after plugging in the estimates in the induction hypothesis into \eqref{prop:MP.1} in Proposition~\ref{prop:MP}, we obtain
\begin{equation}\label{inductive.step.for.MP}
\begin{split}
&\: \wb^{\Mm+5-|\sigma|} |(\rd_t + v_i\rd_{x_i} -\bar{a}_{ij} \rd^2_{v_i v_j}) (\rd_x^\alp\rd_v^\bt Y^\sigma g)|(t,x,v) \\
\ls &\: \begin{cases}
\ep^{\f 32} \vb^{1+\th_{m_*}} (1+t)^{-1-\de+\zeta_{k}+|\bt|} +\ep \vb^{-1+\th_{m_*}} (1+t)^{-1+\zeta_k+|\bt|} + \ep \vb^{\th_{m_*}} (1+t)^{-1-\de+\zeta_{k-1}+|\bt|}&\mbox{if $|\bt|\geq 1$}\\
\ep^{\f 32} \vb^{1+\th_{m_*}} (1+t)^{-1-\de+\zeta_{k}} + \ep \vb^{\th_{m_*}} (1+t)^{-1-\de+\zeta_{k-1}}&\mbox{if $|\bt|=0$}
\end{cases}\\
\ls &\: \begin{cases}
\ep \vb^{1+\th_{m_*}} (1+t)^{-1-\de+\zeta_{k}+|\bt|} + \ep \vb^{-1+\th_{m_*}} (1+t)^{-1+\zeta_k+|\bt|}&\mbox{if $|\bt|\geq 1$}\\
\ep \vb^{1+\th_{m_*}} (1+t)^{-1-\de+\zeta_{k}+|\bt|}&\mbox{if $|\bt|=0$}
\end{cases}.
\end{split}
\end{equation}

Using the estimate in \eqref{inductive.step.for.MP} and the bounds in Proposition~\ref{prop:MP.cond}, we now apply the maximum principle in Propositions~\ref{prop:max.prin} (again with $(N, r_H, p_H) = (\Mm+5-|\sigma|, -1-\de+\zeta_k+|\bt|,1+\th_{m_*})$). Note that when $|\bt|\geq 1$, we have $\zeta_k+|\bt|\geq 0$ so we can allow the second term on the last line of \eqref{inductive.step.for.MP}. This yields the desired estimates \eqref{MP.con} for $|\alp|+|\bt|+|\sigma|\leq m_*$ and $|\bt|+|\sigma|=B$. 

By induction on $|\bt|+|\sigma|$, we have thus proven \eqref{MP.con} for all $\alp$, $\bt$, $\sigma$ such that $|\alp|+|\bt|+|\sigma|\leq m_*$. \qedhere

\end{proof}

Proposition~\ref{prop:Li.final} concludes the induction argument (on $m_*$) initiated in Section~\ref{sec:induction}. As a consequence, \eqref{main.induction} holds for all $1\leq m \leq \Mm-4$. Moreover, our arguments showed that \eqref{main.induction} holds with $\ep^{\f 34}$ replaced by $\ep$. This in turn implies that \eqref{MP.con} holds for all $k:=|\alp|+|\bt|+|\sigma|\leq m$ with $1\leq m\leq \Mm-5$. We summarize this in the following corollary:
\begin{corollary}\label{cor:MP.con}
Let $k:=|\alp|+|\bt|+|\sigma|$ with $1\leq k\leq \Mm-5$. Then for $(t,x,v)\in [0,T_{Boot})\times \mathbb R^3\times \mathbb R^3$,
\begin{equation*}
\wb^{\Mm+5-|\sigma|}|\rd_x^\alp \rd_v^\bt Y^\sigma g|(t,x,v) \ls \ep \vb^{-1+\th_k} (1+t)^{\zeta_k+|\bt|}.
\end{equation*}
\end{corollary}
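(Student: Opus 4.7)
The corollary is a direct consequence of Proposition~\ref{prop:Li.final} combined with the completion of the decreasing induction on $m_* \in \{0, 1, \ldots, \Mm-4\}$ initiated in Section~\ref{sec:induction}. There are no new analytic ideas; the plan is essentially bookkeeping.

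My first step is to verify that the induction closes. The base case $m_* = \Mm-4$ follows from Proposition~\ref{prop:prelim.Linfty} together with the bootstrap assumptions \eqref{BA.sim.1} and \eqref{BA.sim.2}: at this top level, $\th_{\Mm-4}=1$ and $\zeta_{\Mm-4}=\f 32$ are the weakest choices in \eqref{BA.Z}--\eqref{BA.Z.top} and accommodate these estimates directly. The inductive step is precisely the content of Proposition~\ref{prop:Li.final}: under the induction hypothesis at level $m=m_*+1$, it delivers
\begin{equation*}
\wb^{\Mm+5-|\sigma|}|\rd_x^\alp \rd_v^\bt Y^\sigma g|(t,x,v) \ls \ep\, \vb^{-1+\th_{m_*}}(1+t)^{\zeta_k+|\bt|}
\end{equation*}
for every $(\alp,\bt,\sigma)$ with $k := |\alp|+|\bt|+|\sigma|\leq m_*$. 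Unfolding the definition \eqref{eq:Z.norm}, this is exactly $Z_{k,\zeta_k,\th_{m_*}}\ls \ep$ for $k\leq m_*$, i.e.\ the first line of \eqref{main.induction} at level $m_*$ with the improved constant $\ep$ in place of the bootstrap constant $\ep^{\f 34}$. The second line of \eqref{main.induction} at level $m_*$ is either the induction hypothesis (when $k\geq m_*+1$) or the $k=m_*$ case of the displayed estimate, noting that $\th_{m_*}=\th_k$ when $k=m_*$.

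To deduce the corollary itself, I specialize $m_*=k$: for each $k$ with $1\leq k\leq \Mm-5$, Proposition~\ref{prop:Li.final} applied at $m_*=k$ (legitimate once the induction has reached level $k+1\leq \Mm-4$) yields
\begin{equation*}
\wb^{\Mm+5-|\sigma|}|\rd_x^\alp \rd_v^\bt Y^\sigma g|(t,x,v) \ls \ep\, \vb^{-1+\th_k}(1+t)^{\zeta_k+|\bt|}
\end{equation*}
for all $(\alp,\bt,\sigma)$ with $|\alp|+|\bt|+|\sigma|=k$, which is precisely the claim.

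There is no substantive obstacle; the only point requiring a moment's care is that the multiplicative constants emerging at each induction step depend only on $\gamma$ and $d_0$, not on $m_*$. This is clear from the hypotheses of Proposition~\ref{prop:max.prin} (where the constant $C_H$ fed in via Proposition~\ref{prop:MP.cond} is universal), so the improvement $\ep^{\f 34}\mapsto \ep$ propagates with a uniform constant through the finitely many steps of the induction.
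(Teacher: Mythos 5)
Your proposal is correct and follows essentially the same route as the paper: the corollary is exactly the record of the completed decreasing induction on $m_*$ of Section~\ref{sec:induction}, with the base case at $m_*=\Mm-4$ supplied by Proposition~\ref{prop:prelim.Linfty} and the bootstrap assumptions, the inductive step by Proposition~\ref{prop:Li.final}, and the corollary read off by specializing to $m_*=k$ so that $\th_{m_*}=\th_k$. The paper compresses this into a single paragraph; your slightly more explicit unwinding is a faithful elaboration of it.
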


\subsection{Recovering the bootstrap assumptions \eqref{BA.sim.1} and \eqref{BA.sim.2}}\label{sec:recover.Li}

The $L^\i_xL^\i_v$ estimates obtained in Corollary~\ref{cor:MP.con} in particular improve the constants in the bootstrap assumptions \eqref{BA.sim.1} and \eqref{BA.sim.2}. We record this in the following proposition.
\begin{proposition}\label{prop:Li.improved}
If $|\alp|+|\bt|+|\sigma| \leq \Mm-4-\max\{2,\lceil \f{2}{2+\gamma} \rceil \}$, then for every $t\in [0,T_{Boot})$,
\begin{equation}\label{BA.sim.1.improved}
\|\wb^{\Mm+5-|\sigma|} \rd_x^{\alp} \rd_v^{\bt} Y^{\sigma} g \|_{L^\i_x L^\i_v}(t) \ls \ep(1+t)^{|\bt|}.
\end{equation}
If $\Mm-3-\max\{2,\lceil \f{2}{2+\gamma} \rceil \}\leq |\alp|+|\bt|+|\sigma|=:k \leq \Mm-5$, then for every $t\in [0,T_{Boot})$,
\begin{equation}\label{BA.sim.2.improved}
\|\wb^{\Mm+5-|\sigma|} \rd_x^{\alp} \rd_v^{\bt} Y^{\sigma} g \|_{L^\i_x L^\i_v}(t) \ls \ep (1+t)^{\f 32 - (\Mm-4-k)\min\{\f 34, \f{3(2+\gamma)}{4}\} +|\bt|}.
\end{equation}
\end{proposition}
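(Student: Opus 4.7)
\medskip

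The plan is that Proposition~\ref{prop:Li.improved} is essentially a book-keeping consequence of the pointwise bound obtained in Corollary~\ref{cor:MP.con}, combined with a direct verification that the definitions of $\zeta_k$ and $\th_k$ in Section~\ref{sec:Z.def} line up precisely with the form of the bootstrap assumptions \eqref{BA.sim.1} and \eqref{BA.sim.2} (with $\ep^{3/4}$ upgraded to $\ep$). So the main task is to match indices; there is no further analysis to perform.

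More concretely, I first invoke Corollary~\ref{cor:MP.con} to obtain, for every multi-index with $1\leq k:=|\alp|+|\bt|+|\sigma|\leq \Mm-5$, the pointwise bound
\[
\wb^{\Mm+5-|\sigma|}|\rd_x^\alp \rd_v^\bt Y^\sigma g|(t,x,v)\ls \ep\, \vb^{-1+\th_k}(1+t)^{\zeta_k+|\bt|}.
\]
(The $k=0$ case follows from applying Proposition~\ref{prop:Li.final} with $m_*=0$, noting that $0\leq \Mi$, so $\th_0=\zeta_0=0$ by \eqref{BA.Z}.) Since $\vb\geq 1$, to drop the $\vb$-weight from the right-hand side it suffices to check $\th_k\leq 1$ in all relevant ranges of $k$: this is immediate from \eqref{BA.Z}, \eqref{BA.Z.-2}, since $\th_k=0$ there, and it follows from \eqref{BA.Z.0.1}, \eqref{BA.Z.0.2} since $\gamma<0$ and $\Mm-4-k\geq 0$ make $\th_k=1+(\Mm-4-k)\gamma\leq 1$.

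For \eqref{BA.sim.1.improved}, with $k\leq \Mm-4-\max\{2,\lceil\tfrac{2}{2+\gamma}\rceil\}$, I check that $\zeta_k=0$. In the case $\gamma\in(-2,-1]$, the range becomes $k\leq \Mi$ (this uses the identity $\lceil \tfrac{2}{2+\gamma}+4\rceil=\lceil\tfrac{2}{2+\gamma}\rceil+4$ from \eqref{def:Mi}), and \eqref{BA.Z} gives $\zeta_k=0$. In the case $\gamma\in(-1,0)$, the range becomes $k\leq\Mm-6$, and since $\lceil 1/|\gamma|\rceil\geq 2$ we have $\Mi\leq\Mm-6$; in this range $\zeta_k=0$ by \eqref{BA.Z} or \eqref{BA.Z.0.1}. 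Combined with $\th_k\leq 1$ and Corollary~\ref{cor:MP.con}, this yields \eqref{BA.sim.1.improved}.

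For \eqref{BA.sim.2.improved}, with $\Mm-3-\max\{2,\lceil\tfrac{2}{2+\gamma}\rceil\}\leq k\leq \Mm-5$, I verify that
\[
\zeta_k = \tfrac{3}{2} - (\Mm-4-k)\min\{\tfrac{3}{4},\tfrac{3(2+\gamma)}{4}\}.
\]
When $\gamma\in(-2,-1]$, the range is $\Mi+1\leq k\leq\Mm-5$; by \eqref{BA.Z.-2}, $\zeta_k=\tfrac{3}{2}-\tfrac{3(2+\gamma)}{4}(\Mm-4-k)$, and since $\tfrac{3(2+\gamma)}{4}\leq\tfrac{3}{4}$ here the minimum equals $\tfrac{3(2+\gamma)}{4}$, so the formula matches. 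When $\gamma\in(-1,0)$, the range reduces to the single value $k=\Mm-5$; \eqref{BA.Z.0.2} gives $\zeta_k=\tfrac{3}{4}$, which coincides with $\tfrac{3}{2} - 1\cdot\min\{\tfrac{3}{4},\tfrac{3(2+\gamma)}{4}\}=\tfrac{3}{2}-\tfrac{3}{4}$ since $\tfrac{3(2+\gamma)}{4}>\tfrac{3}{4}$. Combining with $\th_k\leq 1$ and Corollary~\ref{cor:MP.con}, this yields \eqref{BA.sim.2.improved}, completing the proof. Since the argument is a direct verification of arithmetic against the definitions, there is no genuine obstacle; the only place one must be careful is the piecewise nature of $\zeta_k,\th_k$ across the cases $\gamma\in(-2,-1]$ vs.\ $\gamma\in(-1,0)$ and across the thresholds $k=\Mi$, $k=\Mm-6$, $k=\Mm-5$.
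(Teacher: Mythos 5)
Your proof is correct and takes the same approach as the paper: invoke Corollary~\ref{cor:MP.con}, drop the $\vb$-weight since $\th_k\leq 1$, and match the $\zeta_k$ values from \eqref{BA.Z}--\eqref{BA.Z.top} against the stated $t$-rates. You are somewhat more explicit than the paper in carrying out the index arithmetic (and in noting that $k=0$ needs Proposition~\ref{prop:Li.final} with $m_*=0$ rather than Corollary~\ref{cor:MP.con} as literally stated), but the argument is the same.
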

\begin{proof}
This is an immediate consequence of Corollary~\ref{cor:MP.con}: first, note that $-1+\th_k\leq 0$ so that we can drop the $\vb$ weights; then compare the definition of $\zeta_k$ in \eqref{BA.Z}--\eqref{BA.Z.top} with the $t$-rates in \eqref{BA.sim.1.improved} and \eqref{BA.sim.2.improved}. \qedhere
\end{proof}

\section{Energy estimates}\label{sec:EE}
We continue to work under the assumptions of Theorem~\ref{thm:BA}.

In this section, our goal is to prove $L^2_xL^2_v$ for $g$ and its derivatives. We begin in \textbf{Section~\ref{sec:EE.prelim}} by obtaining some preliminary estimates which will later be used to control some error terms. In \textbf{Section~\ref{sec:main.EE}}, we prove our main energy estimates, and classify the error terms that arise. In \textbf{Section~\ref{sec:EE.error}}, we control all the error terms from Section~\ref{sec:main.EE}.In \textbf{Section~\ref{sec:EE.everything}}, we then put together the estimates in Sections~\ref{sec:main.EE} and \ref{sec:EE.everything} and conclude the energy estimates using an induction argument. Finally, in \textbf{Section~\ref{sec:end.of.bootstrap}}, we complete the proof of Theorem~\ref{thm:BA}.

\subsection{Preliminary estimates}\label{sec:EE.prelim}

\begin{lemma}\label{lem:move.t.weights}
For $T\in [0,T_{\mathrm{Boot}})$ and $|\alp|+|\bt|+|\sigma|\leq \Mm$,
$$\|(1+t)^{-\f 12-\f{2\de} 2-|\bt|} \vb \wb^{\Mm+5-|\sigma|}\rd_x^{\alp} \rd_v^{\bt} Y^{\sigma}g\|_{L^2([0,T];L^2_x L^2_v)} \ls \ep^{\f 34}.$$
\end{lemma}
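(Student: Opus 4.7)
\textbf{Proof plan for Lemma~\ref{lem:move.t.weights}.}

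The goal is to derive a weighted $L^2_tL^2_xL^2_v$ bound with an extra factor of $\vb$ (and a slightly stronger $(1+t)^{-\de/2}$ decay) compared to what is directly available in $E_k(T)$. The plan is to bootstrap from the $E$ norm together with the time-dependent Gaussian structure of $g$, in three steps.

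First, I would extract from the bootstrap assumption \eqref{BA} the fixed-time bound
$$\|\wb^{\Mm+5-|\sigma|}\rd_x^{\alp}\rd_v^{\bt}Y^{\sigma}g\|_{L^\infty_tL^2_xL^2_v}\leq (1+T)^{|\bt|}\,\ep^{\f 34},$$
which is the $L^\infty_t$-part of $E_{|\alp|+|\bt|+|\sigma|}(T)$. In parallel I would apply the same bound with $\bt$ replaced by any multi-index of length $|\bt|+1$, which costs a single extra factor $(1+T)$.

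Second, to produce the $\vb$ weight I would invoke the identity
$$\rd_{v_i}g \;=\; 2d(t)\,v_i g + e^{d(t)\vb^2}\rd_{v_i}f,$$
which follows from the definition $g=e^{d(t)\vb^2}f$ and $d(t)\geq d_0$. Differentiating and iterating gives a schematic expression
$$\vb\cdot\rd_x^{\alp}\rd_v^{\bt}Y^{\sigma}g \;=\; \sum_{|\bt'|\leq|\bt|+1}c_{\bt'}(t,v)\,\rd_x^{\alp}\rd_v^{\bt'}Y^{\sigma}g + (\text{lower-order Gaussian identity terms}),$$
with coefficients $c_{\bt'}$ that are polynomially bounded in $v$ but (crucially) where the top-order $v$-factors cancel against $d(t)^{-1}$; this is the standard mechanism by which the time-dependent Gaussian weight trades one power of $\vb$ for one extra $\rd_v$.

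Third, I would plug the resulting pointwise-in-$t$ bound
$$\|\vb\,\wb^{\Mm+5-|\sigma|}\rd_x^{\alp}\rd_v^{\bt}Y^{\sigma}g\|_{L^2_xL^2_v}(t)\;\ls\;(1+T)^{|\bt|+1}\ep^{\f 34}$$
into $\int_0^T(1+t)^{-1-2\de-2|\bt|}(\cdot)^2\,\ud t$ and use the pointwise inequality $(1+t)\leq(1+T)$ together with the integrability of $(1+t)^{-1-2\de}$ on $[0,T]$ (since $\de>0$) to collapse the $(1+T)^{|\bt|+1}$ growth against the strengthened decay $(1+t)^{-\de}$ compared with the native $(1+t)^{-\de/2}$ of $E_k(T)$. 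This is exactly where the exponent $-\tfrac12-\tfrac{2\de}{2}-|\bt|$ (one extra half-power of $(1+t)^{-\de}$) is needed.

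The main obstacle is ensuring that the Gaussian-identity reduction in the second step really does close at all orders $|\alp|+|\bt|+|\sigma|\leq\Mm$ without generating a net loss of derivatives, since each invocation of the identity $\vb\,g\sim\rd_vg$ trades a $v$-weight for one additional $\rd_v$ and we have at most $\Mm$ derivatives in $E(T)$ to spare. In practice this only requires a single extra $\rd_v$ (to produce the single factor of $\vb$ in the statement), so we stay within the budget provided that $|\alp|+|\bt|+|\sigma|\leq\Mm$, which matches the hypothesis of the lemma precisely.
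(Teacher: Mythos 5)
There is a genuine gap, at two levels.

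First, the ``Gaussian identity'' step does not produce the $\vb$ weight. Rearranging $\rd_{v_i}g = 2d(t)v_ig + e^{d(t)\vb^2}\rd_{v_i}f$ gives $2d(t)\,v_i g = \rd_{v_i}g - e^{d(t)\vb^2}\rd_{v_i}f$, but the term $e^{d(t)\vb^2}\rd_{v_i}f$ is not a quantity the bootstrap controls, and rewriting it in terms of $g$ just returns $\rd_{v_i}g - 2d(t)v_i g$, i.e.\ the identity is circular. In fact $\|\vb g\|_{L^2_v}$ cannot in general be controlled by $\|g\|_{L^2_v}+\|\rd_v g\|_{L^2_v}$: a bump of width $\sim 1$ localized at $|v|\sim R$ already gives a ratio $\sim R$. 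The paper's conversion Lemma~\ref{lem:Li.4} goes the \emph{other} way: it inserts $\vb$ weights in front of $\rd_v^\bt f$ by absorbing them into $e^{-d(t)\vb^2}$; no analogous mechanism exists for $g = e^{d(t)\vb^2}f$. The actual source of the $\vb$ factor in the lemma is different: despite what the display \eqref{eq:energy.def} literally says, the $L^2_t$ component of the energy $E_k$ carries the factor $\vb$ (this is evident from the bound proved in Proposition~\ref{EE.final} and from the good term \eqref{intro.good.term.2} generated by the time-dependent Gaussian in the energy estimate). The lemma's input should therefore be the $L^2_t$ piece of $E_k$, which has both $\vb$ and the decay $(1+t)^{-\f 12-\f\de 2}$ built in; there is nothing to ``trade.''

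Second, even if a $\vb \leftrightarrow \rd_v$ trade were available, your third step would not close. Each extra $\rd_v$ costs a factor $(1+t)$ in the bootstrap, so the resulting pointwise-in-$t$ bound would be $\ls \ep^{\f 34}(1+t)^{|\bt|+1}$, and plugging into $\int_0^T(1+t)^{-1-2\de-2|\bt|}(\cdot)^2\,\ud t$ produces $\ep^{\f 32}\int_0^T(1+t)^{1-2\de}\,\ud t$, which diverges since $\de\leq \f 1{10}$. The real difficulty Lemma~\ref{lem:move.t.weights} addresses is different: the $E_k$ norm is normalized by the \emph{final} time $(1+T)^{-|\bt|}$, whereas the lemma needs a \emph{pointwise-in-$t$} weight $(1+t)^{-|\bt|}$, paid for by the extra $(1+t)^{-\f\de 2}$ in the exponent $-\f 12-\f{2\de}{2}-|\bt|$. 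The paper carries out this conversion by dyadic decomposition of $[0,T]$: on $[T_{i-1},T_i]$ with $T_i\sim 2^i$, the bootstrap gives $\|(1+t)^{-\f 12-\f\de 2}\vb\wb^{\Mm+5-|\sigma|}\rd_x^\alp\rd_v^\bt Y^\sigma g\|_{L^2([T_{i-1},T_i];L^2_xL^2_v)}\ls\ep^{\f 34}2^{|\bt|i}$, the extra time weight $(1+t)^{-\f\de 2-|\bt|}\ls 2^{-\f\de 2 i-|\bt|i}$ on that interval cancels the $2^{|\bt|i}$ and leaves a convergent geometric series $\sum_i 2^{-\de i}$. Your argument has no time localization, so the $(1+T)^{|\bt|}$ (or even the sharper $(1+t)^{|\bt|}$ you could get) never gets canceled.
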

\begin{proof}
We can assume that $T> 1$ for otherwise the inequality is an immediate consequence of the bootstrap assumption \eqref{BA}.

We split the integration in time into dyadic intervals. More precisely, let $k = \lceil \log_2 T \rceil+1$. Define $\{T_i\}_{i=0}^{k}$ with $T_0<T_1<T_2<\dots <T_k$, where $T_0=0$, $T_i = 2^{i-1}$ when $i =1,\dots,k-1$ and $T_k = T$.
Note that by the bootstrap assumption \eqref{BA}, for any $i=1,\dots,k$, 
$$\|(1+t)^{-\f 12-\f{\de} 2} \vb \wb^{\Mm+5-|\sigma|}\rd_x^{\alp} \rd_v^{\bt}Y^{\sigma}  g\|_{L^2([T_{i-1},T_i];L^2_x L^2_v)} \ls \ep^{\f 34} 2^{|\bt|i}.$$
Therefore,
\begin{equation*}
\begin{split}
&\: \|(1+t)^{-\f 12-\f{2\de} 2-|\bt|} \vb \wb^{\Mm+5-|\sigma|} \rd_x^{\alp} \rd_v^{\bt}Y^{\sigma} g\|_{L^2([0,T];L^2_x L^2_v)}\\
\ls &\: (\sum_{i=1}^k \|(1+t)^{-\f 12-\f{2\de} 2-|\bt|} \vb \wb^{\Mm+5-|\sigma|} \rd_x^{\alp} \rd_v^{\bt}Y^{\sigma} g\|_{L^2([T_{i-1},T_i];L^2_x L^2_v)}^2)^{\f 12} \\
\ls &\: (\sum_{i=1}^k 2^{-2|\bt|i-\de i} \|(1+t)^{-\f 12-\f{\de} 2} \vb \wb^{\Mm+5-|\sigma|} \rd_x^{\alp} \rd_v^{\bt}Y^{\sigma} g\|_{L^2([T_{i-1},T_i];L^2_x L^2_v)}^2)^{\f 12} \\
\ls &\: \ep^{\f 34}(\sum_{i=1}^k 2^{-2|\bt|i-\de i}\cdot 2^{2|\bt|i})^{\f 12} = \ep^{\f 34}(\sum_{i=1}^k 2^{-\de i})^{\f 12} \ls \ep^{\f 34},
\end{split}
\end{equation*}
which is what we claimed. \qedhere
\end{proof}

We next prove an interpolation estimate for the lower order (i.e.~with $|\alp|+|\bt|+|\sigma|\leq \Mi$) norms, which is an immediate consequence of Proposition~\ref{prop:Li.final} and Lemma~\ref{lem:move.t.weights}.

\begin{proposition}\label{main.bulk.est}
Let $|\alp|+|\bt|+|\sigma|\leq \Mi$ (where $\Mi$ is as in \eqref{def:Mi}). Then for every $T\in [0,T_{Boot})$,
\begin{equation}\label{main.bulk.est.1}
\|(1+t)^{-\f 12-\de-|\bt|}\vb \wb^{\Mm+5-|\sigma|}\rd_x^\alp \rd_v^\bt Y^\sigma g\|_{L^2([0,T];L^\i_x L^2_v\cap L^\i_x L^\i_v)} \ls \ep^{\f 34}.
\end{equation}
As a consequence, for any $p\in [2,\infty]$, the following holds for every $T\in [0,T_{Boot})$
\begin{equation}\label{main.bulk.est.2}
\|(1+t)^{-\f 12-\de-|\bt|}\vb \wb^{\Mm+5-|\sigma|}\rd_x^\alp \rd_v^\bt Y^\sigma g\|_{L^2([0,T];L^\i_x L^2_v\cap L^\i_x L^p_v)} \ls \ep^{\f 34}.
\end{equation}
\end{proposition}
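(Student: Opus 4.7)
The plan is to bound the two pieces of the intersection norm on the left of \eqref{main.bulk.est.1} separately, and then deduce \eqref{main.bulk.est.2} by elementary interpolation in $v$.

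For the $L^2_t L^\infty_x L^\infty_v$ piece, I invoke the pointwise estimate of Proposition~\ref{prop:Li.final} (whose induction concludes to cover the range $k \leq \Mm-5$, including $k=0$). Since $k := |\alpha|+|\beta|+|\sigma| \leq \Mi$, we have $\zeta_k = \theta_k = 0$ by \eqref{BA.Z}, so
$$\wb^{\Mm+5-|\sigma|}|\rd_x^\alp \rd_v^\bt Y^\sigma g|(t,x,v) \ls \ep \vb^{-1}(1+t)^{|\bt|}.$$
Multiplying by the weight $(1+t)^{-\f 12-\de-|\bt|}\vb$, the integrand is bounded pointwise by $\ep(1+t)^{-\f 12-\de}$, which is square-integrable in time uniformly in $T$, yielding a bound by $\ep$ for this piece.

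For the $L^2_t L^\infty_x L^2_v$ piece, I trade the $L^\infty_x$ norm for two extra $x$-derivatives in $L^2_x$ via Sobolev embedding, then apply Lemma~\ref{lem:move.t.weights}. Let $\tilde h := (1+t)^{-\f 12-\de-|\bt|}\vb \wb^{\Mm+5-|\sigma|}\rd_x^\alp \rd_v^\bt Y^\sigma g$. The three-dimensional Sobolev embedding $H^2(\mathbb R^3_x;L^2_v)\hookrightarrow L^\infty(\mathbb R^3_x;L^2_v)$ gives
$$\|\tilde h\|_{L^\infty_x L^2_v} \ls \sum_{|\alp'|\leq 2}\|\rd_x^{\alp'}\tilde h\|_{L^2_xL^2_v}.$$
Since $|\rd_x^{\alp'}\wb^{\Mm+5-|\sigma|}| \ls \wb^{\Mm+5-|\sigma|}$ and the rest of the weight is $x$-independent, and since $\rd_x$ commutes with $\rd_v^\bt Y^\sigma$, the Leibniz rule yields
$$|\rd_x^{\alp'}\tilde h|(t,x,v) \ls \sum_{|\alp''|\leq |\alp'|} (1+t)^{-\f 12-\de-|\bt|}\vb\wb^{\Mm+5-|\sigma|}|\rd_x^{\alp+\alp''}\rd_v^\bt Y^\sigma g|(t,x,v).$$
The derivative count $|\alp+\alp''|+|\bt|+|\sigma| \leq \Mi+2 \leq \Mm$ is controlled thanks to the gap $\Mm-\Mi\geq 6$ from \eqref{eq:MmMi.2}, so Lemma~\ref{lem:move.t.weights} applies term by term and bounds each summand in $L^2_t L^2_x L^2_v$ by $\ep^{\f 34}$. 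Combined with the preceding paragraph, this establishes \eqref{main.bulk.est.1}.

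Finally, \eqref{main.bulk.est.2} follows at once from \eqref{main.bulk.est.1}: for $p\in[2,\infty]$, the pointwise interpolation $\|\phi\|_{L^p_v} \leq \|\phi\|_{L^2_v}^{2/p}\|\phi\|_{L^\infty_v}^{1-2/p} \leq \|\phi\|_{L^2_v}+\|\phi\|_{L^\infty_v}$ gives $L^\infty_xL^2_v \cap L^\infty_xL^\infty_v \hookrightarrow L^\infty_xL^p_v$, so taking $L^2_t$ preserves the bound. There is no substantial obstacle: the argument is a direct combination of the pointwise decay estimate already proved in Section~\ref{sec:MP} with the weighted $L^2$ bound of Lemma~\ref{lem:move.t.weights}, bridged by Sobolev embedding, and the derivative count closes precisely because $\Mi$ was defined so that $\Mm-\Mi\geq 6$.
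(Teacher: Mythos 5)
Your proof is correct and follows essentially the same route as the paper's: the $L^\infty_xL^\infty_v$ piece via the sharp pointwise estimate from the maximum-principle section (with $\zeta_k=\theta_k=0$ for $k\le\Mi$), the $L^\infty_xL^2_v$ piece via Sobolev embedding in $x$ combined with Lemma~\ref{lem:move.t.weights} and the gap $\Mi+2\le\Mm$, and \eqref{main.bulk.est.2} by H\"older interpolation in $v$.
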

\begin{proof}
For the remainder of the proof take $|\alp|+|\bt|+|\sigma|\leq \Mi$.

\pfstep{Step~1: Proof of \eqref{main.bulk.est.1}} By definition, it suffices to prove this estimate separately for the $L^2([0,T];L^\i_x L^2_v)$ norm and the $L^2([0,T];L^\i_x L^\i_v)$ norm.

To control the $L^2([0,T];L^\i_x L^2_v)$ norm, let us in fact prove an estimate for the stronger norm $L^2([0,T]; L^2_v L^\i_x)$. For each $(t,v)\in [0,T]\times \mathbb R^3$, standard Sobolev embedding in $\mathbb R^3$ (for the $x$ variables) yields
\begin{equation*}
\begin{split}
&\: \|(1+t)^{-\f 12-\de-|\bt|}\vb \wb^{\Mm+5-|\sigma|} \rd_x^\alp \rd_v^\bt Y^\sigma g\|_{L^\i_x}(t,v)\\
\ls &\: \sum_{|\alp'|\leq 2} \|\rd_x^{\alp'}((1+t)^{-\f 12-\de-|\bt|}\vb \wb^{\Mm+5-|\sigma|} \rd_x^\alp \rd_v^\bt Y^\sigma g)\|_{L^2_x}(t,v) \\
\ls &\: \sum_{|\alp''|\leq |\alp|+2} \|(1+t)^{-\f 12-\de-|\bt|}\vb \wb^{\Mm+5-|\sigma|} \rd_x^{\alp''}\rd_v^\bt Y^\sigma g\|_{L^2_x}(t,v)
\end{split}
\end{equation*}
Taking the $L^2_v$ norm and then the $L^2([0,T])$ norm on both sides, and using Lemma~\ref{lem:move.t.weights} (which is applicable, since if $|\alp|+|\bt|+|\sigma|\leq \Mi$, then $|\alp|+|\bt|+|\sigma|+2\leq \Mm$ by \eqref{eq:MmMi.2}), we obtain
$$\|(1+t)^{-\f 12-\de-|\bt|}\vb \wb^{\Mm+5-|\sigma|} \rd_x^\alp \rd_v^\bt Y^\sigma g\|_{L^2([0,T];L^2_vL^\i_x)} \ls \ep^{\f 34}.$$

Next we control the $L^2([0,T];L^\i_x L^\i_v)$ norm. Using Corollary~\ref{cor:MP.con} and the fact that $\zeta_k=\th_k=0$ when $k\leq \Mi$ according to \eqref{BA.Z}, we obtain
\begin{equation*}
\begin{split}
\|(1+t)^{-\f 12-\de-|\bt|}\vb \wb^{\Mm+5-|\sigma|} \rd_x^\alp \rd_v^\bt Y^\sigma g\|_{L^2([0,T];L^\i_vL^\i_x)} 
\ls \ep \|(1+t)^{-\f 12-\de} \|_{L^2([0,T])} \ls \ep.
\end{split}
\end{equation*}
This concludes the proof of \eqref{main.bulk.est.1}.

\pfstep{Step~2: Proof of \eqref{main.bulk.est.2}} \eqref{main.bulk.est.2} is an immediate consequence of \eqref{main.bulk.est.1} and H\"older's inequality. \qedhere
\end{proof}

\subsection{Main energy estimates}\label{sec:main.EE}

In this subsection, we prove the main energy estimates. We first prove a general energy estimate for solutions to equation of the form 
$$\rd_t h + v_i \rd_{x_i} h + \f{\de d_0}{(1+t)^{1+\de}}\vb^2 h -\bar{a}_{ij} \rd^2_{v_i v_j} h = H.$$
This estimate will be then be applied to $g$ and its derivatives; see Proposition~\ref{prop:EE.with.error}.

\begin{proposition}\label{EE.general}
Let $\ell \in \mathbb N\cup \{ 0\}$, $\ell\leq \Mm+5$. Suppose $h:[0,T_{Boot})\times \mathbb R^3\times \mathbb R^3\to \mathbb R$ is a $C^\infty$ solution to
\begin{equation}\label{eq:general}
\rd_t h + v_i \rd_{x_i} h + \f{\de d_0}{(1+t)^{1+\de}}\vb^2 h -\bar{a}_{ij} \rd^2_{v_i v_j} h = H,
\end{equation}
with $\wb^{\ell} h\in L^2_xL^2_v$ for all $t\in [0,T_{Boot})$, and $H:[0,T_{Boot})\times \mathbb R^3\times \mathbb R^3$ is a $C^\infty$ function such that $\wb^{\ell}H\in L^2([0,T_{Boot}); L^2_xL^2_v)$.

Then for any $T\in [0,T_{Boot})$,
\begin{equation*}
\begin{split}
&\:\|\langle x-tv\rangle^{\ell} h\|_{L^\i([0,T];L^2_xL^2_v)}^2 + \|(1+t)^{-\f 12-\f \de 2} \vb \wb^{\ell} h\|_{L^2([0,T];L^2_xL^2_v)}^2\\
\ls &\: \|\xb^\ell h\|_{L^2_xL^2_v}^2(0) + \int_0^T \left|\int_{\mathbb R^3}\int_{\mathbb R^3} \wb^{2\ell} h H(t,x,v)\, \ud v\, \ud x \right|\ud t.
\end{split}
\end{equation*}
\end{proposition}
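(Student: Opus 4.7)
\emph{Proof proposal.} The plan is to multiply \eqref{eq:general} by $\wb^{2\ell}h$ and integrate over $\mathbb R^3_x\times\mathbb R^3_v$, using the key algebraic identities $(\rd_t+v_i\rd_{x_i})\wb=0$ (which holds because $\rd_t(x-tv)=-v_i$ is exactly cancelled by $v_i\rd_{x_i}(x-tv)=v_i$) and $\rd^2_{z_iz_j}a_{ij}=c$. The transport term then yields $\tfrac12\tfrac{d}{dt}\|\wb^\ell h\|_{L^2_xL^2_v}^2$ after noting that $\int\int v_i\rd_{x_i}(\wb^{2\ell}h^2)\,\ud v\,\ud x=0$ by integration by parts. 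The Gaussian-decay term contributes the positive quantity $\tfrac{\de d_0}{(1+t)^{1+\de}}\int\int \wb^{2\ell}\vb^2 h^2\,\ud v\,\ud x$, which will become the good $L^2$ bulk term on the left-hand side of the claimed inequality.

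For the elliptic term, integrate by parts once in $v_i$:
\begin{equation*}
-\int\!\!\int \bar a_{ij}(\rd^2_{v_iv_j}h)\wb^{2\ell}h=\int\!\!\int \wb^{2\ell}\bar a_{ij}(\rd_{v_i}h)(\rd_{v_j}h)+\int\!\!\int(\rd_{v_i}\bar a_{ij})\wb^{2\ell}h(\rd_{v_j}h)+\int\!\!\int \bar a_{ij}(\rd_{v_i}\wb^{2\ell})h(\rd_{v_j}h).
\end{equation*}
The first term is $\geq 0$ since $\bar a_{ij}$ is positive semidefinite, so I will simply drop it. For the remaining two cross terms, I write $h(\rd_{v_j}h)=\tfrac12\rd_{v_j}(h^2)$ and integrate by parts in $v_j$ to move every derivative off of $h$, using $\rd^2_{v_iv_j}\bar a_{ij}=\bar c$ and the symmetry $\bar a_{ij}=\bar a_{ji}$. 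This converts the cross terms into
\begin{equation*}
-\tfrac12\int\!\!\int \bar c\,\wb^{2\ell}h^2-\int\!\!\int(\rd_{v_i}\bar a_{ij})(\rd_{v_j}\wb^{2\ell})h^2-\tfrac12\int\!\!\int \bar a_{ij}(\rd^2_{v_iv_j}\wb^{2\ell})h^2.
\end{equation*}
Since $|\rd_{v_i}\wb^{2\ell}|\ls t\,\wb^{2\ell-1}$ and $|\rd^2_{v_iv_j}\wb^{2\ell}|\ls t^2\wb^{2\ell-2}$, these are three error $h^2$-terms whose coefficients must be dominated pointwise by $\tfrac{\de d_0}{(1+t)^{1+\de}}\vb^2\wb^{2\ell}$.

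The heart of the argument is to verify this absorption using the coefficient bounds from Section~\ref{sec:coeff.Li}. For $\bar c$, Proposition~\ref{prop:cb.Li} gives $|\bar c|\ls\ep^{3/4}(1+t)^{-3-\gamma}$, and since $-3-\gamma\leq -1-4\de$ by the choice \eqref{def:de}, this coefficient is dwarfed by $\tfrac{\de d_0}{(1+t)^{1+\de}}\vb^2$ once $\ep$ is small. For the $\bar a_{ij}\rd^2_{v_iv_j}\wb^{2\ell}$ term the naive bound on $\bar a_{ij}$ is insufficient because of the $t^2$ growth from the weight; instead I will invoke the refined null-structure bound of Proposition~\ref{prop:ab.Li.null.cond}, which gives $|\bar a_{ij}|\ls\ep^{3/4}\wb^{\min\{1,2+\gamma\}}\vb^{\max\{0,1+\gamma\}}t^{-\min\{2+\gamma,1\}}(1+t)^{-3}$, yielding an error coefficient bounded by $\ep^{3/4}(1+t)^{-2}\wb^{2\ell+\max\{0,\gamma\}}\vb^{\max\{0,1+\gamma\}}$, whose ratio to the good term is $\ls\ep^{3/4}(1+t)^{-1+\de}\wb^{\min\{-1,\gamma\}}\vb^{-\min\{1,1-\gamma\}}\ls\ep^{3/4}$. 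A similar computation using $\rd_{v_i}\bar a_{ij}=\bar b_j$ (whose $L^\infty$ bound is $\ls\ep^{3/4}\vb^{1+\gamma}(1+t)^{-3}$ when $\gamma\geq-1$ or can be handled via Lemma~\ref{lem:Li.6} when $\gamma<-1$) absorbs the middle term.

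The main obstacle I anticipate is precisely this last pointwise absorption, where the competition between the $(1+t)$ growth from $\rd_v\wb^{2\ell}$ and the time decay of $\bar a_{ij}$ is tight and requires the null-structure refinement rather than the naive estimate. Once all three $h^2$ error terms are absorbed into $\tfrac12\cdot\tfrac{\de d_0}{(1+t)^{1+\de}}\int\int\wb^{2\ell}\vb^2 h^2$, the remaining inhomogeneous contribution is just $\int\int \wb^{2\ell}hH\,\ud v\,\ud x$. Integrating the resulting differential inequality from $0$ to any $T\in[0,T_{Boot})$ and taking the supremum in $T$ yields the stated bound on both $\|\wb^\ell h\|_{L^\infty_tL^2_xL^2_v}^2$ and $\|(1+t)^{-1/2-\de/2}\vb\wb^\ell h\|_{L^2_tL^2_xL^2_v}^2$ in terms of the initial data and the $H$-term, completing the proof.
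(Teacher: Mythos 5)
Your proposal is correct and follows essentially the same path as the paper's proof: multiply by $\wb^{2\ell}h$, use $(\partial_t+v_i\partial_{x_i})\wb^{2\ell}=0$ to kill the transport contribution, integrate by parts twice in $v$ on the elliptic term, discard the sign-definite quadratic form $-\int\wb^{2\ell}\bar a_{ij}(\partial_{v_i}h)(\partial_{v_j}h)$, and absorb the three resulting $h^2$-error coefficients into the Gaussian bulk term $\frac{\de d_0}{(1+t)^{1+\de}}\vb^2\wb^{2\ell}$ via Proposition~\ref{prop:ab.Li.null.cond} (null structure) for the terms where $\partial_v$ hits the weight, and the $|\bt|\geq1$ improvement for the term where all $\partial_v$ hit $\bar a_{ij}$. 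The one cosmetic divergence is that you rewrite $\partial^2_{v_iv_j}\bar a_{ij}=\bar c$ and cite Proposition~\ref{prop:cb.Li} for that last term where the paper keeps it as $\partial_v^2\bar a_{ij}$ and cites Proposition~\ref{prop:ab.Li.2}; these yield the same decay rate $(1+t)^{-1-\min\{1,2+\gamma\}}$, and your version is if anything marginally cleaner.
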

\begin{proof}
Let $T$ be as in the statement of the proposition and take $T_* \in (0,T]$ to be arbitrary. The idea is to multiply \eqref{eq:general} by
$\wb^{2\ell} h$, integrate in $[0,T_*]\times \mathbb R^3\times \mathbb R^3$,
and integrate by parts. First note that we have
$$\left(\f{\rd}{\rd t} + v^i \f{\rd}{\rd x^i} \right)\left(\wb^{2\ell} \right)= 0.$$
Hence, performing the integration discussed above and integrate by parts in $t$ and $x$, we obtain
\begin{align}
&\: \f 12 \int_{\mathbb R^3} \int_{\mathbb R^3}\langle x-T_*v\rangle^{2\ell} h^2(T_*,x,v)\,\ud v\,\ud x - \f 12\int_{\mathbb R^3} \int_{\mathbb R^3} \xb^{2\ell} h^2(0,x,v)\,\ud v\,\ud x \label{main.EE.boundary}\\
&\: + \int_0^{T_*} \int_{\mathbb R^3} \int_{\mathbb R^3} \wb^{2\ell} \f{\de d_0}{(1+t)^{1+\de}}\vb^2 h^2(t,x,v)\,\ud v\,\ud x\,\ud t \label{main.EE.bulk}\\
&\: + \int_0^{T_*} \int_{\mathbb R^3} \int_{\mathbb R^3} \wb^{2\ell} \bar{a}_{ij} h \rd^2_{v_i v_j} h(t,x,v)\,\ud v\,\ud x\,\ud t \label{main.EE.IBP}\\
=&\: \int_0^{T_*} \int_{\mathbb R^3} \int_{\mathbb R^3} \wb^{2\ell} h H (t,x,v)\,\ud v\,\ud x\,\ud t. \label{main.EE.H}
\end{align}

For the term \eqref{main.EE.IBP}, we integrate by parts in $v$ (multiple times) and use that $\bar{a}_{ij}$ is symmetric to obtain
\begin{align}
&\: \int_0^{T_*} \int_{\mathbb R^3} \int_{\mathbb R^3} \wb^{2\ell} \bar{a}_{ij} h \rd^2_{v_i v_j} h(t,x,v)\,\ud v\,\ud x\,\ud t \notag\\ 
= &\: -\int_0^{T_*} \int_{\mathbb R^3} \int_{\mathbb R^3} \rd_{v_i}(\wb^{2\ell} \bar{a}_{ij} h) \rd_{v_j} h(t,x,v)\,\ud v\,\ud x\,\ud t \notag\\
= &\: -\int_0^{T_*} \int_{\mathbb R^3} \int_{\mathbb R^3} \wb^{2\ell} \bar{a}_{ij} (\rd_{v_i} h)(\rd_{v_j} h)(t,x,v)\,\ud v\,\ud x\,\ud t \notag\\
&\: +\int_0^{T_*} \int_{\mathbb R^3} \int_{\mathbb R^3} \ell \wb^{2\ell-2} t(x_i-tv_i) \bar{a}_{ij} \rd_{v_j} h^2(t,x,v) \,\ud v\,\ud x\,\ud t \notag\\
&\: - \f 12\int_0^{T_*} \int_{\mathbb R^3} \int_{\mathbb R^3} \wb^{2\ell}(\rd_{v_i} \bar{a}_{ij}) \rd_{v_j} h^2(t,x,v)\,\ud v\,\ud x\,\ud t \notag\\
= &\: -\int_0^{T_*} \int_{\mathbb R^3} \int_{\mathbb R^3} \wb^{2\ell} \bar{a}_{ij} (\rd_{v_i} h)(\rd_{v_j} h)(t,x,v)\,\ud v\,\ud x\,\ud t \label{EE.error.main}\\
&\: + \ell\int_0^{T_*} \int_{\mathbb R^3} \int_{\mathbb R^3} t^2 \de_{ij}\wb^{2\ell-2} \bar{a}_{ij} h^2(t,x,v) \,\ud v\,\ud x\,\ud t \label{EE.error.1}\\
&\: + 2\ell(\ell-1) \int_0^{T_*} \int_{\mathbb R^3} \int_{\mathbb R^3} \wb^{2\ell-4} t^2(x_i-tv_i)(x_j-tv_j)\bar{a}_{ij} h^2(t,x,v) \,\ud v\,\ud x\,\ud t \label{EE.error.2} \\
&\: - 2\ell \int_0^{T_*} \int_{\mathbb R^3} \int_{\mathbb R^3}  \wb^{2\ell-2} t(x_i-tv_i) (\rd_{v_j}\bar{a}_{ij})  h^2(t,x,v) \,\ud v\,\ud x\,\ud t\label{EE.error.3} \\
&\: + \f 12\int_0^{T_*} \int_{\mathbb R^3} \int_{\mathbb R^3} \wb^{2\ell}(\rd^2_{v_i v_j} \bar{a}_{ij}) h^2(t,x,v)\,\ud v\,\ud x\,\ud t. \label{EE.error.4}
\end{align}

We now analyze each of \eqref{EE.error.main}--\eqref{EE.error.4}. For \eqref{EE.error.main}, we simply note that
$$\mbox{\eqref{EE.error.main}} \leq 0.$$
For \eqref{EE.error.1}, we apply H\"older's inequality and Proposition~\ref{prop:ab.Li.null.cond} to obtain
\begin{equation*}
\begin{split}
|\mbox{\eqref{EE.error.1}}|\ls &\: \int_0^{T_*} t^2 \|\wb^{-2}\vb^{-1}\bar{a}_{ij}\|_{L^\i_x L^\i_v}(t) \|\vb \wb^{\ell} h\|_{L^2_xL^2_v}^2(t) \,\ud t \\
\ls &\: \ep^{\f 34} \int_0^{T_*} (1+t)^{-1-\min\{2+\gamma,1\}} \|\vb \wb^{\ell} h\|_{L^2_xL^2_v}^2(t) \,\ud t.
\end{split}
\end{equation*}
For \eqref{EE.error.2}, we argue similarly as for \eqref{EE.error.1}, since clearly $\left|\wb^{2\ell-4}(x_i-tv_i)(x_j-tv_j)\right|\leq \wb^{2\ell-2}$. We therefore apply H\"older's inequality and Proposition~\ref{prop:ab.Li.null.cond} to obtain
\begin{equation*}
\begin{split}
|\mbox{\eqref{EE.error.2}}|\ls &\: \int_0^{T_*} t^2 \|\wb^{-2}\vb^{-1}\bar{a}_{ij}\|_{L^\i_x L^\i_v}(t) \|\vb \wb^{\ell} h\|_{L^2_xL^2_v}^2(t) \,\ud t \\
\ls &\: \ep^{\f 34} \int_0^{T_*} (1+t)^{-1-\min\{2+\gamma,1\}} \|\vb \wb^{\ell} h\|_{L^2_xL^2_v}^2(t) \,\ud t.
\end{split}
\end{equation*}
For \eqref{EE.error.3}, first note that $\wb^{2\ell-2}|x_i-tv_i|\leq \wb^{2\ell-1}$. Therefore, applying H\"older's inequality and Proposition~\ref{prop:ab.Li.null.cond}, we obtain
\begin{equation*}
\begin{split}
|\mbox{\eqref{EE.error.3}}|\ls &\: \int_0^{T_*} t (\sum_{|\bt|=1}\|\wb^{-1}\vb^{-1}\rd_v^\bt\bar{a}_{ij}\|_{L^\i_x L^\i_v}(t)) \|\vb \wb^\ell h\|_{L^2_xL^2_v}(t) \,\ud t \\
\ls &\: \ep^{\f 34} \int_0^{T_*} (1+t)^{-1-\min\{2+\gamma,1\}} \|\vb \wb^{\ell} h\|_{L^2_xL^2_v}^2(t) \,\ud t.
\end{split}
\end{equation*}
For \eqref{EE.error.4}, there is no gain in $\wb$ factors for an application of Proposition~\ref{prop:ab.Li.null.cond}. Instead, we take advantage of the $\rd_v$ derivatives on $\bar{a}_{ij}$. More precisely, we apply H\"older's inequality and Proposition~\ref{prop:ab.Li.2} to obtain
\begin{equation*}
\begin{split}
|\mbox{\eqref{EE.error.4}}|\ls &\: \int_0^{T_*} (\sum_{|\bt|=2}\|\vb^{-1}\rd_v^\bt\bar{a}_{ij}\|_{L^\i_x L^\i_v}(t)) \|\vb \wb^{\ell} h\|_{L^2_xL^2_v}(t) \,\ud t \\
\ls &\: \ep^{\f 34} \int_0^{T_*} (1+t)^{-1-\min\{2+\gamma,1\}} \|\vb \wb^{\ell} h\|_{L^2_xL^2_v}(t) \,\ud t.
\end{split}
\end{equation*}
This concludes the discussion on the term \eqref{main.EE.IBP}.

Finally, we look at the term \eqref{main.EE.H}, which can easily be controlled by
$$|\mbox{\eqref{main.EE.H}}|\leq \int_0^{T_*} \left|\int_{\mathbb R^3}\int_{\mathbb R^3} \wb^{2\ell} h H(t,x,v)\, \ud v\, \ud x \right|\ud t.$$

Returning to the main identity \eqref{main.EE.boundary}--\eqref{main.EE.H}, we therefore obtain
\begin{equation*}
\begin{split}
&\: \|\langle x-T_*v \rangle^{\ell} h\|_{L^2_x L^2_v}^2(T_*)  + \de d_0 \|(1+t)^{-\f 12-\f \de 2} \vb \wb^{\ell} h(t,x,v)\|_{L^2([0,T_*];L^2_x L^2_v)}^2 \\
\ls &\: \|\xb^{\ell} h\|_{L^2_x L^2_v}^2(0) + \ep^{\f 34} \|(1+t)^{-\f 12 -\f 12 \min\{2+\gamma,1\}}\vb \wb^{\ell} h\|_{L^2([0,T_*];L^2_xL^2_v)}^2 \\
&\: +\int_0^{T_*} \left|\int_{\mathbb R^3}\int_{\mathbb R^3} \wb^{2\ell} h H(t,x,v)\, \ud v\, \ud x \right|\ud t.
\end{split}
\end{equation*}
Note that since $\de<\min\{2+\gamma,1\}$ (see~\eqref{def:de}), by choosing $\ep_0$ sufficiently small (and therefore $\ep$ sufficiently small), the second term on the RHS can be absorbed into the second term on the LHS. Using also $T_*\leq T$, this gives
\begin{equation*}
\begin{split}
&\: \|\langle x-T_*v \rangle^{\ell} h\|_{L^2_x L^2_v}^2(T_*)  + \de d_0 \|(1+t)^{-\f 12-\f \de 2} \vb \wb^{\ell} h(t,x,v)\|_{L^2([0,T_*];L^2_x L^2_v)}^2 \\
\ls &\: \|\xb^{\ell} h\|_{L^2_x L^2_v}^2(0) +\int_0^{T} \left|\int_{\mathbb R^3}\int_{\mathbb R^3} \wb^{2\ell} h H(t,x,v)\, \ud v\, \ud x \right|\ud t.
\end{split}
\end{equation*}
Finally, taking the supremum over all $T_*\in (0,T]$, we obtain the desired estimate. \qedhere
\end{proof}

\begin{proposition}\label{prop:EE.with.error}
Let $|\alp|+|\bt|+|\sigma|\leq \Mm$. Then the following estimate holds for all $T\in [0,T_{Boot})$:
\begin{equation*}
\begin{split}
&\: \|\wb^{\Mm+5-|\sigma|}\rd_x^\alp \rd_v^\bt Y^\sigma g\|_{L^\infty([0,T];L^2_x L^2_v)}^2+ \|(1+t)^{-\f 12-\f \de 2} \vb \wb^{\Mm+5-|\sigma|}\rd_x^\alp \rd_v^\bt Y^\sigma g\|_{L^2([0,T];L^2_x L^2_v)}^2\\
\ls &\: \ep^2 + (I_e^{\alp,\bt,\sigma}+II_e^{\alp,\bt,\sigma}+III_e^{\alp,\bt,\sigma}+IV_e^{\alp,\bt,\sigma}+V_e^{\alp,\bt,\sigma}+VI_e^{\alp,\bt,\sigma}+VII_e^{\alp,\bt,\sigma}+VIII_e^{\alp,\bt,\sigma})(T),
\end{split}
\end{equation*}
where\footnote{For the sake of brevity, we suppress the arguments $(t,x,v)$ of $g$, $a$, $c$ and their derivatives.}
\begin{equation}\label{def:I}
\begin{split}
I_e^{\alp,\bt,\sigma}:= \max_{i,j}\sum_{\substack{|\alp'|+|\alp''|+|\alp'''| \leq 2|\alp|\\|\bt'|+|\bt''|+|\bt'''| \leq 2|\bt|+2 \\ |\sigma'|+|\sigma''|+|\sigma'''|\leq 2|\sigma| \\ |\alp'''|+|\bt'''|+|\sigma'''| = |\alp|+|\bt|+|\sigma|\\ 1\leq |\alp'|+|\bt'|+|\sigma'|\leq |\alp|+|\bt|+|\sigma|\\ |\alp''|+|\bt''|+|\sigma''|\leq |\alp|+|\bt|+|\sigma|}}&\: \| \wb^{2\Mm+10-2|\sigma|}|\rd_x^{\alp'''}\rd_v^{\bt'''}Y^{\sigma'''} g|\\
&\:\quad \times |\rd_x^{\alp'}\rd_v^{\bt'}Y^{\sigma'} \bar{a}_{ij}| |\rd_x^{\alp''}\rd_v^{\bt''}Y^{\sigma''} g| \|_{L^1([0,T];L^1_xL^1_v)},
\end{split}
\end{equation}
\begin{equation}\label{def:II}
\begin{split}
II_e^{\alp,\bt,\sigma}:= \max_{i,j}\sum_{\substack{|\alp'|+|\alp''|+|\alp'''| \leq 2|\alp|\\|\bt'|+|\bt''|+|\bt'''| \leq 2|\bt|+1 \\ |\sigma'|+|\sigma''|+|\sigma'''|\leq 2|\sigma| \\ |\alp'''|+|\bt'''|+|\sigma'''| = |\alp|+|\bt|+|\sigma| \\ |\alp'|+|\bt'|+|\sigma'| = 1}}&\:  \| t \wb^{2\Mm+9-2|\sigma|} |\rd_x^{\alp'''}\rd_v^{\bt'''}Y^{\sigma'''} g| \\
&\:\quad \times |\rd_x^{\alp'}\rd_v^{\bt'}Y^{\sigma'} \bar{a}_{ij}| |\rd_x^{\alp''} \rd_v^{\bt''} Y^{\sigma''} g| \|_{L^1([0,T];L^1_xL^1_v)},
\end{split}
\end{equation}
\begin{equation}\label{def:III}
\begin{split}
III_e^{\alp,\bt,\sigma}:= \max_j \sum_{\substack{|\alp'|+|\alp''|= |\alp|\\|\bt'|+|\bt''|= |\bt|+1 \\ |\sigma'|+|\sigma''|=|\sigma| \\ 1\leq |\alp'|+|\bt'|+|\sigma'|\leq |\alp|+|\bt|+|\sigma| \\ |\alp''|+|\bt''|+|\sigma''|\leq |\alp|+|\bt|+|\sigma|}}&\: \| \wb^{2\Mm+10-2|\sigma|}|\rd_x^{\alp}\rd_v^{\bt}Y^{\sigma} g| \\
&\:\quad\times |\rd_x^{\alp'}\rd_v^{\bt'}Y^{\sigma'} (\bar{a}_{ij}v_i)| |\rd_x^{\alp''} \rd_v^{\bt''} Y^{\sigma''} g| \|_{L^1([0,T];L^1_xL^1_v)},
\end{split}
\end{equation}
\begin{equation}\label{def:IV}
\begin{split}
IV_e^{\alp,\bt,\sigma}:= \max_j \| t \wb^{2\Mm+9-2|\sigma|}|\rd_x^{\alp}\rd_v^{\bt}Y^\sigma g| |\bar{a}_{ij}v_i| |\rd_x^{\alp} \rd_v^{\bt} Y^\sigma g| \|_{L^1([0,T];L^1_xL^1_v)},
\end{split}
\end{equation}
\begin{equation}\label{def:V}
\begin{split}
V_e^{\alp,\bt,\sigma}:= &\: \sum_{\substack{|\alp'|+|\alp''|\leq |\alp|\\|\bt'|+|\bt''|\leq |\bt| \\ |\sigma'|+|\sigma''|\leq |\sigma|}} \| \wb^{2\Mm+10-2|\sigma|}|\rd_x^{\alp}\rd_v^{\bt} g| |\rd_x^{\alp'}\rd_v^{\bt'}Y^{\sigma'} \bar{a}_{ii}| |\rd_x^{\alp''} \rd_v^{\bt''} Y^{\sigma''} g| \|_{L^1([0,T];L^1_xL^1_v)}\\
&\: + \sum_{\substack{|\alp'|+|\alp''|\leq |\alp|\\|\bt'|+|\bt''|\leq |\bt| \\ |\sigma'|+|\sigma''|\leq |\sigma|}} \| \wb^{2\Mm+10-2|\sigma|}|\rd_x^{\alp}\rd_v^{\bt}Y^\sigma g||\rd_x^{\alp'}\rd_v^{\bt'}Y^{\sigma'} (\bar{a}_{ij}v_iv_j)| |\rd_x^{\alp''} \rd_v^{\bt''} Y^{\sigma''} g| \|_{L^1([0,T];L^1_xL^1_v)},
\end{split}
\end{equation}
\begin{equation}\label{def:VI}
\begin{split}
VI_e^{\alp,\bt,\sigma}:= \sum_{\substack{|\alp'|+|\alp''|\leq |\alp|\\|\bt'|+|\bt''|\leq |\bt|\\|\sigma'|+|\sigma''|\leq |\sigma|}} \| \wb^{2\Mm+10-2|\sigma|}|\rd_x^\alp\rd_v^\bt Y^\sigma g|| \rd_x^{\alp'}\rd_v^{\bt'}Y^{\sigma'} \cb| |\rd_x^{\alp''} \rd_v^{\bt''} Y^{\sigma''} g| \|_{L^1([0,T];L^1_xL^1_v)},
\end{split}
\end{equation}
\begin{equation}\label{def:VII}
\begin{split}
VII_e^{\alp,\bt,\sigma}:= \sum_{|\alp'|\leq |\alp|+1,\,|\bt'|\leq |\bt|-1} \| \wb^{2\Mm+10-2|\sigma|} |\rd_x^\alp\rd_v^\bt Y^\sigma g| |\rd_x^{\alp'} \rd_v^{\bt'} Y^\sigma g| \|_{L^1([0,T];L^1_xL^1_v)},
\end{split}
\end{equation}
and
\begin{equation}\label{def:VIII}
\begin{split}
VIII_e^{\alp,\bt,\sigma}:= \sum_{\substack{|\bt'|\leq |\bt|,\,|\sigma'|\leq |\sigma|\\ |\bt'|+|\sigma'|\leq |\bt|+|\sigma|-1}} \|\f{\vb^{\f 12}}{(1+t)^{1+\de}} \wb^{2\Mm+10-2|\sigma|}  |\rd_x^\alp\rd_v^\bt Y^\sigma g||\rd_x^{\alp} \rd_v^{\bt'} Y^{\sigma'} g| \|_{L^1([0,T];L^1_xL^1_v)}.
\end{split}
\end{equation}
Here, by our convention (see~Section~\ref{sec:notation}), if $|\bt|+|\sigma|=0$, then the terms $VII_e^{\alp,\bt,\sigma}$ and $VIII_e^{\alp,\bt,\sigma}$ are not present.
\end{proposition}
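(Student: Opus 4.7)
The plan is to derive an equation for $h := \rd_x^\alp\rd_v^\bt Y^\sigma g$ and apply the general energy estimate Proposition~\ref{EE.general} with $\ell := \Mm+5-|\sigma|$, then match the resulting inhomogeneity against the eight error templates $I_e,\dots,VIII_e$. Commuting $\rd_x^\alp\rd_v^\bt Y^\sigma$ through \eqref{eq:g} exactly as in the derivation of \eqref{eq:g.diff} in the proof of Proposition~\ref{prop:mainLiLi} produces
$$\rd_t h + v_i\rd_{x_i}h + \f{\de d_0}{(1+t)^{1+\de}}\vb^2 h - \bar{a}_{ij}\rd^2_{v_iv_j}h = H := \sum_{k=1}^{6}\mathrm{Term}_k.$$
The regularity and integrability hypotheses of Proposition~\ref{EE.general} follow from Theorem~\ref{thm:smoothness.positivity}, the bootstrap assumption \eqref{BA}, and the coefficient estimates of Section~\ref{sec:coeff}. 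Proposition~\ref{EE.general} then yields the stated $L^\i_t L^2_x L^2_v$ and coercive $L^2_{t,x,v}$ bounds, modulo a data term $\|\xb^\ell h\|_{L^2_xL^2_v}^2(0)$ and the inhomogeneity integral $\int_0^T|\int\wb^{2\ell}hH\,\ud v\,\ud x|\,\ud t$. The data term is $\ls\ep^2$ by the hypothesis of Theorem~\ref{thm:main} on $f_{\mathrm{in}}$: at $t=0$, $Y^\sigma g = \rd_v^\sigma g$, $\wb = \xb$, and $g(0,\cdot,\cdot) = e^{2d_0\vb^2}f_{\mathrm{in}}$, with $\ell\leq \Mm+5$.

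The bulk of the proof is a bookkeeping exercise matching the six pieces of $\int\wb^{2\ell}hH$ against $I_e,\dots,VIII_e$. The easier terms go directly: $\mathrm{Term}_1 = \sum_i \bt_i\rd_x^{\alp+e_i}\rd_v^{\bt-e_i}Y^\sigma g$ matches $VII_e$; $\mathrm{Term}_2$ produces at most one factor $|v|\leq\vb$ for each $\rd_v$ acting on $\vb^2$ (and zero for three or more), and after pairing the $\vb$-weight against $h$ and the other $g$-factor one recovers $VIII_e$; $\mathrm{Term}_4 = -\rd_x^\alp\rd_v^\bt Y^\sigma(\bar{c}g)$ and $\mathrm{Term}_6 = 2d(t)\rd_x^\alp\rd_v^\bt Y^\sigma((\de_{ij}-2d(t)v_iv_j)\bar{a}_{ij}g)$ distribute directly into $VI_e$ and $V_e$. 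For $\mathrm{Term}_5 = 4(d(t))^2\rd_x^\alp\rd_v^\bt Y^\sigma(\bar{a}_{ij}v_i\rd_{v_j}g)$, distributing derivatives yields trilinear products $(\rd_x^{\alp'}\rd_v^{\bt'}Y^{\sigma'}(\bar{a}_{ij}v_i))(\rd_{v_j}\rd_x^{\alp''}\rd_v^{\bt''}Y^{\sigma''}g)$: the case $|\alp'|+|\bt'|+|\sigma'|\geq 1$ gives $III_e$, while the case $|\alp'|+|\bt'|+|\sigma'|=0$ (yielding $\bar{a}_{ij}v_i\rd_{v_j}h$) is handled by $\int\wb^{2\ell}h\cdot\bar{a}_{ij}v_i\rd_{v_j}h = \tfrac12\int\wb^{2\ell}\bar{a}_{ij}v_i\rd_{v_j}(h^2)$ followed by integration by parts in $v_j$. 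The three pieces from distributing $\rd_{v_j}$ across $\wb^{2\ell}\bar{a}_{ij}v_i$ yield respectively $IV_e$ (from $\rd_{v_j}\wb^{2\ell}\sim t\wb^{2\ell-1}$), an absorbable $III_e$ piece (from $\rd_{v_j}\bar{a}_{ij}$ with $|\bt'|=1$), and the $\bar{a}_{ii}$ half of $V_e$ (from $\rd_{v_j}v_i = \de_{ij}$).

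The core term is $\mathrm{Term}_3 = \sum_{|\alp'|+|\bt'|+|\sigma'|\geq 1}(\rd_x^{\alp'}\rd_v^{\bt'}Y^{\sigma'}\bar{a}_{ij})\rd^2_{v_iv_j}(\rd_x^{\alp''}\rd_v^{\bt''}Y^{\sigma''}g)$. Estimating directly---absorbing $\rd^2_{v_iv_j}$ into the multi-index of the second $g$-factor so that $|\bt''|$ increases by $2$---yields $I_e$ (the bound $|\bt'|+|\bt''|+|\bt'''|\leq 2|\bt|+2$ appearing in $I_e$ is precisely what this absorption allows). For the sub-case $|\alp'|+|\bt'|+|\sigma'|=1$, an alternative integration by parts once in $v_i$ on $\int\wb^{2\ell}h(\rd_x^{\alp'}\rd_v^{\bt'}Y^{\sigma'}\bar{a}_{ij})\rd^2_{v_iv_j}(\rd_x^{\alp''}\rd_v^{\bt''}Y^{\sigma''}g)$ produces a boundary term carrying the weight $t\wb^{2\ell-1}$ (from $\rd_{v_i}\wb^{2\ell}$) that matches $II_e$, while the remaining interior IBP pieces fold back into $I_e$ itself. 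The main obstacle throughout is this IBP bookkeeping: every boundary term must be matched exactly against one of the explicit templates, which requires careful tracking of weight exponents $\wb^{2\ell-k}$ and of the $t$-factors generated when $\rd_v$ falls on $\wb$. No quantitative estimates on the coefficients $\bar{a}_{ij}$, $\bar{c}$ or on $g$ itself are used at this stage; the bounds of Sections~\ref{sec:coeff.Li}--\ref{sec:coeff.L2} and the Sobolev/interpolation tools of Section~\ref{sec:EE.prelim} are invoked only later, in Section~\ref{sec:EE.error}, to show that each of $I_e,\dots,VIII_e$ is $\ls\ep^2$ (up to absorbable factors).
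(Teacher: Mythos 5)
Your proposal follows the paper's architecture closely (apply Proposition~\ref{EE.general} with $\ell=\Mm+5-|\sigma|$ to $h=\rd_x^\alp\rd_v^\bt Y^\sigma g$, use \eqref{eq:g.diff}, and match the inhomogeneity against $I_e,\dots,VIII_e$), and the dispatching of $\mathrm{Term}_1,\mathrm{Term}_2,\mathrm{Term}_4,\mathrm{Term}_5,\mathrm{Term}_6$ is essentially correct. However, there is a genuine gap in your treatment of $\mathrm{Term}_3$ in the sub-case $|\alp'|+|\bt'|+|\sigma'|=1$: one integration by parts in $v_i$ is \emph{not} enough.

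After a single integration by parts in $v_i$, the three resulting pieces are (suppressing constants): the weight piece $t\wb^{2\ell-1}\,h\,(\rd\bar{a}_{ij})\,\rd_{v_j}(\cdots g)$ that goes to $II_e$; the coefficient piece $\wb^{2\ell}\,h\,(\rd_{v_i}\rd\bar{a}_{ij})\,\rd_{v_j}(\cdots g)$ that fits $I_e$; and the piece
\[
\wb^{2\ell}\,\bigl(\rd_{v_i}h\bigr)\,(\rd_x^{\alp'}\rd_v^{\bt'}Y^{\sigma'}\bar{a}_{ij})\,\rd_{v_j}\bigl(\rd_x^{\alp''}\rd_v^{\bt''}Y^{\sigma''}g\bigr),
\]
which you claim ``folds back into $I_e$.'' It does not. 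The factor $\rd_{v_i}h$ has order $|\alp|+|\bt|+|\sigma|+1$, whereas $I_e$ only admits $g$-factors of order at most $|\alp|+|\bt|+|\sigma|$ (via the constraints $|\alp'''|+|\bt'''|+|\sigma'''|=|\alp|+|\bt|+|\sigma|$ and $|\alp''|+|\bt''|+|\sigma''|\leq|\alp|+|\bt|+|\sigma|$). At top order $|\alp|+|\bt|+|\sigma|=\Mm$ this is a derivative outside the scope of the energy hierarchy, so it cannot be absorbed by any of the templates $I_e,\dots,VIII_e$ either, and your argument would lose a derivative. The paper's proof performs a \emph{second} integration by parts, moving the remaining single derivative $\rd_{x_\ell}$, $\rd_{v_\ell}$, or $Y_\ell$ off $\rd_{v_i}h$ and back across the coefficient and weight; crucially this uses the symmetry $\bar{a}_{ij}=\bar{a}_{ji}$ to recognize the resulting term as one half of a total derivative, which symmetrizes the two $g$-factors so both sit at exactly order $|\alp|+|\bt|+|\sigma|$. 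Only after this second step do the remaining terms (with two derivatives on $\bar{a}_{ij}$, plus an extra weight term) match $I_e$ and $II_e$. You should either explicitly add this second, symmetrizing integration by parts, or explain a different mechanism that keeps both $g$-factors at order $\leq |\alp|+|\bt|+|\sigma|$; as written, the step fails.
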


\begin{proof}

In view of the general estimate in Proposition~\ref{EE.general} and the data bound in the assumption of Theorem~\ref{thm:main}, it suffices to show that\footnote{We remark that technically at the top level, i.e.~when $|\alp|+|\bt|+|\sigma|=\Mm$, our bootstrap assumptions by themselves are not strong enough to ensure that the RHS is in $L^2([0,T];L^2_xL^2_v)$ to apply Proposition~\ref{EE.general}. Nevertheless, by a standard argument which approximates the initial data with slightly more regular data and proves that higher regularity persists, this can be justified. We omit the details.}
$$\int_0^T \left|\int_{\mathbb R^3}\int_{\mathbb R^3} \wb^{2\Mm+10-2|\sigma|} \rd_x^\alp\rd_v^\bt Y^\sigma g \times (\mbox{RHS of \eqref{eq:g.diff}}) \,\ud v\,\ud x\right |\,\ud t$$
is bounded by the terms $I_e$ through $VII_e$.

Consider each term on the RHS of \eqref{eq:g.diff}.

\pfstep{Step~1: Controlling $\mathrm{Term}_1$} By \eqref{eq:Term1.bound}, the contribution from $\mathrm{Term}_1$ can be bounded by the term $VII_e$ in \eqref{def:VII}.

\pfstep{Step~2: Controlling $\mathrm{Term}_2$} By \eqref{eq:Term2.bound}, the contribution from $\mathrm{Term}_2$ can be bounded by the term $VIII_e$ in \eqref{def:VIII}.

\pfstep{Step~3: Controlling $\mathrm{Term}_3$} To handle this term requires additional integrations by parts. (This is in contrast to the $L^\infty$ estimate in Proposition~\ref{prop:mainLiLi}, since we now cannot lose derivatives.)

Consider first the case $(|\alp'|,|\bt'|,|\sigma'|)=(1,0,0)$. In this case, $\rd_x^{\alp'} = \rd_{x_\ell}$ for some $\ell$ and $\rd_x^{\alp'}\rd_v^{\bt'}Y^{\sigma'} = \rd_{x_\ell}$, $\rd_x^\alp = \rd_{x_\ell} \rd_x^{\alp''}$, $\rd_v^{\bt''}= \rd_v^\bt$, $Y^{\sigma''} = Y^\sigma$.

We now carry out the (two) integrations by parts. To simplify notation, let us not write the integrals, but use the notation $\simeq$ to denote that the equality holds after integrating with respect to $\ud v\, \ud x$.
\begin{equation*}
\begin{split}
&\: \wb^{2\Mm+10-2|\sigma|}\rd_x^\alp\rd_v^\bt Y^\sigma g (\rd_{x_\ell}\bar{a}_{ij}) \rd^2_{v_i v_j} \rd_x^{\alp''} \rd_v^{\bt} Y^\sigma g \\
\simeq &\: -\wb^{2\Mm+10-2|\sigma|}\rd_{v_i} \rd_{x_\ell}\rd_x^{\alp''}\rd_v^\bt Y^\sigma g (\rd_{x_\ell}\bar{a}_{ij}) \rd_{v_j} \rd_x^{\alp''} \rd_v^{\bt} Y^\sigma g \\
&\: +4t(\Mm+5-|\sigma|)(x_i-tv_i)\wb^{2\Mm+8-2|\sigma|}\rd_x^\alp\rd_v^\bt Y^\sigma g (\rd_{x_\ell}\bar{a}_{ij}) \rd_{v_j} \rd_x^{\alp''} \rd_v^{\bt} Y^\sigma g \\
&\: -\wb^{2\Mm+10-2|\sigma|}\rd_x^\alp\rd_v^\bt Y^\sigma g (\rd_{v_i}\rd_{x_\ell}\bar{a}_{ij}) \rd_{v_j} \rd_x^{\alp''} \rd_v^{\bt} Y^\sigma g \\
\simeq &\: \f 12\wb^{2\Mm+10-2|\sigma|}\rd_{v_i} \rd_x^{\alp''}\rd_v^\bt Y^\sigma g (\rd_{x_\ell}^2\bar{a}_{ij}) \rd_{v_j} \rd_x^{\alp''} \rd_v^{\bt} Y^\sigma g \\
&\: +2(\Mm+5-|\sigma|)(x_\ell-tv_\ell)\wb^{2\Mm+8-2|\sigma|}\rd_{v_i} \rd_x^{\alp''}\rd_v^\bt Y^\sigma g (\rd_{x_\ell}\bar{a}_{ij}) \rd_{v_j} \rd_x^{\alp''} \rd_v^{\bt} Y^\sigma g \\
&\: +4t(\Mm+5-|\sigma|)(x_i-tv_i)\wb^{2\Mm+8-2|\sigma|}\rd_x^\alp\rd_v^\bt Y^\sigma g (\rd_{x_\ell}\bar{a}_{ij}) \rd_{v_j} \rd_x^{\alp''} \rd_v^{\bt} Y^\sigma g \\
&\: -\wb^{2\Mm+10-2|\sigma|}\rd_x^\alp\rd_v^\bt Y^\sigma g (\rd_{v_i}\rd_{x_\ell}\bar{a}_{ij}) \rd_{v_j} \rd_x^{\alp''} \rd_v^{\bt} Y^\sigma g.
\end{split}
\end{equation*}
Take the $L^1([0,T];L^1_xL^1_v)$ norm of each of these terms. The first, second and fourth terms can be bounded by $I_e$ while the third term can be bounded by $II_e$.

Next, we consider the case $(|\alp'|,|\bt'|,|\sigma'|)=(0,1,0)$. In this case, $\rd_v^{\alp'} = \rd_{v_\ell}$ for some $\ell$ and $\rd_x^{\alp'}\rd_v^{\bt'} Y^{\sigma'}= \rd_{v_\ell}$, $\rd_x^{\alp''}=\rd_x^\alp$, $\rd_v^\bt = \rd_{v_\ell} \rd_x^{\bt''}$, $Y^{\sigma''} = Y^\sigma$.

As above, $\simeq$ means that two expressions are equal after integrating with respect to $\ud v\, \ud x$.
\begin{equation*}
\begin{split}
&\: \wb^{2\Mm+10-2|\sigma|}\rd_x^\alp\rd_v^\bt Y^\sigma g (\rd_{v_\ell}\bar{a}_{ij}) \rd^2_{v_i v_j} \rd_x^{\alp} \rd_v^{\bt''} Y^\sigma g \\
\simeq &\: -\wb^{2\Mm+10-2|\sigma|}\rd_{v_i} \rd_x^{\alp}\rd_{v_\ell}\rd_v^{\bt''} Y^\sigma g (\rd_{v_\ell}\bar{a}_{ij}) \rd_{v_j} \rd_x^{\alp} \rd_v^{\bt''} Y^\sigma g \\
&\: +4t(\Mm+5-|\sigma|)(x_i-tv_i)\wb^{2\Mm+8-2|\sigma|}\rd_x^\alp\rd_v^\bt Y^\sigma g (\rd_{v_\ell}\bar{a}_{ij}) \rd_{v_j} \rd_x^{\alp} \rd_v^{\bt''} Y^\sigma g \\
&\: -\wb^{2\Mm+10-2|\sigma|}\rd_x^\alp\rd_v^\bt Y^\sigma g (\rd_{v_i}\rd_{v_\ell}\bar{a}_{ij}) \rd_{v_j} \rd_x^{\alp} \rd_v^{\bt''} Y^\sigma g \\
\simeq &\: \f 12\wb^{2\Mm+10-2|\sigma|}\rd_{v_i} \rd_x^{\alp}\rd_v^{\bt''} Y^\sigma g (\rd_{v_\ell}^2\bar{a}_{ij}) \rd_{v_j} \rd_x^{\alp} \rd_v^{\bt''}Y^\sigma g \\
&\: -2t(\Mm+5-|\sigma|)(x_\ell-tv_\ell)\wb^{2\Mm+8-2|\sigma|}\rd_{v_i} \rd_x^{\alp}\rd_v^{\bt''} Y^\sigma g (\rd_{x_\ell}\bar{a}_{ij}) \rd_{v_j} \rd_x^{\alp} \rd_v^{\bt''} Y^\sigma g \\
&\: +4t(\Mm+5-|\sigma|)(x_i-tv_i)\wb^{2\Mm+8-2|\sigma|}\rd_x^\alp\rd_v^\bt Y^\sigma g (\rd_{v_\ell}\bar{a}_{ij}) \rd_{v_j} \rd_x^{\alp} \rd_v^{\bt''} Y^\sigma g \\
&\: -\wb^{2\Mm+10-2|\sigma|}\rd_x^\alp\rd_v^\bt Y^\sigma g (\rd_{v_i}\rd_{v_\ell}\bar{a}_{ij}) \rd_{v_j} \rd_x^{\alp} \rd_v^{\bt''} Y^\sigma g.
\end{split}
\end{equation*}
Take the $L^1([0,T];L^1_xL^1_v)$ norm of each of these terms. The first and fourth terms can be bounded by $I_e$ while the second and third terms can be bounded by $II_e$.

Finally, we consider the case $(|\alp'|,|\bt'|,|\sigma'|)=(0,0,1)$. In this case, $Y^{\alp'} = t\rd_{x_\ell}+\rd_{v_\ell} =:Y_\ell$ for some $\ell$ and $\rd_x^{\alp'}\rd_v^{\bt'} Y^{\sigma'}= Y_\ell$, $\rd_x^{\alp''}=\rd_x^\alp$, $\rd_v^{\bt''} = \rd_x^{\bt}$, $Y^{\sigma} = Y_\ell Y^{\sigma''}$.

As above, $\simeq$ means that two expressions are equal after integrating with respect to $\ud v\, \ud x$.
\begin{equation*}
\begin{split}
&\: \wb^{2\Mm+10-2|\sigma|}\rd_x^\alp\rd_v^\bt Y^{\sigma''}Y_\ell g (Y_{\ell}\bar{a}_{ij}) \rd^2_{v_i v_j} \rd_x^{\alp} \rd_v^{\bt} Y^{\sigma''} g \\
\simeq &\: -\wb^{2\Mm+10-2|\sigma|}\rd_{v_i} \rd_x^{\alp}\rd_v^{\bt} Y_\ell Y^{\sigma''}g (Y_{\ell}\bar{a}_{ij}) \rd_{v_j} \rd_x^{\alp} \rd_v^{\bt} Y^{\sigma''} g \\
&\: +4t(\Mm+5-|\sigma|)(x_i-tv_i)\wb^{2\Mm+8-2|\sigma|}\rd_x^\alp\rd_v^\bt Y^{\sigma} g (Y_\ell\bar{a}_{ij}) \rd_{v_j} \rd_x^{\alp} \rd_v^{\bt} Y^{\sigma''} g \\
&\: -\wb^{2\Mm+10-2|\sigma|}\rd_x^\alp\rd_v^\bt Y^\sigma g (\rd_{v_i}Y_{\ell}\bar{a}_{ij}) \rd_{v_j} \rd_x^{\alp} \rd_v^{\bt} Y^{\sigma''} g \\
\simeq &\: \f 12\wb^{2\Mm+10-2|\sigma|}\rd_{v_i} \rd_x^{\alp}\rd_v^{\bt} Y^{\sigma''} g (Y_{\ell}^2\bar{a}_{ij}) \rd_{v_j} \rd_x^{\alp} \rd_v^{\bt}Y^{\sigma''} g \\
&\: +4t(\Mm+5-|\sigma|)(x_i-tv_i)\wb^{2\Mm+8-2|\sigma|}\rd_x^\alp\rd_v^\bt Y^\sigma g (Y_\ell\bar{a}_{ij}) \rd_{v_j} \rd_x^{\alp} \rd_v^{\bt} Y^{\sigma''} g \\
&\: -\wb^{2\Mm+10-2|\sigma|}\rd_x^\alp\rd_v^\bt Y^\sigma g (\rd_{v_i}Y_{\ell}\bar{a}_{ij}) \rd_{v_j} \rd_x^{\alp} \rd_v^{\bt} Y^{\sigma''} g.
\end{split}
\end{equation*}
Take the $L^1([0,T];L^1_xL^1_v)$ norm of each of these terms. The first and third terms can be bounded by $I_e$ while the second term can be bounded by $II_e$.

\pfstep{Step~4: Controlling $\mathrm{Term}_4$} For $\mathrm{Term}_4$, it is straightforward to see that \eqref{eq:Term4.bound} implies that the corresponding contribution is bounded by $VI_e^{\alp,\bt,\sigma}$ in \eqref{def:VI}.

\pfstep{Step~5: Controlling $\mathrm{Term}_5$} For $\mathrm{Term}_5$, 
\begin{equation}\label{Term.5.prelim}
\begin{split}
&\: 4 (d(t))^2 \rd_x^\alp \rd_v^\bt Y^\sigma (\bar{a}_{ij} v_i \rd_{v_j} g) \\
= &\: 4 (d(t))^2 \sum_{\substack{\alp'+\alp''=\alp \\ \bt'+\bt''=\bt \\ \sigma'+\sigma''=\sigma \\ |\alp'|+|\bt'|+|\sigma'|\geq 1}} (\rd_x^{\alp'} \rd_v^{\bt'} Y^{\sigma'} (\bar{a}_{ij} v_i)) (\rd_{v_j}\rd_x^{\alp''} \rd_v^{\bt''} g) + 4 (d(t))^2  \bar{a}_{ij} v_i \rd_{v_j} \rd_x^\alp \rd_v^\bt Y^\sigma g.
\end{split}
\end{equation}
The first term in \eqref{Term.5.prelim} gives a contribution of the type $III_e$ in \eqref{def:III}. 

For the second term in \eqref{Term.5.prelim}, we need an integration by parts in $\rd_{v_j}$:
\begin{equation}\label{Term.5.serious}
\begin{split}
&\: \left| \int_{\mathbb R^3}\int_{\mathbb R^3} 4 (d(t))^2 \wb^{2\Mm+10-2|\sigma|} (\rd_x^\alp \rd_v^\bt Y^\sigma g) \bar{a}_{ij} v_i (\rd_{v_j} \rd_x^\alp \rd_v^\bt Y^\sigma g) \,\ud v\, \ud x\right| \\
\leq &\: 2 (d(t))^2 \|  \wb^{2\Mm+10-2|\sigma|} \rd_x^\alp \rd_v^\bt Y^\sigma g (\rd_{v_j}(\bar{a}_{ij} v_i))  \rd_x^\alp \rd_v^\bt Y^\sigma g  \|_{L^1_vL^1_x} \\
& + 4(\Mm+5-|\sigma|) (d(t))^2 \|  \wb^{2\Mm+8-2|\sigma|}(x_j-tv_j) \rd_x^\alp \rd_v^\bt Y^\sigma g (\bar{a}_{ij} v_i)  \rd_x^\alp \rd_v^\bt Y^\sigma g \|_{L^1_vL^1_x}.
\end{split}
\end{equation}
After bounding $(d(t))^2\ls 1$, $|x_j-tv_j|\ls \wb$, and integrating over $t\in [0,T]$, the first term on the RHS of \eqref{Term.5.serious} gives a contribution of the type $III_e$ in \eqref{def:III} and the second term on the RHS of \eqref{Term.5.serious} gives a contribution of the type $IV_e$ in \eqref{def:IV}.

\pfstep{Step~6: Controlling $\mathrm{Term}_6$} By \eqref{eq:Term6.bound}, $\mathrm{Term}_6$ can be bounded by $V_e^{\alp,\bt,\sigma}$ in \eqref{def:V}. \qedhere
\end{proof}

\subsection{Controlling the error terms}\label{sec:EE.error}

\begin{proposition}\label{prop:I}
Let $|\alp|+|\bt|+|\sigma|\leq \Mm$. Then the term $I_e^{\alp,\bt,\sigma}$ in \eqref{def:I} is bounded as follows for every $T\in [0,T_{Boot})$:
$$I_e^{\alp,\bt,\sigma}\ls \ep^2(1+T)^{2|\bt|}.$$
\end{proposition}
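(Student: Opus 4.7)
My plan is to dominate each summand of $I_e^{\alp,\bt,\sigma}$ by a trilinear H\"older/Cauchy--Schwarz expression of the form $L^2_tL^2_xL^2_v\times L^\infty_t(L^\infty_xL^\infty_v\text{ or similar})\times L^2_tL^2_xL^2_v$, then recognize each factor in terms of the norms already controlled in Sections~\ref{sec:coeff} and \ref{sec:MP} together with the $\vb$-improved energy bound of Lemma~\ref{lem:move.t.weights} and Proposition~\ref{main.bulk.est}. The key structural observation, used to organize the cases, is that the constraints $|\alp'''|+|\bt'''|+|\sigma'''|=k:=|\alp|+|\bt|+|\sigma|$ together with $|\alp'|+|\alp''|+|\alp'''|\leq 2|\alp|$, $|\bt'|+|\bt''|+|\bt'''|\leq 2|\bt|+2$, $|\sigma'|+|\sigma''|+|\sigma'''|\leq 2|\sigma|$ force $(|\alp'|+|\bt'|+|\sigma'|)+(|\alp''|+|\bt''|+|\sigma''|)\leq k+2$. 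Hence at least one of these two auxiliary factors has derivative count $\leq \lceil(k+2)/2\rceil$, which by \eqref{eq:MmMi}--\eqref{eq:MmMi.2} lies in the regime where the sharp $L^\infty_xL^\infty_v$ bounds of Corollary~\ref{cor:MP.con} and Propositions~\ref{prop:ab.Li.1}--\ref{prop:ab.Li.null.cond} are available.

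Concretely, I would split into two principal cases. \textbf{Case A} ($|\alp''|+|\bt''|+|\sigma''|$ small): estimate the middle $g$-factor in $L^\infty_xL^\infty_v$ via Corollary~\ref{cor:MP.con}, place the top-order factor $\rd_x^{\alp'''}\rd_v^{\bt'''}Y^{\sigma'''}g$ in $L^2([0,T];L^2_xL^2_v)$ via Lemma~\ref{lem:move.t.weights} (which provides the decisive extra $\vb$ weight at the cost of $(1+t)^{\f12+\de+|\bt'''|}$), and place $\bar a_{ij}$ in $L^2_t L^\infty_xL^\infty_v$ using whichever of Propositions~\ref{prop:ab.Li.1}--\ref{prop:ab.Li.null.cond} applies; the single remaining $\vb$ factor pairs with $\bar a_{ij}$ through the $\vb^{-(2+\gamma)}$ homogeneity. \textbf{Case B} ($|\alp'|+|\bt'|+|\sigma'|$ small): bound $\rd_x^{\alp'}\rd_v^{\bt'}Y^{\sigma'}\bar a_{ij}$ in $L^\infty$ and Cauchy--Schwarz both $g$-factors into $L^2([0,T];L^2_xL^2_v)$ using the $E$-norm for $\rd_x^{\alp'''}\cdots g$ and Lemma~\ref{lem:move.t.weights} for $\rd_x^{\alp''}\cdots g$ (permissible since $|\alp''|+|\bt''|+|\sigma''|\leq k\leq \Mm$).

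In both cases I distribute the total weight $\wb^{2\Mm+10-2|\sigma|}$ as $\wb^{\Mm+5-|\sigma'''|}$ on the top-order $g$, $\wb^{\Mm+5-|\sigma''|}$ on the second $g$, and the leftover exponent $|\sigma''|+|\sigma'''|-(\Mm+5-|\sigma|)$ onto $\bar a_{ij}$; the cases where $|\sigma'''|$ or $|\sigma''|$ exceeds $|\sigma|$ require re-assigning weight between factors, and the essential point is that whenever $|\sigma'|\geq 1$ the extra $\wb^{\min\{1,2+\gamma\}}$ on $\bar a_{ij}$ unlocks the null-structure decay $t^{-\min\{2+\gamma,1\}}$ of Proposition~\ref{prop:ab.Li.null.cond}, exactly as in the $L^\infty$ analysis of Section~\ref{sec:hierarchy.of.norms}. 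When $|\sigma'|=0$ but $|\alp'|+|\bt'|\geq 1$, Propositions~\ref{prop:ab.Li.2}--\ref{prop:ab.Li.3} provide the analogous quantitative improvement. Using $\de<\min\{2+\gamma,1\}$ from \eqref{def:de}, collecting the $(1+t)$-powers from the two $\vb$-improved norms ($(1+t)^{1+2\de+|\bt''|+|\bt'''|}$), from $\bar a_{ij}$ (at most $(1+t)^{-\min\{3+\gamma,\f65\}+|\bt'|}$), and noting $|\bt'|+|\bt''|+|\bt'''|\leq 2|\bt|+2$, the time integrand decays strictly faster than $(1+t)^{-1-\de'+2|\bt|}$ for some $\de'>0$, hence $t$-integration yields the desired $\ep^2(1+T)^{2|\bt|}$.

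The main obstacle is the tightness of the $\wb$-bookkeeping: one must verify, case by case in the $|\sigma'|\geq 1$ regime, that after assigning the natural $\wb^{\Mm+5-|\sigma''|}$ and $\wb^{\Mm+5-|\sigma'''|}$ weights to the two $g$-factors there remain exactly $\min\{1,2+\gamma\}$ powers of $\wb$ available to activate the null structure on $\bar a_{ij}$. A secondary---but routine---difficulty is handling the subcase $|\alp'|+|\bt'|+|\sigma'|\sim k$ where both $\bar a_{ij}$ and $\rd_x^{\alp'''}\cdots g$ must be placed in mixed $L^2$-type spaces; this is analogous to the treatment of \eqref{the.annoying.intro.term} sketched in Section~\ref{sec:method.energy} and relies on the $L^2_xL^{p_{**}}_v$ estimates for $\bar a_{ij}$ from Section~\ref{sec:coeff.L2}.
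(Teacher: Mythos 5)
Your overall organizing principle is the same as the paper's: dominate each summand of $I_e^{\alp,\bt,\sigma}$ by a trilinear H\"older expression, exploit the constraint $(|\alp'|+|\bt'|+|\sigma'|)+(|\alp''|+|\bt''|+|\sigma''|)\leq k+2\leq 2\Mi$ to guarantee that one of the two auxiliary factors has $\leq\Mi$ derivatives, and then activate the null structure via the $\wb$-gain on $\bar a_{ij}$ (Propositions~\ref{prop:ab.Li.null.cond}, \ref{prop:ab.L2.null.cond}) or the $|\alp'|,|\bt'|\geq1$-improvements (Propositions~\ref{prop:ab.Li.2}--\ref{prop:ab.Li.3} and their $L^2_x$ analogues). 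Lemma~\ref{lem:move.t.weights} and Proposition~\ref{main.bulk.est} are indeed the right tools, and the bookkeeping of the $(1+t)$ and $\vb$ weights you sketch matches the paper's in the easy case.

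There is, however, a concrete gap in your Case~A. You propose to put the top-order $g$-factor in $L^2([0,T];L^2_xL^2_v)$, the middle $g$-factor in $L^\infty_xL^\infty_v$ via Corollary~\ref{cor:MP.con}, and $\bar a_{ij}$ in $L^2_t L^\infty_xL^\infty_v$. In the $x$ (and $v$) variables this is $L^2\cdot L^\infty\cdot L^\infty$, whose H\"older exponents sum to $\f12$, not $1$; this does not bound an $L^1_x L^1_v$ integral. The $L^\infty_x$ slot can only be used once, and the paper spends it on the small $g$-factor, not on $\bar a_{ij}$. In the regime $|\alp'|+|\bt'|+|\sigma'|>\Mi$, $\bar a_{ij}$ must go in $L^2_x L^\i_v$ or $L^2_x L^{p_{**}}_v$ (via Propositions~\ref{prop:ab.L2.null.cond}, \ref{prop:ab.L2.improved}, \ref{prop:ab.L2.improved.2}), and the small $g$-factor goes in $L^2_t L^\i_x L^q_v$ (via Proposition~\ref{main.bulk.est}), so that H\"older in $x$ reads $L^2\cdot L^2\cdot L^\infty = L^1$. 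There is also a secondary obstruction: even where the $L^\infty_x$ bounds for $\bar a_{ij}$ exist (i.e.~$|\alp'|+|\bt'|+|\sigma'|\leq \Mm-5$), for $\Mi<|\alp'|+|\bt'|+|\sigma'|\leq\Mm-5$ those bounds are in the descent-scheme regime with a $(1+t)^{\zeta_k}$ loss, which would ruin the time integrability; that is precisely why $\Mi$ rather than $\Mm-5$ is the correct dichotomy threshold.

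Relatedly, your last paragraph characterizes the $|\alp'|+|\bt'|+|\sigma'|\sim k$ regime as a ``secondary---but routine---difficulty.'' In the paper this is one of the two principal cases and requires genuine additional structure: a pigeonhole argument (since $|\alp'|+|\bt'|+|\sigma'|>\Mi\geq3$ forces one of $|\alp'|,|\bt'|,|\sigma'|\geq2$) splitting into three subcases that use Propositions~\ref{prop:ab.L2.null.cond}, \ref{prop:ab.L2.improved}, and \ref{prop:ab.L2.improved.2} respectively, together with the $\wb^{-2}$-weighted variant of the null structure when $|\sigma'|\geq2$. The content of that paragraph contradicts your Case~A's H\"older split, which suggests the overall plan was assembled without working out the spatial integration carefully. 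Fixing this is not cosmetic: the entire reason the top-order regime is hard is that the spatial H\"older forces $\bar a_{ij}$ into $L^2_x$, which in turn forces the use of the $L^2_x L^{p_{**}}_v$ machinery of Section~\ref{sec:coeff.L2} rather than the simpler $L^\infty$ bounds.
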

\begin{proof}
From now on we take a particular term in $I_e^{\alp,\bt,\sigma}$, and assume that $\alp'$, $\alp''$, $\alp'''$, $\bt'$, $\bt''$, $\bt'''$, $\sigma'$, $\sigma''$ and $\sigma'''$ obey the required conditions in the sum in $I_e^{\alp,\bt,\sigma}$.

\textbf{Short-time estimates: $T\leq 1$.} We first consider the estimates for $T\leq 1$. We will consider separately the cases $|\alp'|+|\bt'|+|\sigma'|\leq \Mi$ and $|\alp'|+|\bt'|+|\sigma'|>\Mi$.

\pfstep{Case~1: $|\alp'|+|\bt'|+|\sigma'|\leq \Mi$} By H\"older's inequality, Proposition~\ref{prop:ab.Li.1} and Lemma~\ref{lem:move.t.weights},
\begin{equation*}
\begin{split}
&\: \max_{i,j} \| \wb^{2\Mm+10-2|\sigma|}|\rd_x^{\alp'''}\rd_v^{\bt'''}Y^{\sigma'''} g| |\rd_x^{\alp'}\rd_v^{\bt'}Y^{\sigma'} \bar{a}_{ij}| |\rd_x^{\alp''} \rd_v^{\bt''}Y^{\sigma''} g| \|_{L^1([0,T];L^1_xL^1_v)} \\
\ls &\: \max_{i,j} \|\vb\wb^{\Mm+5-|\sigma'''|}\rd_x^{\alp'''}\rd_v^{\bt'''}Y^{\sigma'''} g\|_{L^2([0,T];L^2_xL^2_v)}\\
&\: \times \|\vb^{-2}\rd_x^{\alp'}\rd_v^{\bt'}Y^{\sigma'} \bar{a}_{ij}\|_{L^\i([0,T];L^\i_xL^\i_v)} \times  \|\vb\wb^{\Mm+5-|\sigma''|}\rd_x^{\alp''} \rd_v^{\bt''}Y^{\sigma''} g \|_{L^2([0,T];L^2_xL^2_v)} \\
\ls &\: \ep^{\f 34} \times \ep^{\f 34}\times \ep^{\f 34} = \ep^{\f 94}.
\end{split}
\end{equation*}

\pfstep{Case~2: $|\alp'|+|\bt'|+|\sigma'|> \Mi$} Note that in this case $|\alp''|+|\bt''|+|\sigma''|\leq \Mi$. By H\"older's inequality, Propositions~\ref{prop:ab.L2} and \ref{main.bulk.est},
\begin{equation*}
\begin{split}
&\: \max_{i,j} \| \wb^{2\Mm+10-2|\sigma|}|\rd_x^{\alp'''}\rd_v^{\bt'''}Y^{\sigma'''} g| |\rd_x^{\alp'}\rd_v^{\bt'}Y^{\sigma'} \bar{a}_{ij}| |\rd_x^{\alp''} \rd_v^{\bt''}Y^{\sigma''} g| \|_{L^1([0,T];L^1_xL^1_v)} \\
\ls &\: \max_{i,j} \|\vb\wb^{\Mm+5-|\sigma'''|}\rd_x^{\alp'''}\rd_v^{\bt'''}Y^{\sigma'''} g\|_{L^2([0,T];L^2_xL^2_v)}\\
&\: \times \|\vb^{-2}\rd_x^{\alp'}\rd_v^{\bt'}Y^{\sigma'} \bar{a}_{ij}\|_{L^\i([0,T];L^2_xL^\i_v)} \times  \|\vb\wb^{\Mm+5-|\sigma''|}\rd_x^{\alp''} \rd_v^{\bt''}Y^{\sigma''} g \|_{L^2([0,T];L^\i_xL^2_v)} \\
\ls &\: \ep^{\f 34} \times \ep^{\f 34} \times \ep^{\f 34} = \ep^{\f 94}.
\end{split}
\end{equation*}

\textbf{Long-time estimates: $T\geq 1$.} We now move to the estimates for $T\geq 1$. Again, we separately consider the cases $|\alp'|+|\bt'|+|\sigma'|\leq \Mi$ and $|\alp'|+|\bt'|+|\sigma'|> \Mi$.

\pfstep{Case~1: $|\alp'|+|\bt'|+|\sigma'|\leq \Mi$} In this case, we control $\rd_x^{\alp'}\rd_v^{\bt'}Y^{\sigma'}\bar{a}_{ij}$ in $L^\i_xL^\i_v$ (with appropriate weights). Since $\Mi \leq \Mm-4-\max\{2,\lceil \f 2{2+\gamma} \rceil\}$, we can apply the lower order $L^\i_xL^\i_v$ estimates for $\rd_x^{\alp'}\rd_v^{\bt'}Y^{\sigma'}\bar{a}_{ij}$ without a loss. It is crucial in our estimate to also exploit the fact $|\alp'|+|\bt'|+|\sigma'|\geq 1$, so that we have $L^\i_xL^\i_v$ estimates which are \emph{better} than that in Proposition~\ref{prop:ab.Li.1} (which would have incurred a logarithmic loss). To use this, we separately consider the (non-mutually exclusive) subcases $|\sigma'|\geq 1$ and $\max\{|\alp'|,\,|\bt'|\} \geq 1$ below.

\pfstep{Case~1(a): $|\alp'|+|\bt'|+|\sigma'|\leq \Mi$ and $|\sigma'|\geq 1$} The key point for $|\sigma'|\geq 1$ is that $|\sigma'''|+|\sigma''|\leq 2|\sigma|-1$. As a consequence, $\wb^{2\Mm+10-2|\sigma|} \ls \wb^{\Mm+5-|\sigma'''|}\wb^{\Mm+5-|\sigma''|}\wb^{-1}$. Note also that $|\bt'|+|\bt''|+|\bt'''|\leq 2|\bt|+2$. With these bounds, we use H\"older's inequality, Proposition~\ref{prop:ab.Li.null.cond} and Lemma~\ref{lem:move.t.weights} to obtain
\begin{equation*}
\begin{split}
&\: \max_{i,j} \| \wb^{2\Mm+10-2|\sigma|}|\rd_x^{\alp'''}\rd_v^{\bt'''}Y^{\sigma'''} g| |\rd_x^{\alp'}\rd_v^{\bt'}Y^{\sigma'} \bar{a}_{ij}| |\rd_x^{\alp''} \rd_v^{\bt''}Y^{\sigma''} g| \|_{L^1([0,T];L^1_xL^1_v)} \\
\ls &\: \max_{i,j}(1+T)^{2|\bt|} \| (1+t)^{-\f 12-\de-|\bt'''|}\vb\wb^{\Mm+5-|\sigma'''|}\rd_x^{\alp'''}\rd_v^{\bt'''}Y^{\sigma'''} g\|_{L^2([0,T];L^2_xL^2_v)}\\
&\: \times \|(1+t)^{1+2\de-|\bt'|+2}\vb^{-2}\wb^{-1}\rd_x^{\alp'}\rd_v^{\bt'}Y^{\sigma'} \bar{a}_{ij}\|_{L^\i([0,T];L^\i_xL^\i_v)}\\
&\:\times  \|(1+t)^{-\f 12-\de-|\bt''|}\vb\wb^{\Mm+5-|\sigma''|}\rd_x^{\alp''} \rd_v^{\bt''}Y^{\sigma''} g \|_{L^2([0,T];L^2_xL^2_v)} \\
\ls &\: (1+T)^{2|\bt|} \times \ep^{\f 34} \times (\sup_{t\in [0,T]} \ep^{\f 34}(1+t)^{1+2\de-|\bt'|+2}(1+t)^{-3-\min\{2+\gamma,1\}+|\bt'|} )\times \ep^{\f 34} = \ep^{\f 94} (1+T)^{2|\bt|},
\end{split}
\end{equation*}
where in the last line we have used that $2\de <\min\{2+\gamma,1\}$ (by \eqref{def:de}).

\pfstep{Case~1(b): $|\alp'|+|\bt'|\leq \Mi$ and $\max\{|\alp'|,|\bt'|\}\geq 1$} In this case, we do not have a gain in the $\wb$ weight as in Case~1(a). Nevertheless, since $|\bt'|\geq 1$ or $|\alp'|\geq 1$, we can take advantage of the improvement in Propositions~\ref{prop:ab.Li.2} or \ref{prop:ab.Li.3}. Let us note as in Case~1(a) that $|\bt'|+|\bt''|+|\bt'''|\leq 2|\bt|+2$. Hence, using H\"older's inequality, Propositions~\ref{prop:ab.Li.2}, \ref{prop:ab.Li.3} and Lemma~\ref{lem:move.t.weights}, we obtain
\begin{equation*}
\begin{split}
&\: \max_{i,j} \| \wb^{2\Mm+10-2|\sigma|}|\rd_x^{\alp'''}\rd_v^{\bt'''}Y^{\sigma'''} g| |\rd_x^{\alp'}\rd_v^{\bt'}Y^{\sigma'} \bar{a}_{ij}| |\rd_x^{\alp''} \rd_v^{\bt''} Y^{\sigma''} g| \|_{L^1([0,T];L^1_xL^1_v)} \\
\ls &\: \max_{i,j}(1+T)^{2|\bt|} \| (1+t)^{-\f 12-\de-|\bt'''|}\vb\wb^{\Mm+5-|\sigma'''|}\rd_x^{\alp'''}\rd_v^{\bt'''}Y^{\sigma'''} g\|_{L^2([0,T];L^2_xL^2_v)}\\
&\: \times \|(1+t)^{1+2\de-|\bt'|+2}\vb^{-2}\rd_x^{\alp'}\rd_v^{\bt'}Y^{\sigma'} \bar{a}_{ij}\|_{L^\i([0,T];L^\i_xL^\i_v)}\\
&\:\times  \|(1+t)^{-\f 12-\de-|\bt''|}\vb\wb^{\Mm+5-|\sigma''|}\rd_x^{\alp''} \rd_v^{\bt''} Y^{\sigma''} g \|_{L^2([0,T];L^2_xL^2_v)} \\
\ls &\: (1+T)^{2|\bt|} \times \ep^{\f 34} \times (\sup_{t\in [0,T]} \ep^{\f 34}(1+t)^{1+2\de-|\bt'|+2}(1+t)^{-3-\min\{2+\gamma,1\}+|\bt'|} ) \times \ep^{\f 34} = \ep^{\f 94} (1+T)^{2|\bt|},
\end{split}
\end{equation*}
where in the last line we used that $2\de <\min\{2+\gamma,1\}$ (by \eqref{def:de}).

\pfstep{Case~2: $|\alp'|+|\bt'|+|\sigma'|> \Mi$} Recall again that in this case $|\alp''|+|\bt''|+|\sigma''|\leq \Mi$ and thus we control $\rd_x^{\alp''} \rd_v^{\bt''} g$ in $L^2([0,T];L^\i_xL^p_v)$ (for suitable $p$ and with appropriate weights).

Before we proceed, one checks that by definition $\Mi\geq 3$. Hence, by the pigeon hole principle, we must have $|\alp'|\geq 2$ or $|\bt'|\geq 2$ or $|\sigma'|\geq 2$. We separate into these three (non-mutually exclusive) subcases.

\pfstep{Case~2(a): $|\alp'|+|\bt'|+|\sigma'|> \Mi$ and $|\sigma'|\geq 2$} In analogy with case 1(a), we take advantage of $|\sigma'|\geq 2$ by using that it implies $|\sigma'''|+|\sigma''|\leq 2|\sigma|-2$. Hence $\wb^{2\Mm+10-2|\sigma|} \ls \wb^{\Mm+5-|\sigma'''|}\wb^{\Mm+5-|\sigma''|}\wb^{-2}$. Note also that $|\bt'|+|\bt''|+|\bt'''|\leq 2|\bt|+2$. By H\"older's inequality, Propositions~\ref{prop:ab.L2.null.cond} and \ref{main.bulk.est}, we obtain
\begin{equation*}
\begin{split}
&\: \max_{i,j} \| \wb^{2\Mm+10-2|\sigma|}|\rd_x^{\alp'''}\rd_v^{\bt'''}Y^{\sigma'''} g| |\rd_x^{\alp'}\rd_v^{\bt'}Y^{\sigma''} \bar{a}_{ij}| |\rd_x^{\alp''} \rd_v^{\bt''} Y^{\sigma''} g| \|_{L^1([0,T];L^1_xL^1_v)} \\
\ls &\: \max_{i,j}(1+T)^{2|\bt|} \| (1+t)^{-\f 12-\de-|\bt'''|}\vb\wb^{\Mm+5-|\sigma'''|}\rd_x^{\alp'''}\rd_v^{\bt'''}Y^{\sigma'''} g\|_{L^2([0,T];L^2_xL^2_v)}\\
&\: \times \|(1+t)^{1+2\de-|\bt'|+2}\vb^{-2}\wb^{-2}\rd_x^{\alp'}\rd_v^{\bt'}Y^{\sigma''} \bar{a}_{ij}\|_{L^\i([0,T];L^2_xL^\i_v+L^2_xL^{p_{**}}_v)}\\
&\:\times  \|(1+t)^{-\f 12-\de-|\bt''|}\vb\wb^{\Mm+5-|\sigma''|}\rd_x^{\alp''} \rd_v^{\bt''}Y^{\sigma''} g \|_{L^2([0,T];L^\i_xL^2_v\cap L^\i_xL^{\f{2p_{**}}{p_{**}-2}})} \\
\ls &\: (1+T)^{2|\bt|} \times \ep^{\f 34} \times (\sup_{t\in [0,T]} \ep^{\f 34}(1+t)^{1+2\de-|\bt'|+2}(1+t)^{-\min\{\f{16}{5},5+\gamma\}+|\bt'|} )\times \ep^{\f 34} = \ep^{\f 94} (1+T)^{2|\bt|},
\end{split}
\end{equation*}
where in the last line we used that $2\de <\min\{2+\gamma,\f 15\}$ (by \eqref{def:de}).

\pfstep{Case~2(b): $|\alp'|+|\bt'|+|\sigma'|> \Mi$ and $|\bt'|\geq 2$} In this case, we take advantage of $|\bt'|\geq 2$ and use Proposition~\ref{prop:ab.L2.improved}. More precisely, after noting $|\bt'|+|\bt''|+|\bt'''|\leq 2|\bt|+2$, we use H\"older's inequality, Propositions~\ref{prop:ab.L2.improved} and \ref{main.bulk.est} to obtain
\begin{equation*}
\begin{split}
&\: \max_{i,j} \| \wb^{2\Mm+10-2|\sigma|}|\rd_x^{\alp'''}\rd_v^{\bt'''}Y^{\sigma'''} g| |\rd_x^{\alp'}\rd_v^{\bt'}Y^{\sigma'} \bar{a}_{ij}| |\rd_x^{\alp''} \rd_v^{\bt''} Y^{\sigma''} g| \|_{L^1([0,T];L^1_xL^1_v)} \\
\ls &\: \max_{i,j}(1+T)^{2|\bt|} \| (1+t)^{-\f 12-\de-|\bt'''|}\vb\wb^{\Mm+5-|\sigma'''|}\rd_x^{\alp'''}\rd_v^{\bt'''}Y^{\sigma'''} g\|_{L^2([0,T];L^2_xL^2_v)}\\
&\: \times |(1+t)^{1+2\de-|\bt'|+2}\vb^{-2}\rd_x^{\alp'}\rd_v^{\bt'}Y^{\sigma'} \bar{a}_{ij}\|_{L^\i([0,T];L^2_xL^{p_{**}}_v)}\\
&\:\times  \|(1+t)^{-\f 12-\de-|\bt''|}\vb\wb^{\Mm+5-|\sigma''|}\rd_x^{\alp''} \rd_v^{\bt''} Y^{\sigma''} g \|_{L^2([0,T];L^\i_xL^{\f{2p_{**}}{p_{**}-2}}_v)} \\
\ls &\: (1+T)^{2|\bt|} \times \ep^{\f 34} \times (\sup_{t\in [0,T]} \ep^{\f 34}(1+t)^{1+2\de-|\bt'|+2}(1+t)^{-\min\{\f{16}{5},5+\gamma\}+|\bt'|} )\times \ep^{\f 34} = \ep^{\f 94} (1+T)^{2|\bt|},
\end{split}
\end{equation*}
where in the last line we again used that $2\de <\min\{2+\gamma,\f 15\}$ (by \eqref{def:de}).

\pfstep{Case~2(c): $|\alp'|+|\bt'|+|\sigma'|> \Mi$ and $|\alp'|\geq 2$} In this case, we take advantage of $|\alp'|\geq 2$ and use Proposition~\ref{prop:ab.L2.improved.2}. More precisely, after noting $|\bt'|+|\bt''|+|\bt'''|\leq 2|\bt|+2$, we use H\"older's inequality, Propositions~\ref{prop:ab.L2.improved.2} and \ref{main.bulk.est} to obtain
\begin{equation*}
\begin{split}
&\: \max_{i,j} \| \wb^{2\Mm+10-2|\sigma|}|\rd_x^{\alp'''}\rd_v^{\bt'''}Y^{\sigma'''} g| |\rd_x^{\alp'}\rd_v^{\bt'}Y^{\sigma'} \bar{a}_{ij}| |\rd_x^{\alp''} \rd_v^{\bt''} Y^{\sigma''} g| \|_{L^1([0,T];L^1_xL^1_v)} \\
\ls &\: \max_{i,j}(1+T)^{2|\bt|} \| (1+t)^{-\f 12-\de-|\bt'''|}\vb\wb^{\Mm+5-|\sigma'''|}\rd_x^{\alp'''}\rd_v^{\bt'''}Y^{\sigma'''} g\|_{L^2([0,T];L^2_xL^2_v)}\\
&\: \times \|(1+t)^{1+2\de-|\bt'|+2}\vb^{-2}\rd_x^{\alp'}\rd_v^{\bt'}Y^{\sigma'} \bar{a}_{ij}\|_{L^\i([0,T];L^2_xL^\i_v+ L^2_xL^2_v)}\\
&\:\times  \|(1+t)^{-\f 12-\de-|\bt''|}\vb\wb^{\Mm+5-|\sigma''|}\rd_x^{\alp''} \rd_v^{\bt''} Y^{\sigma''} g \|_{L^2([0,T];L^\i_xL^2_v \cap L^\i_xL^\i_v)} \\
\ls &\: (1+T)^{2|\bt|} \times \ep^{\f 34} \times (\sup_{t\in [0,T]} \ep^{\f 34}(1+t)^{1+2\de-|\bt'|+2}(1+t)^{-\min\{\f{16}{5},5+\gamma\}+|\bt'|} )\times \ep^{\f 34} = \ep^{\f 94} (1+T)^{2|\bt|},
\end{split}
\end{equation*}
where in the last line we again used that $2\de <\min\{2+\gamma,\f 15\}$ (by \eqref{def:de}). \qedhere
\end{proof}

\begin{proposition}\label{prop:II}
Let $|\alp|+|\bt|+|\sigma|\leq \Mm$. Then the term $II_e^{\alp,\bt,\sigma}$ in \eqref{def:II} is bounded as follows for every $T\in [0,T_{Boot})$: 
$$II_e^{\alp,\bt,\sigma}(T)\ls \ep^2(1+T)^{2|\bt|}.$$
\end{proposition}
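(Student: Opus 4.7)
The proof follows the blueprint of Proposition~\ref{prop:I}, exploiting two structural differences between $II_e^{\alp,\bt,\sigma}$ and $I_e^{\alp,\bt,\sigma}$: (i) the weight carries an extra factor $t\wb^{-1}$; (ii) the total $\bt$-count is reduced by one, i.e.\ $|\bt'|+|\bt''|+|\bt'''|\leq 2|\bt|+1$ rather than $2|\bt|+2$. The extra $\wb^{-1}$ allows us to invoke the ``null-structure'' estimate Proposition~\ref{prop:ab.Li.null.cond}, producing a decay $t^{-\min\{2+\gamma,1\}}$ that more than compensates the extra factor of $t$; the savings of one $\bt$-derivative yields the extra $(1+t)^{-1}$ required to match the target rate.

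The index bookkeeping is in fact simpler than in Proposition~\ref{prop:I}. Since $|\alp'|+|\bt'|+|\sigma'|=1\leq \Mi$, the single coefficient derivative can always be placed in $L^\infty([0,T];L^\infty_xL^\infty_v)$, so no case split on its index is needed; and both $g$-factors can be placed in $L^2([0,T];L^2_xL^2_v)$ via Lemma~\ref{lem:move.t.weights}, so Proposition~\ref{main.bulk.est} is not required. I will split the $\wb$-weight as
$$\wb^{2\Mm+9-2|\sigma|}\leq \wb^{\Mm+5-|\sigma'''|}\,\wb^{\Mm+5-|\sigma''|}\,\wb^{-1},$$
using $|\sigma''|+|\sigma'''|\leq 2|\sigma|-|\sigma'|\leq 2|\sigma|$, which hands one extra $\wb^{-1}$ to $\bar{a}_{ij}$.

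For $T\geq 1$, apply H\"older in $L^2_tL^2_xL^2_v\cdot L^2_tL^2_xL^2_v\cdot L^\infty_tL^\infty_xL^\infty_v$. The two $g$-contributions, after pairing with weights $(1+t)^{-\f 12-\de-|\bt'''|}\vb$ and $(1+t)^{-\f 12-\de-|\bt''|}\vb$, give $\ep^{3/4}$ each by Lemma~\ref{lem:move.t.weights}. For the coefficient, I will apply Proposition~\ref{prop:ab.Li.null.cond} to $\wb^{-1}\rd_x^{\alp'}\rd_v^{\bt'}Y^{\sigma'}\bar{a}_{ij}$; the resulting $\wb^{\min\{1,2+\gamma\}-1}$ and $\vb^{\max\{0,1+\gamma\}-2}$ factors are harmless since both exponents are non-positive for $\gamma\in(-2,0)$. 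The compensating time weight then produces
$$\bigl\|t\wb^{-1}\vb^{-2}|\rd_x^{\alp'}\rd_v^{\bt'}Y^{\sigma'}\bar{a}_{ij}|(1+t)^{1+2\de+|\bt''|+|\bt'''|}\bigr\|_{L^\infty_tL^\infty_xL^\infty_v}\ls \ep^{3/4}(1+T)^{2\de-\min\{2+\gamma,1\}+2|\bt|},$$
using $|\bt'|+|\bt''|+|\bt'''|\leq 2|\bt|+1$. By \eqref{def:de}, $2\de<\min\{2+\gamma,1\}$, so this is $\ls \ep^{3/4}(1+T)^{2|\bt|}$ with room to spare, and the product of the three factors yields $\ep^{9/4}(1+T)^{2|\bt|}$. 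For $T\leq 1$, the factor $t^{-\min\{2+\gamma,1\}}$ in Proposition~\ref{prop:ab.Li.null.cond} is singular at $t=0$, so there I use Proposition~\ref{prop:ab.Li.1} instead; the factor $t\leq 1$ is trivially harmless and all other estimates go through.

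The only non-routine point is verifying $2\de<\min\{2+\gamma,1\}$, which is built into \eqref{def:de} and provides precisely the slack needed to absorb the otherwise borderline time decay. This is the key place where the smallness of $\de$ is actually used; everything else is a straightforward adaptation of Case~1(a) of Proposition~\ref{prop:I}.
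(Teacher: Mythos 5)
Your proposal is correct and follows the same route as the paper's proof: H\"older with both $g$-factors in $L^2_tL^2_xL^2_v$ via Lemma~\ref{lem:move.t.weights}, the coefficient in $L^\infty_tL^\infty_xL^\infty_v$ via Proposition~\ref{prop:ab.Li.null.cond}, with the single coefficient derivative $|\alp'|+|\bt'|+|\sigma'|=1$ guaranteeing this is always a low-order term, the extra $\wb^{-1}$ buying the null-structure gain $t^{-\min\{2+\gamma,1\}}$, and $|\bt'|+|\bt''|+|\bt'''|\leq 2|\bt|+1$ together with $2\de<\min\{2+\gamma,1\}$ closing the time-weight bookkeeping. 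One small point in your favor: you keep the factor $t$ attached to the coefficient so that $t\cdot t^{-\min\{2+\gamma,1\}}=t^{1-\min\{2+\gamma,1\}}$ is manifestly bounded at $t=0$; the paper's displayed coefficient factor $\|(1+t)^{3+2\de-|\bt'|}\vb^{-2}\wb^{-1}\rd_x^{\alp'}\rd_v^{\bt'}Y^{\sigma'}\bar a_{ij}\|_{L^\infty}$ taken literally is not finite near $t=0$ under Proposition~\ref{prop:ab.Li.null.cond} alone, so your placement of $t$ is the cleaner way to make the step rigorous. (Your separate $T\leq 1$ case via Proposition~\ref{prop:ab.Li.1} is then actually redundant, since the $t$-factor already tames the singularity on all of $[0,T]$, but it does no harm.) Minor cosmetic remark: the displayed bound $\ls\ep^{3/4}(1+T)^{2\de-\min\{2+\gamma,1\}+2|\bt|}$ should strictly be $\ls\ep^{3/4}\max\{1,(1+T)^{2\de-\min\{2+\gamma,1\}+2|\bt|}\}$ when the exponent is negative, but this does not affect the conclusion $\ls\ep^{3/4}(1+T)^{2|\bt|}$.
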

\begin{proof}
Take $\alp'$, $\alp''$, $\alp'''$, $\bt'$, $\bt''$, $\bt'''$, $\sigma'$, $\sigma''$ and $\sigma'''$ satisfying the required conditions in the sum of $II_e^{\alp,\bt,\sigma}$. In particular, since $|\alp'|+|\bt'|+|\sigma'|=1$, we can put the $\rd_x^{\alp'}\rd_v^{\bt'}Y^{\sigma'} \bar{a}_{ij}$ term in $L^\i([0,T];L^\i_xL^\i_v)$. We will also make crucial use of the fact that the $\wb$ weight is one power better than the maximal weight. Finally, note that $|\bt'|+|\bt''|+|\bt'''|\leq 2|\bt|+1$. 

Hence, by H\"older's inequality, Proposition~\ref{prop:ab.Li.null.cond} and Lemma~\ref{lem:move.t.weights}, we obtain
\begin{equation*}
\begin{split}
&\: \max_{i,j}\| t \wb^{2\Mm+9-2|\sigma|} |\rd_x^{\alp'''}\rd_v^{\bt'''}Y^{\sigma'''} g| |\rd_x^{\alp'}\rd_v^{\bt'}Y^{\sigma'} \bar{a}_{ij}| |\rd_x^{\alp''} \rd_v^{\bt''} Y^{\sigma''} g| \|_{L^1([0,T];L^1_xL^1_v)} \\
\ls &\: \max_{i,j}(1+T)^{2|\bt|} \| (1+t)^{-\f 12-\de-|\bt'''|}\vb\wb^{\Mm+5-|\sigma'''|}\rd_x^{\alp'''}\rd_v^{\bt'''} Y^{\sigma'''} g\|_{L^2([0,T];L^2_xL^2_v)}\\
&\: \times \|(1+t)^{1+2\de-|\bt'|+2}\vb^{-2}\wb^{-1}\rd_x^{\alp'}\rd_v^{\bt'}Y^{\sigma'} \bar{a}_{ij}\|_{L^\i([0,T];L^\i_xL^\i_v)}\\
&\:\times  \|(1+t)^{-\f 12-\de-|\bt''|}\vb\wb^{\Mm+5-|\sigma''|}\rd_x^{\alp''} \rd_v^{\bt''} Y^{\sigma''} g \|_{L^2([0,T];L^2_xL^2_v)} \\
\ls &\: (1+T)^{2|\bt|} \times \ep^{\f 34} \times (\sup_{t\in [0,T]} \ep^{\f 34}(1+t)^{1+2\de-|\bt'|+2}(1+t)^{-3-\min\{2+\gamma,1\}+|\bt'|} ) \times \ep^{\f 34} = \ep^{\f 94} (1+T)^{2|\bt|},
\end{split}
\end{equation*}
where in the last line we used that $2\de <\min\{2+\gamma,1\}$ (by \eqref{def:de}). \qedhere
\end{proof}

\begin{proposition}\label{prop:III}
Let $|\alp|+|\bt|+|\sigma|\leq \Mm$. Then the term $III_e^{\alp,\bt,\sigma}$ in \eqref{def:III} is bounded as follows for every $T\in [0,T_{Boot})$:
$$III_e^{\alp,\bt,\sigma}(T)\ls \ep^2(1+T)^{2|\bt|}.$$
\end{proposition}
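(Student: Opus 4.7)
The proof proposal follows the same blueprint as Proposition~\ref{prop:I}, using $\bar{a}_{ij}v_i$ estimates (Propositions~\ref{prop:ab.Li.weighted} and \ref{prop:ab.L2.weighted}) in place of $\bar{a}_{ij}$ estimates. Two structural features should make the bound easier than for $I_e^{\alp,\bt,\sigma}$: first, the constraint is $|\bt'|+|\bt''|=|\bt|+1$, which is one less $v$-derivative than the $2|\bt|+2$ budget that appeared in $I_e^{\alp,\bt,\sigma}$, giving an extra $(1+t)^{-1}$ of room; second, the first factor is the fixed $\rd_x^\alp\rd_v^\bt Y^\sigma g$ rather than a summed-over high-order term, so the energy budget on one side is sharp. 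After extracting $(1+T)^{2|\bt|}$ from the two $g$ factors, the remaining time integral should be uniformly bounded.

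The plan is to split into $T\leq 1$ (short-time) and $T\geq 1$ (long-time), then further split by whether $|\alp'|+|\bt'|+|\sigma'|\leq \Mi$ or $|\alp'|+|\bt'|+|\sigma'|>\Mi$. In the short-time regime I would apply H\"older's inequality placing $\rd_x^\alp\rd_v^\bt Y^\sigma g$ and $\rd_x^{\alp''}\rd_v^{\bt''}Y^{\sigma''}g$ in $L^2([0,T];L^2_xL^2_v)$ via Lemma~\ref{lem:move.t.weights} (respectively via Proposition~\ref{main.bulk.est} if $|\alp''|+|\bt''|+|\sigma''|\leq \Mi$), while controlling $\rd_x^{\alp'}\rd_v^{\bt'}Y^{\sigma'}(\bar{a}_{ij}v_i)$ in $L^\infty([0,T];L^\infty_xL^\infty_v)$ using the first bound in Proposition~\ref{prop:ab.Li.weighted} in the low-order case, or in $L^\infty([0,T];L^2_xL^\infty_v)$ using \eqref{prop:ab.Lw.weighted.basic} of Proposition~\ref{prop:ab.L2.weighted} in the high-order case. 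The $\vb^{\max\{2+\gamma,1\}}$ weight on $\bar{a}_{ij}v_i$ is absorbed by the two factors of $\vb$ from the energy norms.

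For the long-time regime with $|\alp'|+|\bt'|+|\sigma'|\leq \Mi$, we use H\"older with $\rd_x^{\alp'}\rd_v^{\bt'}Y^{\sigma'}(\bar{a}_{ij}v_i)\in L^\infty_t L^\infty_x L^\infty_v$. When $\max\{|\alp'|,|\bt'|\}\geq 1$ we invoke \eqref{prop:ab.L2.weighted.improved} of Proposition~\ref{prop:ab.L2.weighted} in its $L^\infty$ version (or the second line of Proposition~\ref{prop:ab.Li.weighted})—wait, but there is no explicit $L^\infty_x L^\infty_v$ improvement for $\bar{a}_{ij}v_i$ with $|\sigma'|\geq 1$ alone. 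However, the extra $(1+t)^{-1}$ gained from the index constraint $|\bt'|+|\bt''|=|\bt|+1$ means that even the baseline bound $(1+t)^{-3+|\bt'|}$ from Proposition~\ref{prop:ab.Li.weighted} suffices: after Cauchy--Schwarz we end up with an integrand $\sim (1+t)^{-1-2\de}\cdot (1+t)^{-3+|\bt'|}\cdot(1+t)^{1-|\bt'|}$, which is integrable with $(1+T)^{2|\bt|}$ extracted. For the case $|\alp'|+|\bt'|+|\sigma'|>\Mi$, put $\rd_x^{\alp'}\rd_v^{\bt'}Y^{\sigma'}(\bar{a}_{ij}v_i)$ in $L^\infty_t(L^2_x L^\infty_v + L^2_x L^{p_*}_v)$ using Proposition~\ref{prop:ab.L2.weighted}, and $\rd_x^{\alp''}\rd_v^{\bt''}Y^{\sigma''}g$ in $L^2_t(L^\infty_x L^2_v \cap L^\infty_x L^{\frac{2p_*}{p_*-2}}_v)$ via Proposition~\ref{main.bulk.est}; since in this case $|\alp''|+|\bt''|+|\sigma''|\leq \Mi$, and since $\Mi+1\geq 4$ forces either $|\sigma'|\geq 2$ or $\max\{|\alp'|,|\bt'|\}\geq 1$, we can use the improved estimate \eqref{prop:ab.L2.weighted.improved} when $\max\{|\alp'|,|\bt'|\}\geq 1$, and a $\wb$-weighted argument akin to Case~2(a) of Proposition~\ref{prop:I} when $|\sigma'|\geq 2$.

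The main technical issue—not really an obstacle but a bookkeeping task—is the subcase $|\alp'|+|\bt'|+|\sigma'|\leq \Mi$ with $|\sigma'|\geq 1$ but $|\alp'|=|\bt'|=0$, where no improved $\bar{a}_{ij}v_i$ estimate with $\wb^{-1}$ gain has been stated. The resolution is the $(1+t)^{-1}$ budget bought by $|\bt'|+|\bt''|=|\bt|+1$: this renders the baseline bound \eqref{prop:ab.Lw.weighted.basic} (or the first inequality of Proposition~\ref{prop:ab.Li.weighted}) sufficient, so no null-structure-type improvement is required. Given the available margin (quantified via $\de<\min\{2+\gamma,1\}$ from \eqref{def:de}), every subcase yields the bound $\ep^{9/4}(1+T)^{2|\bt|}$, which is stronger than the claimed $\ep^2(1+T)^{2|\bt|}$.
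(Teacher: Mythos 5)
Your proposal is correct and matches the paper's own proof: the paper also substitutes the $\bar{a}_{ij}v_i$ estimates of Propositions~\ref{prop:ab.Li.weighted} and \ref{prop:ab.L2.weighted} into the Proposition~\ref{prop:I} template, uses only the baseline decay from Proposition~\ref{prop:ab.Li.weighted} (no null-structure improvement) in the low-order case $|\alp'|+|\bt'|+|\sigma'|\leq \Mi$ because the index constraint $|\bt'|+|\bt''|\leq|\bt|+1$ buys an extra $(1+t)^{-1}$, and splits the high-order case into $|\sigma'|\geq 2$ (a $\wb^{-2}$-gain argument via a pointwise bound and Proposition~\ref{prop:ab.L2.null.cond}) and $\max\{|\alp'|,|\bt'|\}\geq 1$ (via \eqref{prop:ab.L2.weighted.improved}). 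The observation you flag as the main technical point---that the index constraint renders the baseline bound sufficient and no improvement is needed when $|\sigma'|\geq 1$ alone---is exactly what the paper's Case~1 computation rests on.
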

\begin{proof}
Take $\alp'$, $\alp''$, $\bt'$, $\bt''$, $\sigma'$ and $\sigma''$ satisfying the required conditions in the sum of $III_e^{\alp,\bt,\sigma}$.

We will consider separately the $T\leq 1$ and the $T\geq 1$ estimates.

\textbf{Short-time estimates:~$T\leq 1$.} This is exactly the same as the proof of the $T\leq 1$ estimates in Proposition~\ref{prop:I}, except that we replace the use of Propositions~\ref{prop:ab.Li.1} and \ref{prop:ab.L2} by Propositions~\ref{prop:ab.Li.weighted} and \ref{prop:ab.L2.weighted} respectively; we omit the details.

\textbf{Long-time estimates: ~$T\geq 1$.} We will divide into the cases $|\alp'|+|\bt'|+|\sigma'|\leq \Mi$ and $|\alp'|+|\bt'|+|\sigma'|> \Mi$.

\pfstep{Case~1: $|\alp'|+|\bt'|+|\sigma'|\leq \Mi$} In this case, we control $\rd_x^{\alp'}\rd_v^{\bt'} Y^{\sigma'} (\bar{a}_{ij}v_i)$ in $L^\i([0,T];L^\i_xL^\i_v)$ with appropriate weights. Recall that $|\bt'|+|\bt''|\leq |\bt|+1$. Hence using H\"older's inequality, Proposition~\ref{prop:ab.Li.weighted} and Lemma~\ref{lem:move.t.weights}, we obtain
\begin{equation*}
\begin{split}
&\: \max_j\| \wb^{2\Mm+10-2|\sigma|}|\rd_x^{\alp}\rd_v^{\bt}Y^{\sigma} g| |\rd_x^{\alp'}\rd_v^{\bt'}Y^{\sigma'} (\bar{a}_{ij}v_i)| |\rd_x^{\alp''} \rd_v^{\bt''} Y^{\sigma''} g| \|_{L^1([0,T];L^1_xL^1_v)} \\
\ls &\: \max_j (1+T)^{2|\bt|}\|(1+t)^{-\f 12-\de-|\bt|}\vb \wb^{\Mm+5-|\sigma|} \rd_x^{\alp}\rd_v^{\bt} Y^{\sigma} g\|_{L^2([0,T];L^2_xL^2_v)} \\
&\:\times \|(1+t)^{1+2\de-|\bt'|+1}\vb^{-2} \rd_x^{\alp'}\rd_v^{\bt'}Y^{\sigma'} (\bar{a}_{ij}v_i)\|_{L^\i([0,T];L^\i_xL^\i_v)}\\
&\: \times \|(1+t)^{-\f 12-\de-|\bt''|}\vb \wb^{\Mm+5-|\sigma''|} \rd_x^{\alp''}\rd_v^{\bt''}Y^{\sigma''} g\|_{L^2([0,T];L^2_xL^2_v)} \\
\ls &\: (1+T)^{2|\bt|}\times \ep^{\f 34} \times \ep^{\f 34}(\sup_{t\in [0,T]} (1+t)^{1+2\de-|\bt'|+1} (1+t)^{-3+|\bt'|})\times \ep^{\f 34} = \ep^{\f 94} (1+T)^{2|\bt|},
\end{split}
\end{equation*}
where in the last line we have used $2\de <1$ (which follows from \eqref{def:de}). 

\pfstep{Case~2: $|\alp'|+|\bt'|+|\sigma'|>\Mi$} In this case, we must have $|\alp''|+|\bt''|+|\sigma''|\leq \Mi$ and hence we can bound $\rd_x^{\alp''}\rd_v^{\bt''}Y^{\sigma''} g$ in $L^\i_x L^p_v$ (with weights and with $p\geq 2$). Since $\Mi\geq 3$, by the pigeon hole principle, we have either $|\alp'|\geq 1$ or $|\bt'|\geq 1$ or $|\sigma'|\geq 2$. We consider below the (non-mutually exclusive) subcases $|\sigma'|\geq 2$ and $\max\{|\alp'|,|\bt'|\}\geq 1$.

\pfstep{Case~2(a): $|\alp'|+|\bt'|+|\sigma'|>\Mi$ and $|\sigma'|\geq 2$} First note that since $|\sigma'|\geq 2$, we have $|\sigma''|\leq |\sigma|-2$. As a consequence, 
\begin{equation}\label{III.2.a.1}
\wb^{2\Mm+10-2|\sigma|} \ls \wb^{\Mm+5-|\sigma|}\wb^{\Mm+5-|\sigma''|}\wb^{-2}.
\end{equation}

In this case, we simply estimate $\rd_x^{\alp'}\rd_v^{\bt'}Y^{\sigma'} (\bar{a}_{ij}v_i)$ using the trivial pointwise estimate
$$|\rd_x^{\alp'}\rd_v^{\bt'}Y^{\sigma'} (\bar{a}_{ij}v_i)|\ls \vb (\max_{i,j}|\rd_x^{\alp'}\rd_v^{\bt'}Y^{\sigma'} \bar{a}_{ij}|)+ \sum_{\substack{|\widetilde{\bt}'|\leq |\bt'|,\,|\widetilde{\sigma}'|\leq |\sigma'| \\ |\widetilde{\bt}'| + |\widetilde{\sigma}'| =|\bt'|+ |\sigma'| -1}} (\max_{i,j}|\rd_x^{\alp'}\rd_v^{\widetilde{\bt}'}Y^{\widetilde{\sigma}'} \bar{a}_{ij}|).$$
Together with Proposition~\ref{prop:ab.L2.null.cond}, this then implies that\footnote{Note that in fact the stronger estimate with $\vb^{-2}$ replaced by $\vb^{-1}$ on the LHS holds.}
\begin{equation}\label{III.2.a.2}
\max_j \|\vb^{-2}\wb^{-2}\rd_x^{\alp'}\rd_v^{\bt'}Y^{\sigma'} (\bar{a}_{ij}v_i)\|_{L^2_xL^\i_v+ L^2_xL^{p_{**}}_v}(t) \ls \ep^{\f 34}(1+t)^{-\min\{\f{16}{5},5+\gamma\}+|\bt'|}.
\end{equation}
Recall now also that $|\bt'|+|\bt''|\leq |\bt|+1$. Therefore, using \eqref{III.2.a.1}, \eqref{III.2.a.2}, H\"older's inequality and Proposition~\ref{main.bulk.est}, we obtain
\begin{equation*}
\begin{split}
&\: \max_{j} \| \wb^{2\Mm+10-2|\sigma|}|\rd_x^{\alp}\rd_v^{\bt}Y^{\sigma} g| |\rd_x^{\alp'}\rd_v^{\bt'}Y^{\sigma'} (\bar{a}_{ij}v_i)| |\rd_x^{\alp''} \rd_v^{\bt''} Y^{\sigma''} g| \|_{L^1([0,T];L^1_xL^1_v)} \\
\ls &\: \max_{j}(1+T)^{2|\bt|} \| (1+t)^{-\f 12-\de-|\bt|}\vb\wb^{\Mm+5-|\sigma|}\rd_x^{\alp'''}\rd_v^{\bt'''}Y^{\sigma'''} g\|_{L^2([0,T];L^2_xL^2_v)}\\
&\: \times \|(1+t)^{1+2\de-|\bt'|+1}\vb^{-2}\wb^{-2}\rd_x^{\alp'}\rd_v^{\bt'}Y^{\sigma'} (\bar{a}_{ij}v_i)\|_{L^\i([0,T];L^2_xL^\i_v+ L^2_xL^{p_{**}}_v)} \\
&\:\times  \|(1+t)^{-\f 12-\de-|\bt''|}\vb\wb^{\Mm+5-|\sigma''|}\rd_x^{\alp''} \rd_v^{\bt''} Y^{\sigma''} g \|_{L^2([0,T];L^\i_xL^2_v\cap L^{\i}_xL^{\f{2p_{**}}{p_{**}-2}}_v)} \\
\ls &\: (1+T)^{2|\bt|} \times \ep^{\f 34} \times (\sup_{t\in [0,T]} \ep^{\f 34}(1+t)^{1+2\de-|\bt'|+1}(1+t)^{-\min\{\f{16}{5},5+\gamma\}+|\bt'|} )\times \ep^{\f 34} = \ep^{\f 94} (1+T)^{2|\bt|},
\end{split}
\end{equation*}
where in the last line we used that $2\de <\min\{3+\gamma,\f 65\}$ (by \eqref{def:de}).

\pfstep{Case~2(b): $|\alp'|+|\bt'|+|\sigma'|>\Mi$ and $\max\{|\alp'|,|\bt'|\}\geq 1$} While in this case we have no gain in $\wb$ powers, we use the improvement in Proposition~\ref{prop:ab.L2.weighted} when $\max\{|\alp'|,|\bt'|\}\geq 1$.

Note that $|\bt'|+|\bt''|\leq |\bt|+1$. Hence, by H\"older's inequality, \eqref{prop:ab.L2.weighted.improved} in Proposition~\ref{prop:ab.L2.weighted} and Proposition~\ref{main.bulk.est}, we obtain
\begin{equation*}
\begin{split}
&\: \max_{j} \| \wb^{2\Mm+10-2|\sigma|}|\rd_x^{\alp}\rd_v^{\bt}Y^{\sigma} g| |\rd_x^{\alp'}\rd_v^{\bt'}Y^{\sigma'} (\bar{a}_{ij}v_i)| |\rd_x^{\alp''} \rd_v^{\bt''} Y^{\sigma''} g| \|_{L^1([0,T];L^1_xL^1_v)} \\
\ls &\: \max_{j}(1+T)^{2|\bt|} \| (1+t)^{-\f 12-\de-|\bt|}\vb\wb^{\Mm+5-|\sigma|}\rd_x^{\alp}\rd_v^{\bt}Y^{\sigma} g\|_{L^2([0,T];L^2_xL^2_v)}\\
&\: \times \|(1+t)^{1+2\de-|\bt'|+1}\vb^{-2} \rd_x^{\alp'}\rd_v^{\bt'}Y^{\sigma'} (\bar{a}_{ij}v_i)\|_{L^\i([0,T];L^2_xL^\i_v+L^2_xL^{p_*}_v)}\\
&\:\times  \|(1+t)^{-\f 12-\de-|\bt''|}\vb\wb^{\Mm+5-|\sigma''|}\rd_x^{\alp''} \rd_v^{\bt''} Y^{\sigma''} g \|_{L^2([0,T];L^\i_xL^2_v\cap L^\i_xL^{\f{2p_*}{p_*-2}}_v)} \\
\ls &\: (1+T)^{2|\bt|} \times \ep^{\f 34} \times (\sup_{t\in [0,T]} \ep^{\f 34}(1+t)^{1+2\de-|\bt'|+1}(1+t)^{-\min\{\f{11}{5},4+\gamma\}+|\bt'|} )\times \ep^{\f 34} = \ep^{\f 94} (1+T)^{2|\bt|},
\end{split}
\end{equation*}
where in the last line we used that $2\de <\min\{2+\gamma,\f 15\}$ (by \eqref{def:de}). \qedhere
\end{proof}

\begin{proposition}\label{prop:IV}
Let $|\alp|+|\bt|+|\sigma|\leq \Mm$. Then the term $IV_e^{\alp,\bt,\sigma}$ in \eqref{def:IV} is bounded as follows for every $T\in [0,T_{Boot})$:
$$IV_e^{\alp,\bt,\sigma}(T)\ls \ep^2(1+T)^{2|\bt|}.$$
\end{proposition}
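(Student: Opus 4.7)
The term $IV_e^{\alp,\bt,\sigma}$ is simpler than $III_e^{\alp,\bt,\sigma}$ because (a) no derivatives fall on the coefficient $\bar{a}_{ij}v_i$, and (b) the \emph{same} function $\rd_x^\alp\rd_v^\bt Y^\sigma g$ appears as both $g$-factors, so only a single $L^2$-type norm of $g$ (squared) enters. The new feature, however, is the extra factor $t\wb^{-1}$ coming from rewriting $t\wb^{2\Mm+9-2|\sigma|} = t\wb^{-1}\wb^{2\Mm+10-2|\sigma|}$. As in the proof of Proposition~\ref{EE.general} (cf.\ the bookkeeping of terms \eqref{EE.error.1}--\eqref{EE.error.3}), this is exactly the combination for which the \emph{null structure} of the coefficient must be exploited.

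My plan is to first establish a null-structure version of the pointwise bound on $\bar{a}_{ij}v_i$, in parallel with Proposition~\ref{prop:ab.Li.null.cond}. Starting from \eqref{a.0.2} in Proposition~\ref{prop:a.expressions}, namely $|\bar{a}_{ij}v_i|(t,x,v)\ls\vb^{\max\{2+\gamma,1\}}\int\langle v_*\rangle^{\max\{2+\gamma,1\}}|f|(t,x,v_*)\,\ud v_*$, one decomposes
$$
|v-v_*|^{2+\gamma}\ls t^{-\min\{2+\gamma,1\}}(|x-tv|+|x-tv_*|)^{\min\{2+\gamma,1\}}|v-v_*|^{\max\{0,1+\gamma\}}
$$
and applies Lemma~\ref{lem:Li.5} to obtain
$$
|\bar{a}_{ij}v_i|(t,x,v)\ls \ep^{\f 34}\,\wb^{\min\{2+\gamma,1\}}\vb^{\max\{2+\gamma,1\}}\,t^{-\min\{2+\gamma,1\}}(1+t)^{-3}.
$$
In particular, using $\min\{2+\gamma,1\}\leq 1$ and $t\leq (1+t)$,
\begin{equation*}
t\,\vb^{-2}\wb^{-1}|\bar{a}_{ij}v_i|(t,x,v)\ls \ep^{\f 34}\,\vb^{\max\{\gamma,-1\}}(1+t)^{-2-\min\{2+\gamma,1\}}\ls \ep^{\f 34}(1+t)^{-2-\min\{2+\gamma,1\}}.
\end{equation*}

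Next, apply H\"older's inequality with the three-factor decomposition
$$
t\wb^{2\Mm+9-2|\sigma|}|\rd_x^\alp\rd_v^\bt Y^\sigma g|\,|\bar{a}_{ij}v_i|\,|\rd_x^\alp\rd_v^\bt Y^\sigma g|=\bigl(\vb\wb^{\Mm+5-|\sigma|}|\rd_x^\alp\rd_v^\bt Y^\sigma g|\bigr)^2\cdot t\vb^{-2}\wb^{-1}|\bar{a}_{ij}v_i|,
$$
placing the two identical $g$-factors in $L^2([0,T];L^2_xL^2_v)$ with weight $(1+t)^{-\f12-\de-|\bt|}$ each, and the coefficient in $L^\infty([0,T];L^\infty_xL^\infty_v)$ with weight $(1+t)^{1+2\de+2|\bt|}$. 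Using Lemma~\ref{lem:move.t.weights} and the pointwise bound above yields
\begin{equation*}
IV_e^{\alp,\bt,\sigma}(T)\ls \ep^{\f 32}\cdot\ep^{\f 34}\,\sup_{t\in[0,T]}(1+t)^{2|\bt|-1+2\de-\min\{2+\gamma,1\}}.
\end{equation*}
Since $2\de<\min\{2+\gamma,1\}$ by \eqref{def:de}, the exponent is bounded by $2|\bt|$, giving the claimed $IV_e^{\alp,\bt,\sigma}(T)\ls\ep^{2}(1+T)^{2|\bt|}$.

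The only nontrivial step is verifying the null-structure pointwise bound on $\bar{a}_{ij}v_i$; apart from this, the argument is a direct application of the tools already developed in Sections~\ref{sec:coeff.Li} and \ref{sec:EE.prelim}, and is strictly simpler than the proofs of Propositions~\ref{prop:I}--\ref{prop:III} since no dyadic splitting into cases $|\alp'|+|\bt'|+|\sigma'|\lessgtr\Mi$ is needed (there are no primed multi-indices on the coefficient).
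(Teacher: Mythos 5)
Your argument is correct but takes a more elaborate route than the paper. The paper's proof of Proposition~\ref{prop:IV} does \emph{not} exploit the null structure at all: it simply discards the unused $\wb^{-1}$ factor (writing $\wb^{2\Mm+9-2|\sigma|}\leq\wb^{2(\Mm+5-|\sigma|)}$), places each $g$-factor in the weighted $L^2([0,T];L^2_xL^2_v)$ norm from Lemma~\ref{lem:move.t.weights}, and bounds $\|t(1+t)^{1+2\de}\vb^{-2}\bar{a}_{ij}v_i\|_{L^\i_{t,x,v}}$ directly by Proposition~\ref{prop:ab.Li.weighted}. Since $\vb^{-2}|\bar a_{ij}v_i|\ls\ep^{3/4}(1+t)^{-3}$ already, the total $t$-power is $t\cdot(1+t)^{1+2\de}\cdot(1+t)^{-3}\ls(1+t)^{-1+2\de}\ls 1$, which closes using only $2\de<1$ — no improved coefficient decay is needed. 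Your version instead proves a new null-structure bound for $\bar{a}_{ij}v_i$, analogous to Proposition~\ref{prop:ab.Li.null.cond}, to retain the $\wb^{-1}$. This buys extra room ($(1+t)^{-\min\{2+\gamma,1\}}$ to spare), so it does close, but the extra lemma is superfluous here. One technical caveat: your sketch of the new lemma ``starting from \eqref{a.0.2}'' is slightly off, because \eqref{a.0.2} has already eliminated the $|v-v_*|^{2+\gamma}$ dependence in favor of $\langle v\rangle$--$\langle v_*\rangle$ factors, so your decomposition of $|v-v_*|^{2+\gamma}$ cannot be inserted there directly; you would need to return to the explicit form \eqref{a.0.2.main.step}, extract the factor $|v-v_*|^{1+\gamma}|v||v_*|$ (and handle the $1+\gamma<0$ regime where the estimate degenerates near $v=v_*$), and then redistribute $|x-tv|$, $|x-tv_*|$ weights. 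This is doable but requires the same case analysis as in the proof of \eqref{a.0.2} itself; the paper avoids it entirely.
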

\begin{proof}
By H\"older's inequality, Proposition~\ref{prop:ab.Li.weighted} and Lemma~\ref{lem:move.t.weights}, we obtain
\begin{equation*}
\begin{split}
&\: \max_j \| t \wb^{2\Mm+9-2|\sigma|}|\rd_x^{\alp}\rd_v^{\bt}Y^{\sigma} g| |\bar{a}_{ij}v_i| |\rd_x^{\alp} \rd_v^{\bt}Y^{\sigma} g| \|_{L^1([0,T];L^1_xL^1_v)} \\
\ls &\: \max_j (1+T)^{2|\bt|} \|(1+t)^{-\f 12-\de-|\bt|}\vb\wb^{\Mm+5-|\sigma|}\rd_x^{\alp}\rd_v^{\bt}Y^{\sigma} g\|_{L^2([0,T];L^2_xL^2_v)}^2 \\
&\: \times \|t(1+t)^{1+2\de}\vb^{-2}\bar{a}_{ij}v_i\|_{L^\i([0,T];L^\i_xL^\i_v)} \\
\ls &\: (1+T)^{2|\bt|} \times (\ep^{\f 34})^2\times \ep^{\f 34} \sup_{t\in [0,T]} (1+t)^{1+2\de+1} (1+t)^{-3} = \ep^{\f 94} (1+T)^{2|\bt|},
\end{split}
\end{equation*}
where in the last line we have used $2\de <1$ (by \eqref{def:de}). \qedhere
\end{proof}

\begin{proposition}\label{prop:V}
Let $|\alp|+|\bt|+|\sigma|\leq \Mm$. Then the term $V_e^{\alp,\bt,\sigma}$ in \eqref{def:V} is bounded as follows for every $T\in [0,T_{Boot})$:
$$V_e^{\alp,\bt,\sigma}(T)\ls \ep^2(1+T)^{2|\bt|}.$$
\end{proposition}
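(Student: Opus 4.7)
The plan is to prove Proposition~\ref{prop:V} by following essentially the same argument as in the proof of Proposition~\ref{prop:III} for $III_e^{\alp,\bt,\sigma}$, with the additional observation that $V_e^{\alp,\bt,\sigma}$ has strictly more room in the $t$-decay. Concretely, in the sum defining $V_e^{\alp,\bt,\sigma}$ we have the constraint $|\bt'|+|\bt''|\leq |\bt|$, whereas in $III_e^{\alp,\bt,\sigma}$ the analogous constraint is $|\bt'|+|\bt''|\leq |\bt|+1$; this directly gives us an extra $(1+t)^{-1}$ factor relative to the $III_e$ estimates.

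The next observation is that the coefficient factors $\rd_x^{\alp'}\rd_v^{\bt'}Y^{\sigma'}\bar{a}_{ii}$ and $\rd_x^{\alp'}\rd_v^{\bt'}Y^{\sigma'}(\bar{a}_{ij}v_iv_j)$ appearing in $V_e^{\alp,\bt,\sigma}$ satisfy the same $L^\infty_xL^\infty_v$ and $L^2_xL^p_v$ bounds (for the relevant values of $p\in \{2,\infty,p_*,p_{**}\}$) as the factor $\rd_x^{\alp'}\rd_v^{\bt'}Y^{\sigma'}(\bar{a}_{ij}v_i)$ that was treated in Proposition~\ref{prop:III}. Indeed, for the $L^\infty$ bounds this is precisely Propositions~\ref{prop:ab.Li.1} combined with \ref{prop:ab.Li.weighted}, and for the $L^2$ bounds Propositions~\ref{prop:ab.L2} and \ref{prop:ab.L2.weighted} give the corresponding bounds for $\bar{a}_{ii}$ and $\bar{a}_{ij}v_iv_j$; inspection shows they carry at worst the weight $\vb^{\max\{2+\gamma,1\}}$ and (when at least one derivative hits) the improved $t$-rate provided by \eqref{prop:ab.L2.weighted.improved}. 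In particular, every coefficient estimate used in the proof of Proposition~\ref{prop:III} applies verbatim to these new coefficients.

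With these observations in place, I would run the same case analysis as in the proof of Proposition~\ref{prop:III}: first treating short-time $T\leq 1$ directly by H\"older combined with Proposition~\ref{prop:ab.L2.weighted} and Lemma~\ref{lem:move.t.weights}; and then, for long-time $T\geq 1$, splitting into Case 1 ($|\alp'|+|\bt'|+|\sigma'|\leq \Mi$) where one places the coefficient in $L^\infty_tL^\infty_xL^\infty_v$ via Proposition~\ref{prop:ab.Li.weighted}, and Case 2 ($|\alp'|+|\bt'|+|\sigma'|>\Mi$, hence $|\alp''|+|\bt''|+|\sigma''|\leq \Mi$) where one uses Proposition~\ref{main.bulk.est} on the lower-order factor together with either Proposition~\ref{prop:ab.L2.null.cond} (when $|\sigma'|\geq 2$, absorbing two factors of $\wb^{-1}$) or the improved estimate \eqref{prop:ab.L2.weighted.improved} in Proposition~\ref{prop:ab.L2.weighted} (when $\max\{|\alp'|,|\bt'|\}\geq 1$). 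In every case the resulting $t$-rate will be \emph{strictly better} by a factor of $(1+t)^{-1}$ than the corresponding rate in the proof of Proposition~\ref{prop:III}, so the final power-counting of the form $(1+t)^{1+2\de-|\bt'|+0}(1+t)^{\cdots +|\bt'|}$ closes comfortably using $2\de<\min\{2+\gamma,\tfrac15\}$ from \eqref{def:de}.

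I do not anticipate a genuine obstacle: the only step that required any care in the $III_e$ analysis was the borderline use of the null structure (through the extra $\wb$ weight in Proposition~\ref{prop:ab.L2.null.cond} or Proposition~\ref{prop:ab.L2.weighted}), and the extra $(1+t)^{-1}$ gain from $|\bt'|+|\bt''|\leq|\bt|$ makes this borderline comfortably subcritical here. Thus the proof will be a short argument that essentially says ``the estimates used for $III_e^{\alp,\bt,\sigma}$ apply to each summand of $V_e^{\alp,\bt,\sigma}$, and the improved $|\bt|$-budget yields an even stronger bound,'' concluding with $V_e^{\alp,\bt,\sigma}(T)\ls \ep^{9/4}(1+T)^{2|\bt|}\ls \ep^2(1+T)^{2|\bt|}$.
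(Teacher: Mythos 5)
Your core observation---that the constraint $|\bt'|+|\bt''|\leq |\bt|$ in $V_e$ buys an extra factor of $(1+t)^{-1}$ compared with $III_e$---is exactly right, and it is precisely what makes the estimate close. But you have not drawn the right conclusion from it: that extra gain means one does \emph{not} need the null-structure refinement of Proposition~\ref{prop:ab.L2.null.cond}, the improved rate \eqref{prop:ab.L2.weighted.improved}, or the pigeonhole subcase analysis at all. The paper's proof avoids them entirely. After noting that $\bar a_{ii}$ obeys (via Propositions~\ref{prop:ab.Li.1} and \ref{prop:ab.L2}) every estimate used for $\bar a_{ij}v_iv_j$, the paper simply splits into $|\alp'|+|\bt'|+|\sigma'|\leq\Mi$ and $|\alp'|+|\bt'|+|\sigma'|>\Mi$. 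In the first case it puts the coefficient in $L^\i_xL^\i_v$ via Proposition~\ref{prop:ab.Li.weighted} (rate $(1+t)^{-3+|\bt'|}$, giving net $(1+t)^{-2+2\de}$); in the second it puts the coefficient in $L^2_xL^\i_v$ via the unimproved bound $\|\vb^{-(2+\gamma)}\rd_x^{\alp'}\rd_v^{\bt'}Y^{\sigma'}(\bar a_{ij}v_iv_j)\|_{L^2_xL^\i_v}\ls\ep^{3/4}(1+t)^{-3/2+|\bt'|}$ of Proposition~\ref{prop:ab.L2.weighted}, paired with Proposition~\ref{main.bulk.est} on the lower-order factor (rate $(1+t)^{-1/2+2\de}$, still bounded since $2\de<1/2$). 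No subcases on $|\alp'|,|\bt'|,|\sigma'|$, and no null structure, are needed.

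Moreover, if you \emph{do} insist on running the $III_e$-style Case 2 machinery, there is a concrete technical problem: \eqref{prop:ab.L2.weighted.improved} is proved only for $\bar a_{ij}v_i$, and Proposition~\ref{prop:ab.L2.null.cond} only for $\bar a_{ij}$. Neither statement applies directly to $\bar a_{ii}$ or $\bar a_{ij}v_iv_j$, so your claim that these propositions supply the improved rates for those coefficients is asserting more than the paper establishes. You would need an intermediate Leibniz-type reduction (in the spirit of the pointwise bound at the start of Case~2(a) in the proof of Proposition~\ref{prop:III}, or of \eqref{a.0.3}) to bring in the $\wb^{-2}$ gain or the extra $t$-decay for the contracted coefficients. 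This extra step is exactly what the paper's simpler argument sidesteps by exploiting the $(1+t)^{-1}$ cushion you already identified.
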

\begin{proof}
The term $V_e^{\alp,\bt,\sigma}$ contains two sums, one with $\bar{a}_{ii}$ and one with $\bar{a}_{ij} v_i v_j$. To simplify the exposition, let us just estimate the terms with $\bar{a}_{ij} v_i v_j$. When we handle these terms, we will only use Propositions~\ref{prop:ab.Li.weighted} and \ref{prop:ab.L2.weighted} to control $\bar{a}_{ij} v_i v_j$ and its derivatives. Now note that since by Propositions~\ref{prop:ab.Li.1} and \ref{prop:ab.L2}, $\bar{a}_{ii}$ and its derivatives obey all the analogous estimates for $\bar{a}_{ij} v_i v_j$ and its derivatives in Propositions~\ref{prop:ab.Li.weighted} and \ref{prop:ab.L2.weighted}, the exact same argument will also apply the to terms with $\bar{a}_{ii}$ instead of $\bar{a}_{ij}v_iv_j$.

Now take $\alp'$, $\alp''$, $\bt'$, $\bt''$, $\sigma'$ and $\sigma''$ satisfying the required conditions in the sum of $V_e^{\alp,\bt,\sigma}$. We divide into the cases $|\alp'|+|\bt'|+|\sigma'|\leq \Mi$ and $|\alp'|+|\bt'|+|\sigma'|>\Mi$.

\pfstep{Step~1: $|\alp'|+|\bt'|+|\sigma'|\leq \Mi$} By H\"older's inequality, Proposition~\ref{prop:ab.Li.weighted} and Lemma~\ref{lem:move.t.weights}, we obtain
\begin{equation*}
\begin{split}
&\: \| \wb^{2\Mm+10-2|\sigma|}|\rd_x^{\alp}\rd_v^{\bt}Y^{\sigma} g| |\rd_x^{\alp'}\rd_v^{\bt'}Y^{\sigma'} (\bar{a}_{ij}v_iv_j)| |\rd_x^{\alp''} \rd_v^{\bt''} Y^{\sigma''} g| \|_{L^1([0,T];L^1_xL^1_v)}\\
\ls &\: (1+T)^{2|\bt|} \|(1+t)^{-\f 12-\de-|\bt|}\vb \wb^{\Mm+5-|\sigma|} \rd_x^{\alp}\rd_v^{\bt}Y^{\sigma} g\|_{L^2([0,T];L^2_xL^2_v)} \\
&\: \times \|(1+t)^{1+2\de-|\bt'|} \vb^{-2} \rd_x^{\alp'}\rd_v^{\bt'}Y^{\sigma'} (\bar{a}_{ij}v_iv_j)\|_{L^\i([0,T];L^\i_xL^\i_v)}\\
&\: \times \|(1+t)^{-\f 12-\de-|\bt''|}\vb \wb^{\Mm+5-|\sigma''|} \rd_x^{\alp''}\rd_v^{\bt''} Y^{\sigma''} g\|_{L^2([0,T];L^2_xL^2_v)} \\
\ls &\: (1+T)^{2|\bt|}\times \ep^{\f34} \times \ep^{\f 34}(\sup_{t\in [0,T]} (1+t)^{1+2\de-|\bt'|}(1+t)^{-3+|\bt'|})\times \ep^{\f 34} = \ep^{\f 94} (1+T)^{2|\bt|},
\end{split}
\end{equation*}
where in the last line we have used $2\de < 2$ (by \eqref{def:de}).

\pfstep{Step~2: $|\alp'|+|\bt'|+|\sigma'|>\Mi$} Note that in this case $|\alp''|+|\bt''|+|\sigma''|\leq \Mm$. Hence, by H\"older's inequality, Propositions~\ref{prop:ab.L2.weighted} and \ref{main.bulk.est}, we obtain
\begin{equation*}
\begin{split}
&\: \| \wb^{2\Mm+10-2|\sigma|}|\rd_x^{\alp}\rd_v^{\bt}Y^{\sigma} g| |\rd_x^{\alp'}\rd_v^{\bt'}Y^{\sigma'} (\bar{a}_{ij}v_iv_j)| |\rd_x^{\alp''} \rd_v^{\bt''} Y^{\sigma''} g| \|_{L^1([0,T];L^1_xL^1_v)}\\
\ls &\: (1+T)^{2|\bt|} \|(1+t)^{-\f 12-\de-|\bt|}\vb \wb^{\Mm+5-|\sigma|} \rd_x^{\alp}\rd_v^{\bt}Y^{\sigma} g\|_{L^2([0,T];L^2_xL^2_v)} \\
&\: \times \|(1+t)^{1+2\de-|\bt'|} \vb^{-2} \rd_x^{\alp'}\rd_v^{\bt'}Y^{\sigma'} (\bar{a}_{ij}v_iv_j)\|_{L^\i([0,T];L^2_xL^\i_v)}\\
&\: \times \|(1+t)^{-\f 12-\de-|\bt''|}\vb \wb^{\Mm+5-|\sigma''|} \rd_x^{\alp''}\rd_v^{\bt''}Y^{\sigma''} g\|_{L^2([0,T];L^\i_xL^2_v)} \\
\ls &\: (1+T)^{2|\bt|}\times \ep^{\f34} \times \ep^{\f 34}(\sup_{t\in [0,T]} (1+t)^{1+2\de-|\bt'|}(1+t)^{-\f 32+|\bt'|})\times \ep^{\f 34} = \ep^{\f 94} (1+T)^{2|\bt|},
\end{split}
\end{equation*}
where in the last line we have used $2\de < \f 12$ (by \eqref{def:de}). \qedhere
\end{proof}

\begin{proposition}\label{prop:VI}
Let $|\alp|+|\bt|+|\sigma|\leq \Mm$. Then the term $VI_e^{\alp,\bt,\sigma}$ in \eqref{def:VI} is bounded as follows for every $T\in [0,T_{Boot})$:
$$VI_e^{\alp,\bt,\sigma}(T)\ls \ep^2(1+T)^{2|\bt|}.$$
\end{proposition}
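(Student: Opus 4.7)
The plan is to mimic the case analysis already used for $I_e^{\alp,\bt,\sigma}$--$V_e^{\alp,\bt,\sigma}$, noting that $\bar{c}$ is convolutional in $v$ just like $\bar{a}_{ij}$ but with weaker bounds in $t$ (by a factor $(1+t)^{-\gamma}$) and no $\vb$-weight gain. The saving grace is that in $VI_e^{\alp,\bt,\sigma}$ the constraint is simply $|\bt'|+|\bt''|\leq|\bt|$, with no extra $+1$ or $+2$, so the decay budget is in fact the cleanest of all the error terms. Fix a summand in \eqref{def:VI} with $|\alp'|+|\alp''|\leq|\alp|$, $|\bt'|+|\bt''|\leq|\bt|$, $|\sigma'|+|\sigma''|\leq|\sigma|$, and split according to whether $|\alp'|+|\bt'|+|\sigma'|\leq\Mi$ or $>\Mi$.

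In the first case I would put $\rd_x^{\alp'}\rd_v^{\bt'}Y^{\sigma'}\bar{c}$ in $L^\infty([0,T];L^\infty_xL^\infty_v)$ via Proposition~\ref{prop:cb.Li}, distribute $\wb^{2\Mm+10-2|\sigma|}\leq\wb^{\Mm+5-|\sigma|}\cdot\wb^{\Mm+5-|\sigma''|}$ across the two $g$-factors, and move $\vb$ onto each $g$-factor (with a compensating $\vb^{-2}$ on $\bar{c}$). H\"older in $(t,x,v)$ combined with Lemma~\ref{lem:move.t.weights} would then produce the time-factor
\begin{equation*}
(1+T)^{2|\bt|}\cdot\sup_{t\in[0,T]}(1+t)^{1+2\de+|\bt|+|\bt''|-|\bt'|}\cdot(1+t)^{-3-\gamma+|\bt'|}\ls(1+T)^{2|\bt|}(1+t)^{-2-\gamma+2\de},
\end{equation*}
using $|\bt|+|\bt''|\leq 2|\bt|-|\bt'|$; this is bounded because $2\de\leq\tfrac{2+\gamma}{2}<2+\gamma$ by \eqref{def:de}, giving the desired $\ep^{\f 94}(1+T)^{2|\bt|}$ bound.

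In the second case one has $|\alp''|+|\bt''|+|\sigma''|\leq\Mm-\Mi-1\leq\Mi$ (using $\Mm\leq 2\Mi+1$, cf.~\eqref{eq:MmMi}), so I would control $\rd_x^{\alp'}\rd_v^{\bt'}Y^{\sigma'}\bar{c}$ in $L^\infty([0,T];L^2_xL^{p_{**}}_v)$ via Proposition~\ref{prop:cb.L2}, the top-order $g$-factor in $L^2([0,T];L^2_xL^2_v)$ via Lemma~\ref{lem:move.t.weights}, and the low-order $g$-factor in $L^2([0,T];L^\infty_xL^{\frac{2p_{**}}{p_{**}-2}}_v)$ via \eqref{main.bulk.est.2} in Proposition~\ref{main.bulk.est}. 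After balancing $\vb$-weights (one $\vb$ on each $g$, $\vb^{-2}$ on $\bar{c}$) and collecting the $(1+t)$-weights from the three factors, the resulting $t$-power is $(1+t)^{1+2\de-\min\{6/5,\,3+\gamma\}}$, which is non-positive precisely when $\de\leq\tfrac{1}{10}$ and $\de\leq\tfrac{2+\gamma}{4}$, both of which are enforced by \eqref{def:de}.

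The main obstacle is essentially arithmetic: ensuring the exponent of $(1+t)$ is non-positive in each case. No null structure or improved coefficient estimate (as in Proposition~\ref{prop:I}) is needed here, because the relaxed constraint $|\bt'|+|\bt''|\leq|\bt|$ already affords one full power of $(1+t)$ of headroom compared to $I_e^{\alp,\bt,\sigma}$; this is exactly the margin that absorbs the extra $(1+t)^{-\gamma}$ cost of $\bar{c}$ relative to $\bar{a}_{ij}$, and it is the reason no subdivision by $|\sigma'|\geq 1$ versus $\max\{|\alp'|,|\bt'|\}\geq 1$ is required.
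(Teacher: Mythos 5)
Your approach is essentially identical to the paper's: the same split between $|\alp'|+|\bt'|+|\sigma'|\leq\Mi$ (treated with Proposition~\ref{prop:cb.Li}, H\"older, and Lemma~\ref{lem:move.t.weights}) and $>\Mi$ (treated with Proposition~\ref{prop:cb.L2}, Proposition~\ref{main.bulk.est}, and the conjugate $L^{p_{**}}_v/L^{2p_{**}/(p_{**}-2)}_v$ pairing), and the same conditions $2\de<2+\gamma$ and $2\de\leq\min\{\f 15,2+\gamma\}$. Your observation that the relaxed constraint $|\bt'|+|\bt''|\leq|\bt|$ plus the extra $(1+t)^{-\gamma}$ decay of $\bar c$ (Proposition~\ref{prop:cb.Li}) makes the null-structure refinements of Proposition~\ref{prop:I} unnecessary here is exactly right, and the $\vb/\vb^{-2}$ bookkeeping is a cosmetic but harmless rearrangement of the paper's (which drops the $\vb$ since $\vb\geq 1$).

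One small arithmetic slip in your Case~1 display: once you factor out $(1+T)^{2|\bt|}$, the quantity under the supremum should not still carry the full $|\bt|+|\bt''|$ dependence. The residual from the two $g$-factors is $(1+t)^{1+2\de+|\bt|+|\bt''|}$ and Proposition~\ref{prop:cb.Li} contributes $(1+t)^{-3-\gamma+|\bt'|}$, so the product is $(1+t)^{1+2\de+|\bt|+|\bt''|+|\bt'|-3-\gamma}\leq(1+T)^{2|\bt|}(1+t)^{-2-\gamma+2\de}$ using $|\bt'|+|\bt''|\leq|\bt|$; your displayed exponent $1+2\de+|\bt|+|\bt''|-|\bt'|$ after having already pulled out $(1+T)^{2|\bt|}$ is double-counting and would not actually be bounded. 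The final rate $(1+t)^{-2-\gamma+2\de}$ you state is nevertheless correct, so this is a writeup slip rather than a conceptual gap.
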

\begin{proof}
Take $\alp'$, $\alp''$, $\bt'$, $\bt''$, $\sigma'$ and $\sigma''$ satisfying the required conditions in the sum of $VI_e^{\alp,\bt,\sigma}$. We divide into the cases $|\alp'|+|\bt'|+|\sigma'|\leq \Mi$ and $|\alp'|+|\bt'|+|\sigma'|>\Mm$.

\pfstep{Case~1: $|\alp'|+|\bt'|+|\sigma'|\leq \Mi$} By H\"older's inequality, Proposition~\ref{prop:cb.Li} and Lemma~\ref{lem:move.t.weights}, we obtain
\begin{equation*}
\begin{split}
&\:\| \wb^{2\Mm+10-2|\sigma|}|\rd_x^\alp\rd_v^\bt Y^{\sigma} g|| \rd_x^{\alp'}\rd_v^{\bt'}Y^{\sigma'} \cb| |\rd_x^{\alp''} \rd_v^{\bt''} Y^{\sigma''} g| \|_{L^1([0,T];L^1_xL^1_v)} \\
\ls &\: (1+T)^{2|\bt|}\|(1+t)^{-\f 12-\de-|\bt|}\wb^{\Mm+5-|\sigma|} \rd_x^\alp\rd_v^\bt Y^{\sigma} g \|_{L^2([0,T];L^2_xL^2_v)} \\
&\: \times \| (1+t)^{1+2\de-|\bt'|}\rd_x^{\alp'}\rd_v^{\bt'}Y^{\sigma'} \cb \|_{L^\i([0,T];L^\i_xL^\i_v)}\\
&\: \times \|(1+t)^{-\f 12-\de-|\bt''|} \wb^{\Mm+5-|\sigma''|} \rd_x^{\alp''}\rd_v^{\bt''}Y^{\sigma''} g \|_{L^2([0,T];L^2_xL^2_v)}\\
\ls &\: (1+T)^{2|\bt|}\times \ep^{\f 34} \times \ep^{\f 34}(\sup_{t\in [0,T]} (1+t)^{1+2\de-|\bt'|}(1+t)^{-3-\gamma+|\bt'|}) \times \ep^{\f 34} = \ep^{\f 94} (1+T)^{2|\bt|},
\end{split}
\end{equation*}
where in the last line we have used that $2\de < 2+\gamma$ (by \eqref{def:de}).

\pfstep{Case~2: $|\alp'|+|\bt'|+|\sigma'|>\Mi$} Note that in this case $|\alp''|+|\bt''|+|\sigma''|\leq \Mm$. Hence, by H\"older's inequality, Propositions~\ref{prop:cb.L2} and \ref{main.bulk.est}, we obtain
\begin{equation*}
\begin{split}
&\:\| \wb^{2\Mm+10-2|\sigma|}|\rd_x^\alp\rd_v^\bt Y^{\sigma} g|| \rd_x^{\alp'}\rd_v^{\bt'}Y^{\sigma'} \cb| |\rd_x^{\alp''} \rd_v^{\bt''} Y^{\sigma''} g| \|_{L^1([0,T];L^1_xL^1_v)} \\
\ls &\: (1+T)^{2|\bt|}\|(1+t)^{-\f 12-\de-|\bt|}\wb^{\Mm+5-|\sigma|} \rd_x^\alp\rd_v^\bt Y^{\sigma} g \|_{L^2([0,T];L^2_xL^2_v)} \\
&\: \times \| (1+t)^{1+2\de-|\bt'|}\rd_x^{\alp'}\rd_v^{\bt'}Y^{\sigma'} \cb \|_{L^\i([0,T];L^2_xL^{p_{**}}_v)}\\
&\: \times \|(1+t)^{-\f 12-\de-|\bt''|} \wb^{\Mm+5-|\sigma''|} \rd_x^{\alp''}\rd_v^{\bt''}Y^{\sigma''} g \|_{L^2([0,T];L^\i_xL^{\f{2p_{**}}{p_{**}-2}}_v)}\\
\ls &\: (1+T)^{2|\bt|}\times \ep^{\f 34} \times \ep^{\f 34}(\sup_{t\in [0,T]} (1+t)^{1+2\de-|\bt'|}(1+t)^{-\min\{\f 65,3+\gamma\}+|\bt'|}) \times \ep^{\f 34} = \ep^{\f 94} (1+T)^{2|\bt|},
\end{split}
\end{equation*}
where in the last line we have used that $2\de < \min\{2+\gamma,\f 15\}$ (by \eqref{def:de}). \qedhere
\end{proof}

The terms $VII_e^{\alp,\bt,\sigma}$ and $VIII_e^{\alp,\bt,\sigma}$ are linear in $g^2$ (or the square of the derivatives of $g$). As a consequence, we will not have enough smallness if we just apply the bootstrap assumptions to control them. Therefore unlike the previous terms, we will still keep track of the precise terms on the RHS. 
\begin{proposition}\label{prop:VII}
Let $|\alp|+|\bt|+|\sigma|\leq \Mm$. Then for every $\eta>0$, there exists a constant $C_\eta>0$ (depending on $\eta$ in addition to $d_0$ and $\gamma$) such that the term $VII_e^{\alp,\bt,\sigma}$ in \eqref{def:VII} is bounded as follows for every $T\in [0,T_{Boot})$:
\begin{equation*}
\begin{split}
VII_e^{\alp,\bt,\sigma}(T)\leq &\: \eta \|\wb^{\Mm+5-|\sigma|}\rd_x^\alp\rd_v^\bt Y^\sigma g\|_{L^\infty([0,T];L^2_x L^2_v)}^2 \\
&\: + C_\eta T^2 \sum_{\substack{|\alp'|\leq |\alp|+1 \\|\bt'|\leq |\bt|-1}}\|\wb^{\Mm+5-|\sigma|}\rd_x^{\alp'}\rd_v^{\bt'}Y^\sigma g\|_{L^\infty([0,T];L^2_x L^2_v)}^2.
\end{split}
\end{equation*}
\end{proposition}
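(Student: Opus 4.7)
The plan is a one-line Cauchy--Schwarz followed by a carefully weighted Young's inequality, where the only real subtlety is how the factors of $T$ are distributed. Assume $T>0$ (the case $T=0$ is immediate since $VII_e^{\alp,\bt,\sigma}(0)=0$). First I would split the weight symmetrically as $\wb^{2\Mm+10-2|\sigma|} = \wb^{\Mm+5-|\sigma|}\cdot\wb^{\Mm+5-|\sigma|}$ and apply Cauchy--Schwarz in $(x,v)$ at each fixed $t$, obtaining for every admissible pair $(\alp',\bt')$ in \eqref{def:VII}
$$\|\wb^{2\Mm+10-2|\sigma|}|\rd_x^\alp\rd_v^\bt Y^\sigma g|\,|\rd_x^{\alp'}\rd_v^{\bt'}Y^\sigma g|\|_{L^1_xL^1_v}(t) \leq \|A\|_{L^2_xL^2_v}(t)\,\|B_{\alp',\bt'}\|_{L^2_xL^2_v}(t),$$
where $A := \wb^{\Mm+5-|\sigma|}\rd_x^\alp\rd_v^\bt Y^\sigma g$ and $B_{\alp',\bt'} := \wb^{\Mm+5-|\sigma|}\rd_x^{\alp'}\rd_v^{\bt'}Y^\sigma g$.

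Next, letting $N$ denote the (finite, combinatorially bounded) number of admissible pairs $(\alp',\bt')$, I would apply the $T$-tuned Young's inequality $ab \leq \tfrac{\eta}{NT}a^2 + \tfrac{NT}{4\eta}b^2$, giving
$$\|A\|_{L^2_xL^2_v}(t)\|B_{\alp',\bt'}\|_{L^2_xL^2_v}(t) \leq \f{\eta}{NT}\|A\|_{L^2_xL^2_v}^2(t) + \f{NT}{4\eta}\|B_{\alp',\bt'}\|_{L^2_xL^2_v}^2(t).$$
Integrating in $t\in[0,T]$ and using the trivial bound $\int_0^T \|\cdot\|_{L^2_xL^2_v}^2(t)\,\ud t \leq T\|\cdot\|_{L^\infty([0,T];L^2_xL^2_v)}^2$, the factor $T$ produced by the time integral cancels exactly the $T^{-1}$-scaling in the first term and compounds with the $T$-scaling in the second term to a factor of $T^2$, yielding
$$\int_0^T \|A\|_{L^2_xL^2_v}\|B_{\alp',\bt'}\|_{L^2_xL^2_v}\,\ud t \leq \f{\eta}{N}\|A\|_{L^\infty([0,T];L^2_xL^2_v)}^2 + \f{NT^2}{4\eta}\|B_{\alp',\bt'}\|_{L^\infty([0,T];L^2_xL^2_v)}^2.$$
Summing over the $N$ admissible pairs collapses the first term to exactly $\eta\|A\|_{L^\infty_t L^2_xL^2_v}^2$ and produces the second term with $C_\eta := \tfrac{N}{4\eta}$, a constant depending only on $\eta$, $d_0$ and $\gamma$.

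There is no genuine obstacle; the argument is entirely elementary. The only point requiring care is the choice of the $T^{-1}$-scaling in Young's inequality: the naive choice with a $T$-independent parameter would produce $\eta T$ (rather than $\eta$) in front of the first term, which would be useless because this term must later be absorbed into the left-hand side of Proposition~\ref{prop:EE.with.error} without any $T$-growth. The resulting $T^2$-loss on the lower-$\bt$-order term is acceptable because $|\bt'|\leq |\bt|-1$: in the bootstrap hierarchy \eqref{eq:energy.def}, the corresponding energy carries one fewer power of $(1+T)$, so that the apparent $T^2$-loss will be compensated when the estimate is combined with the induction in Section~\ref{sec:EE.everything}.
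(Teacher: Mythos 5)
Your proof is correct and follows essentially the same route as the paper: decompose the weight symmetrically as $\wb^{\Mm+5-|\sigma|}\cdot\wb^{\Mm+5-|\sigma|}$, apply Cauchy--Schwarz (H\"older), and conclude with a suitably weighted Young's inequality. The paper applies H\"older first over all of $t$, $x$, $v$ to land on $T\|A\|_{L^\infty L^2}\|B\|_{L^\infty L^2}$ before invoking Young, whereas you apply Young pointwise in $t$ with a $T$-tuned parameter and then integrate; these are algebraically equivalent and yield the same constant distribution, so there is no substantive difference.
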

\begin{proof}
By H\"older's inequality,
\begin{equation*}
\begin{split}
&\: \sum_{|\alp'|\leq |\alp|+1,\,|\bt'|\leq |\bt|-1} \| \wb^{2\Mm+10-2|\sigma|} |\rd_x^\alp\rd_v^\bt Y^\sigma g| |\rd_x^{\alp'} \rd_v^{\bt'} Y^\sigma  g| \|_{L^1([0,T];L^1_xL^1_v)} \\
\ls &\: \sum_{\substack{|\alp'|\leq |\alp|+1 \\|\bt'|\leq |\bt|-1}} T \|\wb^{\Mm+5-|\sigma|}\rd_x^\alp\rd_v^\bt Y^\sigma g\|_{L^\infty([0,T];L^2_x L^2_v)} \|\wb^{\Mm+5-|\sigma|}\rd_x^{\alp'}\rd_v^{\bt'}Y^\sigma g\|_{L^\infty([0,T];L^2_x L^2_v)}.
\end{split}
\end{equation*}
The conclusion then follows from an application of Young's inequality. \qedhere
\end{proof}

\begin{proposition}\label{prop:VIII}
Let $|\alp|+|\bt|+|\sigma|\leq \Mm$. Then for every $\eta>0$, there exists a constant $C_\eta>0$ (depending on $\eta$ in addition to $d_0$ and $\gamma$) such that the term $VIII_e^{\alp,\bt,\sigma}$ in \eqref{def:VIII} is bounded as follows for every $T\in [0,T_{Boot})$:
\begin{equation*}
\begin{split}
VIII_e^{\alp,\bt,\sigma}(T)\leq &\: \eta \|(1+t)^{-\f 12-\f \de 2} \vb \wb^{\Mm+5-|\sigma|}\rd_x^\alp\rd_v^\bt Y^\sigma g\|_{L^2([0,T];L^2_x L^2_v)}^2 \\
&\: + C_\eta \sum_{\substack{|\bt'|\leq |\bt|,\,|\sigma'|\leq |\sigma|\\ |\bt'|+|\sigma'|\leq |\bt|+|\sigma|-1}}\|(1+t)^{-\f 12-\f \de 2} \vb \wb^{\Mm+5-|\sigma'|}\rd_x^{\alp}\rd_v^{\bt'}Y^{\sigma'} g\|_{L^2([0,T];L^2_x L^2_v)}^2.
\end{split}
\end{equation*}
\end{proposition}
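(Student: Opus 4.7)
The strategy is to split the integrand of $VIII_e^{\alp,\bt,\sigma}$ into two factors, each of which resembles one of the two norms appearing on the right-hand side of the claimed inequality, and then apply the Cauchy--Schwarz inequality in $L^2([0,T];L^2_xL^2_v)$ followed by Young's inequality. The key algebraic observation is the weight factorisation
\[
\f{\vb^{\f 12}}{(1+t)^{1+\de}}\wb^{2\Mm+10-2|\sigma|}=\left(\f{\vb}{(1+t)^{\f 12+\f \de 2}}\wb^{\Mm+5-|\sigma|}\right)\cdot\left(\f{\vb^{-\f 12}}{(1+t)^{\f 12+\f \de 2}}\wb^{\Mm+5-|\sigma|}\right),
\]
which distributes the weights evenly between the factor carrying $\rd_x^\alp\rd_v^\bt Y^\sigma g$ and the factor carrying $\rd_x^\alp\rd_v^{\bt'}Y^{\sigma'}g$.

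First, I would write each summand on the left-hand side as
\[
\int_0^T\!\!\int_{\mathbb R^3}\!\!\int_{\mathbb R^3}\left(\f{\vb}{(1+t)^{\f 12+\f \de 2}}\wb^{\Mm+5-|\sigma|}|\rd_x^\alp\rd_v^\bt Y^\sigma g|\right)\!\left(\f{\vb^{-\f 12}}{(1+t)^{\f 12+\f \de 2}}\wb^{\Mm+5-|\sigma|}|\rd_x^\alp\rd_v^{\bt'}Y^{\sigma'}g|\right)\,\ud v\,\ud x\,\ud t.
\]
Since $|\sigma'|\leq|\sigma|$ we have $\wb^{\Mm+5-|\sigma|}\leq\wb^{\Mm+5-|\sigma'|}$, and since $\vb\geq 1$ we have $\vb^{-\f 12}\leq\vb$; therefore the second factor is pointwise bounded by $\tfrac{\vb}{(1+t)^{\f 12+\f \de 2}}\wb^{\Mm+5-|\sigma'|}|\rd_x^\alp\rd_v^{\bt'}Y^{\sigma'}g|$. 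Applying the Cauchy--Schwarz inequality in $L^2([0,T]\times\mathbb R^3\times\mathbb R^3;\ud t\,\ud x\,\ud v)$ then yields
\[
VIII_e^{\alp,\bt,\sigma}\leq \|(1+t)^{-\f 12-\f \de 2}\vb\wb^{\Mm+5-|\sigma|}\rd_x^\alp\rd_v^\bt Y^\sigma g\|_{L^2L^2L^2}\cdot\!\!\!\!\sum_{\substack{|\bt'|\leq|\bt|,\,|\sigma'|\leq|\sigma|\\ |\bt'|+|\sigma'|\leq|\bt|+|\sigma|-1}}\!\!\!\!\|(1+t)^{-\f 12-\f \de 2}\vb\wb^{\Mm+5-|\sigma'|}\rd_x^\alp\rd_v^{\bt'}Y^{\sigma'}g\|_{L^2L^2L^2}.
\]

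Finally, I would apply Young's inequality in the form $ab\leq \eta a^2+\tfrac{1}{4\eta}b^2$, taking $a$ to be the first factor (with the full derivative order $\rd_x^\alp\rd_v^\bt Y^\sigma g$) and $b$ to be the sum of the lower-order factors; summing over the finitely many terms in the sum defining $VIII_e^{\alp,\bt,\sigma}$ (which only adjusts the constant $C_\eta$), this yields precisely the desired inequality with some $C_\eta>0$ depending on $\eta$, $d_0$ and $\gamma$. There is no genuine obstacle here: the only thing to watch is that the $\wb$ and $\vb$ weights split in a way that is compatible with $|\sigma'|\leq|\sigma|$ on the one hand and with the quadratic structure of the right-hand side on the other, which is exactly what the factorisation above accomplishes.
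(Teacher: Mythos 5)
Your proof is correct and takes the same approach as the paper, whose proof simply invokes the Cauchy--Schwarz and Young inequalities without further detail; you have spelled out explicitly the weight factorisation and the monotonicity inequalities $\wb^{\Mm+5-|\sigma|}\leq\wb^{\Mm+5-|\sigma'|}$ and $\vb^{-1/2}\leq\vb$ that make the argument go through.
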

\begin{proof}
This is an easy consequence of the Cauchy--Schwarz inequality and Young's inequality. \qedhere
\end{proof}

We have therefore estimated all of the terms on the RHS in the estimate in Proposition~\ref{prop:EE.with.error}.

\subsection{Putting everything together}\label{sec:EE.everything}

Combining Proposition~\ref{prop:EE.with.error} with the estimates in Propositions~\ref{prop:I}--\ref{prop:VIII}, we obtain
\begin{proposition}\label{EE.combined.1}
Let $|\alp|+|\bt|+|\sigma|\leq \Mm$. Then for every $\eta>0$, there exists a constant $C_\eta>0$ (depending on $\eta$ in addition to $d_0$ and $\gamma$) such that the following estimate holds for all $T\in [0,T_{Boot})$:
\begin{equation*}
\begin{split}
&\: \|\wb^{\Mm+5-|\sigma|}\rd_x^\alp\rd_v^\bt Y^{\sigma} g\|_{L^\infty([0,T];L^2_x L^2_v)}^2+ \|(1+t)^{-\f 12-\f \de 2} \vb \wb^{\Mm+5-|\sigma|}\rd_x^\alp\rd_v^\bt Y^{\sigma} g\|_{L^2([0,T];L^2_x L^2_v)}^2\\
\leq &\: C_\eta(\ep^2(1+T)^{2|\bt|} + T^2 \sum_{\substack{|\alp'|\leq |\alp|+1 \\|\bt'|\leq |\bt|-1}}\|\wb^{\Mm+5-|\sigma|}\rd_x^{\alp'}\rd_v^{\bt'}Y^{\sigma} g\|_{L^\infty([0,T];L^2_x L^2_v)}^2 \\
&\qquad + \sum_{\substack{|\bt'|\leq |\bt|,\,|\sigma'|\leq |\sigma|\\ |\bt'|+|\sigma'|\leq |\bt|+|\sigma|-1}}\|(1+t)^{-\f 12-\f \de 2} \vb \wb^{\Mm+5-|\sigma'|}\rd_x^{\alp}\rd_v^{\bt'}Y^{\sigma'} g\|_{L^2([0,T];L^2_x L^2_v)}^2  ) \\
&\: + \eta \|\wb^{\Mm+5-|\sigma|}\rd_x^\alp\rd_v^\bt Y^{\sigma} g\|_{L^\infty([0,T];L^2_x L^2_v)}^2 \\
&\: + \eta \|(1+t)^{-\f 12-\f \de 2} \vb \wb^{\Mm+5-|\sigma|}\rd_x^\alp\rd_v^\bt Y^{\sigma}g\|_{L^2([0,T];L^2_x L^2_v)}^2.
\end{split}
\end{equation*}
\end{proposition}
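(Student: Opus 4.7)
The plan is to simply chain together Proposition~\ref{prop:EE.with.error} with the bounds from Propositions~\ref{prop:I}--\ref{prop:VIII}. Starting from
\begin{equation*}
\begin{split}
&\|\wb^{\Mm+5-|\sigma|}\rd_x^\alp\rd_v^\bt Y^{\sigma} g\|_{L^\infty([0,T];L^2_x L^2_v)}^2+ \|(1+t)^{-\f 12-\f \de 2} \vb \wb^{\Mm+5-|\sigma|}\rd_x^\alp\rd_v^\bt Y^{\sigma} g\|_{L^2([0,T];L^2_x L^2_v)}^2\\
\leq &\: C_0\bigl(\ep^2 + I_e^{\alp,\bt,\sigma}+II_e^{\alp,\bt,\sigma}+\cdots+VIII_e^{\alp,\bt,\sigma}\bigr)(T)
\end{split}
\end{equation*}
for some $C_0 = C_0(d_0,\gamma)>0$ provided by Proposition~\ref{prop:EE.with.error}, I would substitute the bounds from each of the error-term propositions.

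Propositions~\ref{prop:I}--\ref{prop:VI} each yield the uniform bound $(I_e+II_e+III_e+IV_e+V_e+VI_e)(T)\ls \ep^2(1+T)^{2|\bt|}$, which after absorbing the implicit constants into $C_0$ contributes $C\ep^2(1+T)^{2|\bt|}$ on the RHS, with $C$ depending only on $d_0$ and $\gamma$. For $VII_e^{\alp,\bt,\sigma}$ and $VIII_e^{\alp,\bt,\sigma}$, I would apply Propositions~\ref{prop:VII} and \ref{prop:VIII} respectively with the small parameter $\eta/C_0$ in place of $\eta$. This produces contributions
\begin{equation*}
\begin{split}
C_0\cdot VII_e^{\alp,\bt,\sigma}(T) \leq&\: \eta\|\wb^{\Mm+5-|\sigma|}\rd_x^\alp\rd_v^\bt Y^{\sigma} g\|_{L^\infty([0,T];L^2_x L^2_v)}^2 \\
&\: + C_0\cdot C_{\eta/C_0}\, T^2\sum_{\substack{|\alp'|\leq |\alp|+1\\ |\bt'|\leq|\bt|-1}}\|\wb^{\Mm+5-|\sigma|}\rd_x^{\alp'}\rd_v^{\bt'}Y^{\sigma} g\|_{L^\infty([0,T];L^2_x L^2_v)}^2,
\end{split}
\end{equation*}
and an analogous contribution for $C_0\cdot VIII_e^{\alp,\bt,\sigma}$ involving the $L^2([0,T];L^2_xL^2_v)$ norms. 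Relabeling $C_0 C_{\eta/C_0}$ as $C_\eta$ and collecting terms gives the statement of Proposition~\ref{EE.combined.1}.

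There is genuinely no obstacle here beyond bookkeeping: all the work has been done in the preceding propositions. The only subtlety worth flagging is that the coefficient in front of the two norms one wishes to absorb on the LHS must be exactly $\eta$ (not $C\eta$), which is why one applies Propositions~\ref{prop:VII} and~\ref{prop:VIII} with the rescaled smallness parameter $\eta/C_0$; then the constant in front of the lower-order norms on the RHS becomes the desired $C_\eta = C_0 \cdot C_{\eta/C_0}$, still depending only on $\eta$, $d_0$ and $\gamma$. No additional estimates or manipulations are needed.
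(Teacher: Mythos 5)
Your argument is exactly the paper's: combine Proposition~\ref{prop:EE.with.error} with Propositions~\ref{prop:I}--\ref{prop:VIII}, using the Young/rescaling trick (applying Propositions~\ref{prop:VII} and \ref{prop:VIII} with parameter $\eta/C_0$) so that after multiplying by the implicit constant from Proposition~\ref{prop:EE.with.error} the absorbable terms carry exactly the coefficient $\eta$. The paper records this combination in a single sentence, whereas you have spelled out the rescaling explicitly, but the content is the same and correct.
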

We can control the terms with an $\eta$ coefficient on the RHS of the estimate in Proposition~\ref{EE.combined.1} to obtain the following stronger bounds:
\begin{proposition}\label{EE.combined.2}
Let $|\alp|+|\bt|+|\sigma|\leq \Mm$. Then the following estimate holds for all $T\in [0,T_{Boot})$:
\begin{equation*}
\begin{split}
&\: \|\wb^{\Mm+5-|\sigma|}\rd_x^\alp\rd_v^\bt Y^{\sigma} g\|_{L^\infty([0,T];L^2_x L^2_v)}^2+ \|(1+t)^{-\f 12-\f \de 2} \vb \wb^{\Mm+5-|\sigma|}\rd_x^\alp\rd_v^\bt Y^{\sigma} g\|_{L^2([0,T];L^2_x L^2_v)}^2\\
\ls &\: \ep^2(1+T)^{2|\bt|} + T^2 \sum_{\substack{|\alp'|\leq |\alp|+1 \\|\bt'|\leq |\bt|-1}}\|\wb^{\Mm+5-|\sigma|}\rd_x^{\alp'}\rd_v^{\bt'}Y^\sigma g\|_{L^\infty([0,T];L^2_x L^2_v)}^2\\
&\:+ \sum_{\substack{|\bt'|\leq |\bt|,\,|\sigma'|\leq |\sigma|\\ |\bt'|+|\sigma'|\leq |\bt|+|\sigma|-1}}\|(1+t)^{-\f 12-\f \de 2} \vb \wb^{\Mm+5-|\sigma'|}\rd_x^{\alp}\rd_v^{\bt'}Y^{\sigma'} g\|_{L^2([0,T];L^2_x L^2_v)}^2.
\end{split}
\end{equation*}
Here, by our convention (see~Section~\ref{sec:notation}), if $|\bt|+|\sigma|=0$, then the last two terms on the RHS are not present.
\end{proposition}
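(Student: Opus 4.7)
The plan is to derive Proposition~\ref{EE.combined.2} as an essentially immediate corollary of Proposition~\ref{EE.combined.1} by fixing the auxiliary parameter $\eta$ small enough to absorb the last two terms on the right-hand side of Proposition~\ref{EE.combined.1} into the two identical terms on the left-hand side. Concretely, I would set $\eta = \tfrac{1}{2}$ (any fixed value in $(0,1)$ works equally well). This freezes $C_\eta$ into a constant depending only on $d_0$ and $\gamma$, consistent with the $\ls$ convention adopted after the statement of Theorem~\ref{thm:BA}. The two $\eta$-weighted terms on the right-hand side then equal exactly $\tfrac{1}{2}$ of two of the terms on the left-hand side, so subtracting them from both sides produces the claimed inequality.

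The one subtlety in the absorption procedure is that one must know a priori that the corresponding left-hand side norms are finite, so that the subtraction is legitimate. For derivative counts $|\alp|+|\bt|+|\sigma| < \Mm$ this follows immediately from the bootstrap assumption \eqref{BA}; the genuine issue arises at the top order $|\alp|+|\bt|+|\sigma| = \Mm$, as flagged explicitly in the footnote to Proposition~\ref{prop:EE.with.error}. The standard remedy is a short approximation scheme: approximate $f_{\mathrm{in}}$ by a sequence of smoother and better-localized data $f_{\mathrm{in}}^{(n)}$, propagate higher regularity via Corollary~\ref{cor:local.ext} applied at a larger value of $k$ than $\Mm$, rerun the Section~\ref{sec:EE} argument at that higher regularity level (where the required top-order quantities are finite by construction), carry out the absorption there, and then pass to the limit using the fact that the resulting bound is uniform in $n$. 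Because the estimate after absorption depends only on $d_0,\gamma$ and the norm of the data controlled in Theorem~\ref{thm:main}, the limit is trivial.

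The main---and essentially only---obstacle is justifying this approximation/passage-to-limit step cleanly, in particular choosing the approximating data so that the additional smoothness propagates on a uniform time interval and so that the relevant norms on $f_{\mathrm{in}}^{(n)}$ converge to those on $f_{\mathrm{in}}$. This is standard for parabolic-type equations once Corollary~\ref{cor:local.ext} is available, and it is the sole reason the proof is not truly one line. Once finiteness is secured, the absorption argument reduces to subtracting $\tfrac{1}{2}\|\wb^{\Mm+5-|\sigma|}\rd_x^\alp\rd_v^\bt Y^\sigma g\|_{L^\infty_t L^2_x L^2_v}^2$ and $\tfrac{1}{2}\|(1+t)^{-\f12-\f\de2}\vb\wb^{\Mm+5-|\sigma|}\rd_x^\alp\rd_v^\bt Y^\sigma g\|_{L^2_t L^2_x L^2_v}^2$ from both sides of the inequality in Proposition~\ref{EE.combined.1} and relabeling $2C_{1/2} \to $ implicit constant, which yields exactly the statement of Proposition~\ref{EE.combined.2}.
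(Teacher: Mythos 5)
Your proposal matches the paper's own proof: apply Proposition~\ref{EE.combined.1} with $\eta=\tfrac12$, absorb the two $\eta$-weighted terms into the left-hand side, and note that $C_{1/2}$ is a fixed constant depending only on $d_0$ and $\gamma$. Your additional remarks about needing top-order finiteness (and resolving it via the approximation scheme flagged in the footnote to Proposition~\ref{prop:EE.with.error}) are correct and slightly more careful than what the paper spells out at this point, but they do not change the method.
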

\begin{proof}
Apply Proposition~\ref{EE.combined.1} with $\eta = \f 12$. We then subtract
$$\f 12(\|\wb^{\Mm+5-|\sigma|}\rd_x^\alp\rd_v^\bt Y^{\sigma}g\|_{L^\infty([0,T];L^2_x L^2_v)}^2 + \|(1+t)^{-\f 12-\f \de 2} \vb \wb^{\Mm+5-|\sigma|}\rd_x^\alp\rd_v^\bt Y^{\sigma} g\|_{L^2([0,T];L^2_x L^2_v)}^2)$$ 
from both sides of the equation. Now that $\eta$ is fixed, $C_\eta$ is simply a constant depending on $d_0$ and $\gamma$. We have thus proven the desired inequality. \qedhere
\end{proof}

We now set up an induction argument to obtain the final energy estimates from Proposition~\ref{EE.combined.2}.

\begin{proposition}\label{EE.final}
Let $|\alp|+|\bt|+|\sigma|\leq \Mm$. Then the following estimate holds for all $T\in [0,T_{Boot})$:
\begin{equation*}
\begin{split}
&\: \|\wb^{\Mm+5-|\sigma|}\rd_x^\alp\rd_v^\bt Y^{\sigma} g\|_{L^\infty([0,T];L^2_x L^2_v)}^2+ \|(1+t)^{-\f 12-\f \de 2} \vb \wb^{\Mm+5-|\sigma|}\rd_x^\alp\rd_v^\bt Y^{\sigma} g\|_{L^2([0,T];L^2_x L^2_v)}^2 \\
\ls &\: \ep^2(1+T)^{2|\bt|}.
\end{split}
\end{equation*}
\end{proposition}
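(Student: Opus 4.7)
All the hard work has already been done in Proposition~\ref{EE.combined.2}: the main step now is to absorb the two sums on its right-hand side via an induction on the number of "bad" derivatives $|\bt|+|\sigma|$ (i.e.~the ones that either do not commute with $\rd_t+v_i\rd_{x_i}$, or generate $\wb$-weight losses). Concretely, for each $k\in\{0,1,\dots,\Mm\}$ I would prove by induction on $k$ the statement
\[
\mathrm{P}(k): \quad \text{the claim holds for every $(\alp,\bt,\sigma)$ with } |\alp|+|\bt|+|\sigma|\leq \Mm \text{ and } |\bt|+|\sigma|\leq k.
\]

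\textbf{Base case $k=0$.} Here $|\bt|=|\sigma|=0$, so by the convention adopted in Proposition~\ref{EE.combined.2} both the $VII$-type and $VIII$-type sums are empty, and Proposition~\ref{EE.combined.2} gives directly the bound $\ls \ep^2 = \ep^2(1+T)^{2|\bt|}$.

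\textbf{Inductive step.} Assume $\mathrm{P}(k)$ and fix $(\alp,\bt,\sigma)$ with $|\alp|+|\bt|+|\sigma|\leq \Mm$ and $|\bt|+|\sigma|=k+1$. Applying Proposition~\ref{EE.combined.2}, it suffices to show that both error sums are $\ls \ep^2(1+T)^{2|\bt|}$. For the first error sum, every admissible $(\alp',\bt',\sigma)$ satisfies $|\alp'|+|\bt'|+|\sigma|\leq |\alp|+|\bt|+|\sigma|\leq \Mm$ and, crucially, $|\bt'|+|\sigma|\leq |\bt|-1+|\sigma|=k$, so the induction hypothesis applies and yields
\[
T^2\,\|\wb^{\Mm+5-|\sigma|}\rd_x^{\alp'}\rd_v^{\bt'}Y^{\sigma} g\|_{L^\i([0,T];L^2_xL^2_v)}^2 \ls T^2\cdot \ep^2(1+T)^{2|\bt'|}\ls \ep^2(1+T)^{2|\bt|},
\]
using $|\bt'|\leq |\bt|-1$. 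For the second error sum, every admissible $(\bt',\sigma')$ satisfies $|\bt'|+|\sigma'|\leq |\bt|+|\sigma|-1=k$ and $|\bt'|\leq |\bt|$, and the number of $x$-derivatives is unchanged; the induction hypothesis (with $(\alp,\bt',\sigma')$ in place of $(\alp,\bt,\sigma)$, noting that the weight $\wb^{\Mm+5-|\sigma'|}$ matching $|\sigma'|$ is exactly what is produced) gives
\[
\|(1+t)^{-\tfrac12-\tfrac\de2}\vb\wb^{\Mm+5-|\sigma'|}\rd_x^{\alp}\rd_v^{\bt'}Y^{\sigma'}g\|_{L^2([0,T];L^2_xL^2_v)}^2 \ls \ep^2(1+T)^{2|\bt'|}\ls \ep^2(1+T)^{2|\bt|}.
\]

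\textbf{Main obstacle.} There is genuinely no serious obstacle remaining at this stage---all the delicate structural input (the coefficient estimates, the null-structure-based bookkeeping of $\wb$-weights, the weighted $L^\i_x L^\i_v$ bounds, and the integration-by-parts structure of the energy identity) has been packaged into Proposition~\ref{EE.combined.2}. The only subtle point to verify is that both error sums in Proposition~\ref{EE.combined.2} strictly decrease the quantity $|\bt|+|\sigma|$ on which one inducts: the first sum replaces $\rd_v$ by $\rd_x$ (so $|\bt|$ drops by one while $|\sigma|$ is preserved), and the second sum explicitly imposes $|\bt'|+|\sigma'|\leq |\bt|+|\sigma|-1$. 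Both conditions hold, so the induction closes and $\mathrm{P}(\Mm)$---which is the claim of Proposition~\ref{EE.final}---follows.
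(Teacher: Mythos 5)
Your proof is correct and follows essentially the same route as the paper: an induction on $|\bt|+|\sigma|$, with the base case supplied by the vanishing of the error sums when $|\bt|+|\sigma|=0$ and the inductive step closed by plugging the induction hypothesis into both error sums of Proposition~\ref{EE.combined.2}, using $|\bt'|\leq|\bt|-1$ in the first sum to absorb the $T^2$ factor and $|\bt'|\leq|\bt|$ in the second.
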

\begin{proof}
We induct on $|\bt|+|\sigma|$. 

\pfstep{Step~1: Base case: $|\bt|+|\sigma|=0$} Applying Proposition~\ref{EE.combined.2} when $|\bt|+|\sigma|=0$, the last two terms on the RHS are not present. Hence we immediately obtain
$$\|\wb^{\Mm+5}\rd_x^\alp g\|_{L^\infty([0,T];L^2_x L^2_v)}^2+ \|(1+t)^{-\f 12-\f \de 2} \vb \wb^{\Mm+5}\rd_x^\alp g\|_{L^2([0,T];L^2_x L^2_v)}^2 \ls \ep^2$$ 
for all $|\alp|\leq \Mm$, as desired.

\pfstep{Step~2: Induction step} Assume as our induction hypothesis that there exists a $B\in \mathbb N$ such that whenever $|\alp|+|\bt|+|\sigma|\leq \Mm$ and $|\bt|+|\sigma|\leq B-1$, 
\begin{equation*}
\begin{split}
&\: \|\wb^{\Mm+5-|\sigma|}\rd_x^\alp\rd_v^\bt Y^{\sigma} g\|_{L^\infty([0,T];L^2_x L^2_v)}^2+ \|(1+t)^{-\f 12-\f \de 2} \vb \wb^{\Mm+5-|\sigma|}\rd_x^\alp\rd_v^\bt Y^{\sigma} g\|_{L^2([0,T];L^2_x L^2_v)}^2 \\
\ls &\: \ep^2(1+T)^{2|\bt|}.
\end{split}
\end{equation*}

Now take some multi-indices $\alp$, $\bt$ and $\sigma$ such that $|\alp|+|\bt|+|\sigma|\leq \Mm$ and $|\bt|+|\sigma| = B$. Our goal will be to show that the estimate as in the statement of the proposition holds for this choice of $(\alp,\bt,\sigma)$.

By Proposition~\ref{EE.combined.2} and the induction hypothesis,
\begin{equation*}
\begin{split}
&\: \|\wb^{\Mm+5-|\sigma|}\rd_x^\alp\rd_v^\bt Y^{\sigma} g\|_{L^\infty([0,T];L^2_x L^2_v)}^2+ \|(1+t)^{-\f 12-\f \de 2} \vb \wb^{\Mm+5-|\sigma|}\rd_x^\alp\rd_v^\bt Y^{\sigma} g\|_{L^2([0,T];L^2_x L^2_v)}^2\\
\ls &\: \ep^2(1+T)^{2|\bt|} + T^2 \sum_{\substack{|\alp'|\leq |\alp|+1 \\|\bt'|\leq |\bt|-1}}\|\wb^{\Mm+5-|\sigma|}\rd_x^{\alp'}\rd_v^{\bt'}Y^\sigma g\|_{L^\infty([0,T];L^2_x L^2_v)}^2\\
&\:+ \sum_{\substack{|\bt'|\leq |\bt|,\,|\sigma'|\leq |\sigma|\\ |\bt'|+|\sigma'|\leq |\bt|+|\sigma|-1}}\|(1+t)^{-\f 12-\f \de 2} \vb \wb^{\Mm+5-|\sigma'|}\rd_x^{\alp}\rd_v^{\bt'}Y^{\sigma'} g\|_{L^2([0,T];L^2_x L^2_v)}^2\\
\ls &\: \ep^2(1+T)^{2|\bt|} + \ep^2 (\sum_{|\bt'|\leq |\bt|-1} T^2 (1+T)^{2|\bt'|}) + \ep^2 (\sum_{|\bt'|\leq |\bt|} (1+T)^{2|\bt'|}) \ls \ep^2(1+T)^{2|\bt|}.
\end{split}
\end{equation*}

By induction, we have thus obtained the desired estimate. \qedhere
\end{proof}

\subsection{Proof of Theorem~\ref{thm:BA}}\label{sec:end.of.bootstrap}

Combining Propositions~\ref{prop:Li.improved} and \ref{EE.final}, we have now completed the proof of Theorem~\ref{thm:BA}.

\section{Putting everything together (Proof of Theorem~\ref{thm:main})}\label{sec:everything}

We now complete the proof of Theorem~\ref{thm:main}:

\begin{proof}[Proof of Theorem~\ref{thm:main}]
We assume throughout that $\underline{\ep}_0\leq \ep_0$ so that Theorem~\ref{thm:BA} applies.

Let
\begin{equation*}
\begin{split}
T_{\mathrm{max}}:= \sup \{T\in [0,+\infty): &\: \mbox{there exists a unique solution $f:[0,T]\times \mathbb R^3\times \mathbb R^3$ to \eqref{Landau} with} \\
&\: \mbox{$f\geq 0$, $f\restriction_{\{t=0\}} = f_{\mathrm{in}}$ and satisfying \eqref{eq:local.ext} for $k=\Mm$ and $N = \Mm+5$} \\
&\: \mbox{such that the bootstrap assumptions \eqref{BA}, \eqref{BA.sim.1} and \eqref{BA.sim.2} hold}\}.
\end{split}
\end{equation*} 
Note that by Corollary~\ref{cor:local.ext}, $T_{\mathrm{max}}>0$.

We will prove that $T_{\mathrm{max}}=+\infty$. Assume for the sake of contradiction that $T_{\mathrm{max}}<+\infty$.

It follows from the definition of $T_{\mathrm{max}}$ that the assumptions of Theorem~\ref{thm:BA} hold for $T_{Boot} = T_{\mathrm{max}}$. Therefore, by (the $|\sigma|=0$ case in) Theorem~\ref{thm:BA}, 
\begin{equation}\label{uniform.higher.reg.bound}
\sum_{|\alp|+|\bt|\leq \Mm} \| \wb^{\Mm+5} \rd_x^\alp \rd_v^\bt (e^{d(t)\vb^2}f)(t,x,v)\|_{L^\i([0,T_{\mathrm{max}});L^2_xL^2_v)} \ls \ep.
\end{equation}
Take an increasing sequence $\{t_n\}_{n=1}^\infty\subset [0,T_{\mathrm{max}})$ such that $t_n\to T_{\mathrm{max}}$. By the uniform bound \eqref{uniform.higher.reg.bound} and the local existence result in Corollary~\ref{cor:local.ext}, there exists $T_{\mathrm{small}}\in (0,1]$ such that a unique solution exists $[0,t_n+T_{\mathrm{small}}]\times \mathbb R^3\times \mathbb R^3$. In particular, taking $n$ sufficiently large, we have constructed a solution beyond the time $T_{\mathrm{max}}$, up to, say, time $T_{\mathrm{max}}+ \f 12 T_{\mathrm{small}}$. The solution moreover satisfies \eqref{eq:local.ext} for $k=\Mm$ and $N = \Mm+5$

Our next goal will be to show that in fact the estimates \eqref{BA}, \eqref{BA.sim.1} and \eqref{BA.sim.2} hold slightly beyond $T_{\mathrm{max}}$. Our starting point is that by the bootstrap theorem (Theorem~\ref{thm:BA}), 
\begin{equation}\label{important.fact}
\mbox{the estimates \eqref{BA}, \eqref{BA.sim.1} and \eqref{BA.sim.2} in fact all hold in $[0,T_{\mathrm{max}})$ with $\ep^{\f34}$ replaced by $C_{d_0,\gamma} \ep$.}
\end{equation} 

By the local existence result in Corollary~\ref{cor:local.ext}, for $|\alp|+|\bt|\leq \Mm$, $\wb^{\Mm+5}\rd_x^\alp \rd_v^\bt g(t,x,v) \in C^0([0,T_{\mathrm{max}}+\f 12 T_{\mathrm{small}}]; L^2_xL^2_v)$. Since $Y=t\rd_x+\rd_v$, for all $|\alp|+|\bt|+|\sigma|\leq \Mm$, we also have $\wb^{\Mm+5-|\sigma|}\rd_x^\alp \rd_v^\bt Y^\sigma g(t,x,v) \in C^0([0,T_{\mathrm{max}}+\f 12 T_{\mathrm{small}}]; L^2_xL^2_v)$. Using also \eqref{important.fact}, it follows that after choosing $\underline{\ep}_0$ smaller (so that $\ep$ is sufficiently small) if necessary, there exists $T_{\mathrm{ext},0}\in (T_{\mathrm{max}}, T_{\mathrm{max}}+\f 12 T_{\mathrm{small}}]$ such that \eqref{BA} holds up to time $T_{\mathrm{ext},0}$. It thus remains to prove that the estimates \eqref{BA.sim.1} and \eqref{BA.sim.2} hold beyond $T_{\mathrm{max}}$.

\textbf{Claim~1:} There exist $R_0>0$ and $T_{\mathrm{ext},1}\in (T_{\mathrm{max}},T_{\mathrm{ext},0}]$ such that 
$$\sum_{|\alp|+|\bt|\leq \Mm}\|\wb^{\Mm+5-|\sigma|}\rd_x^\alp \rd_v^\bt Y^\sigma g(t,x,v) \|_{L^2_xL^2_v (\{|x|^2+|v|^2\geq R_0^2\})} \leq \ep(1+T_{\mathrm{max}})^{-4}$$ 
for every $t\in [T_{\mathrm{max}}, T_{\mathrm{ext},1}]$.

\emph{Proof of Claim~1.} We showed above that \eqref{BA} holds up to time $T_{\mathrm{ext},0}$. In particular, $\sum_{|\alp|+|\bt|+|\sigma|\leq \Mm}\|\langle x-T_{\mathrm{max}}v \rangle^{\Mm+5-|\sigma|}\rd_x^\alp \rd_v^\bt Y^\sigma g(T_{\mathrm{max}},x,v) \|_{L^2_xL^2_v}$ is finite. Hence there exists $R_0'>0$ such that 
\begin{equation}\label{claim.1.pf.1}
\sum_{|\alp|+|\bt|+|\sigma|\leq \Mm}\|\langle x-T_{\mathrm{max}}v \rangle^{\Mm+5-|\sigma|}\rd_x^\alp \rd_v^\bt Y^\sigma g(T_{\mathrm{max}},x,v) \|_{L^2_xL^2_v (\{|x|^2+|v|^2\geq (R_0')^2\})} \leq \f{\ep}2(1+T_{\mathrm{max}})^{-4}.
\end{equation}

Let $\chi:\mathbb R^3\times \mathbb R^3\to \mathbb R$ be a smooth cut-off function satisfying $0\leq \chi\leq 1$ with $\chi(x,v) = 1$ when $|x|^2+|v|^2\geq (R_0'+1)^2$ and $\chi(x,v)=0$ when $|x|^2+|v|^2\leq (R_0')^2$.

By the continuity-in-time of the $L^2_xL^2_v$ norm in local existence result in Corollary~\ref{cor:local.ext}, 
\begin{equation}\label{claim.1.pf.2}
\lim_{t\to T_{\mathrm{max}}^+} \sum_{|\alp|+|\bt|+|\sigma|\leq \Mm} \| \wb^{\Mm+5-|\sigma|}\chi(x,v) \rd_x^\alp \rd_v^\bt Y^\sigma (g(t,x,v) - g(T_{\mathrm{max}},x,v))\|_{L^2_xL^2_v}  = 0.
\end{equation}
Let $R_0 = R_0'+1$. The claim follows from \eqref{claim.1.pf.1} and \eqref{claim.1.pf.2}.

\textbf{Claim~2:} Given $T_{\mathrm{ext},1}$ as in Claim~1, there exists $T_{\mathrm{ext},2} \in (T_{\mathrm{max}},T_{\mathrm{ext},1}]$ such that for every $t\in [T_{\mathrm{max}},T_{\mathrm{ext},2}]$, when
$|\alp|+|\bt|+|\sigma| \leq \Mm-4-\max\{2,\lceil \f{2}{2+\gamma} \rceil \}$, 
\begin{equation}\label{BA.final.1}
\|\wb^{\Mm+5-|\sigma|} \rd_x^{\alp} \rd_v^{\bt} Y^{\sigma} g \|_{L^\i_x L^\i_v}(t) \leq \f 12 \ep^{\f 34}(1+t)^{|\bt|};
\end{equation}
and when $\Mm-3-\max\{2,\lceil \f{2}{2+\gamma} \rceil \}\leq |\alp|+|\bt|+|\sigma|=:k \leq \Mm-5$, then
\begin{equation}\label{BA.final.2}
\|\wb^{\Mm+5-|\sigma|} \rd_x^{\alp} \rd_v^{\bt} Y^{\sigma} g \|_{L^\i_x L^\i_v}(t) \leq \f 12 \ep^{\f 34}(1+t)^{\f 32 - (\Mm-4-k)\min\{\f 34, \f{3(2+\gamma)}{4}\} +|\bt|}.
\end{equation}

\emph{Proof of Claim~2.} We first prove \eqref{BA.final.1} and \eqref{BA.final.2} for $|x|^2+|v|^2\geq (R_0+1)^2$, where $R_0$ is as in Claim~1. Let $\chi:\mathbb R^3\times \mathbb R^3\to \mathbb R$ be a smooth cut-off function\footnote{Note that this cut-off function is slightly different from that in Claim~1.} satisfying $0\leq \chi\leq 1$ with $\chi(x,v) = 1$ when $|x|^2+|v|^2\geq (R_0+1)^2$ and $\chi(x,v)=0$ when $|x|^2+|v|^2\leq R_0^2$.

We use the Sobolev embedding in Lemma~\ref{lem:stupid.Sobolev.embedding} to control the $L^\i_xL^\i_v$ norm $\chi \rd_x^\alp \rd_v^\bt Y^\sigma g$ for $t\in [T_{\mathrm{max}},T_{\mathrm{ext},1}]$
\begin{equation*}
\begin{split}
&\: \sup_{t\in [T_{\mathrm{max}},T_{\mathrm{ext},1}]}\sum_{|\alp|+|\bt|+|\sigma|\leq \Mm-5} \|\wb^{\Mm+5-|\sigma|}\chi \rd_x^\alp \rd_v^\bt Y^\sigma g\|_{L^\i_xL^\i_v} \\
\ls &\: \sup_{t\in [T_{\mathrm{max}},T_{\mathrm{ext},1}]}\sum_{|\alp|+|\bt|+|\sigma|\leq \Mm-5} \sum_{|\alp'|+|\bt'|\leq 4} \|\rd_x^{\alp'}\rd_v^{\bt'} (\wb^{\Mm+5-|\sigma|} \chi \rd_x^\alp \rd_v^\bt Y^\sigma g)\|_{L^2_xL^2_v} \\
\ls &\: \sup_{t\in [T_{\mathrm{max}},T_{\mathrm{ext},1}]} (1+T_{Boot})^4 \sum_{|\alp|+|\bt|+|\sigma|\leq \Mm-1} \| \wb^{\Mm+5-|\sigma|}\rd_x^\alp \rd_v^\bt Y^\sigma g\|_{L^2_xL^2_v(\{|x|^2+|v|^2\geq R_0^2)} \ls \ep,
\end{split}
\end{equation*}
where in the last estimate we have used Claim~1. By the properties of $\chi$, we have thus proven \eqref{BA.final.1} and \eqref{BA.final.2} for $|x|^2+|v|^2\geq (R_0+1)^2$ for every $t\in [T_{\mathrm{max}},T_{\mathrm{ext},1}]$.

It remains to prove \eqref{BA.final.1} and \eqref{BA.final.2} for $|x|^2+|v|^2\leq (R_0+1)^2$. Note that this is a spatially compact set, and we already have the estimate \eqref{important.fact}. Therefore, by the smoothness of $g$, after choosing $\underline{\ep}_0$ smaller if necessary (so that $\ep$ is also sufficiently small), \eqref{BA.final.1} and \eqref{BA.final.2} hold in the region $|x|^2+|v|^2\leq (R_0+1)^2$ for every $t\in [T_{\mathrm{max}},T_{\mathrm{ext},2}]$ for some $T_{\mathrm{ext},2}$ chosen to be sufficiently close to $T_{\mathrm{max}}$.

Combining the estimates for $|x|^2+|v|^2\geq (R_0+1)^2$ and $|x|^2+|v|^2\leq (R_0+1)^2$, we have proven Claim~2.

Claim~2 therefore established that the estimates \eqref{BA.sim.1} and \eqref{BA.sim.2} can be extend beyond $T_{\mathrm{max}}$. Together with the extension of \eqref{BA} beyond $T_{\mathrm{max}}$ that we established earlier, we have obtained a contradiction with the definition of $T_{\mathrm{max}}$. It thus follows that $T_{\mathrm{max}}=+\infty$. 

Finally, the statements of uniqueness, smoothness and positivity of $f$ follow from Theorems~\ref{thm:local.existence} and \ref{thm:smoothness.positivity}. \qedhere
\end{proof}

\section{Long-time asymptotics}\label{sec:long.time}

In this final section we prove the results about long-time asymptotics of solutions in the near-vacuum regime. In \textbf{Section~\ref{sec:9.1}}, we prove Theorem~\ref{thm:asymptotics}, in \textbf{Section~\ref{sec:9.2}}, we prove Corollary~\ref{cor:macro}, and finally in \textbf{Section~\ref{sec:9.3}}, we prove Theorem~\ref{thm:Maxwellian}.

In the rest of this section, we will work under the assumptions of Theorem~\ref{thm:main} and use the estimates established in the proof of Theorem~\ref{thm:main}.

\subsection{Existence of a large time limit (Proof of Theorem~\ref{thm:asymptotics})}\label{sec:9.1}

\begin{lemma}\label{lem:main.conv}
Assume the conditions of Theorem~\ref{thm:main} hold and suppose $f$ is a solution given by Theorem~\ref{thm:main}. Define $f^\sharp$ as in \eqref{fsharp.def}.

Given $0\leq T_1<T_2$ and $\ell\in \mathbb N\cup\{0\}$, the following estimate holds for some implicit constant depending only on $d_0$, $\gamma$ and $\ell$ (and is independent of $T_1$ and $T_2$):
$$\|\vb^\ell \xb^{\Mm+4} |f^\sharp(T_1,x,v)-f^{\sharp}(T_2,x,v)|\|_{L^\i_xL^\i_v} \ls \ep^{\f 32} (1+T_1)^{-\min\{1,2+\gamma\}}.$$
\end{lemma}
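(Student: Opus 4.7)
The plan is to bound the time derivative of $f^\sharp$ directly and integrate. A direct computation using \eqref{Landau.3} gives
$$\partial_t f^\sharp(t, x, v) = (\partial_t f + v_i \partial_{x_i} f)(t, x+tv, v) = [\bar{a}_{ij}\partial^2_{v_iv_j} f - \bar{c} f](t, x+tv, v),$$
so $f^\sharp(T_1,x,v) - f^\sharp(T_2, x, v) = -\int_{T_1}^{T_2}[\bar{a}_{ij}\partial^2_{v_iv_j} f - \bar{c} f](t, x+tv, v)\,\ud t$. Under the change of variable $y = x+tv$ at fixed $t$, the weight $\xb^{\Mm+4}$ becomes $\wb^{\Mm+4}$, so it suffices to establish the pointwise bound
$$\sup_{y,v} \vb^\ell \wb^{\Mm+4}\bigl(|\bar{a}_{ij}\partial^2_{v_iv_j} f| + |\bar{c} f|\bigr)(t,y,v) \ls \ep^2 \phi(t),$$
for some non-negative $\phi$ with $\int_{T_1}^{\infty} \phi(t)\,\ud t \ls (1+T_1)^{-\min\{1,2+\gamma\}}$ (note $\ep^2 \leq \ep^{3/2}$ for $\ep$ small).

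The $\bar{c}f$ contribution is routine: Proposition~\ref{prop:cb.Li} (at zero derivatives) gives $|\bar{c}|(t,y,v)\ls \ep(1+t)^{-3-\gamma}$, while Lemma~\ref{lem:Li.4} together with the zero-derivative bound from Proposition~\ref{prop:Li.improved} yields $\vb^\ell \wb^{\Mm+4}|f|(t,y,v)\ls \wb^{\Mm+5}|g|(t,y,v)\ls \ep$. One checks directly that $-3-\gamma \leq -1-\min\{1,2+\gamma\}$ for every $\gamma \in (-2,0)$, so the $t$-integral produces exactly the desired rate.

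The main obstacle is the coefficient $\bar{a}_{ij}\partial^2_{v_iv_j} f$: the naive combination of $|\bar{a}_{ij}|\ls \ep \vb^{2+\gamma}(1+t)^{-3}$ (Proposition~\ref{prop:ab.Li.1}) with $\wb^{\Mm+5}|\partial^2_{v_iv_j} g|\ls \ep(1+t)^2$ (Corollary~\ref{cor:MP.con}, which applies since $2 \leq \Mi$) yields only $\ep^2(1+t)^{-1}$, which is \emph{not} integrable at infinity. The key observation that saves us is that we need only the weight $\wb^{\Mm+4}$, not $\wb^{\Mm+5}$, so we can afford to pay one power of $\wb$ in order to invoke the null-structure estimate in Proposition~\ref{prop:ab.Li.null.cond}: for $t\geq 1$,
$$|\bar{a}_{ij}|(t,y,v)\ls \ep \, \wb^{\min\{1,2+\gamma\}}\vb^{\max\{0,1+\gamma\}}t^{-\min\{2+\gamma,1\}}(1+t)^{-3}.$$
Using Lemma~\ref{lem:Li.4} to convert $\partial^2_{v_iv_j}f$ to $\partial^2_{v_iv_j}g$ and absorbing the extra $\vb^{\max\{0,1+\gamma\}}$ factor, the weight demanded on $\partial^2_{v_iv_j}g$ is $\wb^{\Mm+4+\min\{1,2+\gamma\}}\leq \wb^{\Mm+5}$, so Corollary~\ref{cor:MP.con} still applies. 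This gives
$$\vb^\ell \wb^{\Mm+4}|\bar{a}_{ij}\partial^2_{v_iv_j}f|\ls \ep^2 t^{-\min\{2+\gamma,1\}}(1+t)^{-1}\ls \ep^2 (1+t)^{-1-\min\{1,2+\gamma\}}\quad\text{for } t\geq 1,$$
which integrates to $\ls \ep^2(1+T_1)^{-\min\{1,2+\gamma\}}$. For $t\in [0,1]$ (relevant only when $T_1\leq 1$), the naive estimate $\ls \ep^2(1+t)^{-1}$ suffices, since it integrates to $O(\ep^2)$ and $(1+T_1)^{-\min\{1,2+\gamma\}}$ is bounded below by a positive constant on $[0,1]$. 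Summing the three contributions yields the lemma.
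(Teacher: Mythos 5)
Your proposal is correct and follows essentially the same route as the paper's own argument: compute $\partial_t f^\sharp$, integrate from $T_1$ to $T_2$, observe that the $\bar c f$ contribution is already integrable with the right rate, and --- crucially --- use the fact that only $\wb^{\Mm+4}$ (not $\wb^{\Mm+5}$) is needed so that one can trade a power of $\wb$ via Proposition~\ref{prop:ab.Li.null.cond} (the null-structure bound on $\bar a_{ij}$) to upgrade the main term from borderline $(1+t)^{-1}$ to integrable $(1+t)^{-1-\min\{1,2+\gamma\}}$, while bounding the $g$-derivatives with the low-order estimate (Corollary~\ref{cor:MP.con} with $\zeta_k=\theta_k=0$, equivalently \eqref{BA.Z}). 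The only inaccuracy is the intermediate power of $\ep$: Propositions~\ref{prop:ab.Li.1}, \ref{prop:ab.Li.null.cond}, and \ref{prop:cb.Li} as stated give the coefficient bounds with a factor $\ep^{3/4}$ (not $\ep$), so the honest product is $\ep^{3/4}\cdot\ep=\ep^{7/4}$ rather than the $\ep^2$ you claim; since $\ep^{7/4}\leq\ep^{3/2}$ for small $\ep$ this does not affect the conclusion, but it is worth fixing.
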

\begin{proof}
Using the definition of $f^\sharp$ in \eqref{fsharp.def} and the Landau equation \eqref{Landau.3}, we obtain
$$\rd_t f^{\sharp}(t,x,v) = (\rd_t f+v_i\rd_{x_i} f)(t,x+tv,v) = (\bar{a}_{ij} \rd^2_{v_iv_j} f - \bar{c} f)(t,x+tv,v).$$
This implies 
$$(\rd_t (\vb^\ell \xb^{\Mm+4} f^{\sharp}))(t,x,v)= \vb^{\ell} \xb^{\Mm+4}(\bar{a}_{ij} \rd^2_{v_iv_j} f - \bar{c} f)(t,x+tv,v).$$
Integrating in $t$ from $t=T_1$ to $t=T_2$, we thus obtain
\begin{align}
&\: \|\vb^\ell \xb^{\Mm+4} |f^{\sharp}(T_1,x,v)- f^{\sharp}(T_2,x,v)|\|_{L^\i_xL^\i_v} \notag\\
\ls & \: \int_{T_1}^{T_2} \| \vb^{\ell} \xb^{\Mm+4} |\bar{a}_{ij} \rd^2_{v_iv_j} f|(t,x+tv,v) \|_{L^\i_xL^\i_v} \, \ud t \label{asymp.error.1}\\ 
&\: + \int_{T_1}^{T_2} \| \vb^{\ell} \xb^{\Mm+4} |\bar{c} f| (t,x+tv,v) \|_{L^\i_xL^\i_v} \, \ud t . \label{asymp.error.2}
\end{align}

To bound the terms \eqref{asymp.error.1} and \eqref{asymp.error.2}, we use the fact that $\sup_{x\in \mathbb R^3} \sup_{v\in \mathbb R^3} = \sup_{x-tv\in \mathbb R^3}\sup_{v\in \mathbb R^3}$. To control \eqref{asymp.error.1}, we first use Lemma~\ref{lem:Li.4} to obtain
\begin{equation}\label{asymp.error.est.1}
\begin{split}
&\: |\eqref{asymp.error.1}| \\
\ls &\:  (\max_{i,j}\|(1+t)^{3+\min\{1,2+\gamma\}}\vb^{-(2+\gamma)} \wb^{-\min\{1,2+\gamma\}} \bar{a}_{ij}(t,x,v)\|_{L^\i([T_1,T_2];L^\i_x L^\i_v)}) \\
&\: \times (\sum_{|\bt|=2} \|(1+t)^{-2} \vb^{\ell+1} \wb^{\Mm+5} \rd^\bt_{v} f(t,x,v)\|_{L^\i([T_1,T_2];L^\i_xL^\i_v)}) \|(1+t)^{-1-\min\{1,2+\gamma\}}\|_{L^1([T_1,T_2])}\\
\ls &\: (\max_{i,j}\|(1+t)^{3+\min\{1,2+\gamma\}}\vb^{-(2+\gamma)} \wb^{-\min\{1,2+\gamma\}} \bar{a}_{ij}(t,x,v)\|_{L^\i([T_1,T_2];L^\i_x L^\i_v)}) \\
&\: \times (\sum_{|\bt|= 2} \|(1+t)^{-2}\wb^{\Mm+5} \rd^\bt_v g(t,x,v)\|_{L^2([T_1,T_2];L^2_xL^2_v)})(1+T_1)^{-\min\{1,2+\gamma\}}.
\end{split}
\end{equation}
Now by Propositions~\ref{prop:ab.Li.1} and \ref{prop:ab.Li.null.cond} (used for $t\leq 1$ and $t>1$ respectively), the first factor is bounded above by $\ep^{\f 34}$. By \eqref{BA.Z}, the second factor is bounded by $\ep^{\f 34}$. Combining, we see that 
$$|\eqref{asymp.error.1}|\ls |\mbox{RHS of \eqref{asymp.error.est.1}}|\ls \ep^{\f 32} (1+T_1)^{-\min\{1,2+\gamma\}}.$$

We next bound \eqref{asymp.error.2}. Using Lemma~\ref{lem:Li.4}, Proposition~\ref{prop:cb.Li} and \eqref{BA.Z}, we obtain
\begin{equation*}
\begin{split}
|\eqref{asymp.error.2}| \ls &\: \|(1+t)^{-(3+\gamma)}\|_{L^1([T_1,T_2])} \|(1+t)^{3+\gamma}\bar{c}(t,x,v)\|_{L^\i([T_1,T_2];L^\i_x L^\i_v)} \\
&\:\qquad \times \|\vb^{\ell} \wb^{\Mm+4} f(t,x,v)\|_{L^\i([T_1,T_2];L^\i_xL^\i_v)} \ls \ep^{\f 32} (1+T_1)^{-(2+\gamma)}.
\end{split}
\end{equation*}

Finally, using the estimates for \eqref{asymp.error.1} and \eqref{asymp.error.2} above, we obtain the desired estimate. \qedhere
\end{proof}

We are now in a position to prove Theorem~\ref{thm:asymptotics}.
\begin{proof}[Proof of Theorem~\ref{thm:asymptotics}]
By Lemma~\ref{lem:main.conv}, for any $\ell \in \mathbb N$ and for any sequence $t_n\to +\infty$, $\{f^\sharp(t_n,x,v)\}_{n=1}^\infty$ is Cauchy in the Banach space with the norm $\|\vb^\ell \xb^{\Mm+4} (\cdot)\|_{L^\i_x L^\i_v}$. Therefore there exists a unique $f^{\sharp}_\infty:\mathbb R^3\times \mathbb R^3 \to \mathbb R$ such that for any $\ell \in \mathbb N \cup\{0\}$,
$$\lim_{t\to +\infty} \|\vb^\ell \xb^{\Mm+4}|f^\sharp(t,x,v)-f^{\sharp}_\infty(x,v)|\|_{L^\i_xL^\i_v} = 0.$$
Using the estimate in Lemma~\ref{lem:main.conv} again, it then follows that
$$\sup_{t\geq 0} (1+t)^{\min\{1,2+\gamma\}}\|\vb^\ell \xb^{\Mm+4} (f^\sharp(t,x,v) - f^\sharp_\infty(x,v))\|_{L^\i_x L^\i_v }(t) \ls \ep^{\f 32},$$
which is what we wanted to prove. \qedhere
\end{proof}

\subsection{Large time asymptotics for macroscopic quantities (Proof of Corollary~\ref{cor:macro})}\label{sec:9.2}

We will prove slightly more general estimates than Corollary~\ref{cor:macro}. The following proposition gives the main estimates.
\begin{proposition}\label{prop:marco.more.general}
Assume the conditions of Theorem~\ref{thm:main} hold and suppose $f$, $f^\sharp_\infty$ are as given by Theorems~\ref{thm:main} and \ref{thm:asymptotics} respectively.

For any $\ell \in \mathbb N\cup \{0\}$, the following estimate holds for all $t\in [0,+\infty)$ with an implicit constant depending on $d_0$, $\gamma$ and $\ell$:
\begin{equation}\label{eq:macro.more.general.1}
\|\vb^\ell |f(t,x,v) - f^\sharp_\infty(x-tv,v)| \|_{L^\i_xL^1_v}\ls \ep^{\f 32}(1+t)^{-3-\min\{1,2+\gamma\}}
\end{equation}
and
\begin{equation}\label{eq:macro.more.general.2}
\|\vb^\ell f(t,x,v) \|_{L^\i_xL^1_v}\ls \ep(1+t)^{-3}.
\end{equation}
\end{proposition}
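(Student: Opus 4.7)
My plan is to deduce both estimates directly from results already established, using only the dispersive identity
\[
\int_{\mathbb R^3}\f{\ud v}{\langle x-tv\rangle^{\Mm+4}}\ls (1+t)^{-3}
\]
(which follows from $\Mm+4>3$ by the change of variables $u=x-tv$ when $t\geq 1$, and is trivial when $t\leq 1$).

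For \eqref{eq:macro.more.general.1}, first rewrite using the definition \eqref{fsharp.def}: since $f(t,x,v)=f^\sharp(t,x-tv,v)$, we have
\[
f(t,x,v)-f^\sharp_\infty(x-tv,v)=f^\sharp(t,x-tv,v)-f^\sharp_\infty(x-tv,v).
\]
Theorem~\ref{thm:asymptotics} provides the pointwise estimate
\[
\vb^\ell\langle x-tv\rangle^{\Mm+4}|f^\sharp(t,x-tv,v)-f^\sharp_\infty(x-tv,v)|\ls \ep^{\f 32}(1+t)^{-\min\{1,2+\gamma\}}.
\]
Dividing by $\langle x-tv\rangle^{\Mm+4}$, taking $L^1_v$ and then $L^\i_x$, and applying the dispersive identity above gives \eqref{eq:macro.more.general.1}.

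For \eqref{eq:macro.more.general.2}, I would apply Lemma~\ref{lem:Li.2} to $h=\vb^\ell f$. This reduces the task to bounding $\|\vb^{\ell+4}f\|_{L^\i_xL^\i_v}(t)$ and $\|\wb^4\vb^\ell f\|_{L^\i_xL^\i_v}(t)$ by $C\ep$. Both follow from the lowest-order $L^\i_xL^\i_v$ bound for $g$ (e.g.~the $|\alp|+|\bt|+|\sigma|=0$ case of Proposition~\ref{prop:Li.improved}), which gives $\|\wb^{\Mm+5} g\|_{L^\i_xL^\i_v}\ls \ep$, together with the elementary bound $e^{-d(t)\vb^2}\vb^{\ell+4}\ls 1$ coming from $d(t)\geq d_0>0$. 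Combining with the $(1+t)^{-3}$ factor from Lemma~\ref{lem:Li.2} yields \eqref{eq:macro.more.general.2}.

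There is no substantive obstacle here: both inequalities are essentially repackagings of the already-established weighted $L^\i_xL^\i_v$ bounds for $g$ and for $f^\sharp-f^\sharp_\infty$, together with the standard dispersive gain of $(1+t)^{-3}$ from integrating $\langle x-tv\rangle^{-N}$ in $v$ for $N>3$. The only mildly delicate point is to make sure the weight $\Mm+4$ on $\langle x-tv\rangle$ provided by Theorem~\ref{thm:asymptotics} is strictly greater than $3$, so that the dispersive integral converges uniformly in $x$.
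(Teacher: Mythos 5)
Your argument for \eqref{eq:macro.more.general.2} is valid, and it is in fact slightly more direct than the paper's: the paper first establishes a dispersive bound on $\vb^\ell f^\sharp_\infty(x-tv,v)$ (using the data assumption and the convergence in Theorem~\ref{thm:asymptotics} applied at $t=0$) and then combines it with \eqref{eq:macro.more.general.1} by the triangle inequality, whereas you bound $\vb^\ell f$ directly from the $L^\i_xL^\i_v$ control on $g$ via Lemma~\ref{lem:Li.2}. Both routes are fine; yours avoids the detour through $f^\sharp_\infty$.

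However, your proof of \eqref{eq:macro.more.general.1} contains a genuine error. The ``dispersive identity''
\begin{equation*}
\int_{\mathbb R^3}\f{\ud v}{\langle x-tv\rangle^{\Mm+4}}\ls (1+t)^{-3}
\end{equation*}
is \emph{false} for small $t$: the change of variables $u=x-tv$ gives $t^{-3}\int\langle u\rangle^{-(\Mm+4)}\,\ud u$, which blows up as $t\to 0^+$, and at $t=0$ the integrand is constant in $v$ so the integral is infinite. Your claim that the case $t\leq 1$ ``is trivial'' is exactly where the argument breaks. This is precisely why Lemma~\ref{lem:Li.2} (which the paper's proof uses) carries \emph{both} a $\vb^4$-weighted term and a $\wb^4$-weighted term on the right-hand side: the $\vb$-weight covers $t\leq 1$ and the $\wb$-weight covers $t\geq 1$. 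The fix in your setting is immediate, since Theorem~\ref{thm:asymptotics} holds for every $\ell$: replace $\ell$ by $\ell+4$ there, obtain the pointwise bound with both $\vb^{\ell+4}$ and $\langle x-tv\rangle^{\Mm+4}$ weights, and then integrate $\vb^{-4}\langle x-tv\rangle^{-(\Mm+4)}$ in $v$ (which is indeed $\ls(1+t)^{-3}$ uniformly in $x$ for all $t\geq 0$). Once corrected in this way, your approach for \eqref{eq:macro.more.general.1} coincides with the paper's.
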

\begin{proof}
Using Lemma~\ref{lem:Li.2}, then noting $\sup_{x\in \mathbb R^3} \sup_{v\in \mathbb R^3} = \sup_{x-tv\in \mathbb R^3}\sup_{v\in \mathbb R^3}$, and finally using Theorem~\ref{thm:asymptotics}, we obtain
\begin{equation*}
\begin{split}
&\: \|\vb^\ell |f(t,x,v) - f^\sharp_\infty(x-tv,v)| \|_{L^\i_xL^1_v} \\
\ls &\: (1+t)^{-3} (\|\vb^{\ell+4} |f(t,x,v) - f^\sharp_\infty(x-tv,v)| \|_{L^\i_xL^\i_v} + \|\vb^\ell \wb^4 |f(t,x,v) - f^\sharp_\infty(x-tv,v)| \|_{L^\i_xL^\i_v}) \\
= &\: (1+t)^{-3} (\|\vb^{\ell+4} |f^\sharp(t,x,v) - f^\sharp_\infty(x,v)| \|_{L^\i_xL^\i_v} + \|\vb^\ell \xb^4 |f^\sharp(t,x,v) - f^\sharp_\infty(x,v)| \|_{L^\i_xL^\i_v}) \\
\ls &\: \ep^{\f 98} (1+t)^{-3-\min\{1,2+\gamma\}}.
\end{split}
\end{equation*}
This proves \eqref{eq:macro.more.general.1}.

Now using a similar argument as above, we obtain
\begin{equation}\label{eq:main.step.in macro.more.general.2.0}
\begin{split}
&\: \| \vb^{\ell} f^\sharp_\infty(x-tv,v) \|_{L^\i_xL^1_v} \\
\ls &\: (1+t)^{-3} ( \|\vb^{\ell+4}  f^\sharp_\infty(x-tv,v) \|_{L^\i_xL^\i_v} + \|\vb^{\ell} \wb^4 f^\sharp_\infty(x-tv,v) \|_{L^\i_xL^\i_v})\\
\ls &\: (1+t)^{-3} ( \|\vb^{\ell+4}  f^\sharp_\infty(x,v) \|_{L^\i_xL^\i_v} + \|\vb^{\ell} \xb^4 f^\sharp_\infty(x,v) \|_{L^\i_xL^\i_v}).
\end{split}
\end{equation}
To proceed, note that applying the estimate in Theorem~\ref{thm:asymptotics} with $t=0$, we obtain, for every $\ell'\in \mathbb N\cup \{0\}$,
$$\|\vb^{\ell'} \xb^{\Mm+4} (f_{\mathrm{in}}(x,v) - f^\sharp_\infty(x,v))\|_{L^\i_x L^\i_v}\ls \ep^{\f 32}.$$
Combining this estimate with the assumptions on $f_{\mathrm{in}}$ in Theorem~\ref{thm:main} and using the triangle inequality, we in particular have
$$\|\vb^{\ell'} \xb^{4} f^\sharp_\infty(x,v)\|_{L^\i_x L^\i_v}\ls \ep.$$
Plugging this into \eqref{eq:main.step.in macro.more.general.2.0}, we then obtain
\begin{equation}\label{eq:main.step.in macro.more.general.2}
\begin{split}
\| \vb^{\ell} f^\sharp_\infty(x-tv,v) \|_{L^\i_xL^1_v}  \ls \ep(1+t)^{-3}.
\end{split}
\end{equation}
Combining \eqref{eq:main.step.in macro.more.general.2} with \eqref{eq:macro.more.general.1} and using the triangle inequality yields \eqref{eq:macro.more.general.2}. \qedhere
\end{proof}

Using Proposition~\ref{prop:marco.more.general}, we can immediately prove Corollary~\ref{cor:macro}:
\begin{proof}[Proof of Corollary~\ref{cor:macro}]
Note that \eqref{cor:macro.1} is an immediate corollary of \eqref{eq:macro.more.general.2}; while \eqref{cor:macro.2} is an immediate corollary of \eqref{eq:macro.more.general.1}. \qedhere
\end{proof}

\subsection{The large time limit is in general not a traveling global Maxwellian (Proof of Theorem~\ref{thm:Maxwellian})}\label{sec:9.3}

In this final subsection, we prove Theorem~\ref{thm:Maxwellian}. The reader may find it useful to recall Definition~\ref{def:GM}.

\begin{proof}[Proof of Theorem~\ref{thm:Maxwellian}]
By Lemma~\ref{lem:main.conv} (with $\ell = 2$), given initial data $f_{\mathrm{in}}$ with 
\begin{equation}\label{initial.assumption}
\begin{split}
&\: \sum_{|\alp|+|\bt|+|\sigma|\leq \Mm}  \|\xb^{\Mm+5}\rd_x^{\alp}\rd_v^{\bt} (e^{2d_0(1+|v|^2)} f_{\mathrm{in}}) \|_{L^2_xL^2_v} \\
&\: \qquad + \sum_{|\alp|+|\bt|+|\sigma|\leq \Mm-5} \|\xb^{\Mm+5}\vb \rd_x^{\alp}\rd_v^{\bt} (e^{2d_0(1+|v|^2)} f_{\mathrm{in}}) \|_{L^\i_xL^\i_v} \leq \ep 
\end{split}
\end{equation}
(where $\ep\in (0,\underline{\ep}_0]$ with $\underline{\ep}_0$, $\Mm$, $d_0$ as in Theorem~\ref{thm:main}), there exists $C_{\infty}>0$ (depending only on $d_0$ and $\gamma$) such that the unique solution arising from $f_{\mathrm{in}}$ satisfies 
$$\|\vb^2\xb^2|f^\sharp(T_1,x,v)-f^{\sharp}(T_2,x,v)|\|_{L^\i_xL^\i_v} \leq C_{\infty}\ep^{\f 32}(1+T_1)^{-\min\{1,2+\gamma\}}$$
for all $0\leq T_1<T_2$. In particular, taking $T_1=0$ and $T_2\to +\infty$, and using the definition of $f_\infty^\sharp$, we obtain,
$$\|\vb^2\xb^2|f_{\mathrm{in}}(x,v)-f_{\infty}^{\sharp}(x,v)|\|_{L^2_xL^2_v} \leq C_{\infty}\ep^{\f 32}.$$

In view of the above inequality, in order to prove the present proposition, it suffices to exhibit a function $f_{\mathrm{in}}$ such that for some $\ep\in [0,\ep_0]$, the following two conditions are simultaneously satisfied:
\begin{enumerate}
\item \eqref{initial.assumption} holds, and
\item $\inf_{\mathcal M\in {\bf \mathfrak M}} \|\vb^2\xb^2 |f_{\mathrm{in}}(x,v)- \mathcal M^\sharp(x,v)|\|_{L^2_xL^2_v} > C_{\infty}\ep^{\f 32}$.
\end{enumerate}

To show that such an $f_{\mathrm{in}}$ exists, take an arbitrary ``seed function'' $\underline{f}:\mathbb R^3\times \mathbb R^3\to \mathbb R_{>0}$ which 
\begin{itemize}
\item satisfies 
\begin{equation}\label{upper.bd.of.seed}
\begin{split}
&\: \sum_{|\alp|+|\bt|+|\sigma|\leq \Mm}  \|\xb^{\Mm+5}\rd_x^{\alp}\rd_v^{\bt} (e^{2d_0(1+|v|^2)} f_{\mathrm{in}}) \|_{L^2_xL^2_v} \\
&\: \qquad + \sum_{|\alp|+|\bt|+|\sigma|\leq \Mm-5} \|\xb^{\Mm+5}\vb \rd_x^{\alp}\rd_v^{\bt} (e^{2d_0(1+|v|^2)} f_{\mathrm{in}}) \|_{L^\i_xL^\i_v} \leq \underline{C}<+\infty 
\end{split}
\end{equation}
for some $\underline{C}>0$, and
\item is not $\mathcal M^\sharp$ for any global Maxwellian.
\end{itemize}
Note that there must exist a constant $\underline{c}>0$ such that
\begin{equation}\label{away.from.Max}
\inf_{\mathcal M\in\mathfrak M} \|\vb^2 \xb^2 (\underline{f}(x,v)- \mathcal M^\sharp(x,v)) \|_{L^2_xL^2_v} \geq \underline{c} >0.
\end{equation}
(This is an easy consequence of the fact that global Maxwellians are parametrized by a finite dimensional space of parameters. More precisely, if \eqref{away.from.Max} were not true, then there exists a sequence of global Maxwellians $\mathcal M_n$ parametrized by $(\alp_n,\bt_n,\sigma_n,m_n,B_n)$ such that $\lim_{n\to+\infty} \|\xb^2 \vb^2(\underline{f}(x,v)- \mathcal M_n^\sharp(x,v))\|_{L^2_xL^2_v} = 0$.  This convergence in particular implies that all the second moments of $\underline{f}$ are bounded. Hence $(\alp_n,\bt_n,\sigma_n,m_n,B_n)$ stays in a compact set of $\mathbb R\times \mathbb R\times\mathbb R\times\mathbb R\times\mathbb R^{3\times 3}$. Therefore there exists a convergent subsequence which converges, i.e.~$(\alp_n,\bt_n,\sigma_n,m_n,B_n)\to (\alp,\bt,\sigma,m,B)$. This then implies $f_{\mathrm{in}}(x,v) = \mathcal M^\sharp$ for some $\mathcal M\in \mathfrak M$, contradicting our assumptions.)

For $\eta>0$ to be chosen below, we now let
$$f_{\mathrm{in}}:=\eta \underline{f}.$$
Given \eqref{upper.bd.of.seed} and \eqref{away.from.Max}, and noting that the family of global Maxwellians $\mathfrak M$ is invariant under rescaling (i.e.~$\mathcal M\in \mathfrak M\iff \lambda \mathcal M\in \mathfrak M, \forall \lambda>0$), the conditions (1) and (2) above for $f_{\mathrm{in}}$ therefore translates to the two conditions
$$\underline{C}\eta \leq \ep,\quad \underline{c}\eta>C_{\infty} \ep^{\f 32}.$$
for some $\ep\in [0,\ep_0]$. It is then easy to see that this can be satisfied if we take $\ep = \underline{C}\eta$ and $\eta< \min\{\f{\underline{c}^2}{C_\infty^2\underline{C}^3}, \f{\ep_0}{\underline{C}}\}$. \qedhere
\end{proof}

\begin{remark}\label{rmk:generally.not.Max}
We have in fact proven slightly more. Given any function $\underline{f}$ which does not correspond to $\mathcal M^\sharp$ for any $\mathcal M\in \mathfrak M$, there exists $\eta_0>0$ depending on $\underline{f}$ such that if $\eta \in (0,\eta_0)$, then the solution arising from $f_{\mathrm{in}} = \eta \underline{f}$ does not converge to a zero solution or a traveling global Maxwellian.
\end{remark}

\bibliographystyle{plain}
\bibliography{Landau}

\end{document}